\renewcommand\qedsymbol{$\blacksquare$}
\let\oldproofname=\proofname
\renewcommand{\proofname}{\rm\bf{\oldproofname}}
\renewcommand{\labelenumi}{\normalfont{(\roman{enumi})}}
\newtheorem{theorem}{Theorem}[section]
\newtheorem{lemma}[theorem]{Lemma}
\newtheorem{proposition}[theorem]{Proposition}
\newtheorem{corollary}[theorem]{Corollary}
\newtheorem{assumption}[theorem]{Assumption}
\theoremstyle{definition}
\newtheorem{definition}[theorem]{Definition}
\newtheorem{remark}[theorem]{Remark}
\numberwithin{equation}{section}
\DeclareMathOperator{\loc}{loc}
\DeclareMathOperator{\e}{e}
\DeclareMathOperator{\Div}{div}
\DeclareMathOperator{\sgn}{sgn}
\DeclareMathOperator{\supp}{supp}
\DeclareMathOperator{\diag}{diag}
\DeclareMathOperator{\cc}{c}
\DeclareMathOperator{\Capp}{cap}
\newcommand{\C}{\mathbb{C}}
\newcommand{\N}{\mathbb{N}}
\newcommand{\F}{\mathbb{F}}
\newcommand{\K}{\mathbb{K}}
\newcommand{\Q}{\mathbb{Q}}
\newcommand{\R}{\mathbb{R}}
\newcommand{\T}{\mathbb{T}}
\newcommand{\Z}{\mathbb{Z}}
\newcommand{\smooth}[1][]{\rC_{\cc}^{\infty}(#1)}
\newcommand{\smoothD}[1][]{\rC_{D}^{\infty}(#1)}
\DeclareRobustCommand{\Cdot}{\dot{\rC}\protect{\vphantom{C}}}
\renewcommand{\H}{\mathrm{H}}
\DeclareRobustCommand{\Hdot}{\dot{\H}\protect{\vphantom{H}}}
\newcommand{\W}{\mathrm{W}}
\DeclareRobustCommand{\Wdot}{\dot{\W}\protect{\vphantom{W}}}
\renewcommand{\L}{\mathrm{L}}
\newcommand{\Simp}{\mathrm{Simp}}
\newcommand{\cD}{\mathcal{D}}
\newcommand{\cS}{\mathcal{S}}
\newcommand{\D}{\mathsf{D}}
\newcommand{\sR}{\mathsf{R}}
\newcommand{\sN}{\mathsf{N}}
\renewcommand{\d}{\mathrm{d}}
\newcommand{\diam}{\mathrm{diam}}
\newcommand{\rad}{\mathrm{r}}
\newcommand{\osc}{\mathrm{osc}}
\DeclareMathOperator{\esssup}{esssup}
\DeclareMathOperator{\essinf}{essinf}
\newcommand{\les}{\lesssim}
\newcommand{\ges}{\gtrsim}
\renewcommand{\S}{\mathrm{S}}
\let\ii\i
\renewcommand{\i}{\mathrm{i}}
\DeclareMathOperator{\im}{Im}
\DeclareMathOperator{\re}{Re}
\renewcommand{\t}{\boldsymbol{t}}
\newcommand{\z}{\mathbf{z}}
\newcommand{\1}{\boldsymbol{1}}
\newcommand{\rA}{\mathrm{A}}
\newcommand{\rB}{\mathrm{B}}
\newcommand{\rC}{\mathrm{C}}
\newcommand{\rD}{\mathrm{D}}
\newcommand{\rF}{\mathrm{F}}
\newcommand{\rG}{\mathrm{G}}
\newcommand{\rI}{\mathrm{I}}
\newcommand{\rJ}{\mathrm{J}}
\newcommand{\rN}{\mathrm{N}}
\newcommand{\rP}{\mathrm{P}}
\newcommand{\rR}{\mathrm{R}}
\newcommand{\rT}{\mathrm{T}}
\newcommand{\rU}{\mathrm{U}}
\newcommand{\rZ}{\mathrm{Z}}
\newcommand{\cA}{\mathcal{A}}
\newcommand{\cC}{\mathcal{C}}
\newcommand{\cE}{\mathcal{E}}
\newcommand{\cF}{\mathcal{F}}
\newcommand{\cH}{\mathcal{H}}
\newcommand{\cI}{\mathcal{I}}
\newcommand{\cJ}{\mathcal{J}}
\newcommand{\cL}{\mathcal{L}}
\newcommand{\cM}{\mathcal{M}}
\newcommand{\cN}{\mathcal{N}}
\newcommand{\cP}{\mathcal{P}}
\newcommand{\cR}{\mathcal{R}}
\newcommand{\cU}{\mathcal{U}}
\newcommand{\cW}{\mathcal{W}}
\newcommand{\cX}{\mathcal{X}}
\newcommand{\cY}{\mathcal{Y}}
\newcommand{\cZ}{\mathcal{Z}}
\newcommand{\up}{\ldots}
\newcommand{\sub}{\subseteq}
\newcommand{\LU}{\textcolor{black}{\mathrm{(LU)}}}
\newcommand{\Fat}{\textcolor{black}{\mathrm{(Fat)}}}
\newcommand{\Fatp}{\textcolor{black}{\mathrm{(Fat)}_p}}
\newcommand{\PN}{\textcolor{black}{\mathrm{(}\mathrm{P}_{N}\mathrm{)}}}
\newcommand{\PD}{\textcolor{black}{\mathrm{(}\mathrm{P}_{D}\mathrm{)}}}
\newcommand{\AssP}{\textcolor{black}{\mathrm{(}\mathrm{P} \mathrm{)}}}
\newcommand{\AssPp}{\textcolor{black}{\mathrm{(}\mathrm{P} \mathrm{)}_p}}
\newcommand{\Assrnot}{\textcolor{black}{\mathrm{(} r_0 \mathrm{)}}}
\newcommand{\Ndelta}{\textcolor{black}{\mathrm{(}\mathrm{N}_{\delta}\mathrm{)}}}
\newcommand{\ITC}{\textcolor{black}{\mathrm{(ITC}_{N_{\delta}} \mathrm{)}}}
\newcommand{\ICC}{\textcolor{black}{\mathrm{(ICC}_{N_{\delta}} \mathrm{)}}}
\newcommand{\AssE}{\textcolor{black}{\mathrm{(E)}}}
\newcommand{\AssExt}{\textcolor{black}{\mathrm{(} \mathcal{E} \mathrm{)}}}
\def\Xint#1{\mathchoice
	{\XXint\displaystyle\textstyle{#1}}%
	{\XXint\textstyle\scriptstyle{#1}}%
	{\XXint\scriptstyle\scriptscriptstyle{#1}}%
	{\XXint\scriptscriptstyle%
		\scriptscriptstyle{#1}}%
	\!\int}
\def\XXint#1#2#3{{\setbox0=\hbox{$#1{#2#3}{%
				\int}$ }
		\vcenter{\hbox{$#2#3$ }}\kern-.6\wd0}}
\def\barint{\,\Xint-} 
\title{Gaussian estimates vs.\@ elliptic regularity on open sets}
\author{Tim B\"ohnlein}
\author{Simone Ciani}
\author{Moritz Egert}
\address{Fachbereich Mathematik, Technische Universit\"at Darmstadt, Schlossgartenstr. 7, 64289 Darmstadt, Germany; Department of Mathematics, University of Bologna, Piazza di Porta San Donato 5, Bologna, Italy}
\email{boehnlein@mathematik.tu-darmstadt.de}
\email{simone.ciani3@unibo.it}
\email{egert@mathematik.tu-darmstadt.de}
\subjclass[2020]{Primary: 35J25, 47F10. Secondary: 35B65, 46E35.}
\date{\today}
\dedicatory{}
\keywords{Mixed boundary conditions, Elliptic operators in divergence form, Gaussian estimates, De Giorgi class, $L$-harmonic functions, Poincar\'{e} inequalities, Relative capacity}
\begin{document}
	\begin{abstract}
		Given an elliptic operator $L= - \Div (A \nabla \cdot)$ subject to mixed boundary conditions on an open subset of $\R^d$, we study the relation between Gaussian pointwise estimates for the kernel of the associated heat semigroup, H\"older continuity of $L$-harmonic functions and the growth of the Dirichlet energy. To this end, we generalize an equivalence theorem of Auscher and Tchamitchian to the case of mixed boundary conditions and to open sets far beyond Lipschitz domains. Yet, we prove the consistency of our abstract result by encompassing operators with real-valued coefficients and their small complex perturbations into one of the aforementioned equivalent properties. The resulting kernel bounds open the door for developing a harmonic analysis for the associated semigroups on rough open sets.
	\end{abstract}
	\maketitle

	\section{Introduction}
	Let $d \geq 2$, $O \sub \R^d$ be open, $A \in \L^{\infty}(O; \C^{d \times d})$ be elliptic and $L = - \Div (A \nabla \cdot)$ be realized as an m-accretive operator in $\L^2(O)$ subject to boundary conditions. To give a first idea of our results, let us consider the simplest case of pure Dirichlet boundary conditions, $u= 0$ on $\partial O$. We show that the following three properties are equivalent up to minimal changes in the parameter $\mu \in (0,1]$:
	\begin{enumerate}
		\item[$\rD(\mu)$] De Giorgi estimates for $L$ and $L^*$-harmonic functions: There exists $c > 0$ such that for all balls $B(x,R) \sub \R^d$ with $x \in \overline{O}$ and $R \in (0,1]$, all $u \in \H_0^1(O)$ that are $L$ or $L^*$-harmonic in $B(x,R)$ and each $r \in (0, R]$ the estimate
		\begin{equation*}
			\int_{O \cap B(x,r)} |\nabla u|^2 \, \d y \leq c \left( \frac{r}{R} \right)^{d-2+2 \mu} \int_{O \cap B(x,R)} |\nabla u|^2 \, \d y
		\end{equation*}
		is valid.

		\item[$\rG(\mu)$] H\"older continuity and pointwise Gaussian estimates for the kernel of the semigroup $(\e^{-t L})_{t \geq 0}$.

		\item[$\H(\mu)$] Local H\"older regularity of $L$ and $L^*$-harmonic functions with $\L^2$-norm control: There exists $c > 0$ such that for all balls as above and all $L$ or $L^*$-harmonic functions $u \in \H_0^1(O)$ in that ball the estimate
		\begin{equation*}
			\| u \|_{\L^{\infty}(O \cap B(x,\frac{r}{2}))} + r^{\mu} [u]^{(\mu)}_{O \cap B(x,\frac{r}{2})} \leq c r^{-\frac{d}{2}} \| u \|_{\L^2(O \cap B(x,r))}
		\end{equation*}
		holds true, where $[u]^{(\mu)}_{E} \coloneqq \sup_{y,z \in E, \, y \neq z} \frac{|u(y) - u(z)|}{|y-z|^{\mu}}$.
	\end{enumerate}
	Our main interest lies in property $\rG(\mu)$ and we consider the other two properties as a means of getting there. Property $\rG(\mu)$ opens the door for developing for the first time harmonic analysis, and in particular a theory of geometric Hardy spaces for $L$ as in \cite{Auscher_Russ, Duong_Yan-JAMS-H1-BMO-Duality, Bui_Duong_Ly_JFA, Song_Yan-JEE-Self-adjoint} on rough open sets far beyond Lipschitz domains. This link is explored in the work \cite{BB-Hardy_on_open_sets} of the first author with S.~Bechtel. Let us stress that positivity methods via the Beurling--Deny criterion as in \cite{Arendt_ter-Elst} are not suitable for getting $\rG(\mu)$  --- this approach can give the pointwise Gaussian bound, but it misses the H\"older continuity of the kernel that is key to treating the semigroup via methods from singular integral operators.

	\subsection*{The geometric framework}

	In order to prove this equivalence, we only assume that $O^c$ is locally $2$-fat. It is shown in \cite[Thm.~3.3]{P_D_implies_2-fat}, see also Proposition~\ref{Optimality of the geometric setup: Proposition: 2-fatness optimal} below, that this is equivalent to the following weak Poincar\'{e} inequality at the boundary: There are $c_0, r_0 > 0$, $c_1 \geq 1$ such that
	\begin{equation}
		\| u \|_{\L^2(O \cap B(x,r))} \leq c_0 r \| \nabla u \|_{\L^2(O \cap B(x, c_1 r))}  \tag*{\text{$\PD$}}
	\end{equation}
	for all $u \in \H_0^1(O)$ and balls $B(x,r) \sub \R^d$ with $x \in \partial O$ and $r \in (0, r_0]$. This condition seems indispensable, for example to control the growth of the Dirichlet energy in the derivation of $\rD(\mu)$ from $\H(\mu)$, making it reasonable to conjecture that our geometric assumptions are in the realm of the best possible.

	Now, we pass to the case of pure Neumann boundary conditions for $L$. The Poincar\'{e} inequality that we need (for the same reason as above) is that for the same balls as before and $u \in \H^1(O)$ we have
	\begin{equation*}
		\| u - (u)_{O \cap B(x,r)} \|_{\L^2(O \cap B(x,r))} \leq c_0 r \| \nabla u \|_{\L^2(O \cap B(x, c_1 r))}, \tag*{\text{$\PN$}}
	\end{equation*}
	where $(u)_E \coloneqq \frac{1}{|E|} \int_E u \, \d x$. However, in the Neumann case we need different geometric properties of $O$, because, roughly speaking, the extension of functions in the form domain $\H^1(O)$ by $0$ is not meaningful anymore. The most general geometric framework that we are aware of and that satisfies all our needs is the class of locally uniform domains with a positive radius condition. This should be thought of as a quantitative connectedness condition of $O$, see \cite{Jones_Uniform_Domains, Extension_Operator}.

	Our methods are flexible enough to treat mixed Dirichlet/Neumann boundary conditions with hardly any additional effort. Let $D \sub \partial O$ be closed and $N \coloneqq \partial O \setminus D$. We call $D$ the Dirichlet part and $N$ the Neumann part of the boundary. The boundary conditions are encoded in the form domain $V \coloneqq \H_D^1(O)$, which is defined as the closure in $\H^1(O)$ of smooth functions that vanish near $D$ (Definition~\ref{Operator theoretic setting and relevant function spaces: Definition: Mixed boundary condition Sobolev space}). Then the properties $\rD(\mu),\rG(\mu)$ and $\mathrm{H}(\mu)$ are understood in this context (see Section~\ref{Section: D(mu), G(mu), H(mu) for MBC}).

	Remarkably, and in contrast to several earlier results \cite{ter-Elst_Rehberg, Stampacchia_MBC_Hard_to_Check}, we can work without an interface condition between $D$ and $N$, which is where typically the main difficulties lie. Our geometric assumptions ``interpolate" between the two extremal cases. Namely, we need:
	\begin{enumerate}
		\item[$\Fat$] $O^c$ is locally $2$-fat away from $N$,

		\item[$\LU$] $O$ is locally uniform near $N$,
	\end{enumerate}
	see Section~\ref{Subsection: Assumptions Fat and LU} for precise definitions. Within this setting, we can express both $\PD$ and $\PN$ by the single property
	\begin{equation*}
		\tag*{\text{$\AssP$}} \qquad \quad\| u - \1_{[x \in N]} \cdot (u)_{O \cap B(x,r)} \|_{\L^2(O \cap B(x,r))} \leq c_0 r \| \nabla u \|_{\L^2(O \cap B(x, c_1 r))}, \qquad \qquad \qquad
	\end{equation*}
	for all balls as before and $u \in \H^1_D(O)$. Here, $\1_{[x \in N]} = \1_N(x)$ denotes the indicator function of the set $N$.

	We recall the basic $\L^2$ operator theory for $L$ in Section~\ref{Section: Operator theoretic setting and relevant function spaces} and introduce all relevant geometric concepts in Section~\ref{Section: The geometric setup}. The rest of the paper is divided into three parts.

	\textbf{The equivalence theorem (Sections~\ref{Section: D(mu), G(mu), H(mu) for MBC}~to~\ref{Section: (3) to (1)}).} We prove the equivalence of $\rD(\mu)$, $\rG(\mu)$ and $\H(\mu)$ for mixed boundary conditions. This is the main result and illustrated in Figure~\ref{fig: Summary Equivalence Theorem}.

	\begin{theorem} \label{Theorem: Main result of the paper}
		Let $d \geq 2$, $O \sub \R^d$ be locally uniform near $N$, $O^c$ locally $2$-fat away from $N$ and $\mu_0 \in (0,1]$. Then the following assertions are equivalent.
		\begin{enumerate}
			\item[$\mathrm{(i)}$] $L$ and $L^*$ have property $\rD(\mu)$ for all $\mu \in (0, \mu_0)$.
			\item[$\mathrm{(ii)}$] $L$ has property $\rG(\mu)$ for all $\mu \in (0, \mu_0)$.
			\item[$\mathrm{(iii)}$] $L$ and $L^*$ have property $\mathrm{H}(\mu)$ for all $\mu \in (0, \mu_0)$.
		\end{enumerate}
	\end{theorem}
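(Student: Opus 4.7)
My plan is to establish the cycle $\rD(\mu) \Rightarrow \H(\mu') \Rightarrow \rG(\mu'') \Rightarrow \rD(\mu''')$, each arrow costing a strictly positive but arbitrarily small loss in the H\"older exponent; since the hypothesis is stated \emph{for all} $\mu \in (0,\mu_0)$, the losses are absorbed in the statement. Throughout, the backbone is Caccioppoli's inequality combined with the unified Poincar\'{e} inequality $\AssP$: $\Fat$ supplies a Poincar\'{e} inequality up to the Dirichlet part $D$, $\LU$ yields a Poincar\'{e}--Wirtinger inequality near the Neumann part $N$ via a Whitney chaining argument, and both are captured uniformly in $\AssP$. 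Self-adjointness is not available, so every step is carried out symmetrically for $L$ and $L^*$.

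\textbf{Step $\rD(\mu)\Rightarrow \H(\mu)$.} For $u$ that is $L$- or $L^*$-harmonic in $B(x,R)$, apply $\AssP$ with the natural constant (namely $c=0$ if $x\in D$ and $c=(u)_{O\cap B(x,r)}$ otherwise) to convert Dirichlet energy decay into $\L^2$ oscillation decay: $\int_{O\cap B(x,r)}|u-c|^2\,\d y \leq C\,r^2 \int_{O\cap B(x,c_1 r)}|\nabla u|^2\,\d y$. Feeding $\rD(\mu)$ into the right-hand side produces a Campanato-type estimate of order $r^{d+2\mu}$, which by a Morrey--Campanato embedding adapted to the geometric framework $(\Fat)+(\LU)$ upgrades to H\"older continuity with the quantitative norm control required by $\H(\mu)$. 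The $\L^2\to \L^\infty$ part of $\H(\mu)$ follows from adding back the mean and using the Cauchy--Schwarz inequality.

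\textbf{Step $\H(\mu)\Rightarrow \rG(\mu)$.} The semigroup $(\e^{-tL})_{t\geq 0}$ enjoys $\L^2$ Davies--Gaffney off-diagonal estimates unconditionally, as these follow from form-accretivity alone. Since, for any test function $f$, the map $x\mapsto \e^{-tL}f(x)$ is $L$-harmonic on balls of radius $\sqrt{t}$, and symmetrically for $L^*$ in the variable $y$, $\H(\mu)$ converts the $\L^2$ Davies--Gaffney bound into pointwise $\L^\infty$ kernel control in each variable separately. This yields the Gaussian upper bound $|K(t,x,y)|\leq C t^{-d/2}\e^{-|x-y|^2/(ct)}$. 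Applying $\H(\mu)$ once more to the same $L$- and $L^*$-harmonic sections of $K$ delivers spatial H\"older continuity; parabolic space--time H\"older regularity then follows by interpolation with the analytic time derivative $\partial_t \e^{-tL} = -L\e^{-tL}$.

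\textbf{Step $\rG(\mu)\Rightarrow \rD(\mu)$ and the main obstacle.} Given the Gaussian kernel estimate and its H\"older continuity, an $L$-harmonic function $u$ on $B(x,R)$ inherits H\"older regularity there by a cutoff/reproducing-formula argument that recovers $\H(\mu)$, and Caccioppoli's inequality then yields $\int_{B_r}|\nabla u|^2 \leq C r^{-2}\int_{B_{2r}}|u-c|^2 \leq C r^{d-2+2\mu}[u]_{(\mu)}^2$, which is $\rD(\mu)$. I expect the true difficulty to lie in $\H(\mu)\Rightarrow \rG(\mu)$: local pointwise regularity must be pushed into global kernel information on a rough open set with mixed boundary conditions, with no interface hypothesis between $D$ and $N$. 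In particular, one must ensure that $y\mapsto K(t,x,y)$ lies in the form domain $\H_D^1(O)$ and that $\H(\mu)$ applies to it with constants independent of $x$ --- precisely the uniform boundary behaviour that the combination $\Fat+\LU$ was designed to guarantee.
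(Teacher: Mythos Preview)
Your cycle runs $\rD \Rightarrow \H \Rightarrow \rG \Rightarrow \rD$, whereas the paper proves $\rD \Rightarrow \rG \Rightarrow \H \Rightarrow \rD$. The order is not merely cosmetic: your step $\H(\mu) \Rightarrow \rG(\mu)$ rests on a claim that is simply false, and this is precisely why the paper does not take that route.

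You write that ``the map $x \mapsto \e^{-tL}f(x)$ is $L$-harmonic on balls of radius $\sqrt{t}$''. It is not. The semigroup orbit $u_t = \e^{-tL}f$ satisfies $L_D u_t = L u_t = -\partial_t u_t$ in $O(x,R)$, an \emph{inhomogeneous} equation whose right-hand side is controlled only in $\L^2$ (via analyticity, $\|tL u_t\|_2 \les \|f\|_2$). Property $\H(\mu)$, as defined in the paper, says nothing whatsoever about solutions to $L_D u = g$ with $g \neq 0$; it is a statement about $L$-harmonic functions only. So you cannot feed $u_t$ into $\H(\mu)$ to extract pointwise or H\"older kernel bounds, and the entire mechanism you describe for passing from local elliptic regularity to global Gaussian bounds collapses.

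The paper handles this obstruction by going $\rD(\mu) \Rightarrow \rG(\mu)$ directly, and the key device is a comparison argument for the inhomogeneous problem (Proposition~\ref{(1) to (2): Proposition: A priori bound for weak solution in Morrey spaces}): on each ball one solves an auxiliary local Dirichlet problem $L_D v = L u_t$ with the scaled a~priori bound of Lemma~\ref{D(mu), G(mu) and H(mu) for MBC: Lemma: Existence of weak solutions + A prior bound}, so that $w = u_t - v$ is genuinely $L$-harmonic and $\rD(\mu)$ applies to $w$. This decomposition, iterated through a Morrey--Campanato bootstrap, is what produces the $\L^\infty$ and $\Cdot^\mu$ bounds for the semigroup; it uses $\rD(\mu)$, not $\H(\mu)$, because $\rD(\mu)$ is stable under adding a locally solved inhomogeneous correction while $\H(\mu)$ is not. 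Your steps $\rD \Rightarrow \H$ and $\rG \Rightarrow \rD$ are in reasonable shape (the latter is essentially the paper's $\rG \Rightarrow \H$ followed by $\H \Rightarrow \rD$), but without a correct bridge to $\rG(\mu)$ the circle does not close.
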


	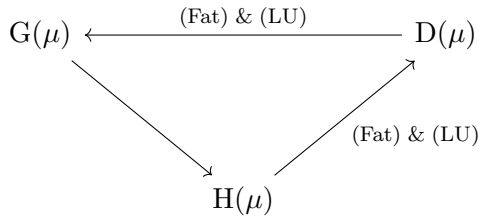
\begin{figure}[h!]
		\begin{center}
			\begin{tikzcd}[row sep=4em, column sep=4em]
				\rG(\mu)   \arrow[dr] &         & \arrow[ll,"\Fat \; \& \;  \LU",swap] \rD(\mu) \\
				& \H(\mu) \arrow[ur,"\Fat \; \& \; \LU",swap] &
			\end{tikzcd}
		\end{center}
		\caption{The geometric assumptions needed in Theorem \ref{Theorem: Main result of the paper}. Except for the implication $\mathrm{H}(\mu) \Longrightarrow \rD(\mu)$ the value of $\mu$ changes. Any property for $\mu = \mu_0$ implies any other property for all $\mu < \mu_0$.}
		\label{fig: Summary Equivalence Theorem}
	\end{figure}

	Theorem~\ref{Theorem: Main result of the paper} originates from results of Auscher and Tchamitchian on $O = \R^d$, see \cite[Chap.~4]{Auscher_Heat-Kernel} and \cite[Chap.~1]{Auscher_Tchamitchian_Kato}. They have been extended to special Lipschitz domains with pure Dirichlet or Neumann boundary conditions using localization techniques \cite{Auscher_Tchamitchian_Domains}. We highlight that we do not only recover the known statements of \cite{Auscher_Tchamitchian_Domains}, but we generalize the geometric setting to a far larger class of admissible geometries.

	The proof of Theorem~\ref{Theorem: Main result of the paper} is inspired by the monograph \cite{Auscher_Tchamitchian_Kato} and the work of ter Elst and Rehberg \cite{ter-Elst_Rehberg}, who studied property $\rG(\mu)$ for real-valued coefficients, when $O$ has a weakly Lipschitz boundary around the Neumann part, satisfies an exterior thickness condition around the Dirichlet part and an interface condition in between. Concerning the geometric setup, the present work also extends their result, see the discussion in Section~\ref{Subsection: Comparison of the geometric setup}.

	It remains the question whether some operator $L$ that satisfies one or equivalently all three of these properties exists. This leads us to the second part.

	\textbf{Real-valued coefficients (Sections~\ref{Section: Real-valued A} and~\ref{Section: Property D(mu)}).} Here, we study real-valued $A$ and show:

	\begin{theorem}  \label{Theorem: Main result 2}
		Let $d \geq 2$, $O \sub \R^d$ be locally uniform near $N$, $O^c$ locally $2$-fat away from $N$ and let $A$ be real-valued. Then $L$ has property $\H(\mu)$ for some $\mu \in (0,1]$.
	\end{theorem}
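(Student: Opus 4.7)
The strategy is the classical De Giorgi--Nash scheme, adapted to the mixed boundary setting, with the unified Poincar\'e inequality $\AssP$ serving as the analytic workhorse that replaces the separate Dirichlet and Neumann arguments. Crucially, since $A$ is real-valued, the truncations $(u-k)_\pm$ of any $L$-harmonic $u \in \H^1_D(O)$ remain admissible test functions (for appropriate $k$), which is the algebraic backbone of the whole argument and the reason the complex case is so much harder. I would show that every such $u$ belongs to a De Giorgi class on each ball $B(x,R)$ with $x \in \overline{O}$, with constants depending only on $d$, ellipticity and the geometric constants from $\AssP$, $\Fat$ and $\LU$; membership in this class is classically known to yield both H\"older continuity with some $\mu \in (0,1]$ and the $\L^\infty$-to-$\L^2$ control, which together are exactly $\H(\mu)$.

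The concrete steps are as follows. First, I would establish a Caccioppoli inequality on level sets. Testing the harmonicity relation with $\eta^2 (u-k)_+$, where $\eta$ is a cutoff supported in $B(x,R)$, gives
\begin{equation*}
\int_{A(k,\rho)} |\nabla u|^2 \, \d y \leq \frac{C}{(R-\rho)^2} \int_{A(k,R)} (u-k)^2 \, \d y, \qquad A(k,\rho) \coloneqq \{u > k\} \cap O \cap B(x,\rho),
\end{equation*}
with $C$ depending only on ellipticity. At balls centred at $x \in N$ the level $k$ is arbitrary (and one also uses the symmetric bound with $(k-u)_+$); at $x \notin N$ one restricts to $k \geq 0$ so that $(u-k)_+ \in \H^1_D(O)$ is still admissible. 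Second, I would combine this with $\AssP$ applied to whichever of the super-/sub-level sets has measure at most $\tfrac{1}{2}|O \cap B(x,r)|$, together with the density estimate produced by $\Fat$ near the Dirichlet part, to obtain the full De Giorgi class inequality. Third, the classical De Giorgi oscillation lemma then delivers $\osc_{O \cap B(x, \alpha r)} u \leq \theta \, \osc_{O \cap B(x, r)} u$ for some $\theta \in (0,1)$ and $\alpha \in (0,1)$, which iterates to H\"older continuity with exponent $\mu = \log \theta / \log \alpha > 0$. Fourth, Moser iteration built on the same Caccioppoli--Sobolev structure provides the companion $\L^\infty$-bound against the $\L^2$-norm.

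The hard part will be to handle the interior, Dirichlet, Neumann and, above all, the mixed regime near the interface $\overline{D} \cap \overline{N}$ within a single argument --- recall that no interface regularity between $D$ and $N$ is assumed. This is precisely where the hybrid form of $\AssP$ is decisive: at $x \in N$, the subtraction of the mean supplies the necessary zero-average compensation, while at any $x \notin N$ the vanishing on the Dirichlet part combined with the $2$-fatness from $\Fat$ of $O^c$ away from $N$ furnishes the density lower bound on the zero set that is required to close the sublevel Poincar\'e argument. A secondary technical point is that $\AssP$, $\Fat$ and $\LU$ are only quantitative on scales $r \leq r_0$, so the whole iteration is first run on balls of radius $\leq r_0$ and then propagated to $R \in (0,1]$ by a finite covering, which enlarges the constants but leaves the exponent $\mu$ unchanged.
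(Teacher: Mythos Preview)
Your high-level strategy matches the paper's (De Giorgi class, Caccioppoli on level sets, oscillation decay), but there is a real gap in the step you call ``the classical De Giorgi oscillation lemma''. You plan to drive it with the $2$-Poincar\'e inequality $\AssP$, and that is not enough. The oscillation decay rests on a \emph{shrinking lemma}: as the level $k_i$ rises toward the supremum, the measure of $\{u>k_i\}$ must tend to zero. On $\R^d$ this comes from De Giorgi's isoperimetric inequality (a $1$-Sobolev estimate), which is not available on these rough sets with mixed boundary conditions. The paper's substitute is a $p$-Poincar\'e inequality $\AssPp$ with $p<2$: applying it to the truncation $(u-k_i)^+-(u-k_{i+1})^+$ and then H\"older's inequality produces an extra factor $|A_{k_i}\setminus A_{k_{i+1}}|^{1-p/2}$, and it is precisely the positivity of the exponent $1-p/2$ that makes the telescoping sum $\sum_i |A_{k_i}\setminus A_{k_{i+1}}|$ force $|A_{k_n}|\to 0$. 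With $p=2$ this exponent vanishes and the argument collapses.

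Obtaining $\AssPp$ for some $p<2$ is the non-trivial input you are missing: the paper gets it from Lewis' self-improvement of $p$-fatness (local $2$-fatness implies local $p$-fatness for some $p<2$), applied via the reformulation in Lemma~\ref{The geometric setup: Lemma: (Fat) rephrased} to upgrade $\Fat$ to $\Fatp$, and then Proposition~\ref{The geometric setup: Proposition: (P)} yields $\AssPp$. Without this step your outline does not close. A secondary point: your ``step~2'' is muddled --- the Caccioppoli inequality of step~1 already \emph{is} the De Giorgi class membership, so there is no separate ``full De Giorgi class inequality'' to derive; what is actually needed between Caccioppoli and oscillation decay is exactly the shrinking lemma just described.
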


	To this end, we combine De Giorgi's classical approach \cite{DeGio57, Giaquinta_Martinazzi} with a method of DiBene-detto for nonlinear operators of $p$-growth \cite{DiBe89, DiBenedetto}. The simple underlying idea is that a Poincar\'e inequality, with a lower exponent than the $2$-growth of the operator, implies an estimate for the growth of the level sets as in the case of the isoperimetric inequality. Here, we use the deep fact that $p$-fatness is an open ended condition in $p$ \cite{Lewis_P-fat_Open_ended, Mikkonen}. Furthermore, we prove local boundedness and H\"older continuity up to the boundary for functions lying in a wider function class than the mere solutions to the equation (Definition~\ref{Real valued A: Definition: De Giorgi class}). In fact, we can also be slightly more general on the geometric side by replacing~\hyperref[The geometric setup: Definition (Fat)]{$\Fat$} and~\hyperref[The geometric setup: Definition (LU)]{$\LU$} by a $p$-adapted version of $\AssP$ for some $p \in (1,2)$ joint with the embedding $\H_D^1(O) \sub \L^{2^*}(O)$ for $d \geq 3$, where $2^* = \nicefrac{2 d}{(d-2)}$, or an interpolation type inequality when $d=2$.

	Theorem~\ref{Theorem: Main result 2} is already known in the pure Dirichlet case provided that $O$ satisfies an exterior thickness condition \cite[Chap.~II, App.~C \& D]{Kinderlehrer_Stampacchia}. In several papers \cite{Reference_Intro_Holder_Mixed_1, Reference_Intro_Holder_Mixed_2, Reference_Intro_Holder_Mixed_3, Robert_Joachim_Fractional_Powers, Stampacchia_MBC_Hard_to_Check} the H\"older regularity of solutions to non-homogeneous elliptic problems was studied for the case of mixed boundary conditions. However, all aforementioned papers use either Lipschitz coordinate charts around the Neumann part, stronger assumptions on $D$ and an interface condition between $D$ and $N$ or their geometric setup is almost impossible to check \cite{Stampacchia_MBC_Hard_to_Check}. We refer to \cite{ter-Elst_Rehberg} for further references and applications.

	\textbf{Further special cases (Sections~\ref{Section: Property D(mu)} and \ref{Section: Property G(mu) for d=2}).}  In the setup of Theorem~\ref{Theorem: Main result of the paper} we follow an argument of \cite{Auscher_Heat-Kernel} to prove that $\rD(\mu)$, and thus $\rG(\mu)$ and $\H(\mu)$, are stable under small complex perturbations of the coefficients, too. Property $\rG(\mu)$ for small perturbations of real-valued $A$ has been obtained in \cite{P-ellipticity-Moritz-Counterpart} via a different method. Moreover, when $d = 2$, only a slightly stronger geometric assumption on $D$ --- the so-called $(d-1)$-set property --- is needed to obtain for every elliptic operator $L$ some $\mu \in (0,1]$ such that $\rG(\mu)$ and hence also $\rD(\mu)$ and $\H(\mu)$ hold.

	\textbf{Acknowledgment} We would like to thank Sebastian Bechtel for his persistent interest in this project and for providing a proof of Theorem~\ref{Property G(mu) for d=2: Theorem} and Juha Lehrb\"ack for pointing out reference \cite{Mikkonen}. We are grateful to the anonymous referees for their careful reading and numerous suggestions for improving this paper. We acknowledge the Departments of Mathematics of TU Darmstadt and Universit\`a di Bologna Alma Mater for their support as well as PNR fundings 2021-2027 of MIUR.


	\section*{Notation} \label{Notation}

	Throughout, we use the following notation and abbreviations to simplify the exposition.

	{\small{
			\begin{itemize}
				\item For $\mathrm{X}, \mathrm{Y} \geq 0$, we write $\mathrm{X} \les \mathrm{Y}$, if there is some $c > 0$, which is independent of the parameters at stake, such that $\mathrm{X} \leq c \mathrm{Y}$. To emphasize that $c = c(a)$, we write $\mathrm{X} \les_a \mathrm{Y}$.

				\item Given $p \in [1, \infty]$, we write $p'$ for its H\"older conjugate satisfying $1=\nicefrac{1}{p}+\nicefrac{1}{p'}$. We denote by $p_* \coloneqq \nicefrac{dp}{(d+p)}$ the lower Sobolev conjugate of $p$ and, provided $p \in [1,d)$, we let $p^* \coloneqq \nicefrac{dp}{(d-p)}$ be the upper Sobolev conjugate of $p$.

				\item For $x \in \R^d$ and $r > 0$ we denote by $B(x,r)$ the open ball centered in $x$ with radius $r$. Given $E, F \sub \R^d$ we write $\d(E, F) \coloneqq \mathrm{dist}(E,F)$ for their Euclidean distance and abbreviate $\d_E(x) \coloneqq \d(x, E) \coloneqq \d( \{ x \}, E)$. The ball relative to $E$ is denoted by $E(x,r) \coloneqq E \cap B(x,r)$ and we write $\partial E (x,r) = \partial (E(x,r))$ for its boundary.

				\item Given $E \sub \R^d$ and $\delta > 0$, we set $E_{\delta} \coloneqq \{ x \in \R^d \colon \d_E(x) < \delta \}$.

				\item Given $E \sub \R^d$ and a function $u \colon E \to \C$, we denote by $u_0$ its $0$-extension to $\R^d$.

				\item We abbreviate norms in $\L^p(O)$ by $\| \cdot \|_p$ and norms in $\W^{1,p}(O)$ by $\| \cdot \|_{1,p}$. All integrals are taken with respect to the Lebesgue measure. Functions are $\C$-valued unless stated otherwise.

				\item We abbreviate $u_t \coloneqq \e^{-t L} u$ for all $t \geq 0$ and $u \in \L^2(O)$.
	\end{itemize}}}


	\section{Operator theoretic setting and relevant function spaces} \label{Section: Operator theoretic setting and relevant function spaces}

	Throughout this work, we let $O \sub \R^d$, $d \geq 2$, be open. We denote by $D \sub \partial O$ a closed set, which we call the \textbf{Dirichlet part of the boundary}, and we denote its complement by $N \coloneqq \partial O \setminus D$, the \textbf{Neumann part of the boundary}.

	In this section we recall the basic theory for $L \coloneqq - \Div (A \nabla \cdot)$ viewed as an m-accretive operator in $\L^2(O)$.

	\begin{definition}  \label{Operator theoretic setting and relevant function spaces: Definition: Mixed boundary condition Sobolev space}
		Let $\smoothD[\R^d] \coloneqq \smooth[\R^d \setminus D]$. We define the space of smooth functions in $O$ that vanish near $D$ as
		\begin{equation*}
			\rC_D^{\infty}(O) \coloneqq \{ \varphi|_O : \varphi \in \smoothD[\R^d] \}.
		\end{equation*}
		For $p \in [1, \infty)$ we let
		\begin{equation*}
			\W_D^{1,p}(O) \coloneqq \overline{\rC_D^{\infty}(O)}^{\| \cdot \|_{1,p}} \quad \& \quad \H_D^1(O) \coloneqq \W^{1,2}_D(O).
		\end{equation*}
	\end{definition}

    \begin{remark}
		In general, the space $\H_{\varnothing}^1(O)$ is a proper subspace of $\H^1(O)$ and models so-called \textbf{good Neumann boundary conditions}. However, if we have a bounded Sobolev extension operator $\cE \colon \H^1(O) \to \H^1(\R^d)$ at our disposal, then equality holds, because $\smooth[\R^d]$ is dense in $\H^1(\R^d)$.
	\end{remark}

	These Sobolev spaces with partial Dirichlet condition are closed under truncation in the following sense.

	\begin{lemma}[{\cite[Lem.~2.2~(a)]{ter-Elst_Rehberg}}] \label{Real valued A: Lemma: Auxiliary results}
		Let $p \in (1, \infty)$, $u \in \W^{1,p}_D(O; \R)$ and $k \geq 0$. Then $(u-k)^{+}$ and $u \wedge k$ are contained in $\W^{1,p}_D(O; \R)$.
	\end{lemma}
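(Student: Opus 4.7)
The plan is to work at the level of the generating smooth functions and to pass the truncation through a mollification argument. Since $u$ is real-valued, I first fix a sequence $\varphi_n \in \rC_D^\infty(O; \R)$ with $\varphi_n \to u$ in $\W^{1,p}(O;\R)$; such a sequence exists by taking real parts of a complex approximating sequence, which preserves both smoothness and the vanishing-near-$D$ property. Writing $T$ for either Lipschitz truncation $T(t) \coloneqq (t-k)^+$ or $T(t) \coloneqq t \wedge k$, the goal reduces to showing (a) $T \circ \varphi_n|_O \in \W_D^{1,p}(O;\R)$ for every $n$, and (b) $T \circ \varphi_n|_O \to T \circ u$ in $\W^{1,p}(O;\R)$.

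Step (a) is the crux. Viewing $\varphi_n$ as an element of $\smoothD[\R^d]$, the hypothesis $k \geq 0$ ensures $T(0) = 0$, so that $T \circ \varphi_n$ vanishes wherever $\varphi_n$ does. In particular, $\supp(T \circ \varphi_n) \sub \supp(\varphi_n)$, which is a compact subset of the open set $\R^d \setminus D$; hence $\eta_n \coloneqq \d(\supp(\varphi_n), D) > 0$. The function $T \circ \varphi_n$ is Lipschitz with compact support, thus lies in $\W^{1,p}(\R^d)$. Convolving with a standard mollifier $\rho_\epsilon$ for $\epsilon < \eta_n$ yields $\rho_\epsilon * (T \circ \varphi_n) \in \smoothD[\R^d]$, and $\rho_\epsilon * (T \circ \varphi_n) \to T \circ \varphi_n$ in $\W^{1,p}(\R^d)$ as $\epsilon \to 0$. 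Restricting to $O$ places $T \circ \varphi_n|_O$ in $\W_D^{1,p}(O;\R)$.

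Step (b) is the classical Stampacchia chain rule: for any Lipschitz $F\colon \R \to \R$ with $F(0) = 0$, the map $v \mapsto F \circ v$ is continuous on $\W^{1,p}(O;\R)$, with $\nabla (F \circ v) = F'(v) \nabla v$ a.e. Applying this to $F = T$ and the convergent sequence $\varphi_n \to u$ gives $T \circ \varphi_n|_O \to T \circ u$ in $\W^{1,p}(O;\R)$. Since $\W_D^{1,p}(O;\R)$ is closed in $\W^{1,p}(O;\R)$, this forces $T \circ u \in \W_D^{1,p}(O;\R)$, as desired.

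The main subtle point is the support control in step (a), which relies essentially on $k \geq 0$: without this assumption, $T \circ \varphi_n$ could be nonzero on regions where $\varphi_n$ itself vanishes, those regions might accumulate near $D$, and the mollification would then fail to keep the approximants inside $\smoothD[\R^d]$.
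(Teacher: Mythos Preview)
The paper does not supply its own proof of this lemma; it simply cites \cite[Lem.~2.2~(a)]{ter-Elst_Rehberg}. Your argument is correct and is essentially the standard one: reduce to the dense class $\rC_D^\infty$, use $T(0)=0$ (which is exactly where $k \geq 0$ enters) to keep the support of $T \circ \varphi_n$ compactly away from $D$, mollify to land back in $\rC_D^\infty$, and then invoke the $\W^{1,p}$-continuity of Lipschitz superposition to pass to the limit. One small remark on Step~(b): the continuity of $v \mapsto T \circ v$ on $\W^{1,p}$ is true but not entirely automatic, since $T'$ has a jump; the clean justification uses that $\nabla v = 0$ a.e.\ on $\{v = k\}$, so along an a.e.\ convergent subsequence one gets $(T'(v_n)-T'(v))\nabla v \to 0$ a.e.\ and concludes by dominated convergence. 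You implicitly rely on this, and it is worth naming.
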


	We introduce the operator $L$ subject to mixed boundary conditions.

	\begin{assumption} \label{Assumption: Operator theoretic setting}
		We assume that $A \colon O \to \C^{d \times d}$ is elliptic in the sense that
		\begin{equation*}
			\exists \, \lambda > 0 \; \forall \, u \in \H_D^1(O) \colon \quad \re \int_O A \nabla u \cdot \overline{\nabla u} \geq \lambda \| \nabla u \|_2^2 \quad \& \quad   \Lambda \coloneqq \| A \|_{\infty} < \infty.
		\end{equation*}
	\end{assumption}

	The divergence form operator $L$ is realized in $\L^2(O)$ via the closed and densely defined sectorial form
	\begin{equation*}
		a \colon \H_D^1(O) \times \H_D^1(O) \to \C, \quad a(u,v) \coloneqq \int_O A \nabla u \cdot \overline{\nabla v}.
	\end{equation*}
	Its domain is given by
	\begin{equation*}
		\D(L) = \left\{ u \in \H_D^1(O) \colon \exists \, Lu \in \L^2(O) \; \forall \, v \in \H_D^1(O) \colon (Lu \, | \, v)_2 = a(u,v) \right\}.
	\end{equation*}

	Kato's form method \cite[Chap.~6]{Kato} yields that $L$ is m-accretive and thus generates an analytic $\rC_0$-contraction semigroup $(\e^{-t L})_{t \geq 0}$ in $\L^2(O)$. The semigroup and its gradient also satisfy so-called $\boldsymbol{\L^2}$ \textbf{off-diagonal estimates}, which the reader should think of as an $\L^2$-averaged form of kernel bounds.

	\begin{proposition}[{\cite[Prop.~3.2]{Bechtel_Lp}}, {\cite[Prop.~4.2]{Egert: Lp-Riesz-Trafo}}] \label{Operator theoretic setting and relevant function spaces: Proposition: L2 ODE}
		There are $C,c > 0$ depending only on $\lambda, \Lambda$ such that
		\begin{equation*}
			\| \1_F \e^{- t L} (\1_E u) \|_2 + \| \1_F \sqrt{t} \nabla \e^{- t L} (\1_E u) \|_2 \leq C\e^{-c \frac{\d(E,F)^2}{t}} \| \1_E u \|_2,
		\end{equation*}
		for all measurable sets $E, F \sub O$, $t > 0$ and $u \in \L^2(O)$.
	\end{proposition}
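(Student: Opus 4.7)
The plan is to use Davies' exponential perturbation method, which is now standard for producing Gaffney-type estimates for second-order divergence form operators. Fix a bounded Lipschitz function $\eta \colon \R^d \to \R$ with $\|\nabla \eta\|_{\infty} \leq 1$ and a parameter $\rho \geq 0$. Multiplication by $\e^{\pm\rho\eta}$ maps $\rC_D^{\infty}(O)$ to itself and is bounded on $\H^1(O)$ with norm controlled by $\rho$ and $\|\eta\|_\infty$, so it preserves $\H_D^1(O)$. I would introduce the conjugated sesquilinear form
\begin{equation*}
a_\rho(u,v) \coloneqq a(\e^{-\rho\eta} u, \e^{\rho\eta} v), \qquad u, v \in \H_D^1(O).
\end{equation*}
Expanding with the product rule produces the original form plus two cross terms of order $\rho$ and one term of order $\rho^2$. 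Applying Assumption~\ref{Assumption: Operator theoretic setting} to $a$ and Young's inequality to the remainder yields
\begin{equation*}
\re a_\rho(u,u) \geq \tfrac{\lambda}{2} \|\nabla u\|_2^2 - \kappa \rho^2 \|u\|_2^2
\end{equation*}
with $\kappa = \kappa(\lambda,\Lambda) > 0$ independent of $\eta$ and $\rho$. Hence $a_\rho + \kappa \rho^2$ is closed, sectorial and densely defined with uniform constants, and its associated operator $L_\rho$ generates an analytic semigroup on $\L^2(O)$ satisfying $\|\e^{-tL_\rho}\|_{2 \to 2} \leq \e^{\kappa \rho^2 t}$.

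Next I would use the intertwining identity $\e^{\rho \eta} \e^{-tL} w = \e^{-tL_\rho}(\e^{\rho \eta} w)$, which holds for $w \in \L^2(O)$ by construction of $L_\rho$. Given measurable $E, F \sub O$, I would pick $\eta$ to be the $1$-Lipschitz function $\min(\d(\cdot, E), M)$ for some $M$ large enough that $\eta = 0$ on $E$ and $\eta \geq \d(E,F)$ on $F$; the estimates that follow are independent of $M$, so no limiting argument is actually needed. Replacing $w$ by $\1_E u$ and restricting to $F$ gives
\begin{equation*}
\|\1_F \e^{-tL}(\1_E u)\|_2 \leq \e^{-\rho \d(E,F)} \|\e^{-tL_\rho}(\1_E u)\|_2 \leq \e^{-\rho \d(E,F) + \kappa \rho^2 t} \|\1_E u\|_2.
\end{equation*}
Optimizing over $\rho \geq 0$ via $\rho = \d(E,F)/(2 \kappa t)$ produces the semigroup part of the estimate with $c = 1/(4\kappa)$, with the trivial $\L^2$-contraction bound covering the regime $\d(E,F)^2 \leq t$.

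For the gradient part, analytic semigroup calculus applied to the sectorial operator $L_\rho + \kappa \rho^2$ gives $\|L_\rho \e^{-tL_\rho}\|_{2 \to 2} \les (t^{-1} + \rho^2) \e^{\kappa \rho^2 t}$, whence, combined with the coercivity estimate above,
\begin{equation*}
\sqrt{t}\,\|\nabla \e^{-tL_\rho}\|_{2 \to 2} \les (1 + \rho \sqrt{t})\e^{\kappa \rho^2 t}.
\end{equation*}
Writing $\e^{-tL}(\1_E u) = \e^{-\rho \eta}\e^{-tL_\rho}(\1_E u)$, the chain rule produces a term $\rho(\nabla\eta)\e^{-tL_\rho}(\1_E u)$ and a term $\nabla \e^{-tL_\rho}(\1_E u)$. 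Restricting to $F$ and using $\eta \geq \d(E,F)$ there yields
\begin{equation*}
\|\1_F \sqrt{t}\, \nabla \e^{-tL}(\1_E u)\|_2 \les (1 + \rho \sqrt{t}) \e^{-\rho \d(E,F) + \kappa \rho^2 t}\|\1_E u\|_2.
\end{equation*}
The same optimization in $\rho$ provides the Gaussian factor, with the polynomial prefactor $\rho\sqrt{t} \sim \d(E,F)/\sqrt{t}$ absorbed into a slight weakening of the constant $c$.

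The only real obstacle is verifying that $\e^{\pm\rho\eta}$ preserves the form domain $\H_D^1(O)$; this requires going back to the definition of $\H_D^1(O)$ as the closure of $\rC_D^{\infty}(O)$, and is clean because boundedness and the Lipschitz property of $\eta$ ensure that the multiplication operator extends continuously to the closure. Everything else is routine Kato form theory and analytic semigroup calculus, and it is crucial that the constant $\kappa$ depends only on $\lambda, \Lambda$ so that the Gaussian decay constants $C, c$ do as well.
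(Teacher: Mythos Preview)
The paper does not prove this proposition but cites \cite[Prop.~3.2]{Bechtel_Lp} and \cite[Prop.~4.2]{Egert: Lp-Riesz-Trafo}; your Davies exponential perturbation argument is precisely the standard method those references employ, and your outline is correct. One small imprecision: multiplication by $\e^{\pm\rho\eta}$ does not literally map $\rC_D^{\infty}(O)$ to itself since $\eta$ is only Lipschitz, but the fix is routine---either mollify $\eta$ first or argue directly that bounded Lipschitz multipliers preserve $\H_D^1(O)$ by approximating on the dense subspace and using that $\H_D^1(O)$ is closed in $\H^1(O)$.
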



	\section{The geometric setup} \label{Section: The geometric setup}

	\subsection{Assumptions (Fat) and (LU)} \label{Subsection: Assumptions Fat and LU}

	We introduce the geometric setup and explain its consequences.

	\begin{definition}
		Let $p \in (1,d]$, $U \sub \R^d$ be open and $K \sub U$ be compact. The \textbf{$\boldsymbol{p}$-capacity of the condenser $\boldsymbol{(K, U)}$} is defined as
		\begin{equation*}
			\Capp_p(K; U) \coloneqq \inf \left\{ \| \nabla u \|_{\L^p(U)}^p \colon  u \in \smooth[U; \R] \; \text{with} \; u \geq 1 \; \text{pointwise on} \; K \right\}.
		\end{equation*}
	\end{definition}

	\begin{definition}
		Let $C \sub \R^d$ be closed, $\widehat{C} \sub C$ and $p \in (1,d]$. We call \textbf{$\boldsymbol{C}$ locally $\boldsymbol{p}$-fat in $\boldsymbol{\widehat{C}}$} if:
		\begin{equation}
			\exists \, c > 0 \; \forall \, x \in \widehat{C}, r \in (0, 1] \colon \quad \Capp_p(\overline{B(x,r)} \cap C; B(x,2r)) \geq c r^{d-p}.  \label{eq: Definition: p-fatness}
		\end{equation}
		If $\widehat{C} = C$, then we say that \textbf{$\boldsymbol{C}$ is locally $\boldsymbol{p}$-fat}.
	\end{definition}

	Here, we refer to Remark~\ref{Appendix: Remark: p-fatness} for elementary properties related to this definition and to \cite{Hedberg} for general background on capacities.

	\begin{definition}[Assumption $(\mathrm{Fat})_p$] \label{The geometric setup: Definition (Fat)}
		Let $p \in (1, d]$. We say that \textbf{$\boldsymbol{O^c}$ is locally $\boldsymbol{p}$-fat away from $\boldsymbol{N}$}, if there is some $\delta > 0$ such that:
		\begin{enumerate}
			\item $D$ is locally $p$-fat in $D \cap N_{\delta}$,

			\item $O^c$ is locally $p$-fat in $D$.
		\end{enumerate}
		For $p=2$ we write~\hyperref[The geometric setup: Definition (Fat)]{$\Fat$} instead of $\Fat_2$ to mean that $O^c$ is locally $2$-fat away from $N$.
	\end{definition}

	\begin{figure}
		\centering
		\begin{tikzpicture}[scale=1.5]
			\draw[black, domain=-2.5:0.7, samples=100] plot (\x, {\x*\x*\x/15 - \x*\x/15 + 0.5}); 
			\draw[black, line width = 2pt, domain=0.7:2, samples=100] plot (\x, {\x*\x*\x/15 - \x*\x/15 + 0.5}); 

			\draw[black, domain=0.7:2, samples=100] plot (\x, {\x*\x*\x/15 - \x*\x/15 + 1.7});
			\draw[black, domain=0.7:2, samples=100] plot (\x, {\x*\x*\x/15 - \x*\x/15 -0.7});
			\draw (0.7, 343/15000 - 0.49/15 + 1.7) arc (90:270:1.2cm);

			\draw (0.7, 343/15000 - 0.49/15 -0.7) -- (0.7, 343/15000 - 0.49/15 + 0.5);

			\draw[black, line width = 1.3pt, color=blue!30, domain=-0.5:0.5, samples=100] plot (\x, {\x*\x*\x/15 - \x*\x/15 + 0.5});

			\draw (0,0.5) circle (0.5cm);
			\draw (-2, -0.29) circle (0.5cm);

			\node[inner sep=1.5pt, fill=black, circle] at (0,0.5) (x) {};
			\node[inner sep=1.5pt, fill=black, circle] at (-2,-0.29) (y) {};
			\node[above] at (-0.7,-0.8) {$O$};
			\node[above] at (0,1.5) {$O^c$};
			\node[above] at (1,0.5) {$N$};
			\node[above] at (-1.4,0.2) {$D$};

			\node[above] at (x.north) {$x$};
			\node[below] at (y.south) {$y$};

			\draw (0.7, 343/15000 - 0.49/15 -0.7) -- (0.7, 343/15000 - 0.49/15 + 0.5)  node[midway, right] {$\delta$};


			\coordinate (center) at (-2, -0.29);
			\def\radius{0.5}

			\clip (-3.5, -0.5) -- plot[domain=-3.5:0.5, samples=100] (\x, {(\x)^3/15 - (\x)^2/15 + 0.5}) -- (-2, 0.5) -- cycle;
			\clip (center) circle (\radius);

			\fill[blue!30] (-3.5, 1) rectangle (0.5, -1.29);

			\node[inner sep=1.5pt, fill=black, circle] at (-2,-0.29) (y) {};
		\end{tikzpicture}
		\vspace{-90pt}
		\caption{The parts of the balls around $x$ and $y$ for which we require a lower bound on the $2$-capacity are in blue. The full picture of assumption~\hyperref[The geometric setup: Definition (Fat)]{$\Fat$} is obtained by letting $x$, $y$ and the size of the balls vary.}
		\label{fig. The geometric setup: Assumption (Fat)}
	\end{figure}
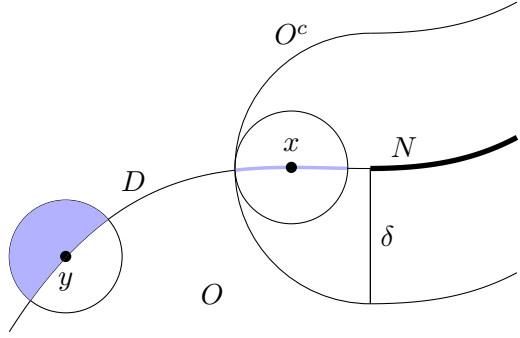

	This terminology carries the idea of a fatness assumption on $O^c ( \supseteq D )$ with the additional requirement that the lower bound on the capacity already has to come from the complementary boundary part $D (\subseteq O^c)$ as points get closer to $N$ (point $x$ instead of $y$ in Figure~\ref{fig. The geometric setup: Assumption (Fat)}). In Section~\ref{Section: Real-valued A} we will need a self-improvement property of~\hyperref[The geometric setup: Definition (Fat)]{$\Fatp$} with respect to $p$ in the spirit of Lewis' result \cite{Lewis_P-fat_Open_ended}. To this end, the equivalent formulation of~\hyperref[The geometric setup: Definition (Fat)]{$\Fatp$} below will be useful. For most of the paper, we shall work with~\hyperref[The geometric setup: Definition (Fat)]{$\Fat$} and set $p=2$.

	Given $\delta > 0$, let $\Sigma \sub \R^d$ be a grid of closed, axis-parallel cubes of diameter $\nicefrac{\delta}{8}$ and define
	\begin{equation*}
		N_{\delta}^{\Sigma} \coloneqq \mathrm{interior} \left( \bigcup \, \{ Q \in \Sigma \colon \, Q \cap \overline{N_{\delta}} \neq \varnothing \} \right).
	\end{equation*}
	The set $N_{\delta}^{\Sigma}$ is a regularized version of $N_{\delta}$ such that $N_{\delta} \sub N_{\delta}^{\Sigma} \sub N_{\nicefrac{9 \delta}{8}}$. In particular, as a union of cubes of the same size, $(N_{\delta}^{\Sigma})^c$ is locally $p$-fat for any $p \in (1,d]$ by Poincar\'{e}'s inequality, see also Section~\ref{Subsection: Comparison of the geometric setup}, which is not necessarily true for $(N_{\delta})^c$.

	\begin{lemma}  \label{The geometric setup: Lemma: (Fat) rephrased}
		Let $p \in (1, d]$. The following assertions are equivalent.
		\begin{enumerate}
			\item $D \cup (O^c \setminus N_{\delta}^{\Sigma})$ is locally $p$-fat for some $\delta > 0$.

			\item $O^c$ is locally $p$-fat away from $N$.
		\end{enumerate}
		In addition, if one of the conditions holds true with $\delta > 0$, then the other one holds true with $\nicefrac{\delta}{2}$.
	\end{lemma}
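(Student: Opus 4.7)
I would proceed by a case analysis on the anchor point $x$, using only the monotonicity of the $p$-capacity in the compact set, the standard comparison of $p$-capacities with respect to reference balls of bounded ratio, the sandwich $N_{\delta} \sub N_{\delta}^{\Sigma} \sub N_{\nicefrac{9\delta}{8}}$ immediate from the cube-grid construction, and the local $p$-fatness of $(N_{\delta}^{\Sigma})^c$ recorded in the paragraph preceding the lemma.

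For the implication $(\mathrm{i}) \Longrightarrow (\mathrm{ii})$ with parameter $\nicefrac{\delta}{2}$, write $C_\delta := D \cup (O^c \setminus N_{\delta}^{\Sigma})$. Item $(\mathrm{ii})(2)$ is immediate from the inclusion $C_\delta \sub O^c$ and monotonicity. For item $(\mathrm{ii})(1)$ at $x \in D \cap N_{\nicefrac{\delta}{2}}$, the key observation is that if $r < \nicefrac{\delta}{2}$, then every $y \in \overline{B(x,r)}$ satisfies $d(y,N) \leq r + \nicefrac{\delta}{2} < \delta$, so $y \in N_{\delta} \sub N_{\delta}^{\Sigma}$; hence $\overline{B(x,r)} \cap C_\delta = \overline{B(x,r)} \cap D$ and the estimate follows directly from $(\mathrm{i})$. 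Larger scales $r \in [\nicefrac{\delta}{2}, 1]$ are then reduced to scale $\nicefrac{\delta}{4}$ by monotonicity in the compact set and the capacity comparison, absorbing a constant depending only on $\delta$.

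For $(\mathrm{ii}) \Longrightarrow (\mathrm{i})$ with parameter $\eta := \nicefrac{\delta}{2}$, set $C := D \cup (O^c \setminus N_{\eta}^{\Sigma})$, and distinguish three cases for $x \in C$. If $x \in D \cap N_{\delta}$, I use $(\mathrm{ii})(1)$ together with $D \sub C$. If $x \in D \setminus N_{\delta}$, then for $r \leq \nicefrac{7\delta}{16}$ the estimate $d(y,N) \geq \delta - r \geq \nicefrac{9\delta}{16}$ combined with $N_{\eta}^{\Sigma} \sub N_{\nicefrac{9\eta}{8}} = N_{\nicefrac{9\delta}{16}}$ yields $\overline{B(x,r)} \cap N_{\eta}^{\Sigma} = \varnothing$, so $\overline{B(x,r)} \cap O^c \sub C$ and $(\mathrm{ii})(2)$ applies. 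If $x \in C \setminus D$, then $d(x,N) \geq \eta$ and $x \notin D$ force $x \in \R^d \setminus \overline{O}$; for $r \leq \nicefrac{d(x,\overline{O})}{2}$ the ball $\overline{B(x,r)}$ lies in $O^c$ so that $\overline{B(x,r)} \cap C$ coincides with $\overline{B(x,r)} \cap (N_{\eta}^{\Sigma})^c$, and the local $p$-fatness of $(N_{\eta}^{\Sigma})^c$ provides the bound; for larger $r$, I would pick $z \in D$ with $|x-z| = d(x,D)$ and transfer the fatness at $z$ --- which is now guaranteed by the first two cases --- to $x$ via the capacity comparison. In all three cases, passage to the full range $r \in (0,1]$ is a routine application of the capacity comparison with constants depending only on $\delta$.

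The hard part will be the third case in $(\mathrm{ii}) \Longrightarrow (\mathrm{i})$: hypothesis $(\mathrm{ii})$ provides no direct control of the capacity at $x \in C \setminus D$, and the crucial ingredient is the local $p$-fatness of $(N_{\delta}^{\Sigma})^c$, a free byproduct of the cube construction that is unavailable for $(N_{\delta})^c$ itself. A secondary bookkeeping point is the consistent tracking of the factor $\nicefrac{1}{2}$ in the parameter: it is precisely what absorbs the buffer $N_{\delta}^{\Sigma} \setminus N_{\delta} \sub N_{\nicefrac{9\delta}{8}} \setminus N_{\delta}$ introduced by the cubical regularization in both implications.
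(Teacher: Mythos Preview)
Your plan is correct in spirit and uses the same three ingredients as the paper --- fatness of $D$ from $(\mathrm{ii})(1)$, fatness of $O^c$ from $(\mathrm{ii})(2)$, and the free fatness of $(N_{\eta}^{\Sigma})^c$ --- and your $(\mathrm{i}) \Longrightarrow (\mathrm{ii})$ is essentially identical to the paper's. The difference in $(\mathrm{ii}) \Longrightarrow (\mathrm{i})$ is organizational: you split cases by the location of the \emph{point} $x$ (in $D \cap N_\delta$, in $D \setminus N_\delta$, or in $C \setminus D$), whereas the paper splits by whether the \emph{ball} $\overline{B(x, r/2)}$ meets $D \cap N_\delta$, meets $D$, or misses $D$ entirely. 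The paper's choice pays off precisely in your third case: whenever the paper needs an auxiliary point $z$, it is drawn from $\overline{B(x, r/2)}$, so $\overline{B(z, r/2)} \subseteq \overline{B(x, r)}$ and the transfer is immediate.

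Your ``for larger $r$, pick $z \in D$ with $|x-z| = d(x,D)$'' is the one soft spot. In the relevant sub-case $d(x,D) \le d(x,N)$ you only obtain $|x-z| = d(x,D) < 2r$, which is not small enough to fit $\overline{B(z, s)} \subseteq \overline{B(x,r)}$ at a scale $s$ comparable to $r$. The fix is to subdivide once more: if $d(x,D) \le r/2$ the transfer goes through as you intend; if $d(x,D) \in (r/2, 2r)$ then $r$ and $d(x,D)$ are comparable, so the bound already established at scale $d(x,\overline{O})/2 = d(x,D)/2$ propagates to scale $r$ by your ``routine comparison''. (If $d(x,D) > d(x,N)$, or if $D = \varnothing$, then $d(x,\overline{O}) = d(x,N) \ge \eta$ and your small-$r$ argument reaches $r \le \delta/4$, after which the routine extension suffices and no $z \in D$ is needed.) With this refinement your argument is complete; the paper's ball-based case split simply bypasses the issue.
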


	\begin{proof}
		\textbf{(ii) $\boldsymbol{\Longrightarrow}$ (i):} Let $\delta > 0$ be as in Definition~\ref{The geometric setup: Definition (Fat)}. We show that $U \coloneqq D \cup (O^c \setminus N_{\nicefrac{\delta}{2}}^{\Sigma})$ is locally $p$-fat. Let $x \in U$ and $r \leq \nicefrac{\delta}{4}$. We make the following case distinction:

		\textbf{(1) $\boldsymbol{\overline{B(x, \nicefrac{r}{2})} \cap (D \cap N_{\delta}) \neq \varnothing}$.} Pick $z \in \overline{B(x, \nicefrac{r}{2})} \cap (D \cap N_{\delta})$. Then $\overline{B(z, \nicefrac{r}{2})} \cap D \sub \overline{B(x,r)} \cap U$ and the local $p$-fatness of $D$ in $D \cap N_{\delta}$ yields the claim.

		\textbf{(2) $\boldsymbol{\overline{B(x, \nicefrac{r}{2})}  \cap (D \cap N_{\delta}) = \varnothing}$.} Then $x \in O^c \setminus N_{\nicefrac{\delta}{2}}^{\Sigma}$ and we consider two subcases.

		\textbf{(2.1) $\boldsymbol{\overline{B(x, \nicefrac{r}{2})}  \cap D \neq \varnothing}$.} Let $z \in \overline{B(x, \nicefrac{r}{2})}  \cap (D \setminus N_{\delta})$. Since $r \leq \nicefrac{\delta}{4}$ we get $\overline{B(z, \nicefrac{r}{2})} \cap O^c \sub \overline{B(x,r)} \cap U$ and conclude from the local $p$-fatness of $O^c$ in $D$.

		\textbf{(2.2) $\boldsymbol{\overline{B(x, \nicefrac{r}{2})}  \cap D = \varnothing}$.} It follows that $\overline{B(x, \nicefrac{r}{2})}  \sub O^c$. Thus, we have $\overline{B(x, \nicefrac{r}{2})}  \cap (N_{\nicefrac{\delta}{2}}^{\Sigma})^c \sub \overline{B(x,r)} \cap U$ and deduce the claim from the local $p$-fatness of $(N_{\nicefrac{\delta}{2}}^{\Sigma})^c$.

		\textbf{(i) $\boldsymbol{\Longrightarrow}$ (ii):} Let $U \coloneqq D \cup (O^c \setminus N_{\delta}^{\Sigma})$. We show that $D$ is locally $p$-fat in $D \cap N_{\nicefrac{\delta}{2}}$ and $O^c$ is locally $p$-fat in $D$. The second assertion follows, since $U \sub O^c$ and $D \sub U$. For the first assertion, let $x \in D \cap N_{\nicefrac{\delta}{2}}$ and $r \leq \nicefrac{\delta}{4}$. Then $\overline{B(x,r)} \cap D = \overline{B(x,r)} \cap U$ and we conclude again from the local $p$-fatness of $U$.
	\end{proof}

	We will see in Proposition~\ref{The geometric setup: Proposition: (P)} that~\hyperref[The geometric setup: Definition (Fat)]{$\Fat$} is substantial for having a boundary Poincar\'{e}~inequality on $\H_D^{1}(O)$ without average. There are essentially two ways to get this inequality: either the extension of $u$ to the whole ball $B$ vanishes on a set that has measure comparable to $B$ or $u$ vanishes on a portion of $D$ that is ``nice enough" in this capacitary sense. The fatness assumption treats both cases simultaneously.

	While~\hyperref[The geometric setup: Definition (Fat)]{$\Fat$} describes $O$ away from $N$ in our main result, we use the following quantitative connectedness condition near $N$, see also Figure~\ref{fig. The geometric setup: Epsilon-Delta Condition}.

	\begin{definition}[Assumption (LU)] \label{The geometric setup: Definition (LU)}
		Let $\varepsilon \in (0, 1]$ and $\delta \in (0, \infty]$. We call \textbf{$\boldsymbol{O}$ locally an $\boldsymbol{(\varepsilon, \delta)}$-domain near $\boldsymbol{N}$}, if the following properties hold:
		\begin{enumerate}
			\item[(i)] All points $x,y \in O \cap N_{\delta}$ with $0 < |x-y| < \delta$ can be joined in $O$ by an \textbf{$\boldsymbol{\varepsilon}$-cigar with respect to $\boldsymbol{\partial O \cap N_{\delta}}$}, that is to say, a rectifiable curve $\gamma \sub O$ of length $\ell(\gamma) \leq \nicefrac{|x-y|}{\varepsilon}$ such that we have for all $z \in \mathrm{tr}(\gamma)$ that
			\begin{equation} \label{eq: The geometric setup: Epsilon-Delta Condition}
				\d(z, \partial O \cap N_{\delta}) \geq \frac{\varepsilon |z-x||z-y|}{|x-y|}.
			\end{equation}
			\item[(ii)] $O$ has \textbf{positive radius near $\boldsymbol{N}$}, that is, there is some $C > 0$ such that all connected components $O'$ of $O$ with $\partial O' \cap N \neq \varnothing$ satisfy $\diam (O') \geq C \delta$.
		\end{enumerate}
		If the values $\varepsilon, \delta, C$ need not be specified, then $O$ is called \textbf{locally uniform near $\boldsymbol{N}$}.
	\end{definition}

	\begin{figure}
		\centering
		\begin{tikzpicture}[scale=2.0]
			\draw[black, domain=-2.5:2, samples=100] plot (\x, {\x*\x/15 + 1});
			\draw[black, domain=-2.5:2, samples=100] plot (\x, {\x*\x*\x/15 - \x*\x/15 + 0.5});
			\draw[black, domain=-1:0, samples=100] plot (\x, {\x*\x/4 +5* \x/4});
			\draw[black, domain=-1:0, samples=100] plot (\x, {1.5*\x*\x + 2.5* \x});
			\node[inner sep=1.5pt, fill=black, circle] at (0,0) (x) {};
			\node[inner sep=1.5pt, fill=black, circle] at (-1,-1) (y) {};
			\node[above] at (-0.7,-0.4) {$O$};
			\node[above] at (0,1.2) {$O$};
			\node[right] at (x.east) {$x$};
			\node[left] at (y.west) {$y$};
		\end{tikzpicture}
		\caption{An illustration of an $\varepsilon$-cigar between $x$ and $y$. In view of \eqref{eq: The geometric setup: Epsilon-Delta Condition}, the cigar is contained in $O$. In order to understand the nature of the cigar shape, we use the length condition $\ell(\gamma) \leq \nicefrac{|x-y|}{\varepsilon}$ to obtain from \eqref{eq: The geometric setup: Epsilon-Delta Condition} that$$\frac{\varepsilon}{2}(|z-x| \land |z-y|) \leq \frac{\varepsilon |z-x| |z-y|}{|x-y|} \leq |z-x| \land |z-y|.\qquad \qquad \qquad$$Hence, \eqref{eq: The geometric setup: Epsilon-Delta Condition} means essentially that every point $z$ on $\mathrm{tr}(\gamma)$ keeps distance to $\partial O \cap N_{\delta}$ that is at least the minimum of its distance to $x$ and $y$. We refer to \cite{Uniform_domains_book, Vaisala-Uniform-domains} for more information.}
		\label{fig. The geometric setup: Epsilon-Delta Condition}
	\end{figure}

	\begin{definition}
		Let $c > 0$. We say that
		\begin{enumerate}
			\item $c$ depends on the \textbf{geometry} if $c$ depends only on dimension and the parameters in the definitions of~\hyperref[The geometric setup: Definition (Fat)]{$\Fat$} and~\hyperref[The geometric setup: Definition (LU)]{$\LU$}.

			\item $c$ depends on \textbf{ellipticity} if $c$ depends on $\lambda$ and $\Lambda$.
		\end{enumerate}
	\end{definition}

	Assumption~\hyperref[The geometric setup: Definition (LU)]{$\LU$} has been introduced in \cite{Kato_Mixed}, to which we refer for a detailed discussion. It is slightly stronger than the related condition in \cite{Extension_Operator}, see \cite[Prop.~2.5 (ii)]{Kato_Mixed}.

	Notice further that~\hyperref[The geometric setup: Definition (Fat)]{$\Fat$} and~\hyperref[The geometric setup: Definition (LU)]{$\LU$} become weaker as $\delta$ decreases. Hence, we can (and will) always assume that $\delta \leq 1$ with the same choice of $\delta$ in both conditions.

	\begin{theorem}[{\cite[Thm.~10.2]{Extension_Operator}}, $\AssExt$]  \label{The geometric setup: Theorem: Extension operator}
		Assume ~\hyperref[The geometric setup: Definition (LU)]{$\LU$}. There are $K \geq 1$, $A \leq \nicefrac{1}{2}$ and an extension operator $\cE$ from $\L^1_{\loc}(O)$ into the space of measurable functions defined on $\R^d$ such that for all $p \in [1, \infty)$ one has that $\cE$ restricts to a bounded operator from $\W^{1,p}_D(O)$ to $\W^{1,p}_D(\R^d)$, which is local and homogeneous, that is,
		\begin{equation} \label{eq: The geometric setup: E is local and hom}
			\| \nabla^{\ell} \cE u \|_{\L^p(B(x,r))} \les \| \nabla^{\ell} u \|_{\L^p(O(x, K r))}
		\end{equation}
		holds true for all $u \in \W_D^{1,p}(O)$, $\ell = 0, 1$, $x \in \partial O$ and $r \in (0, A \delta]$. The implicit constant depends only on the parameters in~\hyperref[The geometric setup: Definition (LU)]{$\LU$}.
	\end{theorem}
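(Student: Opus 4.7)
The plan is to adapt the classical Jones extension construction for uniform domains to the mixed boundary setting dictated by~\hyperref[The geometric setup: Definition (LU)]{$\LU$}. First, I would perform a Whitney decomposition of $\R^d \setminus \overline{O}$ into closed dyadic cubes $\{Q_j\}$ with $\diam(Q_j) \approx \d(Q_j, \partial O)$. For those Whitney cubes $Q_j$ that are \emph{small} (in the sense that $\diam(Q_j) \leq c\delta$ for a suitable $c$) and sufficiently close to $N$, I would select a \emph{reflected} cube $Q_j^* \sub O$ of comparable diameter with $\d(Q_j, Q_j^*) \les \diam(Q_j)$. The existence of such a reflection is provided by the $\varepsilon$-cigars of~\hyperref[The geometric setup: Definition (LU)]{$\LU$}: one travels inward along a cigar curve from a boundary point nearest to $Q_j$, and the estimate \eqref{eq: The geometric setup: Epsilon-Delta Condition} guarantees a point on the curve whose distance to $\partial O$ is of the order $\diam(Q_j)$. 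The positive radius condition ensures that this construction is not vacuous on any relevant connected component.

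Next, I would glue a partition of unity $\{\varphi_j\}$ subordinate to the slightly enlarged Whitney cubes to the averages $(u)_{Q_j^*}$ and set
\begin{equation*}
	\cE u \coloneqq \begin{cases} u & \text{on } O, \\ \sum_j \varphi_j \, (u)_{Q_j^*} & \text{on Whitney cubes close to $N$,} \\ 0 & \text{elsewhere.} \end{cases}
\end{equation*}
A smooth cutoff localized in $N_{\delta/2}^{\Sigma}$ reconciles these definitions. The $\L^p$ bound is immediate from the bounded overlap of the reflected cubes, which follows from the volume comparison $|Q_j^*| \approx |Q_j|$ together with the cigar property. For the gradient bound, the essential step is to control $|(u)_{Q_i^*} - (u)_{Q_j^*}|$ whenever $Q_i$ and $Q_j$ are neighboring Whitney cubes; this is done by chaining $Q_i^*$ to $Q_j^*$ through a controlled number of overlapping cubes inside $O$ (whose existence is again guaranteed by~\hyperref[The geometric setup: Definition (LU)]{$\LU$}) and summing Poincar\'e differences along the chain.

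To verify that $\cE$ maps $\W^{1,p}_D(O)$ into $\W^{1,p}_D(\R^d)$, I would first argue on test functions $\varphi \in \rC_D^{\infty}(O)$. Since $\varphi$ vanishes on a neighborhood of $D$ and the reflected cubes $Q_j^*$ used in a neighborhood of any point of $D \cap N_\delta$ can be chosen to lie in that vanishing set (this is where the choice of $c$ above interacts with~\hyperref[The geometric setup: Definition (LU)]{$\LU$}), the extension $\cE \varphi$ itself vanishes on a neighborhood of $D$, hence lies in $\rC_D^{\infty}(\R^d)$. Since the above construction gives a bounded operator on $\W^{1,p}_D(O)$, a density argument extends $\cE$ to the whole space with values in $\W^{1,p}_D(\R^d)$. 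The locality and homogeneity estimate \eqref{eq: The geometric setup: E is local and hom} is a by-product of the construction: only reflected cubes $Q_j^*$ whose enlarged supports meet $B(x,r)$ contribute, and all such $Q_j^*$ lie in $O(x, Kr)$ for some $K \geq 1$ depending only on the $(\varepsilon, \delta)$-parameters.

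The main obstacle is to arrange the reflection map so that it simultaneously yields \emph{bounded overlap}, the existence of short chains connecting reflections of neighboring Whitney cubes, and \emph{preservation of vanishing near $D$}. The first two are the heart of Jones' argument and rely critically on the cigar condition; the third is the new twist imposed by the mixed boundary setting and is what forces us to work only with Whitney cubes up to size $c\delta$, so that near $D$ the extension is simply zero and the Dirichlet condition is respected.
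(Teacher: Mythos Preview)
The paper does not prove this theorem; it is quoted from \cite[Thm.~10.2]{Extension_Operator} and used as a black box. Your outline is the Jones-type construction that the cited reference indeed carries out, and you have correctly identified the main new difficulty in the mixed-boundary setting: arranging that the extension respects the vanishing condition on $D$.

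Two steps of your sketch would need tightening before they constitute a proof. First, $\cE\varphi$ is not in $\rC_D^\infty(\R^d)$ for $\varphi \in \rC_D^\infty(O)$: the Jones extension is at best Lipschitz across $\partial O$, not smooth. What your argument actually yields is that $\cE\varphi \in \W^{1,p}(\R^d)$ vanishes on a neighbourhood of $D$; membership in $\W^{1,p}_D(\R^d)$ then follows by a mollification argument (or, as in the cited reference, via a quasicontinuity characterisation of $\W^{1,p}_D$). Second, the reflection map $Q_j \mapsto Q_j^*$ must be fixed once and for all, independently of the input $\varphi$, so you cannot ``choose $Q_j^*$ to lie in that vanishing set''. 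The correct statement is that $\d(Q_j^*, Q_j) \lesssim \diam(Q_j)$ uniformly; hence for Whitney cubes $Q_j$ sufficiently close to $D$ (relative to the vanishing neighbourhood of a given $\varphi$) one has $(\varphi)_{Q_j^*} = 0$, while the remaining cubes in the reflection region sit at a uniform positive distance from $D$. Together this shows $\cE\varphi$ vanishes on a $\varphi$-dependent neighbourhood of $D$, which is what is needed.
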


	Now, we draw important consequences from~\hyperref[The geometric setup: Definition (Fat)]{$\Fat$} and~\hyperref[The geometric setup: Definition (LU)]{$\LU$}. The first one implies that $O$ has no exterior cusps near $N$.

	\begin{proposition}[{\cite[Prop.~2.9]{Kato_Mixed}}] \label{The geometric setup: Proposition: (ICC)}
		Assume~\hyperref[The geometric setup: Definition (LU)]{$\LU$}. We have an interior corkscrew condition for $O$ near $N$:
		\begin{equation*}
			\tag*{\text{$(\mathrm{ICC}_{N_{\delta}})$}}  \exists \, \alpha > 0 \; \forall \, x \in \overline{O \cap N_{\nicefrac{\delta}{2}}}, r \in (0,1] \; \exists \, z \in O \colon \quad B(z, \alpha r) \sub O(x,r).
		\end{equation*}
	\end{proposition}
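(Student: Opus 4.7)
The plan is to build the required corkscrew as a midpoint of an $\varepsilon$-cigar furnished by $\LU$(i), with the second endpoint supplied by the positive radius condition $\LU$(ii). By a density argument (the corkscrew ball is open), I may reduce matters to $x \in O \cap N_{\nicefrac{\delta}{2}}$. Let $O'$ denote the connected component of $O$ containing $x$; a short topological argument --- tracing a straight segment from $x$ toward a point of $N$ within distance $\nicefrac{\delta}{2}$ and catching its first intersection with $\partial O'$ --- shows that $\partial O' \cap N \neq \varnothing$, so $\LU$(ii) gives $\diam(O') \geq C \delta$.

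Next I produce the second endpoint. Set $s \coloneqq \min(\nicefrac{\varepsilon r}{4}, c \delta)$ for a small absolute constant $c \leq \nicefrac{\varepsilon}{4}$ to be fixed at the end. Since $O'$ is open and connected, hence path-connected, the intermediate value theorem applied to $\xi \mapsto |x - \xi|$ along a path in $O'$ from $x$ to a point at distance $\geq \nicefrac{C \delta}{2}$ yields $y \in O'$ with $|x - y| = s$. The triangle inequality gives $|x - y| < \delta$ and $\d(y, N) \leq |y - x| + \d(x, N) < s + \nicefrac{\delta}{2} < \delta$, so both hypotheses of $\LU$(i) are met.

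Applying $\LU$(i) furnishes a rectifiable curve $\gamma \sub O$ from $x$ to $y$ of length $\ell(\gamma) \leq \nicefrac{s}{\varepsilon}$ satisfying \eqref{eq: The geometric setup: Epsilon-Delta Condition} pointwise on $\mathrm{tr}(\gamma)$. By continuity of $\xi \mapsto |\xi - x| - |\xi - y|$ along $\gamma$ I select $z \in \mathrm{tr}(\gamma)$ with $|z - x| = |z - y|$; the triangle inequality forces $|z - x| = |z - y| \geq \nicefrac{s}{2}$, and \eqref{eq: The geometric setup: Epsilon-Delta Condition} yields
\begin{equation*}
    \d(z, \partial O \cap N_\delta) \geq \frac{\varepsilon |z-x||z-y|}{|x-y|} \geq \frac{\varepsilon s}{4}.
\end{equation*}

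It remains to upgrade this to a bound on $\d(z, \partial O)$. Since $|z - x| \leq \nicefrac{s}{\varepsilon} \leq \nicefrac{c \delta}{\varepsilon} \leq \nicefrac{\delta}{4}$, we have $z \in N_{\nicefrac{3\delta}{4}}$, so any point of $\partial O$ within distance $\nicefrac{\delta}{4}$ of $z$ automatically lies in $N_\delta$. Combined with the previous display and $\nicefrac{\varepsilon s}{4} < \nicefrac{\delta}{4}$, this yields $\d(z, \partial O) \geq \nicefrac{\varepsilon s}{4}$. Finally $B(z, \nicefrac{\varepsilon s}{4}) \sub B(x, \nicefrac{s}{\varepsilon} + \nicefrac{\varepsilon s}{4}) \sub B(x, r)$, and the corkscrew constant $\alpha \coloneqq \min(\nicefrac{\varepsilon^2}{16}, \nicefrac{\varepsilon c \delta}{4})$ works thanks to $r \leq 1$. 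I expect the only delicate step to be the topological verification that $\partial O' \cap N \neq \varnothing$; the rest is a careful bookkeeping of constants.
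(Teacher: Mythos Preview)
The paper does not prove this proposition; it merely cites \cite[Prop.~2.9]{Kato_Mixed}. Your argument follows what is presumably the intended route in that reference (midpoint of an $\varepsilon$-cigar), and steps 4--9 are correct with your bookkeeping of constants.

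The gap is exactly where you anticipated it, but your proposed fix does not work. Tracing the segment from $x$ to a nearby point $p \in N$ and taking the first exit point $q \in \partial O'$ only gives $q \in \partial O' \subseteq \partial O$; nothing prevents $q \in D$. Concretely, $D$ may protrude between $x$ and $p$ even though both lie in $N_{\delta/2}$, so the segment can hit $D$ before reaching $N$.

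The claim $\partial O' \cap N \neq \varnothing$ is nonetheless true under $\LU$, and the repair uses $\LU$(i) rather than a purely topological argument. Take $p \in N$ with $|x - p| < \nicefrac{\delta}{2}$. Since $p \in \partial O$, choose $p' \in O$ with $|p' - p|$ small enough that $p' \in N_\delta$ and $|x - p'| < \delta$. Then $\LU$(i) furnishes a path in $O$ from $x$ to $p'$, forcing $p' \in O'$. As $p'$ can be taken arbitrarily close to $p$, we get $p \in \overline{O'} \setminus O = \partial O'$, hence $p \in \partial O' \cap N$ and $\LU$(ii) applies. With this correction your proof goes through.
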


	The second one is a weak Poincar\'{e} inequality with correct scaling. In the formulation, $\cE, A, \delta$ and $K$ are as in Theorem~\ref{The geometric setup: Theorem: Extension operator}. In the proof, we frequently use the fact that$$\inf_{c \in \C} \| u - c \|_{\L^p(E)} \leq \| u - (u)_E \|_{\L^p(E)} \leq 2 \inf_{c \in \C} \| u - c \|_{\L^p(E)}$$whenever $p \in [1, \infty)$, $E \sub \R^d$ has positive and finite measure, and $u \in \L^p(E)$.

	\begin{proposition}[Weak Poincar\'e inequality] \label{The geometric setup: Proposition: (P)}
		Let $p \in (1,d]$ and assume~\hyperref[The geometric setup: Definition (Fat)]{$\Fatp$} and~\hyperref[The geometric setup: Definition (LU)]{$\LU$}. There is $c_0 > 0$ depending on the geometry and $p$ such that
		\begin{equation*}   \tag*{\text{$\AssPp$}}
			\| u - \1_{[\d_D(x) > r]} \cdot (u)_{O(x,r)} \|_{\L^p(O(x,r))} \leq c_0 r \| \nabla u \|_{\L^p(O(x, 3 K r))}
		\end{equation*}
		for all $u \in \W_D^{1,p}(O)$, each $x \in \overline{O}$ and all $r \in (0, \nicefrac{A \delta}{2}]$.
	\end{proposition}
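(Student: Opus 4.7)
The plan is to partition the argument by the position of $x$ relative to $D$. Case~1 is $\d_D(x) > r$, where the indicator equals one and a Poincar\'e inequality with mean value is needed. Case~2 is $\d_D(x) \le r$, where we need a mean-free estimate exploiting that $u$ has vanishing trace on $D$.

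For Case~1, if $B(x,r) \sub O$ the classical Poincar\'e inequality on the ball gives the estimate directly. Otherwise there exists $y \in \partial O \cap B(x,r)$, necessarily in $N$ since $\d_D(x) > r$. Using $2r \le A\delta$, Theorem~\ref{The geometric setup: Theorem: Extension operator} yields $\cE u \in \W^{1,p}(\R^d)$; applying the classical Poincar\'e inequality to $\cE u$ on $B(y,2r) \supset B(x,r)$ and using $\|u - (u)_{O(x,r)}\|_{\L^p(O(x,r))} \le 2\|\cE u - (\cE u)_{B(y,2r)}\|_{\L^p(B(y,2r))}$, together with~\eqref{eq: The geometric setup: E is local and hom} to control $\|\nabla \cE u\|_{\L^p(B(y,2r))} \les \|\nabla u\|_{\L^p(O(y,2Kr))}$, gives the conclusion via $O(y,2Kr) \sub O(x,3Kr)$.

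For Case~2, pick $z \in D$ with $|z-x| \le r$, so $B(x,r) \sub B(z,2r)$. If $z \in N_{\delta}$, then the first clause of~\hyperref[The geometric setup: Definition (Fat)]{$\Fatp$} gives local $p$-fatness of $D$ at $z$; combined with the quasi-everywhere vanishing of $\cE u \in \W^{1,p}_D(\R^d)$ on $D$, a Maz'ya-type Poincar\'e inequality on $B(z,2r)$ yields $\|\cE u\|_{\L^p(B(z,2r))} \les r\|\nabla \cE u\|_{\L^p(B(z,2r))}$, and~\eqref{eq: The geometric setup: E is local and hom} finishes the argument. If $z \notin N_{\delta}$, then $2r \le A\delta \le \tfrac{\delta}{2} < \delta \le \d(z,N)$ forces $B(z,2r) \cap \partial O \sub D$; a short density argument with $\rC^\infty_D(O)$ then shows that the zero extension $u_0$ lies in $\W^{1,p}(B(z,2r))$ with gradient $(\nabla u)_0$, and the same Maz'ya-type inequality applied to $u_0$, using the local $p$-fatness of $O^c$ at $z$ from the second clause of~\hyperref[The geometric setup: Definition (Fat)]{$\Fatp$}, yields the bound. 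In both sub-cases the inclusion $O(z,2r) \sub O(x,3r) \sub O(x,3Kr)$ absorbs the constants (using $K \ge 1$).

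The central technical ingredient is a Maz'ya-type Poincar\'e inequality: for $v \in \W^{1,p}(B(z,2r))$ vanishing quasi-everywhere on a closed set $F \sub \overline{B(z,r)}$ with $\Capp_p(F; B(z,2r)) \ges r^{d-p}$, one has $\|v\|_{\L^p(B(z,2r))} \les r \|\nabla v\|_{\L^p(B(z,2r))}$. The other fine point is justifying that $u_0 \in \W^{1,p}(B(z,2r))$ in the sub-case $z \notin N_{\delta}$, which follows from approximating $u$ by elements of $\rC^\infty_D(O)$, each of which vanishes in a neighborhood of $D \supseteq B(z,2r) \cap \partial O$.
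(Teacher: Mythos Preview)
Your proof is correct and follows essentially the same strategy as the paper: the same case split according to whether $\d_D(x) > r$ or $\d_D(x) \le r$, the same recentering at a boundary point, the same use of the extension operator $\cE$ with its locality estimate~\eqref{eq: The geometric setup: E is local and hom} near $N$, and the same Maz'ya-type Poincar\'e inequality combined with~\hyperref[The geometric setup: Definition (Fat)]{$\Fatp$} near $D$ (with the zero extension $u_0$ replacing $\cE u$ when the chosen Dirichlet point lies outside $N_\delta$). The only cosmetic difference is that the paper first reduces by density to $u \in \rC_D^\infty(O)$, which makes the vanishing of $\cE u$ near $D$ hold on an open neighborhood rather than merely quasi-everywhere; your formulation via quasi-everywhere vanishing is equally valid for invoking Maz'ya's inequality.
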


	\begin{proof}
		By density, we can assume $u \in \smoothD[O]$. First, we note that if $\overline{B(x,r)} \sub O$, then $\d_D(x) > r$ and $O(x,r) = B(x,r)$, and \hyperref[The geometric setup: Proposition: (P)]{$\AssPp$} follows from the standard Poincaré inequality (with subtraction of the average) on the ball $B(x,r)$. Hence, we assume from now on that $\overline{B(x,r)} \cap \partial O \neq \varnothing$.

		We distinguish two cases.

		\textbf{(1) $\boldsymbol{\d_D(x) \leq r}$.} In this case there exists $x_D \in \overline{B(x,r)} \cap D$.

		\textbf{(1.1) $\boldsymbol{x_D \in D \cap N_{\delta}}$.} We estimate
		\begin{align*}
			\| u \|_{\L^p(O(x,r))}
			&\leq \| u \|_{\L^p(O(x_D, 2r))} \\
			&\leq \| \cE u \|_{\L^p(B(x_D, 2r))}.
			\intertext{Since $u \in \smoothD[O]$ and $\cE$ is local and homogeneous, we have for all $y \in D$ and sufficiently small $r > 0$ that $\| \cE u \|_{\L^p(B(y,r))} \les \| u \|_{\L^p(O(y, Kr))} =0$. From this we conclude that $\cE u$ vanishes almost everywhere on an open neighborhood of $D$. Hence, in view of~\hyperref[The geometric setup: Definition (Fat)]{$\Fatp$}, we can apply Mazya's Poincaré inequality \cite[Lem.~3.1]{Poincare-Capacity} and \eqref{eq: The geometric setup: E is local and hom} to continue by}
			&\les r \| \nabla \cE u \|_{\L^p(B(x_D, 2r))} \\
			&\les r \| \nabla u \|_{\L^p(O(x_D, 2Kr))} \\
			&\leq r \| \nabla u \|_{\L^p(O(x, 3 K r))}.
		\end{align*}
		\textbf{(1.2) $\boldsymbol{x_D \in D \setminus N_{\delta}}$.} Since $2 r \leq \nicefrac{\delta}{2}$, we know that $u_0$ belongs to $\W^{1,p}(B(x_D, 2r))$, and of course $u_0$ vanishes on $O^c$. Hence, the same argument as in case (1.1) applies with $u_0$ replacing $\cE u$.

		\textbf{(2) $\boldsymbol{r < \d_D(x)}$.} In this case there exists $x_N \in \overline{B(x,r)} \cap N$. Using the standard Poincaré inequality on $B(x_N, 2r)$ and \eqref{eq: The geometric setup: E is local and hom}, we get the desired estimate
		\begin{align*}
			\| u - (u)_{O(x,r)} \|_{\L^p(O(x,r))} &\leq 2 \| u - (\cE u)_{B(x_N, 2r)} \|_{\L^p(O(x, r))}
			\\&\leq 2 \| \cE u - (\cE u)_{B(x_N, 2r)} \|_{\L^p(B(x_N, 2r))}
			\\&\les r \| \nabla \cE u \|_{\L^p(B(x_N, 2r))}
			\\&\les r \| \nabla u \|_{\L^p(O(x_N, 2 K r))}
			\\&\leq r \| \nabla u \|_{\L^p(O(x, 3 K r))}. \qedhere
		\end{align*}
	\end{proof}

	As with~\hyperref[The geometric setup: Definition (Fat)]{$\Fatp$} we simply write~\hyperref[The geometric setup: Proposition: (P)]{$\AssP$} instead of $\AssP_2$. From now on we set $p=2$ and use the fixed constants
	\begin{equation*}
		\tag*{\text{$(c_1)$}} c_1 \coloneqq 3K \geq 3
	\end{equation*}
	and
	\begin{equation}
		\tag*{\text{$\Assrnot$}} r_0 \coloneqq (A \land C) \frac{\delta}{2}.
	\end{equation}
	Incorporating $C > 0$ from Definition~\ref{The geometric setup: Definition (LU)} in the definition of the radius $r_0$ will be useful at later occurrences.

	\begin{remark} \label{The geometric setup: Remark: (P) follows from boundary inequality}
		As we have seen in the last proof, working with weak Poincar\'{e} inequalities bears the advantage that the ball can be centered at the boundary. Because of this, \hyperref[The geometric setup: Proposition: (P)]{$\AssPp$} could equivalently be required with $x \in \partial O$ instead of $x \in \overline{O}$. Moreover, by using the triangle inequality, we get the Poincar\'{e} inequality with average,
		\begin{equation*}
			\| u - (u)_{O(x,r)} \|_{\L^p(O(x,r))} \les r \| \nabla u \|_{\L^p(O(x, c_1 r))},
		\end{equation*}
		for all $u \in \W^{1,p}_D(O)$, $x \in \overline{O}$ and $r \in (0, r_0]$.
	\end{remark}

	\subsection{Comparison of the geometric setup} \label{Subsection: Comparison of the geometric setup}

	Now, we provide a short comparison of our chosen geometry with the one in \cite{ter-Elst_Rehberg, Disser_ter-Elst_Rehberg}. We believe that it is instructive to see how their assumptions are built into our general framework.

	To show $\rG(\mu)$ for real-valued $A$, the following geometric setup is used, compare with \cite[Thm.~7.5]{ter-Elst_Rehberg}:

	\begin{enumerate}
		\item[(I)] \textbf{Uniform Lipschitz charts around $\boldsymbol{N}$:} There is $K \geq 1$ such that for all $x \in \overline{N}$ there is an open neighbourhood $U_x$ of $x$ and a bi-Lipschitz map $\Phi_x \colon U_x \to B(0,1)$ with bi-Lipschitz constant at most $K$ and the properties $\Phi_x(x) = 0$ and $\Phi_x(U_x \cap O) = (\R^d_+)(0,1)$.

		\item[(II)] \textbf{$\boldsymbol{O}$ is exterior thick in $\boldsymbol{D}$:}
		\begin{equation*}
			|(O^c)(x,r)| \ges r^d \qquad (x \in D, r \leq 1).
		\end{equation*}
		\item[(III)] \textbf{Interface condition between $\boldsymbol{D}$ and $\boldsymbol{N}$:} There is $c > 0$ such that
		\begin{equation*}
			\cH^{d-1} ( ((\R^{d-1} \times \{ 0 \}) \cap [ \d_{\Phi_x(N \cap U_x)}( \cdot) > c r ])(y,r)) \ges r^{d-1},
		\end{equation*}
		for all $x \in D \cap \overline{N}$, $y \in \Phi_x(D \cap \overline{N} \cap U_x)$ and $r \leq 1$. Here, $\cH^{d-1}$ is the $(d-1)$-dimensional Hausdorff measure in $\R^d$.
	\end{enumerate}

	\begin{lemma} \label{Comparison of the geometric setup: Lem: Joachim-Tom setting implies ours}
		If $O,D$ and $N$ satisfy $\mathrm{(I)}$, $\mathrm{(II)}$ and $\mathrm{(III)}$, then they also satisfy~\hyperref[The geometric setup: Definition (Fat)]{$\Fat$} and~\hyperref[The geometric setup: Definition (LU)]{$\LU$}.
	\end{lemma}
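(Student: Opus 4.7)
The two assumptions can be attacked independently, and in both cases the bridge is the bi-Lipschitz invariance (up to constants) of the relevant property: the $(\varepsilon,\delta)$-domain structure for $\LU$ and the $p$-capacity for $\Fat$. The strategy is to transfer the properties of the model configuration ``upper half-ball inside the unit ball'' through the charts $\Phi_x$ furnished by (I), using (II) at points of $D$ that are far from $N$ and (III) at points of $D$ that are close to $N$.

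\textbf{Proof of $\LU$.} The half-ball $(\R^d_+)(0,1)$ is an $(\varepsilon_0,\delta_0)$-domain for explicit absolute constants. Since $\Phi_x$ is bi-Lipschitz with constant $\leq K$, the relations $B(x,\nicefrac{1}{K})\sub U_x\sub B(x,K)$ hold, and pulling the half-ball cigar back yields that $U_x\cap O$ is an $(\varepsilon,\delta')$-domain with constants depending only on $K$. To join two points $z_1,z_2\in O\cap N_{\delta}$ with $|z_1-z_2|$ small, I pick $x_N\in\overline{N}$ with $\d_{\overline{N}}(z_1)\leq\delta$ and arrange $\delta$ small enough (in terms of $K$) so that $z_1,z_2\in U_{x_N}$; the transferred cigar keeps distance to $\Phi_{x_N}^{-1}(\R^{d-1}\times\{0\}\cap B(0,1))\supseteq \partial O\cap N_\delta\cap U_{x_N}$ bounded below as in the model. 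The positive radius condition is immediate: any connected component $O'$ of $O$ with $\partial O'\cap N\neq\varnothing$ contains some set $\Phi_{x}^{-1}((\R^d_+)(0,1))$ of diameter $\gtrsim \nicefrac{1}{K}$.

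\textbf{Proof of $\Fat$.} Let me treat the two clauses of Definition~\ref{The geometric setup: Definition (Fat)}. For ``$O^c$ locally $2$-fat in $D$'': if $x\in D$ and $r\in(0,1]$, condition (II) and the elementary bound $\Capp_p(E;B(x,2r))\gtrsim r^{-p}|E|$ (valid for $p\in(1,d]$) give
\begin{equation*}
\Capp_2(\overline{B(x,r)}\cap O^c;B(x,2r))\gtrsim r^{-2}|(O^c)(x,r)|\gtrsim r^{d-2}.
\end{equation*}
For ``$D$ locally $2$-fat in $D\cap N_\delta$'' (after shrinking $\delta$), fix $x\in D\cap N_\delta$ and $r\in(0,r_0]$, and choose $x_N\in \overline{N}\cap\overline{B(x,\delta)}$ so that $x\in U_{x_N}$. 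The image $\Phi_{x_N}(D\cap U_{x_N})$ sits inside $\R^{d-1}\times\{0\}$, and (III) asserts that it is locally Ahlfors $(d-1)$-regular at $\Phi_{x_N}(x)$ with constants independent of $r$. A standard fact (constructing a Frostman measure of dimension $d-1$ on the set and using the $2$-capacity/Riesz energy duality, or simply a direct test-function argument) then yields a lower bound of order $r^{d-2}$ for $\Capp_2(\overline{B(\Phi_{x_N}(x),r)}\cap\Phi_{x_N}(D);B(\Phi_{x_N}(x),2r))$. Bi-Lipschitz invariance of $p$-capacity under $\Phi_{x_N}^{-1}$ (with constants depending only on $K$) transports this to a lower capacity bound for $D$ in the original coordinates.

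\textbf{Main obstacle.} The conceptual core of the argument is routine once one knows that $(d-1)$-Ahlfors regular subsets of $\R^d$ are locally $p$-fat for every $p\in(1,d]$, so the real work is a careful bookkeeping of constants through the charts and the choice of a common $\delta$ for both clauses of $\Fat$ and for $\LU$. A minor subtlety is to verify that the cigar one obtains from the half-ball actually satisfies the $\Fat$-style distance inequality with respect to $\partial O\cap N_\delta$ (as opposed to all of $\partial O$); this is where one uses that, under the chart, the image of $\partial O\cap U_{x_N}$ and the image of $\partial O\cap N_\delta\cap U_{x_N}$ agree for $\delta$ sufficiently small relative to the chart size.
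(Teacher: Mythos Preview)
Your overall strategy coincides with the paper's: (I) yields $\LU$, (II) plus Poincar\'e yields local $2$-fatness of $O^c$ in $D$ exactly as you write, and (I)+(III) yield local $2$-fatness of $D$ near $N$ through an intermediate $(d-1)$-Ahlfors regularity statement. The paper defers the $\LU$ step and the extraction of the $(d-1)$-set property to external references (\cite{Kato_Mixed,Egert_PhD} and \cite[Lem.~5.4]{ter-Elst_Rehberg}), and handles ``$(d-1)$-set $\Rightarrow$ $2$-fat'' via an appendix lemma in original coordinates; you instead sketch these directly and transfer via bi-Lipschitz invariance of capacity. In substance the two routes agree.

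There is, however, a real gap in your invocation of (III). As stated, (III) applies only when the chart center lies in $D\cap\overline{N}$ \emph{and} the base point $y$ is the image of a point of $D\cap\overline{N}$. You choose $x_N\in\overline{N}$ (possibly in $N$, not in $D$) and take $y=\Phi_{x_N}(x)$ for an arbitrary $x\in D\cap N_\delta$ (possibly not in $\overline{N}$); neither hypothesis is ensured, so (III) cannot be read off as ``$\Phi_{x_N}(D)$ is Ahlfors $(d-1)$-regular at $\Phi_{x_N}(x)$''. Closing this gap is exactly what \cite[Lem.~5.4]{ter-Elst_Rehberg} does: one obtains $\cH^{d-1}(D(x,r))\simeq r^{d-1}$ for \emph{all} $x\in D\cap N_{2\delta}$ by combining (III) at nearby interface points (which exist once $x$ is close to $N$, since in the chart $\Phi(D)$ is relatively closed and $\Phi(N)$ relatively open in the connected $(d-1)$-disc) with the trivial observation that at scales below $\d(x,\overline{N})$ a full $(d-1)$-disc in the chart already lies in $\Phi(D)$. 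Once that is in place, your Frostman/bi-Lipschitz argument and the paper's appendix lemma are interchangeable.
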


	\begin{proof}
		It is classical that (I) implies~\hyperref[The geometric setup: Definition (LU)]{$\LU$}, see \cite[p.~9]{Kato_Mixed} and references therein. The full details have been written out in \cite[Lem.~2.2.20]{Egert_PhD}. 
  
  To see that (II) implies that $O^c$ is locally $2$-fat in $D$, let $x \in D$, $r \leq 1$ and $u \in \smooth[ B(x,2 r)]$ with $u =1$ on $\overline{B(x,r)} \cap O^c$. Then (II) joint with Poincar\'{e}'s inequality yields
		\begin{equation*}
			r^{d-2} \les r^{-2} |(O^c)(x,r)| \leq r^{-2} \| u \|_{\L^2(B(x, 2r))}^2 \les \| \nabla u \|_{\L^2(B(x, 2r))}^2,
		\end{equation*}
		and hence
		\begin{equation*}
			r^{d-2} \les \Capp_2(\overline{B(x,r)} \cap O^c; B(x, 2r)).
		\end{equation*}
		Finally, let us explain why $D$ is locally $2$-fat in $D \cap N_{\delta}$ for some $\delta > 0$. In fact, (I), (III), and \cite[Lem.~5.4]{ter-Elst_Rehberg} show that there is some $\delta \in (0,1]$ such that
		\begin{equation}  \label{eq: Comparison of the geometric setup: D is (d-1)-set}
			\cH^{d-1} (D(x,r)) \simeq r^{d-1}  \qquad (r \leq 1, x \in D \cap N_{2 \delta}),
		\end{equation}
		compare also with \cite[p.~304]{ter-Elst_Rehberg}. It seems to be folklore that this implies the local $2$-fatness of $D$ in $D \cap N_{\delta}$. For convenience, we include the details in the appendix, see Lemma~\ref{App: Fat and Thick: Lem: Thickness and fatness}. Altogether, we have concluded~\hyperref[The geometric setup: Definition (Fat)]{$\Fat$} from (I), (II) and (III).
	\end{proof}

	To construct explicitly a set $O$ that fulfills~\hyperref[The geometric setup: Definition (LU)]{$\LU$} and~\hyperref[The geometric setup: Definition (Fat)]{$\Fat$}, but not the geometric setup from above, we consider $\R^2 \setminus \{ (x, y) \in \R^2 \colon x \geq 0 \; \& \; 0 \leq y \leq x^2 \}$ and add a part of the von Koch snowflake, see Figure~\ref{fig. The geometric setup: (Fat) and (LU), but not (I) to (III)}. We put Neumann boundary conditions on the ``fractal part'' coming from the snowflake and Dirichlet boundary conditions on its complement. Let us sketch that $O$ is an admissible example.
	\begin{enumerate}
		\item As $|(O^c)(0,r)| \leq \int_0^r x^2 \, \d x  = \nicefrac{r^3}{3}$ for small enough $r > 0$ it follows that $O$ is not exterior thick at the origin.
  
		\item The boundary of the von Koch snowflake is not even rectifiable, so there are no Lipschitz coordinate charts around $N$.
  
		\item By inspection, $\cH^1(D(x,r)) \simeq r$ for all $r \in (0,1]$ and $x \in D$. Hence, Lemma~\ref{App: Fat and Thick: Lem: Thickness and fatness} reveals $D$ is locally $2$-fat (in itself). As $D \sub O^c$, also~\hyperref[The geometric setup: Definition (Fat)]{$\Fat$} is satisfied.
  
		\item Since the von Koch snowflake is an $(\varepsilon, \infty)$-domain (see \cite[Prop.~6.30]{Uniform_domains_book}), one can verify that $O$ is an $(\varepsilon, \infty)$-domain as well.
	\end{enumerate}

	\begin{figure}
		\centering
		\begin{tikzpicture}[decoration=Koch snowflake]
			\draw[black, line width = 2.5] (0,0) -- (1.5,0);
			\draw[black, line width = 2.5] (4,0) -- (5,0);
			\draw[fill=blue!30] decorate{ decorate{ decorate{ decorate{ (1.5,0) -- (4,0) }}}};
			\draw[black, line width = 2.5] [smooth, samples=100,domain=0:1.7] plot ({\x}, {\x*\x});
			\draw[fill=blue!30] (0,0) -- (5,0) -- (5,-2) --(0,-2);
			\draw[fill=blue!30] (0,-2) -- (-2,-2) -- (-2,2.89) --(0,2.89);
			\draw[->] (-2,0) -- (5.3,0) node[right] {$x$};
			\draw[->] (0,-2) -- (0,3.3) node[above] {$y$};
			\fill[blue!30] (0,0) -- plot[domain=0:1.8,smooth] (\x,{\x^2}) -- (1.7,2.89) -- (0,2.89);
			\draw (0,2.89) -- (1.7,2.89);
			\node[above] at (3,1.5) {$O^c$};
			\node[below] at (1,-1) {$O$};
			\node[right] at (1,1) {$D$};
			\node[right] at (3,0.8) {$N$};
			\node[inner sep=1.5pt, fill=black, circle] at (1.5,0) (x) {};
			\node[inner sep=1.5pt, fill=black, circle] at (4,0) (y) {};
			\node[below] at (x.south) {$(1,0)$};
			\node[below] at (y.south) {$(2,0)$};
		\end{tikzpicture}
		\caption{A geometric constellation in $\R^2$ that satisfies~\hyperref[The geometric setup: Definition (Fat)]{$\Fat$} and~\hyperref[The geometric setup: Definition (LU)]{$\LU$}, but not the geometric setup introduced in this subsection. Here, $N$ can be constructed by the following algorithm: divide the line segment between $(1,0)$ and $(2,0)$ into three parts of equal length, remove the middle one, and build an equilateral triangle over this segment. Then apply this procedure to each of the four remaining segments and iterate.}
		\label{fig. The geometric setup: (Fat) and (LU), but not (I) to (III)}
	\end{figure}
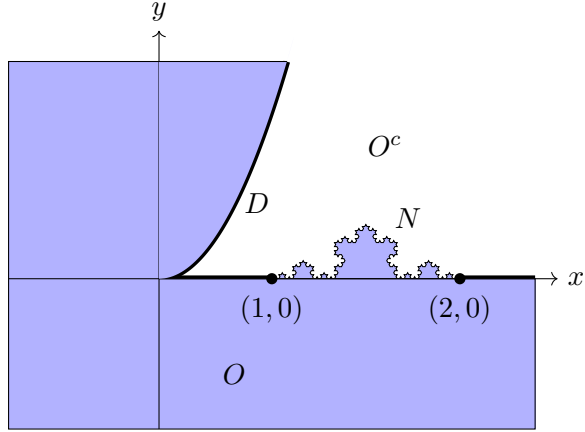

	We close this section by showing that (under the background assumption~\hyperref[The geometric setup: Definition (LU)]{$\LU$}) having~\hyperref[The geometric setup: Definition (Fat)]{$\Fat$} is just as good as having the abstract assumption~\hyperref[The geometric setup: Proposition: (P)]{$\AssP$}. Recall that~\hyperref[The geometric setup: Definition (LU)]{$\LU$} is void in the case of pure Dirichlet boundary conditions.

	\begin{proposition} \label{Optimality of the geometric setup: Proposition: 2-fatness optimal}
		Let $d \geq 2$ and assume~\hyperref[The geometric setup: Definition (LU)]{$\LU$}. Then~\hyperref[The geometric setup: Definition (Fat)]{$\Fat$} is equivalent to~\hyperref[The geometric setup: Proposition: (P)]{$\AssP$}.
	\end{proposition}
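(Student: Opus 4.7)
The implication $\Fat \Longrightarrow \AssP$ is exactly Proposition~\ref{The geometric setup: Proposition: (P)} with $p=2$, so only the converse $\AssP \Longrightarrow \Fat$ requires work. My plan combines Lemma~\ref{The geometric setup: Lemma: (Fat) rephrased} with the classical Maz'ya-type equivalence between weak $\L^2$-Poincar\'{e} inequalities on $\H_0^1$-type spaces and local $2$-fatness of the complement, as recorded in~\cite[Thm.~3.3]{P_D_implies_2-fat}.

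By Lemma~\ref{The geometric setup: Lemma: (Fat) rephrased} it is enough to prove that the closed set $U \coloneqq D \cup (O^c \setminus N_\delta^\Sigma)$ is locally $2$-fat for some $\delta > 0$, that is, $\Capp_2(\overline{B(x,r)} \cap U; B(x,2r)) \ges r^{d-2}$ uniformly for $x \in U$ and $r$ small. When $\overline{B(x,r)} \sub O^c \setminus N_\delta^\Sigma$ the bound is immediate, the $2$-capacity of a closed ball inside its twofold enlargement being $\simeq r^{d-2}$. Otherwise the interior corkscrew condition coming from~\hyperref[The geometric setup: Definition (LU)]{$\LU$} (Proposition~\ref{The geometric setup: Proposition: (ICC)}) combined with the cubic structure of $N_\delta^\Sigma$ places $x$ at distance $\les r$ from $D$, so that~\hyperref[The geometric setup: Proposition: (P)]{$\AssP$} applies at $x$ in its no-average form,
\begin{equation*}
\| u \|_{\L^2(O(x, r))} \leq c_0 r \| \nabla u \|_{\L^2(O(x, c_1 r))} \qquad (u \in \H_D^1(O)).
\end{equation*}

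The next step is to transfer this weak Poincar\'{e} inequality to a genuine Dirichlet Poincar\'{e} inequality on the full ball $B(x, Kr)$, valid for arbitrary functions of $\H_0^1(B(x, Kr))$ that vanish quasi-everywhere on $D \cap \overline{B(x,r)}$. Near $N$ this transfer is effected through the extension operator $\cE$ of Theorem~\ref{The geometric setup: Theorem: Extension operator}; far from $N$ a simple $0$-extension suffices. Once the Poincar\'{e} inequality has been lifted to the full ball,~\cite[Thm.~3.3]{P_D_implies_2-fat} delivers $\Capp_2(D \cap \overline{B(x,r)}; B(x,2r)) \ges r^{d-2}$, and the same bound is inherited by $U$ since $D \sub U$.

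The step I expect to be the main obstacle is the transfer through $\cE$: I must verify that the Maz'ya-type test functions $\eta(1-\varphi)$ built from admissible $\varphi$ for the condenser restrict to bona fide elements of $\H_D^1(O)$, and, conversely, that the resulting inequality on $O(x, r)$ upgrades to the entire ball $B(x, Kr)$ without picking up any capacity contribution of $N$. This is precisely where~\hyperref[The geometric setup: Definition (LU)]{$\LU$} enters essentially via $\cE$ and the locality property~\eqref{eq: The geometric setup: E is local and hom}; the subsequent appeal to the classical Maz'ya equivalence is then routine.
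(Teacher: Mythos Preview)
Your reduction via Lemma~\ref{The geometric setup: Lemma: (Fat) rephrased} is legitimate, but the subsequent case split has a real gap. The claim that ``otherwise \dots\ places $x$ at distance $\les r$ from $D$'' is false: take $x \in O^c \setminus N_\delta^\Sigma$ with $\d(x,\partial O)$ large but $x$ sitting just outside $N_\delta^\Sigma$ (e.g.\ in a half-space with $N$ a half-line, pick $x$ deep in $O^c$ near the edge of the $\delta$-tube around $N$). Then $\overline{B(x,r)} \sub O^c$ meets $N_\delta^\Sigma$, so you are not in your ``immediate'' case, yet $\d_D(x)$ can be arbitrarily large. This missing subcase is handled not by~\hyperref[The geometric setup: Proposition: (P)]{$\AssP$} but directly by the local $2$-fatness of $(N_\delta^\Sigma)^c$; the corkscrew condition~\hyperref[The geometric setup: Proposition: (ICC)]{$\ICC$} plays no role there.

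More seriously, the ``transfer'' step is both unnecessary and does not work as written. The operator $\cE$ extends from $O$ outward; it cannot control an arbitrary $w \in \H_0^1(B(x,Kr))$ on $O^c \cap N_\delta^\Sigma$, where $w$ need not vanish and bears no relation to $\cE(w|_O)$. So lifting~\hyperref[The geometric setup: Proposition: (P)]{$\AssP$} to a full-ball Poincar\'e and then invoking~\cite[Thm.~3.3]{P_D_implies_2-fat} as a black box does not close. The paper avoids all of this by working directly with Definition~\ref{The geometric setup: Definition (Fat)}: for $x \in D$ and a capacity test function $u \in \smooth[B(x,2r)]$, either $\|u\|_{\L^2}$ is already large on a small sub-ball and the standard Poincar\'e inequality for $u$ gives $\|\nabla u\|_2^2 \ges r^{d-2}$, or else $\|1-u\|_{\L^2}$ is large there, and $v \coloneqq \varphi(1-u)$ (with $\varphi$ a cutoff) vanishes on $D$ precisely because $u = 1$ on the condenser compact; hence $v|_O \in \H_D^1(O)$ and~\hyperref[The geometric setup: Proposition: (P)]{$\AssP$} applied \emph{directly to $v$} yields the capacity bound. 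The test function $\varphi(1-u)$ you allude to is the whole argument, not an ingredient inside a larger transfer scheme; the only genuine use of~\hyperref[The geometric setup: Definition (LU)]{$\LU$} is through~\hyperref[The geometric setup: Proposition: (ICC)]{$\ICC$}, needed to guarantee $|O(x,\nicefrac{r}{2c_1})| \simeq r^d$ in the case $x \in D \cap N_\delta$.
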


	\begin{proof}
		That~\hyperref[The geometric setup: Definition (Fat)]{$\Fat$} and~\hyperref[The geometric setup: Definition (LU)]{$\LU$} imply~\hyperref[The geometric setup: Proposition: (P)]{$\AssP$} has been shown in Proposition~\ref{The geometric setup: Proposition: (P)}. For the converse statement, we borrow ideas from \cite[Thm.~3.3]{P_D_implies_2-fat}. To show~\hyperref[The geometric setup: Definition (Fat)]{$\Fat$} we fix $x \in D$ and $r \leq r_0$. We consider two cases.

		\textbf{(1) $\boldsymbol{x \in D \setminus N_{\delta}}$.} Let $u \in \smooth[B(x, 2r)]$ with $u =1$ on $\overline{B(x,r)} \cap O^c$. If we assume that
		\begin{equation*}
			\frac{1}{4} |B(x, \nicefrac{r}{2 c_1})| \leq \| u \|_{\L^2(B(x, \nicefrac{r}{2 c_1}))}^2,
		\end{equation*}
		then Poincar\'{e}'s inequality applied on $B(x, 2r)$ implies
		\begin{equation*}
			r^{d-2} \les \| \nabla u \|_{\L^2(B(x, 2r))}^2.
		\end{equation*}
		Now, we assume the converse estimate. Then
		\begin{align*}
			\frac{1}{2} |B(x, \nicefrac{r}{2 c_1})| &\leq \| 1-u \|_{\L^2(B(x, \nicefrac{r}{2 c_1}))}^2 + \| u \|_{\L^2(B(x, \nicefrac{r}{2 c_1}))}^2
			\\&\leq \| 1-u \|_{\L^2(B(x, \nicefrac{r}{2 c_1}))}^2 + \frac{1}{4} |B(x, \nicefrac{r}{2 c_1})|
		\end{align*}
		and hence
		\begin{equation*}
			r^d \les \| 1- u \|_{\L^2(B(x, \nicefrac{r}{2 c_1}))}^2.
		\end{equation*}
		Let $\varphi \in \smooth[B(x,r)]$ with $\varphi =1$ on $B(x, \nicefrac{r}{2})$ and put $v \coloneqq \varphi (1-u)$. Note that $v \in \smooth[\R^d]$ with $v = 0$ on $D$. Hence, $v \in \H_D^1(O)$ by \cite[Thm.~9.1.3]{Hedberg} and~\hyperref[The geometric setup: Proposition: (P)]{$\AssP$} yields
		\begin{align*}
			r^d \les \int_{B(x, \nicefrac{r}{2 c_1})} |1-u|^2 = \int_{O(x, \nicefrac{r}{2 c_1})} |v|^2 \les r^2  \int_{O(x, \nicefrac{r}{2})} |\nabla v|^2 = r^2 \int_{B(x, \nicefrac{r}{2})} |\nabla u|^2.
		\end{align*}
		This shows that $O^c$ is locally $2$-fat in $D \setminus N_{\delta}$.

		\textbf{(2) $\boldsymbol{x \in D \cap N_{\delta}}$.} To prove that $D$ is locally $2$-fat in $D \cap N_{\delta}$, we systematically replace $O^c$ by $D$ and $B(x, \nicefrac{r}{2 c_1})$ by $O(x, \nicefrac{r}{2 c_1})$ in (1) and apply the same argument. The key points are that we now have $v \in \H_0^1(\R^d \setminus D)$ and hence $v|_O \in \H_D^1(O)$, and $|O(x, \nicefrac{r}{2 c_1})| \simeq r^d$ due to~\hyperref[The geometric setup: Proposition: (ICC)]{$\ICC$}.
	\end{proof}


	\section{Properties $\rD(\mu)$, $\rG(\mu)$ and $\H(\mu)$}  \label{Section: D(mu), G(mu), H(mu) for MBC}

	The property that we are mostly interested in is the Gaussian estimate for the kernel of the semigroup $(\e^{- t L})_{t \geq 0}$. Let us introduce this property in detail:

	\begin{definition}
		Let $\mu \in (0,1]$. We say that $L$ has \textbf{property} $\boldsymbol{\rG(\mu)}$ if the following holds:
		\begin{enumerate}
			\item[(G1)] For any $t > 0$ there is a measurable function $K_t \colon O \times O \to \C$ such that
			\begin{equation*}
				(\e^{-t L} f)(x) = \int_O K_t(x,y) f(y) \, \d y \qquad (f \in \L^2(O), \text{a.e.} \; x \in O).
			\end{equation*}
			\item[(G2)] There are $b,c, \omega > 0$ such that we have for each $t > 0$ that
			\begin{equation*}
				|K_t(x,y)| \leq c t^{- \frac{d}{2}} \e^{- b \frac{|x-y|^2}{t}} \e^{\omega t}.
			\end{equation*}
			\item[(G3)] For every $x,x',y,y' \in O$ and $t> 0$ we have
			\begin{equation*}
				|K_t(x,y) - K_t(x',y')| \leq c t^{- \frac{d}{2} - \frac{\mu}{2}} ( |x-x'| + |y-y'| )^{\mu} \e^{\omega t}.
			\end{equation*}
		\end{enumerate}
	\end{definition}

	\begin{remark} \label{D(mu), G(mu) and H(mu) for MBC: Remark: G(mu) stability}
		The following facts will be useful in this paper:
		\begin{enumerate}
			\item Property $\rG(\mu)$ is stable under taking adjoints since the kernel of the adjoint semigroup is given by $K_t^*(x,y) = \overline{K_t(y,x)}$.

			\item Logarithmic convex combinations of (G2) and (G3) yield for all $\nu \in (0, \mu)$, each $x,y \in O$, $h \in \R^d$ with $y+h \in O$ and $t > 0$ a bound
			\begin{equation*}
				|K_t(x,y+h) - K_t(x,y)| \leq c t^{- \frac{d}{2}} \left( \frac{|h|}{\sqrt{t}} \right)^{\nu} \e^{- b \frac{|x-y|^2}{t}} \e^{\omega t},
			\end{equation*}
			with different constants $b, c, \omega$ provided that $|h| \leq \nicefrac{|x-y|}{2}$. A similar estimate holds true in the $x$-variable.
		\end{enumerate}


	\end{remark}

	The (eventually equivalent) properties $\rD(\mu)$ and $\H(\mu)$ talk about the regularity of weak solutions in subsets of $O$. For pure Dirichlet boundary conditions, these properties have appeared in the introduction, but their adaptation to general boundary conditions requires some care. Following \cite{ter-Elst_Rehberg}, we do that by looking at solutions to $- \Div A \nabla u = 0$ in $O(x,r) = O \cap B(x,r)$ that are compatible with the ``global" boundary conditions (Dirichlet on $D$, Neumann on $N$), that is, we use test functions with pure Dirichlet boundary conditions only on $\partial O(x,r) \setminus N(x,r)$. In this case we write $L_D u = 0$ in $O(x,r)$ and the precise variational formulation is as follows:

	\begin{definition}
		Let $x \in \overline{O}$, $r > 0$, $u \in \H^1(O(x,r))$ and $f \in \L^2(O(x,r)), F \in \L^2(O(x,r))^d$. We write $L_D u = f - \Div F $ in $O(x,r)$ if
		\begin{equation*}
			\int_{O(x,r)} A \nabla u \cdot \overline{\nabla \varphi} = \int_{O(x,r)} f \overline{\varphi} +F \cdot \overline{\nabla \varphi} \qquad  (\varphi \in \H_{\partial O(x,r) \setminus N(x,r)}^1(O(x,r))).
		\end{equation*}
		In addition, given $u \in \H_D^1(O)$ and $f \in \L^2(O), F \in \L^2(O)^d$, we write $L_D u = f - \Div F$ in $O$ if
		\begin{equation*}
			\int_{O} A \nabla u \cdot \overline{\nabla \varphi} = \int_{O} f \overline{\varphi} + F \cdot \overline{\nabla \varphi}  \qquad  (\varphi \in \H_D^1(O)).
		\end{equation*}
	\end{definition}

	One reason why this definition of $L_D u = f - \Div F$ is natural for our purpose is because the class of test functions in the previous definition is canonically embedded into $\H_D^1(O)$: If $u \in \H_D^1(O)$ satisfies a global equation $L_D u = f - \Div F$, then also $L_D u = f - \Div F$ in all local sets $O(x,r)$ due to part (i) of the following lemma applied with $U=B(x,r)$.

	\begin{lemma} \label{D(mu), G(mu) and H(mu) for MBC: Lemma: Sobolev extension by 0}
		Let $U \sub \R^d$ be open and $p \in [1, \infty)$.
		\begin{enumerate}
			\item $N \cap U$ is open in $\partial (O \cap U)$ and the $0$-extension $\cE_0 \colon \W_{\partial (O \cap U) \setminus (N \cap U)}^{1,p}(O \cap U) \to \W^{1,p}_D(O)$ is isometric.

			\item If $\psi \in \smooth[U]$, then multiplication by $\psi$ maps the space $\W^{1,p}_D(O)$ boundedly into $\W^{1,p}_{\partial(O \cap U) \setminus (N \cap U)}(O \cap U)$.
		\end{enumerate}
	\end{lemma}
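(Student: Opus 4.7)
For the topological claim in (i), since $D$ is closed in $\partial O$ and $U$ is open, for any $x \in N \cap U$ I pick a ball $B \sub U$ around $x$ so small that $B \cap \partial O \sub N$. A quick check gives $B \cap \partial(O \cap U) = B \cap \partial O$ (because $\partial(O \cap U) \cap U = \partial O \cap U$), hence $B \cap \partial(O \cap U) \sub N \cap U$, proving $N \cap U$ is relatively open in $\partial(O \cap U)$. As a byproduct, the closed set $\widetilde D \coloneqq \partial(O \cap U) \setminus (N \cap U)$ decomposes as $\widetilde D = (D \cap U) \cup (\partial(O \cap U) \setminus U)$, which will control both sources of the ``Dirichlet'' behavior on the local domain.

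For the isometry in (i), I reduce by density to $u = \Phi|_{O \cap U}$ with $\Phi \in \smooth[\R^d \setminus \widetilde D]$ and observe that $S \coloneqq \supp \Phi \cap \overline{O \cap U}$ is a compact subset of $U$: any $y \in S$ lying on $\partial(O \cap U)$ cannot lie in $\widetilde D$, so $y \in N \cap U \sub U$, while points of $S$ inside $O \cap U$ are trivially in $U$. I then choose a cutoff $\eta \in \smooth[U]$ that is identically $1$ on an open neighborhood of $S$ and set $\Psi \coloneqq \Phi \cdot \eta$. Smoothness of $\Phi$ as a function on $\R^d$ (zero extension is legitimate because $\supp\Phi$ keeps positive distance from the closed set $\widetilde D$) makes $\Psi \in \smooth[\R^d]$; its support avoids $D \cap U$ because $\supp\Phi$ does, and avoids $D \setminus U$ because $\supp\eta \sub U$. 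Therefore $\Psi \in \smooth[\R^d \setminus D]$. Since $\eta \equiv 1$ on $S$ and $\Phi$ vanishes off $S$, one checks $\Psi|_O = \cE_0 u$ pointwise and $\nabla \Psi$ equals the zero-extension of $\nabla u$ to $O$, giving $\|\cE_0 u\|_{\W^{1,p}(O)} = \|u\|_{\W^{1,p}(O \cap U)}$. Passing to the closure yields the isometric operator $\cE_0 \colon \W^{1,p}_{\widetilde D}(O \cap U) \to \W^{1,p}_D(O)$.

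For (ii), I approximate $u \in \W^{1,p}_D(O)$ by $\varphi_n|_O$ with $\varphi_n \in \smooth[\R^d \setminus D]$ and consider $\psi \varphi_n$. Its (compact) support sits inside $\supp \psi \sub U$ and, by compactness of $\supp\varphi_n$, at positive distance from $D$; hence it avoids both pieces of $\widetilde D$, namely $D \cap U$ (inside $D$) and $\partial(O \cap U) \setminus U$ (outside $U$). So $\psi \varphi_n \in \smooth[\R^d \setminus \widetilde D]$ and its restriction lies in $\rC^\infty_{\widetilde D}(O \cap U)$. The routine product-rule estimate
\begin{equation*}
\|\psi w\|_{\W^{1,p}(O \cap U)} \les \bigl(\|\psi\|_\infty + \|\nabla \psi\|_\infty\bigr)\,\|w\|_{\W^{1,p}(O)}
\end{equation*}
applied with $w = \varphi_n - \varphi_m$ and with $w = u$ then produces both the Cauchy property of $(\psi \varphi_n)|_{O \cap U}$ in the target space, with limit $(\psi u)|_{O \cap U}$, and the claimed boundedness of multiplication by $\psi$.

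The main technical obstacle I see is the compactness statement $S \sub U$ in (i): it is exactly what enables the cutoff argument that converts a smooth test function for the mixed boundary condition on $O \cap U$ into one for $D$ on $O$. The preliminary decomposition $\widetilde D = (D \cap U) \cup (\partial(O \cap U) \setminus U)$ makes this containment transparent, after which the remaining verifications (norm equality, product-rule bound, density) are bookkeeping.
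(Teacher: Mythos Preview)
Your proof is correct. For part (ii) it is essentially identical to the paper's argument: both approximate by $\varphi_n \in \smooth[\R^d \setminus D]$, verify that $\supp(\psi\varphi_n)$ misses $\widetilde D = \partial(O\cap U)\setminus(N\cap U)$, and pass to the limit; you check this via your decomposition $\widetilde D = (D\cap U)\cup(\partial(O\cap U)\setminus U)$, while the paper does the equivalent set-theoretic computation $(U\setminus D)\cap \widetilde D \sub (\partial O\cap U)\setminus(D\cup(N\cap U)) = \varnothing$.

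For part (i) the paper simply cites \cite[Lem.~6.3]{ter-Elst_Rehberg}, so your self-contained argument is a genuine addition. The key step you isolate --- that $S=\supp\Phi\cap\overline{O\cap U}$ is a compact subset of $U$, which then allows a cutoff $\eta\in\smooth[U]$ turning a $\widetilde D$-test function into a $D$-test function with identical restriction and gradient on $O$ --- is exactly the right mechanism, and your decomposition of $\widetilde D$ makes the verification $S\sub U$ transparent. This buys independence from the external reference at the cost of a paragraph of topology; the paper's choice to cite keeps the exposition shorter since the result is known.
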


	\begin{proof}
		For (i) see \cite[Lem.~6.3]{ter-Elst_Rehberg}. For (ii) we pick $u \in \W^{1,p}_D(O)$. By definition this means that there is a sequence $(\varphi_n)_n \sub \smooth[\R^d \setminus D]$ with $\varphi_n|_O \to u$ in $\W^{1,p}(O)$. Then $\psi \varphi_n \in \smooth[\R^d]$ and to see that $\supp(\psi \varphi_n) \cap [\partial (O \cap U) \setminus (N \cap U)] = \varnothing$, we notice that $\supp(\psi \varphi_n) \sub U \setminus D$ and
		\begin{equation*}
			(U \setminus D) \cap (\partial (O \cap U) \setminus (N \cap U)) \sub (\partial O \cap U) \setminus ( D \cup (N \cap U)) = \varnothing.
		\end{equation*}
		We have shown that $\psi \varphi_n \in \rC^\infty_{\partial(O \cap U) \setminus (N \cap U)}(\R^d)$ and the claim follows by passing to the limit in $n$.
	\end{proof}

	Now, we introduce $\rD(\mu)$ and $\H(\mu)$.
	\begin{definition}
		Let $\mu \in (0,1]$. We say that $L$ has \textbf{property $\boldsymbol{\rD(\mu)}$} if there is some $c_{\rD(\mu)} > 0$ such that for all $0<r \leq R\leq 1$, every $x \in \overline{O}$ and all $u \in \H^1_D(O)$ with $L_D u =0$ in $O(x,R)$ we have
		\begin{equation}
			\int_{O(x,r)} |\nabla u|^2 \leq c_{\rD(\mu)} \left( \frac{r}{R} \right)^{d-2 + 2 \mu} \int_{O(x, R)} |\nabla u|^2.  \label{eq: Definition: Property D(mu)}
		\end{equation}
	\end{definition}

	\begin{remark} \label{(D(mu), G(mu) and H(mu) for MBC: Remark: Scaling D(mu)}
		The interest in property $\rD(\mu)$ lies in radii that are not comparable by absolute constants. Otherwise the estimate holds by monotonicity of the integral with any choice of $\mu$. In particular, we can replace the condition $0 < r \leq R \leq 1$ by the more flexible condition $0 < c r \leq R \leq R_0$ for any fixed $R_0 > 0$ and $c > 1$.
	\end{remark}

	\begin{definition} \label{D(mu), G(mu) and H(mu) for MBC: Definition: H(mu)}
		Let $\mu \in (0,1]$. We say that $L$ has \textbf{property $\boldsymbol{\H(\mu)}$} if there is some $c_{\H(\mu)} > 0$ such that for all $r \in (0,1]$, every $x \in \overline{O}$ and all $u \in \H^1_D(O)$ with $L_D u =0$ in $O(x,r)$ we have that $u$ has a continuous representative in $O(x, \nicefrac{r}{2})$ that satisfies
		\begin{equation}
			r^{\mu} [u]^{(\mu)}_{O(x, \frac{r}{2})} \leq c_{\H(\mu)} r^{-\frac{d}{2}} \| u \|_{\L^2(O(x,r))}.  \label{eq: Definition: Property H(mu)}
		\end{equation}
	\end{definition}

	Note that this deﬁnition is different from the one given in the introduction, but with some work it turns out to be equivalent as we will see in Lemma~\ref{(D(mu), G(mu) and H(mu) for MBC: Lemma: Add L-infinity part in H(mu)} below. In the setting of $\H(\mu)$ the function $u$ extends continuously to $\overline{O(x,\nicefrac{r}{2})}$ and the Dirichlet condition gets a pointwise meaning:

	\begin{lemma} \label{(D(mu), G(mu) and H(mu) for MBC: Lemma: Pointwise real. of bdr. conditions}
		Assume~\hyperref[The geometric setup: Definition (Fat)]{$\Fat$} and~\hyperref[The geometric setup: Definition (LU)]{$\LU$}. Let $x \in \overline{O}$, $r > 0$ and $u \in \H_D^1(O)$. If $u$ has a continuous representative in $\overline{O(x, r)}$, then $u(y) = 0$ for all $y \in \overline{D(x, r)}$.
	\end{lemma}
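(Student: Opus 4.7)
The plan is to argue by contradiction via a truncation trick that reduces the assertion to a direct application of the boundary Poincar\'{e} inequality~\hyperref[The geometric setup: Proposition: (P)]{$\AssP$}. Under~\hyperref[The geometric setup: Definition (Fat)]{$\Fat$} and~\hyperref[The geometric setup: Definition (LU)]{$\LU$} this inequality is available by Proposition~\ref{The geometric setup: Proposition: (P)}, and the crucial observation is that at a Dirichlet point $y \in D$ one has $\d_D(y) = 0$, so the indicator in~\hyperref[The geometric setup: Proposition: (P)]{$\AssP$} vanishes and the estimate carries \emph{no subtracted average}. This built-in absence of an average is precisely what will produce the contradiction.

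I would start with two routine reductions. Since $D \subseteq \overline O$ and $\tilde u$ is continuous on $\overline{O(x,r)}$, every $y \in \overline{D(x,r)}$ is a limit of points in $D(x,r) = D \cap B(x,r)$, each of which lies in $\overline{O(x,r)}$; by continuity of $\tilde u$ it therefore suffices to treat $y \in D \cap B(x,r)$, strictly inside the ball. Next, $\H_D^1(O)$ is stable under $\re$ and $\im$ (apply these to approximators in $\rC_D^{\infty}(O)$), so if $\tilde u(y) \neq 0$ I may replace $u$ by one of $\pm \re u$ or $\pm \im u$ and assume $u \in \H_D^1(O; \R)$ with $\tilde u(y) = c > 0$.

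Now for the core step, set $w \coloneqq u \wedge (c/2)$, which belongs to $\H_D^1(O; \R)$ by Lemma~\ref{Real valued A: Lemma: Auxiliary results}. Continuity of $\tilde u$ at $y$ furnishes some $s_0 \in (0, r - |y-x|)$ with $\tilde u > c/2$ on $\overline{O(x,r)} \cap \overline{B(y, s_0)}$, whence $w \equiv c/2$ a.e.\ on $O(y, s_0)$ and $\nabla w = 0$ a.e.\ there. For any $s > 0$ with $c_1 s \leq s_0$ and $s \leq r_0$, the inequality~\hyperref[The geometric setup: Proposition: (P)]{$\AssP$} applied to $w$ at $y \in D$ reads
\begin{equation*}
    \|w\|_{\L^2(O(y,s))} \leq c_0 s \|\nabla w\|_{\L^2(O(y, c_1 s))} = 0,
\end{equation*}
forcing $w = 0$ a.e.\ on $O(y,s)$. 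But simultaneously $w = c/2$ a.e.\ on the same set $O(y, s) \subseteq O(y, s_0)$, which has positive measure since it is open and nonempty (as $y \in \overline{O}$ and $O$ is open). This contradicts $c > 0$.

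The only genuine temptation I would have to resist is to appeal to the heavier machinery of quasi-continuous representatives of an extension $\cE u \in \H_D^1(\R^d)$ combined with Wiener-type regularity of $2$-fat boundary points; the truncation shortcut above makes this circumlocution unnecessary, since all the capacitary information required is already packaged inside~\hyperref[The geometric setup: Proposition: (P)]{$\AssP$}.
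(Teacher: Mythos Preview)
Your argument is correct and matches the paper's own proof almost verbatim: both reduce to $y\in D$ and real-valued $u$, contradict $u(y)\neq 0$ by truncating to a function that is constant near $y$, and then apply the average-free case of~\hyperref[The geometric setup: Proposition: (P)]{$\AssP$} at the Dirichlet point $y$. The only cosmetic difference is that the paper truncates $|u|\wedge c$ (hence the phrase ``repeated application'' of Lemma~\ref{Real valued A: Lemma: Auxiliary results}), whereas you first pass to $\pm\re u$ or $\pm\im u$ and then truncate once via $u\wedge(c/2)$; your variant is slightly cleaner in that respect.
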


	\begin{proof}
		By continuity, and since $y$ and $r$ are arbitrary, it suffices to prove $u(y)=0$ when $y \in D$. We can assume that $u$ is real-valued, since otherwise we consider real- and imaginary parts separately. For the sake of a contradiction, suppose that $|u(y)|  \neq 0$. By continuity, pick $0<\rho \leq r_0 \wedge r$ and $c>0$ such that $|u| \geq c$ on $O(y,\rho)$. Repeated application of the truncation property in Lemma~\ref{Real valued A: Lemma: Auxiliary results} gives $|u| \wedge c \in \H^1_D(O)$. But on $O(y,\rho)$ this function is the constant $c$ and the Poincar\'e inequality in Proposition~\ref{The geometric setup: Proposition: (P)} yields the contradiction $c=0$.
	\end{proof}

	A further consequence of our geometric setup is that property $\H(\mu)$ yields a posteriori local boundedness of $L$-harmonic functions.

	\begin{lemma}  \label{(D(mu), G(mu) and H(mu) for MBC: Lemma: Add L-infinity part in H(mu)}
		Assume~\hyperref[The geometric setup: Definition (Fat)]{$\Fat$} and~\hyperref[The geometric setup: Definition (LU)]{$\LU$} and let $L$ have property $\H(\mu)$. Then there is some $c_{\H(\mu)} > 0$ such that for all $r \in (0,1]$, every $x \in \overline{O}$ and all $u \in \H^1_D(O)$ with $L_D u =0$ in $O(x,r)$ we have that $u$ has a continuous representative in $O(x, \nicefrac{r}{2})$ that satisfies \begin{equation}
			\| u \|_{\L^{\infty}(O(x, \frac{r}{2}))} + r^{\mu} [u]^{(\mu)}_{O(x, \frac{r}{2})} \leq c_{\H(\mu)} r^{-\frac{d}{2}} \| u \|_{\L^2(O(x,r))}.  \label{eq: Definition: Property H(mu) + L-infty-bound}
		\end{equation}
	\end{lemma}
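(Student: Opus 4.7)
The Hölder part of \eqref{eq: Definition: Property H(mu) + L-infty-bound} coincides with the hypothesis \eqref{eq: Definition: Property H(mu)}, so only the $\L^{\infty}$-bound requires proof. The plan is, for each $y \in O(x, r/2)$, to exhibit an \emph{anchor point} $z \in \overline{O(x, r/2)}$ satisfying
\begin{equation*}
    |u(z)| \leq c r^{-\frac{d}{2}} \| u \|_{\L^2(O(x,r))},
\end{equation*}
and to conclude via the Hölder estimate
\begin{equation*}
    |u(y)| \leq |u(z)| + [u]^{(\mu)}_{O(x,r/2)} |y-z|^{\mu} \les r^{-\frac{d}{2}} \| u \|_{\L^2(O(x,r))},
\end{equation*}
using $|y-z| \leq r$ together with $r^{\mu} [u]^{(\mu)}_{O(x,r/2)} \leq c_{\H(\mu)} r^{-d/2} \|u\|_{\L^2(O(x,r))}$. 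Continuous extension of $u$ to the compactum $\overline{O(x, r/2)}$ is automatic from the finite Hölder seminorm on $O(x, r/2)$ via uniform continuity, so $u(z)$ makes pointwise sense.

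The choice of $z$ is made by a case analysis. \textbf{Case A:} If $D \cap B(x, r/2) \neq \varnothing$, pick $z$ in this intersection; any such $z$ lies in $\overline{O(x, r/2)}$ because $z \in \partial O$ is approached by points of $O$ eventually inside $B(x, r/2)$, and Lemma~\ref{(D(mu), G(mu) and H(mu) for MBC: Lemma: Pointwise real. of bdr. conditions} forces $u(z) = 0$. \textbf{Case B:} If $D \cap B(x, r/2) = \varnothing$ and $r \leq \delta$, I first establish $|O(x, r/2)| \gtrsim r^d$. If additionally $B(x, r/2) \cap N = \varnothing$, then $B(x, r/2) \cap \partial O = \varnothing$; since $B(x, r/2)$ is open and connected and $x \in \overline{O} \setminus \partial O = O$, this forces $B(x, r/2) \sub O$, so the volume bound is trivial. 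Otherwise $\d(x, N) < r/2 \leq \delta/2$ and therefore $x \in \overline{O \cap N_{\delta/2}}$ (as $N_{\delta/2}$ is an open neighborhood of $x$), and~\hyperref[The geometric setup: Proposition: (ICC)]{$\ICC$} from Proposition~\ref{The geometric setup: Proposition: (ICC)} produces an interior ball $B(w, \alpha r/2) \sub O(x, r/2)$. Equipped with the volume lower bound, Chebyshev's inequality applied to the continuous representative of $u$ yields a point $z \in O(x, r/2)$ with $|u(z)|^2 \leq |O(x, r/2)|^{-1} \int_{O(x, r/2)} |u|^2 \les r^{-d} \|u\|_{\L^2(O(x, r))}^2$, which is what was needed.

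It remains to bootstrap to all $r \in (0, 1]$. For $r > R \coloneqq \delta \wedge 1$ and $y \in O(x, r/2)$, set $\rho \coloneqq R/4$; then $|y-x| + \rho < r/2 + R/4 \leq 3r/4 < r$ and hence $\overline{B(y, \rho)} \sub B(x, r)$. By Lemma~\ref{D(mu), G(mu) and H(mu) for MBC: Lemma: Sobolev extension by 0} (cutting off in $B(x,r)$ and extending by zero) the equation $L_D u = 0$ restricts to $O(y, \rho)$, so the already-established $\L^{\infty}$-bound at scale $\rho$ applies and gives $|u(y)| \les \rho^{-d/2} \|u\|_{\L^2(O(y, \rho))} \les r^{-d/2} \|u\|_{\L^2(O(x, r))}$, the last inequality because $\rho$ is a purely geometric constant and $r \leq 1$. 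The only genuine obstacle is the volume estimate $|O(x, r/2)| \gtrsim r^d$ in Case B, which is precisely where~\hyperref[The geometric setup: Definition (LU)]{$\LU$} enters (through~\hyperref[The geometric setup: Proposition: (ICC)]{$\ICC$}); assumption~\hyperref[The geometric setup: Definition (Fat)]{$\Fat$} is used only indirectly via Lemma~\ref{(D(mu), G(mu) and H(mu) for MBC: Lemma: Pointwise real. of bdr. conditions} in Case A.
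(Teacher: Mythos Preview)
Your proof is correct and follows essentially the same strategy as the paper: find an anchor point $z$ either on $D$ (where $u(z)=0$ by Lemma~\ref{(D(mu), G(mu) and H(mu) for MBC: Lemma: Pointwise real. of bdr. conditions}) or via a volume lower bound $|O(x,r/2)|\gtrsim r^d$ coming from~\hyperref[The geometric setup: Proposition: (ICC)]{$\ICC$}, then control $|u(y)|$ by the H\"older seminorm. The only organizational difference is how you secure~\hyperref[The geometric setup: Proposition: (ICC)]{$\ICC$} when $x$ itself is not in $N_{\delta/2}$: the paper (Case~2.2) shifts to a nearby point $w\in N(x,r/4)$, which lies in $\overline{O\cap N_{\delta/2}}$ for \emph{all} $r\in(0,1]$, and applies~\hyperref[The geometric setup: Proposition: (ICC)]{$\ICC$} there; you instead restrict to $r\le\delta$ so that $x$ itself lies in $\overline{O\cap N_{\delta/2}}$, and then bootstrap for $r>\delta$ via restriction of the equation to $O(y,\rho)$ (your appeal to Lemma~\ref{D(mu), G(mu) and H(mu) for MBC: Lemma: Sobolev extension by 0} here is justified by combining parts (i) and (ii) with a cutoff $\chi\in\smooth[B(x,r)]$ equal to $1$ on $\overline{B(y,\rho)}$). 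Both routes are valid; the paper's is slightly more direct, yours keeps the case split cleaner at the cost of the extra bootstrap step.
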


	\begin{proof}
		We only need to bound the $\L^{\infty}$-norm. We distinguish the following cases:

		\textbf{(1) $\boldmath x \in N_{\nicefrac{\delta}{2}}$ or $\boldmath B(x, \nicefrac{r}{4}) \sub O$.} Using $\H(\mu)$ we have for all $y, z \in O(x, \nicefrac{r}{2})$ that
		\begin{equation} \label{eq: Appendix: Remark: H(mu), L infinity removed}
			|u(y)| \leq r^{\mu}[u]^{(\mu)}_{O(x, \frac{r}{2})} + |u(z)| \les r^{-\frac{d}{2}} \| u \|_{\L^2(O(x, r))} + |u(z)|.
		\end{equation}
		Now, we average with respect to $z$ on $O(x, \nicefrac{r}{2})$ to get
		\begin{equation}
			|u(y)| \les ( r^{-\frac{d}{2}} + |O(x,\nicefrac{r}{2})|^{- \frac{1}{2}} ) \| u \|_{\L^2(O(x, r))}.
		\end{equation}
		By either~\hyperref[The geometric setup: Proposition: (ICC)]{$\ICC$} or $B(x, \nicefrac{r}{4}) \sub O$ we get $|O(x, \nicefrac{r}{2})| \simeq r^d$, which proves the claim.

		\textbf{(2) $\boldsymbol{x \in (N_{\nicefrac{\delta}{2}})^c}$ and $\boldmath (\partial O)(x, \nicefrac{r}{4}) \neq \boldsymbol{\varnothing}$.} We consider two subcases.

		\textbf{(2.1) $\boldmath D(x, \nicefrac{r}{4}) \neq \boldsymbol{\varnothing}$.} Pick $y \in D(x, \nicefrac{r}{4})$. Lemma~\ref{(D(mu), G(mu) and H(mu) for MBC: Lemma: Pointwise real. of bdr. conditions} implies $u(y) =0$ and we get for all $z \in O(x, \nicefrac{r}{2})$ that
		\begin{equation*}
			|u(z)| = |u(z) - u(y)| \leq r^{\mu}[u]^{(\mu)}_{O(x, \frac{r}{2})} \les r^{-\frac{d}{2}} \| u \|_{\L^2(O(x, r))}.
		\end{equation*}
		\textbf{(2.2) $\boldmath D(x, \nicefrac{r}{4}) = \boldsymbol{\varnothing}$.} Pick $w \in N(x, \nicefrac{r}{4})$. Then $O(w, \nicefrac{r}{4}) \sub O(x, \nicefrac{r}{2})$ and we have for all $y \in O(x, \nicefrac{r}{2})$ and $z \in O(w, \nicefrac{r}{4})$ that
		\begin{equation*}
			|u(y)| \leq r^{\mu}[u]^{(\mu)}_{O(x, \frac{r}{2})} + |u(z)|.
		\end{equation*}
		Now, we average with respect to $z$ on $O(w, \nicefrac{r}{4})$ and conclude as in the first case. Note that $w \in N$, so we have~\hyperref[The geometric setup: Proposition: (ICC)]{$\ICC$} at our disposal.
	\end{proof}

	\begin{remark} \label{(D(mu), G(mu) and H(mu) for MBC: Remark: H(mu) Scaling}
		Using Lemma~\ref{(D(mu), G(mu) and H(mu) for MBC: Lemma: Add L-infinity part in H(mu)}, the following modifications can be made in Definition~\ref{D(mu), G(mu) and H(mu) for MBC: Definition: H(mu)} and Lemma~\ref{(D(mu), G(mu) and H(mu) for MBC: Lemma: Add L-infinity part in H(mu)}.

		\begin{enumerate}
			\item It is possible to replace the condition $r \in (0,1]$ by $r \in (0,R]$ for any $R > 0$.

			Indeed, it suffices to consider $R > 1$ and $r \in (1, R)$. The $\L^{\infty}$-part is clear, as it is a pointwise estimate for all $y \in O(x, \nicefrac{r}{2})$. To bound the H\"older seminorm, we pick $y, z \in O(x, \nicefrac{r}{2})$. If $|y-z| \geq \nicefrac{1}{8}$, then we can use the $\L^{\infty}$-bound. If $|y-z| < \nicefrac{1}{8}$, then we can apply the estimate in $O(x, \nicefrac{1}{4})$.

			\item By the same type of argument, the radius $\nicefrac{r}{2}$ on the left-hand side of \eqref{eq: Definition: Property H(mu)} can be replaced by $\gamma r$ for any $\gamma \in (0,1)$.
		\end{enumerate}
	\end{remark}

	Next, we discuss the solvability of the local problem $L_D u = f - \Div F$ in $O(x,R)$ with an a priori bound that has the correct scaling in $R$.
	To see that this does not come for free, we consider a simple counterexample.

	Fix $r \in (0,1]$ and put $O_{r} \coloneqq B(0,r) \cup B(4 e_1, 1)$ as the union of two disjoint balls. We impose Neumann boundary conditions on $\partial B(0, r)$ and Dirichlet boundary conditions on $\partial B (4 e_1, 1)$. Take some $f \in \L^2(O_{r})$ that is not average free over $B(0,r)$. Choosing $R = 2 $, the local problem $L_D u = f$ in $O_{r}(0, R) = B(0, r)$ cannot have a solution as we can take the constant $1$-function as a test function.

	However, changing the radius from $R = 2$ to $\rho = \nicefrac{r}{2}$ yields a pure Dirichlet problem $L_D u = f$ in $O_r(0, \rho) = B(0, \rho)$, which admits a unique solution by the Lax-Milgram lemma. The correct scaling in $\rho$ in the a priori estimate comes from the classical Poincar\'{e} inequality on balls. Now, the key observation is that our geometric setup does not allow that $r$ shrinks to $0$ (see Definition~\ref{The geometric setup: Definition (LU)}). This ensures that the ratio $\nicefrac{\rho}{R}$ is bounded from below. We will need this fact in Section~\ref{Section: (1) to (2)}.

	Getting the correct scaling in $R$ can be more difficult. Here, the geometry has to ensure that the local Dirichlet part $\partial O(x, R) \setminus N(x,R)$ is large enough in a suitable sense.

	The key point in the next lemma is the following: even when we cannot solve every local problem with an a priori bound that has the correct scaling in $R$, we can use our geometric setup to do it for some smaller radius $\rho$ still comparable to $R$.

	\begin{lemma} \label{D(mu), G(mu) and H(mu) for MBC: Lemma: Existence of weak solutions + A prior bound}
		Assume~\hyperref[The geometric setup: Definition (Fat)]{$\Fat$} and~\hyperref[The geometric setup: Definition (LU)]{$\LU$}. Let $x \in \overline{O}$, $r \leq r_0$, $f \in \L^2(O(x,r))$ and $F\in \L^2(O(x,r))^d$. There is some $\rho \in [\nicefrac{r}{4}, r]$ such that the problem $L_D v = f - \Div F$ in $O(x, \rho)$ has a unique weak solution $v \in \H^1_{\partial O(x, \rho) \setminus N(x,\rho)}(O(x, \rho))$ that satisfies
		\begin{equation}  \label{eq: D(mu), G(mu), H(mu): a priori bound}
			\| \nabla v \|_{\L^2(O(x, \rho))} \les \rho \| f \|_{\L^2(O(x,\rho))} + \| F \|_{\L^2(O(x,\rho))}
		\end{equation}
		with an implicit constant depending on $\lambda$ and geometry.
	\end{lemma}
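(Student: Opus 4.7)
The strategy is to apply the Lax-Milgram lemma to the sectorial form
\begin{equation*}
a_\rho(u,w) \coloneqq \int_{O(x,\rho)} A \nabla u \cdot \overline{\nabla w}, \qquad u,w \in V_\rho \coloneqq \H^1_{\partial O(x,\rho) \setminus N(x,\rho)}(O(x,\rho)),
\end{equation*}
paired with the continuous anti-linear functional $\ell(w) \coloneqq \int_{O(x,\rho)} f \overline{w} + F \cdot \overline{\nabla w}$. By the variational formulation, it suffices to choose $\rho \in [\nicefrac{r}{4}, r]$ so that $a_\rho$ is coercive on $V_\rho$ with the correct scaling, which reduces to proving the Poincar\'e inequality
\begin{equation*}
\|w\|_{\L^2(O(x,\rho))} \leq C \rho \|\nabla w\|_{\L^2(O(x,\rho))} \qquad (w \in V_\rho),
\end{equation*}
with $C$ depending only on the geometry.

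My plan is to pick $\rho$ via a geometric case distinction. If $\d_N(x) \geq \nicefrac{\delta}{2}$, then $r \leq r_0 \leq \nicefrac{\delta}{2}$ forces $N(x,\rho) = \varnothing$ for every $\rho \leq r$, so $V_\rho$ enforces fully Dirichlet traces on $\partial O(x,\rho)$; the zero-extension of $w$ to $\R^d$ lies in $\H^1_0(B(x,2\rho))$ and the classical Poincar\'e inequality on the ball concludes the matter for any $\rho \in [\nicefrac{r}{4}, r]$. Otherwise $x \in \overline{O \cap N_{\nicefrac{\delta}{2}}}$, so Proposition~\ref{The geometric setup: Proposition: (ICC)} yields $|O(x,s)| \ges s^d$ for $s \in (0,1]$. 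A co-area pigeonhole on $g(s) \coloneqq \cH^{d-1}(\partial B(x,s) \cap O)$, combined with
\begin{equation*}
\int_{\nicefrac{r}{4}}^{\nicefrac{r}{2}} g(s)\, \d s = |O(x, \nicefrac{r}{2})| - |O(x, \nicefrac{r}{4})| \ges r^d,
\end{equation*}
selects some $\rho \in [\nicefrac{r}{4}, \nicefrac{r}{2}]$ with $g(\rho) \ges \rho^{d-1}$. Since $w \in V_\rho$ vanishes in the trace sense on $\partial B(x,\rho) \cap \overline{O}$, the zero-extension $w_0 \in \H_D^1(O)$ from Lemma~\ref{D(mu), G(mu) and H(mu) for MBC: Lemma: Sobolev extension by 0} vanishes on a set of $2$-capacity $\ges \rho^{d-2}$ inside $B(x,2\rho)$, and a Mazya-type capacitary Poincar\'e inequality delivers the desired scaled bound.

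With coercivity in place, Lax-Milgram produces a unique $v \in V_\rho$ with $a_\rho(v,w) = \ell(w)$ for every $w \in V_\rho$, which is precisely $L_D v = f - \Div F$ on $O(x,\rho)$. Testing the equation with $v$ itself and using ellipticity and Poincar\'e yields
\begin{equation*}
\lambda \|\nabla v\|_2^2 \leq |\ell(v)| \leq \|f\|_2 \|v\|_2 + \|F\|_2 \|\nabla v\|_2 \leq (C \rho \|f\|_2 + \|F\|_2) \|\nabla v\|_2,
\end{equation*}
and division by $\|\nabla v\|_2$ yields the claimed a priori bound.

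The main obstacle is the Neumann-dominated regime: absorbing the boundary geometry into a capacitary Poincar\'e inequality despite the unavailability of a direct zero-extension across $N$ to $\R^d$. Making the Mazya argument quantitative with constants depending only on the parameters of~\hyperref[The geometric setup: Definition (Fat)]{$\Fat$} and~\hyperref[The geometric setup: Definition (LU)]{$\LU$}, and verifying that all auxiliary balls needed to invoke Proposition~\ref{The geometric setup: Proposition: (P)} or the extension operator of Theorem~\ref{The geometric setup: Theorem: Extension operator} stay within the admissible radius $r_0$, is where the technical effort would be concentrated.
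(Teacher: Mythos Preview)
Your overall strategy—reduce to a scaled Poincar\'e inequality on $V_\rho$ and then apply Lax--Milgram—is exactly right and matches the paper, and your treatment of the case $\d_N(x) \geq \nicefrac{\delta}{2}$ is correct. The difficulty, as you identify, is the Neumann-dominated regime, and here your argument has a genuine gap and diverges from the paper's route.

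The co-area pigeonhole is not justified as written: you assert $|O(x,\nicefrac{r}{2})| - |O(x,\nicefrac{r}{4})| \ges r^d$, but $\ICC$ only gives $|O(x,\nicefrac{r}{2})| \geq c_d (\nicefrac{\alpha r}{2})^d$, while $|O(x,\nicefrac{r}{4})|$ can be as large as $c_d (\nicefrac{r}{4})^d$. The difference is $\ges r^d$ only when $\alpha > \nicefrac{1}{2}$, which is nowhere guaranteed. One can salvage the idea by integrating over $[\epsilon r, r]$ with $\epsilon = \epsilon(\alpha)$ small, but then $\rho$ need not land in $[\nicefrac{r}{4}, r]$ as the statement demands. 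Even granting a good $\rho$, you still owe the quantitative step that a subset of the sphere $\partial B(x,\rho)$ with surface measure $\ges \rho^{d-1}$ has $2$-capacity $\ges \rho^{d-2}$ in $B(x,2\rho)$; this is true (trace inequality on the sphere plus Poincar\'e on the ball), but it is not the off-the-shelf Mazya inequality and must be written out. Finally, you need to pass through $\cE$ before any capacitary argument applies on the full ball, and verify that $\cE w_0$ still vanishes on $\partial B(x,\rho) \cap O$—it does, since that set lies inside $O$ where $\cE$ acts as the identity, but this should be said.

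The paper sidesteps all of this in the Neumann case. It first uses the positive-radius clause of $\LU$ to rule out $\partial B(x,\nicefrac{r}{2}) \cap O = \varnothing$, then picks $y \in \partial B(x,\nicefrac{r}{2}) \cap O$ and applies $\ICC$ \emph{at $y$} to produce a corkscrew ball $B(z,\nicefrac{\alpha r}{4}) \sub O(y,\nicefrac{r}{4})$. Setting $\rho \coloneqq |z-x|$ automatically gives $\rho \in [\nicefrac{r}{4}, \nicefrac{3r}{4}]$, places $z$ on $\partial B(x,\rho)$, and ensures $B(z,\nicefrac{\alpha\rho}{4}) \sub O$. Then $\cE\varphi_0$ vanishes on the definite spherical cap $\partial B(x,\rho) \cap B(z,\nicefrac{\alpha\rho}{4})$, and the required Poincar\'e inequality for $\H^1$-functions on a ball vanishing on a fixed-proportion boundary cap follows by compactness and scaling—no co-area, no capacity estimate. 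This buys both the stated range $\rho \in [\nicefrac{r}{4}, r]$ and cleaner constant-tracking; your approach, once patched, would give $\rho \in [\epsilon(\alpha) r, r]$ and require an extra capacity lemma.
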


	\begin{proof}
		The proof is divided into three main cases. We abbreviate
		\begin{equation*}
			V_{\rho} \coloneqq \H^1_{\partial O(x, \rho) \setminus N(x, \rho)}(O(x, \rho)).
		\end{equation*}
		The main issue, as seen from the example above, is the coercivity of the form on $V_{\rho}$.

		\textbf{(1) $\boldsymbol{r < \d_{\partial O}(x)}$.} Then $O(x, r) = B(x, r)$ and we have Poincar\'{e}'s inequality on $V_r = \H^1_0(B(x, r))$ at our disposal. Hence, the result for $\rho = r$ follows from the Lax--Milgram lemma.

		\textbf{(2) $\boldsymbol{\d_D(x) \leq r}$.} Lemma~\ref{D(mu), G(mu) and H(mu) for MBC: Lemma: Sobolev extension by 0} (i) joint with~\hyperref[The geometric setup: Proposition: (P)]{$\AssP$} implies the Poincar\'{e} inequality
		\begin{equation}
			\| \varphi \|_{\L^2(O(x, r))} \les r \| \nabla \varphi_0 \|_{\L^2(O(x, c_1 r))} = r \| \nabla \varphi \|_{\L^2(O(x, r))} \qquad (\varphi \in V_r) \label{eq: Geometric Assumption: Existence of weak solutions, Poincare}
		\end{equation}
		and we conclude as in the first case.

		\textbf{(3) $\boldsymbol{r < \d_D(x)}$ and $\boldsymbol{\d_N(x) \leq r}$.} We need to consider two subcases:

		\textbf{(3.1) $\boldsymbol{\partial B(x, r) \cap O = \varnothing}$.} Since $x \in \overline{O}$, this means that $O$ splits into two components $O = O_{\loc} \cup (O \setminus O_{\loc})$, where $O_{\loc}$ is open and contained in $B(x, r)$. As $r < \d_D(x)$, the boundary of all connected components of $O_{\loc}$ intersects $\partial O$ in $N$. Hence, $O$ has a connected component with diameter less than $2 r \leq C \delta$ that intersects $N$ in contradiction with~\hyperref[The geometric setup: Definition (LU)]{$\LU$}. Thus, this case can never occur.

		\textbf{(3.2) $\boldsymbol{\partial B(x, r) \cap O \neq \varnothing}$.} As in the second case it suffices to prove
		\begin{equation}
			\| \varphi \|_{\L^2(O(x, \rho))} \les \rho \| \nabla \varphi \|_{\L^2(O(x, \rho))} \qquad (\varphi \in V_\rho) \label{eq: Geometric Assumption: Existence of weak solutions, Poincare with rho}
		\end{equation}
		for some $\rho \in [\nicefrac{r}{4}, r]$. By the argument in (3.1) we can assume that there is some $y \in \partial B(x, \nicefrac{r}{2}) \cap O \sub N_{\nicefrac{\delta}{2}} \cap O$.

		Now, our goal is to show that there is a radius $\rho$ (comparable to $r$) such that $\partial O(x,\rho)$ carries a large portion of Dirichlet boundary conditions (not necessarily coming from $D$). For this we will find a ball $B(z, \nicefrac{\alpha \rho}{4} )$ that lies inside $O$ with center $z \in \partial B(x, \rho)$: By~\hyperref[The geometric setup: Proposition: (ICC)]{$\ICC$} there is some $z$ with $B(z, \nicefrac{\alpha r}{4}) \sub O(y, \nicefrac{r}{4})$. We set $\rho \coloneqq |z-x|$ so that $\rho \in [\nicefrac{r}{4}, \nicefrac{3 r}{4}]$. Notice that $z \in \partial B(x, \rho)$ and $B(z, \nicefrac{\alpha \rho}{4}) \sub O$, which is exactly what we need (see Figure~\ref{fig. D(mu), G(mu) and H(mu): (ICC) ensures large Dirichlet part}).

		\begin{figure}
			\centering
			\begin{tikzpicture}
				\draw (0,0) circle (4cm);
				\draw (0,0) circle (2cm);
				\draw (1.42,-1.42) circle (1cm);
				\draw (1.22, -1.02) circle (0.42 cm);
				\draw (0,0) circle (1.59 cm);
				\draw[black, domain=-4:4, samples=100] plot (\x, {\x*\x*\x/15 - \x*\x/15 + 0.5});
				\node[inner sep=1.5pt, fill=black, circle] at (0,0) (x) {};
				\node[inner sep=1.5pt, fill=black, circle] at (1.42,-1.42) (y) {};
				\node[inner sep=1.5pt, fill=black, circle] at (1.22,-1.02) (z) {};
				\node[below] at (0,-2.5) {$O$};
				\node[above] at (0,2.5) {$O^c$};
				\node[right] at (x.south) {$x$};
				\node[right] at (y.south) {$y$};
				\node[left] at (z.south) {$z$};
				\draw (0,0) -- (0,1.59) node[midway, right] {$\rho$};
				\draw (0,0) -- (0,-2) node[midway, left] {$\frac{r}{2}$};
				\draw (0,0) -- (-4,0) node[pos=0.7, above] {$r$};
			\end{tikzpicture}
			\caption{Geometric configuration, where $\partial O(x, \rho) \setminus N(x,\rho)$ is large enough.}
			\label{fig. D(mu), G(mu) and H(mu): (ICC) ensures large Dirichlet part}
		\end{figure}
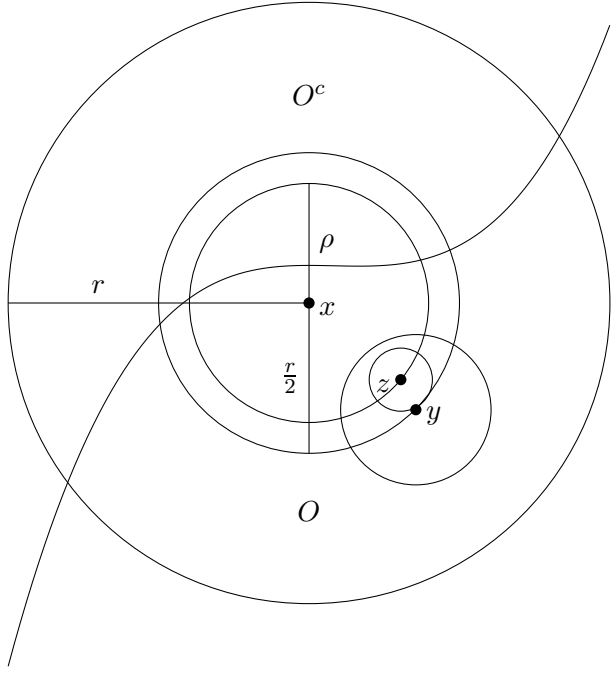

		Let now $\varphi \in \rC^{\infty}_{\partial O(x, \rho) \setminus N(x, \rho)}(O(x, \rho))$ and extend $\varphi$ by $0$ to $O$ (see Lemma~\ref{D(mu), G(mu) and H(mu) for MBC: Lemma: Sobolev extension by 0} (i)). Then
		\begin{equation*}
			\| \varphi \|_{\L^2(O(x, \rho))} \leq \| \cE \varphi_0 \|_{\L^2(B(x, \rho))}.
		\end{equation*}
		Note that $\cE \varphi_0 \in \H^1(B(x, \rho))$ vanishes on $\partial B(x, \rho) \cap B(z, \nicefrac{\alpha \rho}{4})$ and thus we have
		\begin{equation*}
			\| \varphi \|_{\L^2(O(x, \rho))} \les \rho \| \nabla \cE \varphi_0 \|_{\L^2(B(x, \rho))}.
		\end{equation*}
		Indeed, when $x = 0$, $\rho =1$ and $z$ is the north pole of $B(x,\rho)$, then this Poincar\'{e} inequality follows by compactness and then we can use scaling and a rigid motion. Finally, we use that $\cE$ is local and homogeneous in order to derive
		\begin{equation*}
			\| \nabla \cE \varphi_0 \|_{\L^2(B(x, \rho))} \les \| \nabla \varphi_0 \|_{\L^2(O(x, c_1 \rho))} = \| \nabla \varphi \|_{\L^2(O(x, \rho))} .
		\end{equation*}
		The combination of these three estimates proves \eqref{eq: Geometric Assumption: Existence of weak solutions, Poincare with rho} and completes the proof.
	\end{proof}

	We will show Theorem~\ref{Theorem: Main result of the paper} by proving the implications (i) $\Longrightarrow$ (ii), (ii) $\Longrightarrow$ (iii), and (iii) $\Longrightarrow$ (i) in this order as in \cite{Auscher_Tchamitchian_Kato}. This is the content of the following three sections.

	\section{From $\rD(\mu)$ to $\rG(\mu)$} \label{Section: (1) to (2)}

	In this section we prove the implication $\mathrm{(i)} \Rightarrow \mathrm{(ii)}$ of Theorem~\ref{Theorem: Main result of the paper}. Throughout the entire section we make the geometric assumptions~\hyperref[The geometric setup: Definition (Fat)]{$\Fat$} and~\hyperref[The geometric setup: Definition (LU)]{$\LU$}, see Definitions~\ref{The geometric setup: Definition (Fat)} and \ref{The geometric setup: Definition (LU)}, and our goal is thus to show:

	\begin{theorem} \label{(1) to (2): Main result}
		Let $L$ and $L^*$ have property $\rD(\mu_0)$ and fix $\mu \in (0, \mu_0)$. Then $L$ has property $\rG(\mu)$ with implicit constants depending on geometry, ellipticity and $[c_{\rD(\mu)}, \mu, \mu_0]$.
	\end{theorem}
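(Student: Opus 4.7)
The strategy follows the Auscher--Tchamitchian paradigm: first upgrade $\rD(\mu_0)$ to a Campanato--Morrey-type Hölder estimate for $L_D$-harmonic functions, then transfer this regularity to the semigroup $u_t := \e^{-tL}u$ by treating $L_D u_t = -\partial_t u_t$ as an inhomogeneous local problem, and finally combine with the $\L^2$ off-diagonal estimates of Proposition~\ref{Operator theoretic setting and relevant function spaces: Proposition: L2 ODE} to produce a kernel with Gaussian pointwise bound and Hölder continuity.

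\emph{Campanato--Morrey step.} Fix $\nu$ with $\mu < \nu < \mu_0$. For $u \in \H_D^1(O)$ that is $L_D$-harmonic on $O(x,R)$ and $r \in (0, R/c_1]$, combining~\hyperref[The geometric setup: Proposition: (P)]{$\AssP$} for $x \in D$ (or its averaged form from Remark~\ref{The geometric setup: Remark: (P) follows from boundary inequality} for $x \in O \setminus D$) with $\rD(\mu_0)$ gives the Campanato growth
\begin{equation*}
\inf_{c \in \C}\int_{O(x,r)} |u-c|^2 \,\d y \lesssim r^2 \|\nabla u\|_{\L^2(O(x, c_1 r))}^2 \lesssim r^{d+2\nu} R^{-(d-2+2\nu)} \|\nabla u\|_{\L^2(O(x,R))}^2.
\end{equation*}
Invoking~\hyperref[The geometric setup: Proposition: (ICC)]{$\ICC$} to ensure $|O(y,s)| \simeq s^d$ for $y \in \overline{O \cap N_{\delta/2}}$ (the bound being trivial when $B(y,s) \subseteq O$), a standard Campanato-to-Hölder argument via dyadic chaining of averages yields a scale-invariant Hölder estimate of exponent $\nu$ for $u$ on $O(x, R/2)$, that is, a local version of~$\H(\nu)$.

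\emph{Transfer to the semigroup.} For fixed $r \leq r_0$, pick $\rho \in [r/4, r]$ as in Lemma~\ref{D(mu), G(mu) and H(mu) for MBC: Lemma: Existence of weak solutions + A prior bound} and solve $L_D v = -\partial_t u_t$ in $O(x,\rho)$ with homogeneous boundary data; the a priori bound~\eqref{eq: D(mu), G(mu), H(mu): a priori bound} yields $\|\nabla v\|_{\L^2(O(x,\rho))} \lesssim \rho \|\partial_t u_t\|_{\L^2(O(x,\rho))}$. The difference $w := u_t - v$ is $L_D$-harmonic in $O(x,\rho)$, so the Hölder estimate above applies to $w$. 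Bounding $v$ itself via a Moser-type iteration fed by the same Campanato growth (with the inhomogeneity carried as a lower-order correction with the natural scaling) gives
\begin{equation*}
\|u_t\|_{\L^\infty(O(x, r/2))} + r^\nu [u_t]^{(\nu)}_{O(x, r/2)} \lesssim r^{-d/2}\bigl(\|u_t\|_{\L^2(O(x,r))} + r^2 \|\partial_t u_t\|_{\L^2(O(x,r))}\bigr).
\end{equation*}
By analyticity of the semigroup, $\partial_t u_t = -Lu_t$ inherits the $\L^2$ off-diagonal bound at scale $\sqrt{t}$ from Proposition~\ref{Operator theoretic setting and relevant function spaces: Proposition: L2 ODE} with an extra factor $t^{-1}$; choosing $r \simeq \sqrt{t}$ and summing over a covering of $O$ by balls of radius $\sqrt{t}$ upgrades the two estimates to the $\L^1 \to \L^\infty$ Gaussian bound $\|\1_F \e^{-tL}\1_E f\|_{\L^\infty(F)} \lesssim t^{-d/2}\e^{-b\d(E,F)^2/t}\e^{\omega t}\|f\|_{\L^1(E)}$.

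\emph{Kernel and its regularity.} The Dunford--Pettis theorem converts this into assertions (G1) and (G2). The Hölder part of the pointwise estimate gives (G3) in the variable $x$; performing the entire construction for $L^*$ (the hypothesis is symmetric in $L$ and $L^*$) and using $K_t^*(x,y) = \overline{K_t(y,x)}$ transfers Hölder regularity to the $y$-variable. A Davies exponential conjugation $L \mapsto \e^{\alpha\varphi}L\e^{-\alpha\varphi}$ with bounded Lipschitz $\varphi$ finally converts the on-diagonal Hölder bound into the Gaussian-weighted form recorded in Remark~\ref{D(mu), G(mu) and H(mu) for MBC: Remark: G(mu) stability}, establishing $\rG(\mu)$. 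The principal obstacle is geometric: the local inhomogeneous Dirichlet problem is only solvable with the correct scaling on a comparable ball $O(x,\rho)$ with $\rho \in [r/4, r]$, not on $O(x,r)$ itself; propagating the fixed ratio $\rho/r \geq 1/4$ through the Campanato iteration without deterioration is exactly what the flexibility of $\rD(\mu)$ over $r \leq R \leq R_0$ (Remark~\ref{(D(mu), G(mu) and H(mu) for MBC: Remark: Scaling D(mu)}) accommodates. A secondary subtlety is reconciling the natural $t^{-1}$ scaling of $\partial_t u_t$ with the $r^2$ prefactor in the display above so that the final kernel bound lands at exactly the $t^{-d/2}$ scaling required by (G2).
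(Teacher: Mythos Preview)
Your outline follows the correct paradigm but diverges from the paper's execution in two places, one of which is a genuine gap.

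The paper does \emph{not} first establish $\H(\nu)$ for harmonic functions and then treat $u_t$ as harmonic-plus-correction. Instead it runs a Morrey-space bootstrap directly on the inhomogeneous equation: Proposition~\ref{(1) to (2): Proposition: A priori bound for weak solution in Morrey spaces} shows that $\rD(\mu)$ upgrades $\|\nabla u\|_{\kappa,x,R_0}$ to $\|\nabla u\|_{\kappa+\sigma,x,R_0}$ for any solution of $L_D u = f$ with $f$ in the local Morrey class, and Lemma~\ref{(1) to (2): Lemma: Induction lemma for Gaussian estimates} iterates this (with $f = Lu_t$ and analyticity of the semigroup supplying the needed Morrey control of $f$) until the Campanato exponent exceeds $d$. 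This is the correct substitute for your ``Moser-type iteration fed by the same Campanato growth'': Moser iteration requires real coefficients and fails for general complex $A$, so your $\L^\infty$ and H\"older control of the inhomogeneous piece $v$ is not justified as written. The Morrey--Campanato machinery is coefficient-agnostic and is precisely what fills this gap.

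A second, methodological difference: the paper explicitly \emph{avoids} Davies' exponential conjugation. After obtaining the uniform $\L^2 \to \L^\infty$ and $\L^2 \to \Cdot^\mu$ bounds (Proposition~\ref{(1) to (2): Proposition: Almost Gaussian estimates}), it interpolates them pointwise with the $\L^2$ off-diagonal estimates of Proposition~\ref{Operator theoretic setting and relevant function spaces: Proposition: L2 ODE} to obtain the localized $\L^2 \to \L^\infty$ bound with Gaussian tail, and then passes to $\L^1 \to \L^\infty$ via duality and the semigroup law (Corollary~\ref{D(mu) to G(mu): Corollary: G(mu) equivalent to L2-Linfty-ODE + L2-Holder}, relying on the change-of-exponents lemmas in \cite{Auscher-Egert}). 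Your ``summing over a covering'' does not by itself produce an $\L^1 \to \L^\infty$ bound, and Davies' method, while workable in principle, introduces lower-order drift terms that the paper's route sidesteps.
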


	To prove this result, we use $\rD(\mu_0)$ to obtain semigroup bounds in $\L^{\infty}$ and $\Cdot^{\mu}$ with correct scaling. Once in place, existence of the kernel with estimates will follow from the Dunford--Pettis theorem. This will be explained at the end of this section. Compared to \cite{ter-Elst_Rehberg, Auscher_Tchamitchian_Kato}, we interpolate the $\L^{\infty}$ and $\Cdot^{\mu}$-bounds for the semigroup with the $\L^2$ off-diagonal estimates from Proposition~\ref{Operator theoretic setting and relevant function spaces: Proposition: L2 ODE} instead of directly applying Davies' perturbation method. This provides a much shorter and streamlined argument, since it does not produce lower order perturbations for the divergence form operator.

	To bound $\e^{-t L}$ in $\L^{\infty}$ and $\Cdot^{\mu}$, we use Morrey and Campanato spaces and bootstrap regularity. Let us introduce them properly and refer to \cite[Chap.~3]{Giaquinta} or \cite{Campa} for more information.

	\begin{definition}
		Let $\kappa \in [0,d)$, $r > 0$ and $x \in \overline{O}$. We define the \textbf{local Morrey space}
		\begin{equation*}
			\L^{\kappa}_{x, r} (O) \coloneqq \left\{ u \in \L^2(O) \colon \| u \|_{\L^{\kappa}_{x,r}(O)} \coloneqq \sup_{\rho \in (0, r]} \rho^{- \frac{\kappa}{2}} \| u \|_{\L^2(O(x,\rho))} < \infty \right\}.
		\end{equation*}
	\end{definition}

	\begin{definition}
		Let $\kappa \in [0,d+2)$, $r > 0$ and $x \in \overline{O}$. We define the \textbf{local Campanato space}
		\begin{equation*}
			\cL^{\kappa}_{x, r} (O) \coloneqq \left\{ u \in \L^2(O) \colon [ u ]_{\cL^{\kappa}_{x,r}(O)} \coloneqq \sup_{\rho \in (0, r]} \rho^{- \frac{\kappa}{2}} \| u - (u)_{O(x,\rho)} \|_{\L^2(O(x,\rho))} < \infty \right\},
		\end{equation*}
		and set
		\begin{equation*}
			\| u \|_{\cL^{\kappa}_{x, r}(O)} \coloneqq \| u \|_{\L^2(O(x,r))} + [ u ]_{\cL^{\kappa}_{x,r}(O)}.
		\end{equation*}
	\end{definition}

	\textbf{Convention.} We abbreviate $\| \cdot \|_{\L^{\kappa}_{x,r}(O)} \eqqcolon \| \cdot \|_{\kappa, x,r}$ and drop the dependence on $x$ whenever the context is clear. We also write
	\begin{equation*}
		\| \cdot \|_{\cL^{\kappa}_{x,r}(O)} \eqqcolon \| \cdot \|_{\cL^{\kappa}_{x,r}} \eqqcolon \| \cdot \|_{\cL^{\kappa}_r},
	\end{equation*}
	and we use the same abbreviations for $[ \, \cdot \, ]_{\cL^{\kappa}_{x,r}(O)}$.

	The following results from \cite[Lem.~3.1]{ter-Elst_Rehberg} are central.

	\begin{lemma} \label{(1) to (2): Lemma: Classical Morrey and Campanato Lemma}
		Let $\gamma \in (0,1)$, $r \in (0,1]$, $\kappa \in [0, d +2)$ and $x,y \in \overline{O}$ with $|O(x,\rho)| \land |O(y,\rho)| \geq \gamma |B(x,\rho)|$ for all $\rho \in (0,1]$. The following properties hold true with implicit constants depending only on $[d, \gamma, \kappa]$:

		\begin{enumerate}
			\item[$\mathrm{(i)}$] If $\kappa < d$, then $\L^{\kappa}_{x,r}(O) \cong \cL^{\kappa}_{x,r}(O)$ with estimate
			\begin{equation*}
				[u]_{\cL_{x, r}^{\kappa}} \leq \| u \|_{\kappa, x, r} \les r^{- \frac{d}{2}} \| u \|_{\L^2(O(x,r))} + [u ]_{\cL_{x,r}^{\kappa}} \qquad (u \in \L_{x,r}^{\kappa}(O)).
			\end{equation*}
	
			\item[$\mathrm{(ii)}$] If $\kappa > d$ and $u \in \cL^{\kappa}_{x,r}(O)$, then $u(x) \coloneqq \lim_{\rho \searrow 0} (u)_{O(x, \rho)}$ exists and
			\begin{equation*}
				|u(x) - (u)_{O(x,\rho)}| \les \rho^{\frac{\kappa -d}{2}} [ u ]_{\cL^{\kappa}_{x,\rho}} \qquad (\rho \in (0,r]).
			\end{equation*}

			\item[$\mathrm{(iii)}$] If $\kappa > d$, $|x-y| \leq \nicefrac{r}{2}$ and $u \in \cL^{\kappa}_{x,r}(O) \cap \cL^{\kappa}_{y,r}(O)$, then
			\begin{equation*}
				|u(x) - u(y)| \les ( [ u ]_{\cL^{\kappa}_{x,r}} + [u]_{\cL^{\kappa}_{y,r}} ) |x-y|^{\frac{\kappa - d}{2}}.
			\end{equation*}
		\end{enumerate}
	\end{lemma}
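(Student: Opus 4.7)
The three parts rely on the same basic dyadic-annuli technique from classical Campanato theory, adapted to the ``relative'' balls $O(x,\rho)$. The measure-density hypothesis $|O(x,\rho)| \wedge |O(y,\rho)| \geq \gamma |B(x,\rho)|$ for $\rho\in(0,1]$ is exactly what replaces the usual $|B(x,\rho)|\simeq \rho^d$ and lets Lebesgue-point/Cauchy-sequence arguments go through unchanged.

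The common technical core is the following chain estimate. Fix $x\in\overline O$, $\rho\in(0,r]$ and set $\rho_j\coloneqq 2^{-j}\rho$ for $j\ge 0$. Using that $O(x,\rho_{j+1})\sub O(x,\rho_j)$, Cauchy--Schwarz and the measure-density give
\begin{equation*}
    \bigl|(u)_{O(x,\rho_{j+1})}-(u)_{O(x,\rho_j)}\bigr|
    \le |O(x,\rho_{j+1})|^{-1/2}\,\|u-(u)_{O(x,\rho_j)}\|_{\L^2(O(x,\rho_j))}
    \les \rho_j^{(\kappa-d)/2}[u]_{\cL^\kappa_{x,r}},
\end{equation*}
with constants depending only on $d$, $\gamma$ and $\kappa$. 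This inequality is the backbone of all three items; it is the only place where the geometric assumption on $O$ enters quantitatively.

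For part (i) (here $\kappa<d$, hence $(\kappa-d)/2<0$), I would sum the geometric series from $j=0$ up to the index for which $\rho_j\simeq \rho$ and dominate it by the largest term, yielding $|(u)_{O(x,\rho)}-(u)_{O(x,r)}|\les \rho^{(\kappa-d)/2}[u]_{\cL^\kappa_{x,r}}$. Combined with the trivial bound $|(u)_{O(x,r)}|\les r^{-d/2}\|u\|_{\L^2(O(x,r))}$ from the measure-density, one obtains
\begin{equation*}
    \|u\|_{\L^2(O(x,\rho))}\le \|u-(u)_{O(x,\rho)}\|_{\L^2(O(x,\rho))}+|(u)_{O(x,\rho)}||O(x,\rho)|^{1/2}
    \les \rho^{\kappa/2}[u]_{\cL^\kappa_{x,r}} + \rho^{d/2}r^{-d/2}\|u\|_{\L^2(O(x,r))}.
\end{equation*}
Multiplying by $\rho^{-\kappa/2}$ and using $r\le 1$ to upgrade $r^{-\kappa/2}$ to $r^{-d/2}$ gives the claimed Morrey bound; the reverse inequality $[u]_{\cL^\kappa_{x,r}}\le\|u\|_{\kappa,x,r}$ is immediate from $\|u-(u)_E\|_{\L^2(E)}\le 2\|u\|_{\L^2(E)}$.

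For part (ii) (now $\kappa>d$, so the geometric series is summable on the small-radius side), the same chain inequality summed from $j=0$ to $\infty$ shows that $\{(u)_{O(x,\rho_j)}\}_j$ is Cauchy with limit $u(x)$, and produces the tail bound $|u(x)-(u)_{O(x,\rho)}|\les \rho^{(\kappa-d)/2}[u]_{\cL^\kappa_{x,\rho}}$. For part (iii), writing
\begin{equation*}
|u(x)-u(y)|\le |u(x)-(u)_{O(x,2|x-y|)}|+|(u)_{O(x,2|x-y|)}-(u)_{O(y,2|x-y|)}|+|(u)_{O(y,2|x-y|)}-u(y)|,
\end{equation*}
the outer terms are handled by (ii), while the middle one is controlled by observing that $O(x,2|x-y|)\cap O(y,2|x-y|)$ contains, say, $O(x,|x-y|)$ (since $|x-y|\le r/2$ ensures both radii lie in $(0,r]$), and comparing each average to the average over this common subset via Cauchy--Schwarz and the measure-density. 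The only genuine obstacle is to ensure that all intermediate averages remain taken over radii $\le r$ so that the Campanato seminorm can be invoked; this is exactly why the hypothesis $|x-y|\le r/2$ is imposed.
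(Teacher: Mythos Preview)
Your argument is the standard Campanato dyadic-chain technique and is correct; the paper does not prove this lemma but cites it from \cite[Lem.~3.1]{ter-Elst_Rehberg}, where precisely this approach is carried out. One small notational slip: in part~(i) your chain should run from $r$ \emph{down} to $\rho$ (i.e.\ set $\rho_j\coloneqq 2^{-j}r$ and stop at the first $j$ with $\rho_j\le\rho$), not from $\rho$ downward as your choice $\rho_j=2^{-j}\rho$ suggests; the subsequent estimates you wrote are nevertheless the right ones for that corrected chain.
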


	We begin with gradient estimates for global weak solutions in Morrey spaces. Later, we will apply these estimates iteratively to $u_t = \e^{- t L} u$ with $u \in \L^2(O)$.

	\begin{proposition}  \label{(1) to (2): Proposition: A priori bound for weak solution in Morrey spaces}
		Assume that $L$ has property $\rD(\mu)$. Let $\kappa \in [0,d)$, $\sigma \in (0,2]$ with $\kappa + \sigma < d-2 +2 \mu$, $x \in \overline{O}$, $R_0 \in (0, r_0]$ and $f \in \L^{\kappa}_{x, R_0}(O)$. Then we have for all $u \in \H_D^1(O)$ with $L_Du = f$ in $O(x,R_0)$ and $\varepsilon \in (0,1]$ the estimate
		\begin{equation*}
			\| \nabla u \|_{\kappa + \sigma, x, R_0} \les \varepsilon^{2- \sigma} \| f \|_{\kappa, x, R_0} + \varepsilon^{-(\kappa + \sigma)} \| \nabla u \|_{\L^2(O(x,R_0))},
		\end{equation*}
		where the implicit constant depends only on $[\lambda, c_{\rD(\mu)}, \mu, \kappa, \sigma, R_0]$ and geometry.
	\end{proposition}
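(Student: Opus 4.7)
The plan is to run a classical Campanato-type iteration adapted to the mixed boundary setup: decompose $u = v + w$ locally into an $L_D$-harmonic part $v$ (handled by $\rD(\mu)$) and an inhomogeneous part $w$ (handled by the a priori estimate of Lemma~\ref{D(mu), G(mu) and H(mu) for MBC: Lemma: Existence of weak solutions + A prior bound}), then iterate on concentric balls. For each $r \in (0, R_0]$, that lemma yields some $\rho \in [\nicefrac{r}{4}, r]$ and a weak solution $w \in \H^1_{\partial O(x,\rho) \setminus N(x,\rho)}(O(x,\rho))$ of $L_D w = f$ on $O(x,\rho)$ satisfying $\|\nabla w\|_{\L^2(O(x,\rho))} \lesssim \rho \|f\|_{\L^2(O(x,\rho))}$; its zero extension lies in $\H_D^1(O)$ by Lemma~\ref{D(mu), G(mu) and H(mu) for MBC: Lemma: Sobolev extension by 0}(i), so $v \coloneqq u - w \in \H_D^1(O)$ is $L_D$-harmonic on $O(x,\rho)$.

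Applying $\rD(\mu)$ to $v$ at scale $\rho$, combining with $|\nabla u|^2 \leq 2|\nabla v|^2 + 2|\nabla w|^2$, the a priori bound on $w$, and the Morrey estimate $\|f\|_{\L^2(O(x,\rho))} \leq \rho^{\kappa/2} \|f\|_{\kappa, x, R_0}$, and using $\rho \simeq r$ (with the intermediate range $s \in (\rho, r]$ handled trivially by $(s/r)^{d-2+2\mu} \gtrsim 1$), I obtain
\begin{equation*}
\phi(s) \leq c\left[(s/r)^{d-2+2\mu} \phi(r) + r^{\kappa+2} \|f\|^2_{\kappa,x,R_0}\right], \qquad 0 < s \leq r \leq R_0,
\end{equation*}
where $\phi(t) \coloneqq \int_{O(x,t)} |\nabla u|^2$. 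Since $R_0 \leq 1$ and $\sigma \leq 2$, we have $r^{\kappa+2} \leq R_0^{2-\sigma} r^{\kappa+\sigma}$; since $\kappa + \sigma < d-2+2\mu$, a standard iteration lemma (e.g., \cite[Lem.~5.13]{Giaquinta_Martinazzi}) then upgrades this into the uniform Morrey decay
\begin{equation*}
\phi(s) \leq c\left[(s/R_0)^{\kappa+\sigma} \phi(R_0) + R_0^{2-\sigma} s^{\kappa+\sigma} \|f\|^2_{\kappa,x,R_0}\right], \qquad s \in (0, R_0].
\end{equation*}

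To introduce $\varepsilon \in (0,1]$, I split the supremum defining $\|\nabla u\|_{\kappa+\sigma,x,R_0}$ at $s = \varepsilon R_0$: on $s \in (0, \varepsilon R_0]$, apply the preceding bound with $\varepsilon R_0$ in place of $R_0$ and use $\|f\|_{\kappa,x,\varepsilon R_0} \leq \|f\|_{\kappa,x,R_0}$ to produce, after a square root, coefficients $\varepsilon^{-(\kappa+\sigma)/2}$ on $\|\nabla u\|_{\L^2(O(x,R_0))}$ and $\varepsilon^{(2-\sigma)/2}$ on $\|f\|_{\kappa,x,R_0}$; on $s \in (\varepsilon R_0, R_0]$ the trivial bound $s^{-(\kappa+\sigma)/2} \|\nabla u\|_{\L^2(O(x,s))} \leq (\varepsilon R_0)^{-(\kappa+\sigma)/2} \|\nabla u\|_{\L^2(O(x,R_0))}$ reproduces the same first coefficient. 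Substituting $\varepsilon \mapsto \varepsilon^2$ (which preserves $(0,1]$) converts these exponents into the advertised $\varepsilon^{2-\sigma}$ and $\varepsilon^{-(\kappa+\sigma)}$. The only delicate point is iterating when $\kappa+2$ may exceed the decay exponent $d-2+2\mu$ (which happens for $\sigma < 2$); the interpolation $r^{\kappa+2} \leq R_0^{2-\sigma} r^{\kappa+\sigma}$ reduces this to an admissible inhomogeneity in the iteration lemma, at the price of an $R_0$-dependent prefactor that is absorbed into $\lesssim$, consistent with the theorem's dependence list.
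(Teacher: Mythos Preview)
Your proof is correct and follows essentially the same approach as the paper: the same local decomposition via Lemma~\ref{D(mu), G(mu) and H(mu) for MBC: Lemma: Existence of weak solutions + A prior bound}, the same use of $\rD(\mu)$ on the harmonic part, the same Campanato iteration (Lemma~\ref{Appendix: Lemma: Campanato} in the paper), and the same splitting at scale $\varepsilon^2 R_0$ with the trivial monotonicity bound on the large-radius range. The only cosmetic differences are that you swap the roles of $v$ and $w$, and that the paper bakes the restriction $R \leq R_0 \varepsilon^2$ into the iteration from the outset rather than introducing $\varepsilon$ afterwards as you do; these are equivalent reorderings of the same argument.
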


	Before we start the proof, we recall the classical Campanato lemma.

	\begin{lemma}[{\cite[Chap.~3, Lem.~2.1]{Giaquinta}}]\label{Appendix: Lemma: Campanato}
		Let $\phi \colon \R \rightarrow [0, \infty)$ be non-decreasing, and let $C_1, C_2, R_0, \varepsilon \geq 0$ as well as $0 \leq \beta < \gamma$. If we have
		\begin{equation}\label{CampaEstimate}
			\phi(r) \leq C_1 \bigg[ \bigg( \frac{r}{R} \bigg)^{\gamma}+\varepsilon \bigg] \phi(R)+ C_2 R^{\beta} \qquad (0< r \leq R \leq R_0),
		\end{equation}
		then there exists $\varepsilon_0,c >0$ depending only on $[C_1, \gamma, \beta]$ such that if $\varepsilon<\varepsilon_0$, then
		\begin{equation}\label{CampaConseguence}
			\phi(r) \leq c \bigg[ \bigg( \frac{r}{R} \bigg)^{\beta} \phi(R) + C_2 r^{\beta }\bigg] \qquad (0 < r \leq R \leq R_0).
		\end{equation}
	\end{lemma}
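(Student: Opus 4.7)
The plan is to run the standard iteration at a fixed dyadic scale. Introduce an auxiliary exponent $\beta' \in (\beta, \gamma)$, for concreteness $\beta' \coloneqq (\beta+\gamma)/2$. First I would pick $\tau \in (0,1)$ small enough that $C_1 \tau^{\gamma} \leq \frac{1}{2} \tau^{\beta'}$, which is possible because $\gamma > \beta'$, so $\tau^{\gamma - \beta'} \to 0$ as $\tau \to 0$. Then set $\varepsilon_0 \coloneqq \frac{1}{2 C_1} \tau^{\beta'}$ so that whenever $\varepsilon < \varepsilon_0$ we have $C_1[\tau^{\gamma} + \varepsilon] \leq \tau^{\beta'}$. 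Applying \eqref{CampaEstimate} with $r = \tau R$ yields
\begin{equation*}
    \phi(\tau R) \leq \tau^{\beta'} \phi(R) + C_2 R^{\beta} \qquad (R \in (0, R_0]).
\end{equation*}

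Next, I would iterate this bound on the geometric sequence $R_k \coloneqq \tau^k R$. Setting $a_k \coloneqq \phi(\tau^k R)$, one has $a_{k+1} \leq \tau^{\beta'} a_k + C_2 \tau^{k\beta} R^{\beta}$, and a straightforward induction gives
\begin{equation*}
    a_k \leq \tau^{k \beta'} \phi(R) + C_2 R^{\beta} \sum_{j=0}^{k-1} \tau^{(k-1-j) \beta'} \tau^{j \beta}.
\end{equation*}
Factoring out $\tau^{(k-1)\beta'}$ turns the sum into a geometric series with ratio $\tau^{\beta - \beta'} > 1$, whose value is bounded by $c\,\tau^{(k-1)(\beta - \beta') }$ with $c = c(\tau, \beta, \beta')$. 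Combining this with $\tau^{k\beta'} \leq \tau^{k\beta}$ (since $\beta' > \beta$ and $\tau < 1$) leads to
\begin{equation*}
    \phi(\tau^k R) \leq \tau^{k\beta}\phi(R) + c\, C_2 (\tau^k R)^{\beta},
\end{equation*}
which is exactly \eqref{CampaConseguence} restricted to the discrete scales $r = \tau^k R$.

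Finally, for an arbitrary $r \in (0, R]$ I would use monotonicity of $\phi$: choose the unique $k \geq 0$ with $\tau^{k+1} R < r \leq \tau^k R$. Then $\phi(r) \leq \phi(\tau^k R)$, and since $\tau^{k \beta} \leq \tau^{-\beta} (r/R)^{\beta}$ by construction, absorbing $\tau^{-\beta}$ into the constant yields \eqref{CampaConseguence} with $c$ depending only on $[C_1, \gamma, \beta]$ (through $\tau$, $\beta'$, and the geometric series constant).

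The only delicate step is the iteration: if one tries to run it directly with $\beta$ in place of $\beta'$, the geometric sum in the error term picks up an unwanted factor of $k$, which blows up as $k \to \infty$. The trick of splitting the gap $\gamma - \beta$ into two pieces via $\beta'$, so that the decay factor $\tau^{\beta'}$ strictly beats the growth $\tau^{j\beta}$ at each step, is what produces a convergent error series and gives the right scaling in $r^\beta$ at the end.
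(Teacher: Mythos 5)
Your proof is correct and is essentially the standard iteration argument from the cited reference [Giaquinta, Chap.~3, Lem.~2.1], which the paper invokes without reproducing: the intermediate exponent $\beta'$, the choice of $\tau$ and of $\varepsilon_0$ so that $C_1[\tau^{\gamma}+\varepsilon]\leq\tau^{\beta'}$, the induction along the scales $\tau^k R$, and the final passage to arbitrary $r$ by monotonicity all match the textbook proof. The only cosmetic point is that your formula $\varepsilon_0=\tau^{\beta'}/(2C_1)$ tacitly assumes $C_1>0$; this is harmless, since for $C_1=0$ the claim is immediate from the hypothesis with $R=r$, or one may simply replace $C_1$ by $\max(C_1,1)$.
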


	\begin{proof}[\rm\bf{Proof of Proposition~\ref{(1) to (2): Proposition: A priori bound for weak solution in Morrey spaces}.}]
		Let $0 < r \leq R \leq R_0 \varepsilon^2$ and define the function
		\begin{equation*}
			\phi(r) \coloneqq \int_{O(x,r)} |\nabla u|^2.
		\end{equation*}
		We pick $\rho \in [\nicefrac{R}{4}, R]$ as in Lemma~\ref{D(mu), G(mu) and H(mu) for MBC: Lemma: Existence of weak solutions + A prior bound}, so that there is some $v \in \H_{\partial O(x,\rho) \setminus N(x, \rho)}^1(O(x,\rho))$ such that $L_D v = f$ in $O(x, \rho)$. It also satisfies the a priori bound \eqref{eq: D(mu), G(mu), H(mu): a priori bound}, which implies
		\begin{equation}  \label{eq: D(mu) to G(mu): A priori estimate in Morrey space}
			\int_{O(x, \rho)} |\nabla v|^2 \les R^{\kappa +2} \| f \|_{ \kappa, R}^2 \leq (R_0 \varepsilon^2)^{2 - \sigma} \| f \|_{\kappa, R_0}^2 R^{\kappa + \sigma}.
		\end{equation}
		By Lemma~\ref{D(mu), G(mu) and H(mu) for MBC: Lemma: Sobolev extension by 0} (i) we can extend $v$ by $0$ and view it as an element of $\H_D^1(O)$. Then $w \coloneqq u -v \in \H_D^1(O)$ satisfies $L_D w =0$ in $O(x, \rho)$. Provided that $r \leq \rho$, we can use property $\rD(\mu)$ to get
		\begin{align}
			\phi(r) &\les \int_{O(x,r)} |\nabla v|^2 + \int_{O(x,r)} |\nabla w|^2  \notag
			\\&\les \int_{O(x,r)} |\nabla v|^2 + \left( \frac{r}{\rho} \right)^{d-2+2\mu}  \int_{O(x,\rho)} |\nabla w|^2  \notag
			\\&\les \int_{O(x,\rho)} |\nabla v|^2 + \left( \frac{r}{\rho} \right)^{d -2 +2 \mu} \phi(\rho). \label{eq: (1) to (2): Proposition: A priori bound for weak solution in Morrey spaces}
		\end{align}
		Inserting \eqref{eq: D(mu) to G(mu): A priori estimate in Morrey space} into \eqref{eq: (1) to (2): Proposition: A priori bound for weak solution in Morrey spaces} and using $\nicefrac{R}{4} \leq \rho \leq R$ delivers
		\begin{equation*}
			\phi(r) \les \left( \frac{r}{R} \right)^{d-2 +2 \mu} \phi(R) + (R_0 \varepsilon^2)^{2 - \sigma} \| f \|_{\kappa, R_0}^2 R^{\kappa + \sigma}.
		\end{equation*}
		If $r \geq \rho$, then $\nicefrac{r}{R} \geq \nicefrac{1}{4}$ and the same estimate follows by monotonicity of $\phi$, even without the semi-norm of $f$. Lemma~\ref{Appendix: Lemma: Campanato} improves this bound to
		\begin{equation*}
			\phi(r) \les \left( \frac{r}{R} \right)^{\kappa + \sigma} \phi(R) + (R_0 \varepsilon^2)^{2 - \sigma} \| f \|_{\kappa, R_0}^2  r^{\kappa + \sigma},
		\end{equation*}
		with an implicit constant depending only on $[c_{\rD(\mu)}, \mu, \kappa, \sigma]$ and geometry. Hence, we get for all $r, R$ as above that 
		\begin{equation*}
			r^{- \frac{1}{2} (\kappa + \sigma)} \phi(r)^{\frac{1}{2}} \les R^{- \frac{1}{2}(\kappa + \sigma)} \| \nabla u \|_{\L^2(O(x,R_0))} + (R_0 \varepsilon^2)^{1 - \frac{\sigma}{2}} \| f \|_{\kappa, R_0}.
		\end{equation*}
		In particular, if we pick $R \coloneqq R_0 \varepsilon^2$, then we get for $0 < r \leq R_0 \varepsilon^2$ the estimate
		\begin{equation*}
			r^{- \frac{1}{2} (\kappa + \sigma)} \phi(r)^{\frac{1}{2}} \les R_0^{- \frac{1}{2}(\kappa + \sigma)} \varepsilon^{-(\kappa + \sigma)} \| \nabla u \|_{\L^2(O(x,R_0))} + R_0^{1 - \frac{\sigma}{2}} \varepsilon^{2 - \sigma} \| f \|_{\kappa, R_0}.
		\end{equation*}
		As before, this estimate remains valid for $R_0 \varepsilon^2 < r \leq R_0$ by monotonicity of $\phi$. Taking the supremum in $r \leq R_0$ yields the claim.
	\end{proof}

	\begin{lemma} \label{(1) to (2): Lemma: Campanato norm controlled}
		Let $\kappa \in [0,d)$ and $\sigma \in (0,2]$. There is some $c > 0$ depending only on geometry, $\kappa$ and $\sigma$ such that we have for all $\varepsilon \in (0,1]$, $u \in \H_D^1(O)$ and $x \in \overline{O}$ that
		\begin{equation*}
			[ u ]_{\cL^{\kappa + \sigma}_{x,r_0}} \leq c ( \varepsilon^{2- \sigma} \| \nabla u \|_{ \kappa, x, c_1 r_0} + \varepsilon^{- (\kappa + \sigma)} \| u \|_{\L^2(O(x,r_0))}).
		\end{equation*}
		Moreover, if $x \notin N_{\nicefrac{\delta}{2}}$, then the same estimate holds for $u_0$ on $B(x,r_0)$.
	\end{lemma}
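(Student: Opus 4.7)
The plan is to estimate the Campanato seminorm by splitting the range of radii $\rho \in (0, r_0]$ at the threshold $\rho^* \coloneqq \varepsilon^2 r_0$. For small radii $\rho \leq \rho^*$, the weak Poincar\'e inequality with average from Remark~\ref{The geometric setup: Remark: (P) follows from boundary inequality}, applied to $u \in \H_D^1(O)$, gives
\[
\|u - (u)_{O(x,\rho)}\|_{\L^2(O(x,\rho))} \les \rho \,\|\nabla u\|_{\L^2(O(x, c_1 \rho))} \leq (c_1 \rho)^{\kappa/2} \rho \,\|\nabla u\|_{\kappa, x, c_1 r_0}.
\]
Multiplying by $\rho^{-(\kappa+\sigma)/2}$ produces the factor $\rho^{1-\sigma/2}$, which is non-negative since $\sigma \in (0,2]$, so $\rho \leq \varepsilon^2 r_0$ implies $\rho^{1-\sigma/2} \les \varepsilon^{2-\sigma}$ with implicit constant depending only on geometry and $\sigma$. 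This produces the first term in the claim.

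For large radii $\rho \in (\rho^*, r_0]$, I simply combine the trivial bound $\|u - (u)_{O(x,\rho)}\|_{\L^2(O(x,\rho))} \leq 2\|u\|_{\L^2(O(x, r_0))}$ with $\rho^{-(\kappa+\sigma)/2} \leq (\varepsilon^2 r_0)^{-(\kappa+\sigma)/2} \les \varepsilon^{-(\kappa+\sigma)}$ to obtain the second term. Taking the supremum over $\rho \in (0, r_0]$ of the two bounds yields the desired estimate for $[u]_{\cL^{\kappa+\sigma}_{x, r_0}}$.

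For the addendum on $u_0$: when $x \notin N_{\nicefrac{\delta}{2}}$, the constraint $A \leq \nicefrac{1}{2}$ built into the definition of $r_0$ yields $r_0 \leq \nicefrac{\delta}{4}$, so every $y$ in the open ball $B(x, 2r_0)$ satisfies $\d(y, N) > 0$ and hence lies outside $\overline{N}$. Consequently $N \cap B(x, 2r_0) = \varnothing$, and Lemma~\ref{D(mu), G(mu) and H(mu) for MBC: Lemma: Sobolev extension by 0}(i) identifies $u_0|_{B(x, r_0)}$ with an element of $\H^1(B(x, r_0))$ whose gradient equals $\nabla u \cdot \1_O$. The argument then proceeds verbatim, with the classical Poincar\'e inequality on the ball $B(x, \rho)$ replacing the weak Poincar\'e inequality, and using $\|u_0\|_{\L^2(B(x, r_0))} = \|u\|_{\L^2(O(x, r_0))}$.

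The argument is a direct computation carrying no serious obstacle; the only point requiring a little care is the addendum, where one must verify that the open ball $B(x, 2r_0)$ avoids $\overline{N}$ when $x \notin N_{\nicefrac{\delta}{2}}$, so that the zero extension is Sobolev-regular on $B(x, r_0)$.
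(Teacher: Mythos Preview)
Your proof is correct and follows essentially the same approach as the paper: split the range of radii at $\varepsilon^2 r_0$, apply the Poincar\'e inequality with average (Remark~\ref{The geometric setup: Remark: (P) follows from boundary inequality}) for small radii, and use the trivial bound $\|u-(u)_{O(x,\rho)}\|_{\L^2(O(x,\rho))} \leq 2\|u\|_{\L^2(O(x,r_0))}$ for large radii. The addendum is also handled the same way, the paper noting tersely that $r_0 < \nicefrac{\delta}{2}$ suffices for $u_0 \in \H^1(B(x,r))$; your invocation of Lemma~\ref{D(mu), G(mu) and H(mu) for MBC: Lemma: Sobolev extension by 0}(i) is not quite the right reference (that lemma extends from $O\cap U$ to $O$, not from $O$ to a ball), but the underlying fact---that $u_0$ is $\H^1$ on balls avoiding $N$ because approximating $\smoothD$-functions vanish near $D$---is standard and your conclusion is correct.
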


	\begin{proof}
		If $r \in (0, \varepsilon^2 r_0]$, then we have by Remark~\ref{The geometric setup: Remark: (P) follows from boundary inequality} that
		\begin{align*}
			r^{- \frac{1}{2} (\kappa + \sigma)} \| u - (u)_{O(x,r)} \|_{\L^2(O(x,r))} &\les_{c_0,d} r^{1- \frac{\sigma}{2}} r^{- \frac{\kappa}{2}} \| \nabla u \|_{\L^2(O(x, c_1 r))}
			\\&\les_{c_1, \kappa, \sigma} r_0^{1- \frac{\sigma}{2}} \varepsilon^{2- \sigma} \| \nabla u \|_{\kappa, x, c_1 r_0}.
		\end{align*}
		In the other case, we get
		\begin{align*}
			r^{- \frac{1}{2}(\kappa + \sigma)} \| u - (u)_{O(x,r)} \|_{\L^2(O(x,r))} &\leq 2r^{- \frac{1}{2}(\kappa + \sigma)} \| u \|_{\L^2(O(x,r))}
			\\&\leq r_0^{- \frac{1}{2}(\kappa + \sigma)} \varepsilon^{- (\kappa + \sigma)} \| u \|_{\L^2(O(x,r_0))}
		\end{align*}
		as required.

		Finally, if $x \notin N_{\nicefrac{\delta}{2}}$, then $u_0 \in \H^1(B(x,r))$ since we have $r_0 < \nicefrac{\delta}{2}$ by definition and hence we can use the standard Poincar\'e inequality on balls in the first case.
	\end{proof}

	Next, we proceed as follows with $u_t = \e^{- t L} u$, where $u \in \L^2(O)$:

	\begin{itemize}
		\item We increase the regularity of $\nabla u_t$ in Morrey spaces up to the critical exponent $d-2+2 \mu$.

		\item We pass to an estimate for the Campanato seminorm of $u_t$ with exponent $d+2\mu$.
	\end{itemize}

	\begin{lemma} \label{(1) to (2): Lemma: Induction lemma for Gaussian estimates} Let $L$ have property $\rD(\mu_0)$ and let $\mu \in (0, \mu_0)$. There are $c, \omega > 0$ and $\gamma \in (0,1]$ depending only on geometry, ellipticity and $[c_{\rD(\mu)}, \mu, \mu_0]$ such that
		\begin{equation}
			[u_t]_{\cL_{x,\gamma r_0}^{d+ 2 \mu}} \leq c t^{- \frac{d}{4} - \frac{\mu}{2}} \e^{\omega t} \| u \|_2 \qquad (t > 0, u \in \L^2(O), x \in \overline{O}). \label{eq: (1) to (2): Induction lemma for Gaussian estimates}
		\end{equation}
		Moreover, if $x \notin N_{\nicefrac{\delta}{2}}$, then we can replace $u$ by $u_0$ on the left-hand side.
	\end{lemma}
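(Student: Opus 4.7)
The plan is to combine the analyticity of the semigroup with a finite iteration of Proposition~\ref{(1) to (2): Proposition: A priori bound for weak solution in Morrey spaces} and Lemma~\ref{(1) to (2): Lemma: Campanato norm controlled}, bridged by the Morrey--Campanato equivalence of Lemma~\ref{(1) to (2): Lemma: Classical Morrey and Campanato Lemma}~(i). First, since $L$ generates an analytic $\rC_0$-semigroup on $\L^2(O)$, each $u_t$ lies in $\D(L^k)$ for every integer $k \geq 0$ with the a priori bound $\| \partial_t^k u_t \|_2 = \| L^k u_t \|_2 \les t^{-k} \e^{\omega t} \| u \|_2$, and combined with Proposition~\ref{Operator theoretic setting and relevant function spaces: Proposition: L2 ODE} (applied with $E = F = O$) and the semigroup property this yields $\| \nabla \partial_t^k u_t \|_2 \les t^{-\nicefrac{1}{2} - k} \e^{\omega t} \| u \|_2$. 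Moreover $\partial_t^k u_t \in \H_D^1(O)$ and $L_D \partial_t^k u_t = - \partial_t^{k+1} u_t$ holds globally on $O$, hence also on every $O(x, R_0)$.

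Next I would run a simultaneous induction over a Morrey regularity level $\kappa$ and the order $k$ of the time derivative in order to propagate
\begin{equation*}
\| \nabla \partial_t^k u_t \|_{\kappa, x, r_0} \les t^{-\nicefrac{(\kappa+2)}{4} - k} \e^{\omega t} \| u \|_2 \qquad (k \geq 0)
\end{equation*}
from the base case $\kappa = 0$ (given by the previous paragraph) up to any $\kappa \in [d-2+2\mu,\, d-2+2\mu_0)$, in finitely many steps of size at most $\sigma \leq 2$. The inductive step from $\kappa$ to $\kappa+\sigma$ consists in applying Proposition~\ref{(1) to (2): Proposition: A priori bound for weak solution in Morrey spaces} to $\partial_t^k u_t$ with right-hand side $f = -\partial_t^{k+1} u_t$; the Morrey norm of $f$ at the current level is controlled by invoking Lemma~\ref{(1) to (2): Lemma: Campanato norm controlled} on $\partial_t^{k+1} u_t$ (together with the Morrey/Campanato equivalence of Lemma~\ref{(1) to (2): Lemma: Classical Morrey and Campanato Lemma}~(i)) at the previous level, which is available by the induction hypothesis for $k+1$. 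The single choice $\varepsilon = \min\{1, (t/r_0^2)^{\nicefrac{1}{4}}\}$ balances the two terms arising from each of the two lemmas and reproduces the advertised $t$-scaling. Having reached $\kappa = d-2+2\mu$, one last application of Lemma~\ref{(1) to (2): Lemma: Campanato norm controlled} to $u_t$ with $\sigma = 2$, at the reduced radius $\gamma r_0$ with $\gamma \coloneqq \nicefrac{1}{c_1}$ so that $c_1 \gamma r_0 = r_0$, yields
\begin{equation*}
[u_t]_{\cL^{d+2\mu}_{x, \gamma r_0}} \les t^{-\nicefrac{d}{4} - \nicefrac{\mu}{2}} \e^{\omega t} \| u \|_2,
\end{equation*}
which is the claim. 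When $x \notin N_{\nicefrac{\delta}{2}}$, the zero extension $(u_t)_0$ belongs to $\H^1(B(x, r_0))$ and the final part of Lemma~\ref{(1) to (2): Lemma: Campanato norm controlled} provides the same estimate with $u_t$ replaced by $(u_t)_0$.

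The main technical obstacle is the coupled nature of the induction: each Morrey upgrade of $\nabla u_t$ at level $\kappa$ calls for a Morrey bound on $-\partial_t u_t$ at the same level, which itself is obtained via a Morrey/Campanato transfer for $\partial_t u_t$ that requires a Morrey bound on $\nabla \partial_t u_t$ at a lower level, and analogously for all higher time derivatives. Keeping these strands synchronized, choosing the nested radii in such a way that every intermediate $c_1$-enlargement still fits below $r_0$, and verifying that the single $\varepsilon$-tuning $\varepsilon \sim t^{\nicefrac{1}{4}}$ simultaneously balances every error term with constants depending only on geometry, ellipticity and $[c_{\rD(\mu)}, \mu, \mu_0]$, is the delicate bookkeeping that has to be carried out.
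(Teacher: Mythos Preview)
Your approach is sound and can be carried out, but it differs from the paper's in one key respect. You organize the bootstrap as a coupled two-parameter induction in the Morrey level $\kappa$ and the time-derivative order $k$, because controlling $\|\partial_t^{k+1}u_t\|_\kappa$ via Lemma~\ref{(1) to (2): Lemma: Campanato norm controlled} forces you to track $\|\nabla\partial_t^{k+1}u_t\|_{\kappa'}$ at a lower level, and so on down to $\partial_t^{m}u_t$ at level~$0$. The paper sidesteps this entirely by formulating the induction hypothesis $P(\kappa,r)$ for \emph{arbitrary} initial data $u\in\L^2(O)$ and then exploiting the semigroup law: writing $Lu_t=\e^{-\frac{t}{2}L}(Lu_{t/2})$ and invoking $P(\kappa,r)$ with fresh initial data $tLu_{t/2}$ (which is bounded in $\L^2$ by analyticity) immediately yields $\|Lu_t\|_{\kappa,x,r}$ without ever touching $\partial_t^2u_t$ or higher. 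The result is a clean one-dimensional induction $P(\kappa,r)\Rightarrow P(\kappa+\sigma,\nicefrac{r}{c_1})$ with none of the bookkeeping you flag as delicate. One small inconsistency in your write-up: you set $\gamma=\nicefrac{1}{c_1}$ (a single shrink) yet also acknowledge that every intermediate $c_1$-enlargement must fit below $r_0$; in fact the radius has to be reduced at each of the roughly $\nicefrac{d}{2}$ steps, so $\gamma=c_1^{-m}$ with $m$ depending on $d$ and $\mu$, exactly as in the paper.
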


	\begin{proof}
		Let $\kappa \in [0, d-2 +2 \mu_0)$ and $r \in (0, r_0]$. Consider the following statement:

		\textbf{$\boldsymbol{P(\kappa, r)}$.} There are $c, \omega > 0$ depending on geometry and $[\lambda, \Lambda, c_{\rD(\mu)}, \mu_0, \kappa]$ such that
		\begin{equation*}
			\| u_t \|_{\kappa, x, r} + \| \sqrt{t} \nabla u_t \|_{\kappa, x, r} \leq c t^{- \frac{\kappa}{4}} \e^{\omega t} \| u \|_2 \qquad (t > 0, u \in \L^2(O), x \in \overline{O}).
		\end{equation*}
		$P(0, r_0)$ holds true by the $\L^2$-theory in Section~\ref{Section: Operator theoretic setting and relevant function spaces}. Let $\sigma \in (0,2]$ with $\kappa + \sigma < d-2 +2 \mu_0$. We claim that
		\begin{equation*}
			P(\kappa, r) \Longrightarrow P(\kappa + \sigma, \nicefrac{r}{c_1}).
		\end{equation*}
		This will yield \eqref{eq: (1) to (2): Induction lemma for Gaussian estimates} with $\gamma \coloneqq \nicefrac{1}{c_1^{m_0+2}}$, where $m_0$ is the largest integer with $2 m_0 < d -2 + 2 \mu$, by iterating $(m_0+1)$-times and a final application of the Poincar\'e inequality in Remark~\ref{The geometric setup: Remark: (P) follows from boundary inequality}. For the additional claim when $x \notin N_{\nicefrac{\delta}{2}}$, we simply use the standard Poincar\'{e} inequality on balls in the final step as in the previous proof.

		Assume that $P(\kappa, r)$ is valid and define $\varepsilon \coloneqq t^{\nicefrac{1}{4}} \e^{-t} \in (0,1]$. We prove $P(\kappa + \sigma, \nicefrac{r}{c_1})$ in two steps.

		\textbf{Non-gradient bound.} If $x \in N_{\nicefrac{\delta}{2}}$, then we have~\hyperref[The geometric setup: Proposition: (ICC)]{$\ICC$} at hand and we can apply Lemma~\ref{(1) to (2): Lemma: Classical Morrey and Campanato Lemma} (i) and then Lemma~\ref{(1) to (2): Lemma: Campanato norm controlled} to get
		\begin{align*}
			\| u_t \|_{\kappa+ \sigma, \nicefrac{r}{c_1}} &\les \| u_t \|_{\cL^{\kappa + \sigma}_{\nicefrac{r}{c_1}}} \les (t^{\frac{1}{4}} \e^{-t} )^{2- \sigma} \| \nabla u_t \|_{ \kappa, r} + (t^{\frac{1}{4}} \e^{-t} )^{- (\kappa + \sigma)} \| u_t \|_{2}.
		\end{align*}
		Using $P(\kappa, r)$ and the $\L^2$-contractivity of the semigroup gives us
		\begin{equation*}
			\| u_t \|_{\kappa+ \sigma, \nicefrac{r}{c_1}} \les (t^{- \frac{\kappa + \sigma}{4}} \e^{\omega  t} + t^{- \frac{\kappa + \sigma}{4}} \e^{(d+2) t}) \| u \|_2 \les t^{- \frac{\kappa + \sigma}{4}}  \e^{(\omega \lor (d+2)) t} \| u \|_2.
		\end{equation*}
		If $x \notin N_{\nicefrac{\delta}{2}}$, then we can do the first step for the $0$-extension on $B(x,r)$, to which Lemma~\ref{(1) to (2): Lemma: Campanato norm controlled} applies as well.

		\textbf{Gradient bound.} Proposition~\ref{(1) to (2): Proposition: A priori bound for weak solution in Morrey spaces} with $f = L u_t = \e^{\nicefrac{-t}{2} L} L u_{\nicefrac{t}{2}}$ reveals the bound
		\begin{align*}
			\| \sqrt{t} \nabla u_t \|_{\kappa + \sigma, \nicefrac{r}{c_1}}
			&\les \sqrt{t} (t^{\frac{1}{4}} \e^{-t})^{2 - \sigma} \| \e^{- \frac{t}{2} L} L u_{\nicefrac{t}{2}} \|_{ \kappa, \nicefrac{r}{c_1}} + (t^{\frac{1}{4}} \e^{-t})^{- (\kappa + \sigma)} \| \sqrt{t} \nabla u_t \|_{2}
			\\& \les t^{- \frac{\sigma}{4}} \| \e^{- \frac{t}{2} L} ( t L u_{\nicefrac{t}{2}} ) \|_{\kappa, \nicefrac{r}{c_1}} + t^{- \frac{\kappa + \sigma}{4}} \e^{(d + 2) t} \| \sqrt{t} \nabla u_t \|_{2}.
		\end{align*}
		Next, we use $P(\kappa, r)$ joint with the bound $\| t L u_{\nicefrac{t}{2}} \|_2 \les \| u \|_2$ for analytic semigroups for the first summand and the estimate $\| \sqrt{t} \nabla u_t \|_2 \les \| u \|_2$ from ellipticity (or Proposition~\ref{Operator theoretic setting and relevant function spaces: Proposition: L2 ODE}) for the second one to deduce
		\begin{equation*}
			\| \sqrt{t} \nabla u_t \|_{\kappa + \sigma, \nicefrac{r}{c_1}} \les t^{-\frac{\kappa + \sigma}{4}} \e^{(\frac{\omega}{2} \lor (d+2)) t} \| u \|_2. \qedhere
		\end{equation*}
	\end{proof}

	Finally, we can prove:

	\begin{proposition} \label{(1) to (2): Proposition: Almost Gaussian estimates}
		Assume that $L$ has property $\rD(\mu_0)$ and fix $\mu \in (0, \mu_0)$. Then there are $c, \omega > 0$ depending on geometry, ellipticity and $[c_{\rD(\mu)}, \mu, \mu_0]$ such that
		\begin{equation} \label{eq: L2 - Linfty + Holder estimate}
			\| u_t \|_{\infty} + t^{\frac{\mu}{2}}  [u_t]_O^{(\mu)} \leq c \e^{\omega t} t^{- \frac{d}{4}} \| u \|_2 \qquad (t > 0, u \in \L^2(O)).
		\end{equation}
	\end{proposition}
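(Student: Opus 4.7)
The plan is to combine the Campanato estimate of Lemma~\ref{(1) to (2): Lemma: Induction lemma for Gaussian estimates} with the classical Morrey--Campanato embedding of Lemma~\ref{(1) to (2): Lemma: Classical Morrey and Campanato Lemma}. The latter converts control of the Campanato seminorm at exponent $d+2\mu$ into pointwise H\"older continuity and an $\L^{\infty}$ bound, provided that the thickness condition $|O(x,\rho)| \gtrsim \rho^d$ holds at the base points. The working radius is tuned to the natural diffusion scale $\sqrt{t}$.

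For the $\L^{\infty}$ bound, I fix $x \in O$ and set $\rho \coloneqq \sqrt{t} \wedge \gamma r_0$. If $x \in N_{\nicefrac{\delta}{2}}$, then~\hyperref[The geometric setup: Proposition: (ICC)]{$\ICC$} gives $|O(x,\rho)| \gtrsim \rho^d$, so that Lemma~\ref{(1) to (2): Lemma: Classical Morrey and Campanato Lemma}~(ii) with $\kappa=d+2\mu$ yields
\begin{equation*}
|u_t(x)| \leq |(u_t)_{O(x,\rho)}| + c\rho^{\mu}\,[u_t]_{\cL^{d+2\mu}_{x,\rho}}.
\end{equation*}
If $x \notin N_{\nicefrac{\delta}{2}}$, the same inequality is obtained with $O(x,\rho)$ replaced by $B(x,\rho)$ and $u_t$ by $(u_t)_0$, by invoking the addendum to Lemma~\ref{(1) to (2): Lemma: Induction lemma for Gaussian estimates} and the identity $(u_t)_0(x)=u_t(x)$. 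In either case the average is bounded by $\rho^{-d/2}\|u\|_2$, while the Campanato term is controlled by $\rho^{\mu} t^{-d/4-\mu/2} \e^{\omega t} \|u\|_2$ via Lemma~\ref{(1) to (2): Lemma: Induction lemma for Gaussian estimates}. My choice of $\rho$ then balances the two contributions to give $c\e^{\omega t} t^{-d/4}\|u\|_2$, with the polynomial excess $(\sqrt{t}/(\gamma r_0))^{d/2}$ that appears when $\sqrt{t} > \gamma r_0$ absorbed by enlarging $\omega$.

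For the H\"older seminorm, I take distinct $x,y \in O$. When $|x-y| > \gamma r_0/4$, the H\"older quotient is dominated by the just-proved $\L^{\infty}$ bound, the factor $t^{\mu/2}$ being absorbed into $\e^{\omega t}$ for large $t$ and by $r_0^{\mu}$ otherwise. When $|x-y| \leq \gamma r_0/4$, I apply Lemma~\ref{(1) to (2): Lemma: Classical Morrey and Campanato Lemma}~(iii) with $\kappa=d+2\mu$ and $r=\gamma r_0$, and use Lemma~\ref{(1) to (2): Lemma: Induction lemma for Gaussian estimates} to bound both Campanato seminorms by $t^{-d/4-\mu/2}\e^{\omega t}\|u\|_2$. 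This yields $|u_t(x)-u_t(y)| \leq c t^{-d/4-\mu/2} \e^{\omega t} |x-y|^{\mu} \|u\|_2$, and multiplication by $t^{\mu/2}|x-y|^{-\mu}$ closes the argument.

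The hard part will be that Lemma~\ref{(1) to (2): Lemma: Classical Morrey and Campanato Lemma}~(iii) demands both the thickness hypothesis and a Campanato control for a \emph{single} function at $x$ and $y$ simultaneously, whereas Lemma~\ref{(1) to (2): Lemma: Induction lemma for Gaussian estimates} naturally bounds $u_t$ near $N$ and $(u_t)_0$ away from $N$. I expect to reconcile this by running Lemma~\ref{(1) to (2): Lemma: Induction lemma for Gaussian estimates} with two overlapping buffer widths, say $\nicefrac{\delta}{4}$ and $\nicefrac{\delta}{2}$, so that whenever $|x-y|$ is less than a fixed fraction of $\delta$ a common representative (either $u_t$, with~\hyperref[The geometric setup: Proposition: (ICC)]{$\ICC$} at both endpoints, or $(u_t)_0$, with trivial thickness on balls) is available; for $|x-y|$ of order $\delta$ or larger, the $\L^{\infty}$ bound already closes the estimate.
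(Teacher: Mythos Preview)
Your approach is correct and matches the paper's proof in structure: the $\L^\infty$ bound via Lemma~\ref{(1) to (2): Lemma: Classical Morrey and Campanato Lemma}~(ii) combined with Lemma~\ref{(1) to (2): Lemma: Induction lemma for Gaussian estimates}, and the H\"older bound via Lemma~\ref{(1) to (2): Lemma: Classical Morrey and Campanato Lemma}~(iii), splitting according to whether the points lie near $N$ or not. The paper uses the radius $r=\gamma r_0\sqrt{t}\,\e^{-t}$, which handles small and large $t$ in one stroke, but your choice $\rho=\sqrt{t}\wedge\gamma r_0$ with the polynomial excess absorbed into $\e^{\omega t}$ is equally valid.

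The only substantive difference is in the mixed case $x\in N_{\nicefrac{\delta}{2}}$, $y\notin N_{\nicefrac{\delta}{2}}$ that you correctly flagged. The paper does not introduce a second buffer width; instead it invokes a sharper form of Lemma~\ref{(1) to (2): Lemma: Classical Morrey and Campanato Lemma}~(iii), extracted from the proof of \cite[Lem.~3.1]{ter-Elst_Rehberg}, which compares $u_t$ at $x$ on $O(x,\gamma r_0)$ directly with $(u_t)_0$ at $y$ on $B(y,\gamma r_0)$:
\[
|u_t(x)-(u_t)_0(y)| \les |x-y|^{\mu}\big([u_t]_{\cL^{d+2\mu}_{x,\gamma r_0}}+[(u_t)_0]_{\cL^{d+2\mu}_{y,\gamma r_0}(B(y,\gamma r_0))}\big).
\]
Your buffer-zone alternative (re-running Lemma~\ref{(1) to (2): Lemma: Induction lemma for Gaussian estimates} with threshold $\nicefrac{\delta}{4}$ so that for $|x-y|$ below a fixed fraction of $\delta$ both points fall into a common regime) is a legitimate self-contained substitute; it just costs a second pass through Lemmas~\ref{(1) to (2): Lemma: Campanato norm controlled}--\ref{(1) to (2): Lemma: Induction lemma for Gaussian estimates} with a smaller working radius, whereas the paper's route is shorter once the cited estimate is granted.
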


	\begin{proof}
		We prove the two bounds separately.

		\textbf{$\boldsymbol{\L^{\infty}}$-bound.} Let $r \coloneqq \gamma r_0 \sqrt{t} \e^{-t} \leq \gamma r_0$ and fix $x \in O$.

		\textbf{(1) $\boldsymbol{x \in O \cap N_{\nicefrac{\delta}{2}}}$.} Due to~\hyperref[The geometric setup: Proposition: (ICC)]{$\ICC$}, we can apply Lemma~\ref{(1) to (2): Lemma: Classical Morrey and Campanato Lemma} (ii) to get
		\begin{align*}
			|u_t(x)| &\les r^{\mu} [ u_t ]_{\cL^{d+2 \mu}_{x,r}} + |(u_t)_{O(x,r)}| \leq
			r^{\mu} [ u_t ]_{\cL^{d+2 \mu}_{x,r}} + |O(x,r)|^{-\frac{1}{2}} \| u_t \|_2.
		\end{align*}
		Lemma~\ref{(1) to (2): Lemma: Induction lemma for Gaussian estimates} controls the first summand and~\hyperref[The geometric setup: Proposition: (ICC)]{$\ICC$} together with the contractivity of the semigroup the second one:
		\begin{equation*}
			|u_t(x)| \les t^{- \frac{d}{4}} \e^{- \mu t} \e^{\omega t} \| u \|_2 + r^{- \frac{d}{2}} \| u \|_{2} \les t^{- \frac{d}{4}} \e^{(\omega \lor \frac{d}{2}) t} \| u \|_2.
		\end{equation*}
		\textbf{(2) $\boldsymbol{x \in O \setminus N_{\nicefrac{\delta}{2}}}$.} The argument is the same upon working with the $0$-extension of $u_t$ on $B(x, r)$ instead of $u_t$ on $O(x,r)$.

		\textbf{$\boldsymbol{\Cdot^{\mu}}$-bound.} Fix $x,y \in O$. For $|x-y| > \nicefrac{\gamma r_0}{2}$ we use the $\L^{\infty}$-bound and that $t^{\nicefrac{\mu}{2}} \leq \e^{\nicefrac{\mu t}{2}}$:
		\begin{equation*}
			t^{\frac{\mu}{2}} \frac{|u_t(x)-u_t(y)|}{|x-y|^\mu} \lesssim \e^{\frac{\mu}{2} t} \| u_t\|_\infty \leq c \e^{(\omega +\frac{\mu}{2}) t} t^{- \frac{d}{4}} \| u \|_2.
		\end{equation*}
		Now, let $|x-y| \leq \nicefrac{\gamma r_0}{2}$. We distinguish three cases.

		\textbf{(1) $\boldsymbol{x,y \in O \cap N_{\nicefrac{\delta}{2}}}$.} Lemma~\ref{(1) to (2): Lemma: Classical Morrey and Campanato Lemma} (iii) joint with Lemma~\ref{(1) to (2): Lemma: Induction lemma for Gaussian estimates} gives
		\begin{equation*}
			|u_t(x) - u_t(y) | \les |x-y|^{\mu} \big( [u_t]_{\cL^{d+2 \mu}_{x,\gamma r_0}} + [u_t]_{\cL^{d+2 \mu}_{y,\gamma r_0}} \big) \les |x-y|^{\mu} t^{- \frac{d}{4} - \frac{\mu}{2}} \e^{\omega t} \| u \|_2.
		\end{equation*}
		\textbf{(2) $\boldsymbol{x,y \in O \setminus N_{\nicefrac{\delta}{2}}}$.} This case is again identical to the first one upon working with the $0$-extension $(u_t)_0$.

		\textbf{(3) $\boldsymbol{x \in O \cap N_{\nicefrac{\delta}{2}}}$ and $\boldsymbol{y \in O \setminus N_{\nicefrac{\delta}{2}}}$.} The proof of \cite[Lem.~3.1]{ter-Elst_Rehberg} easily reveals the more precise estimate
		\begin{equation*}
			|u_t(x) - u_t(y)| = |u_t(x) - (u_t)_0(y)| \les |x-y|^{\mu} \big( [u_t]_{\cL^{d+2 \mu}_{x,\gamma r_0}} + [(u_t)_0]_{\cL^{d+ 2 \mu}_{y,\gamma r_0}(B(y,\gamma r_0))} \big),
		\end{equation*}
		which is enough to conclude once more by Lemma~\ref{(1) to (2): Lemma: Induction lemma for Gaussian estimates}.
	\end{proof}

	We come to the main result, Theorem~\ref{(1) to (2): Main result}. For this we need a criterion to decide when a linear operator is given by a measurable kernel, known as the Dunford--Pettis theorem.

	\begin{theorem}[Dunford--Pettis, {\cite[Thm.~1.3]{Dunford-Pettis}}] \label{D(mu) to G(mu): Theorem: Dunford-Pettis}
		Let $p \in [1, \infty)$. The map
		\begin{equation*}
			\L^{\infty}(O; \L^{p'}(O)) \to \cL(\L^p(O), \L^{\infty}(O)), \quad K \mapsto \left( f \mapsto \int_O K(\cdot, y) f(y) \, \d y \right)
		\end{equation*}
		is an isometric isomorphism.
	\end{theorem}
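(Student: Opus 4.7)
The map $\Phi \colon K \mapsto T_K$, where $(T_K f)(x) \coloneqq \int_O K(x,y) f(y) \, \d y$, is clearly linear. I would establish in order: (a) well-definedness and the bound $\|T_K\|_{\L^p \to \L^\infty} \leq \|K\|_{\L^\infty(\L^{p'})}$; (b) the reverse estimate, which simultaneously yields injectivity; (c) surjectivity by reconstructing a kernel from a given operator.

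\textbf{The easy direction.} For $p \in (1, \infty)$ the space $\L^{p'}(O)$ is separable, so any strongly measurable representative of $K \in \L^\infty(O; \L^{p'}(O))$ can, via Pettis' theorem, be identified with a jointly measurable function on $O \times O$. Then for each $f \in \L^p(O)$ and a.e.\ $x \in O$, Hölder's inequality gives $\bigl|\int_O K(x,y) f(y) \, \d y\bigr| \leq \|K(x, \cdot)\|_{p'} \, \|f\|_p$, and $x \mapsto (T_K f)(x)$ is measurable by Fubini. Taking essential supremum in $x$ delivers $\|T_K f\|_\infty \leq \|K\|_{\L^\infty(\L^{p'})} \|f\|_p$. (The case $p=1$ reduces to identifying $\L^\infty(O; \L^\infty(O))$ with $\L^\infty(O \times O)$.) For the reverse inequality, fix a countable dense sequence $(f_n)_n$ in the unit ball of $\L^p(O)$ and pick pointwise representatives of $T_K f_n \in \L^\infty(O)$ bounded by $\|T_K\|$ off a null set $N_n$. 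Outside $N \coloneqq \bigcup_n N_n$, $\L^p$–$\L^{p'}$ duality and density give
\begin{equation*}
\|K(x,\cdot)\|_{p'} = \sup_n \biggl|\int_O K(x,y) f_n(y) \, \d y\biggr| = \sup_n |(T_K f_n)(x)| \leq \|T_K\|,
\end{equation*}
so $\|K\|_{\L^\infty(\L^{p'})} \leq \|T_K\|$. Injectivity is immediate.

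\textbf{Surjectivity.} Given $T \in \cL(\L^p(O), \L^\infty(O))$, fix a countable $\Q$-linear dense subspace $\cF \subset \L^p(O)$ generated by a dense sequence $(f_n)_n$, and choose pointwise representatives $g_n$ of $T f_n$ with $\|g_n\|_\infty = \|T f_n\|_\infty$. Working off a single null set $N$ (the countable union of the exceptional sets for the identities $T(a f_n + b f_m) = a Tf_n + b T f_m$ with $a, b \in \Q + \i \Q$, and for the bounds $|g_n(x)| \leq \|T\|\|f_n\|_p$), the assignment $f_n \mapsto g_n(x)$ defines, for each $x \notin N$, a $\Q$-linear functional on $\cF$ of norm at most $\|T\|$, which extends uniquely to a continuous linear functional $\Lambda_x \in (\L^p(O))^*$ with $\|\Lambda_x\| \leq \|T\|$. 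Riesz representation produces a unique $K(x, \cdot) \in \L^{p'}(O)$ with $\Lambda_x(f) = \int_O K(x,y) f(y) \, \d y$; set $K(x,\cdot) \equiv 0$ on $N$. The map $x \mapsto K(x,\cdot)$ is weakly measurable because $x \mapsto \langle K(x,\cdot), f \rangle = (Tf)(x)$ is measurable for every $f$; by Pettis' theorem and separability of $\L^{p'}(O)$, it is strongly measurable, and the uniform bound $\|K(x,\cdot)\|_{p'} \leq \|T\|$ places $K$ in $\L^\infty(O; \L^{p'}(O))$. By construction, $T f = T_K f$ for $f \in \cF$, hence everywhere by density.

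\textbf{Main obstacle.} The delicate point is choosing pointwise representatives of countably many elements of $\L^\infty(O)$ coherently enough that linear relations survive on a single conull set, so that $\Lambda_x$ is genuinely linear for every $x$ outside one exceptional null set. Once this bookkeeping is done, the measurability of $x \mapsto K(x,\cdot)$ reduces to Pettis' theorem via the separability of $\L^{p'}(O)$; this is the place where the restriction $p \in [1, \infty)$ (so that $p' > 1$ or a direct identification is available) enters crucially.
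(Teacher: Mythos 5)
You should note first that the paper does not prove this statement at all: it is quoted verbatim from Arendt--Bukhvalov \cite[Thm.~1.3]{Dunford-Pettis}, so there is no in-paper proof to compare against and your argument has to stand on its own. For $p\in(1,\infty)$ it essentially does: the Hölder bound, the reverse estimate via a countable dense subset of the unit ball of $\L^p(O)$ together with $\L^p$--$\L^{p'}$ duality, and the surjectivity construction through the functionals $\Lambda_x$, Riesz representation and Pettis' theorem is the standard route, and is correct modulo the bookkeeping you yourself flag (the norm bounds $|g_h(x)|\leq \|T\|\,\|h\|_p$ and the linearity relations must be imposed for \emph{all} countably many elements $h$ of the $\Q+\i\Q$-span $\cF$, not only for the generators and pairs of generators, since a bound at the generators does not transfer to their combinations).

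The genuine gap is the case $p=1$. The parenthetical identification of $\L^{\infty}(O;\L^{\infty}(O))$ with $\L^{\infty}(O\times O)$ is false if the former is understood in the Bochner (strongly measurable) sense in which you use it everywhere else: for $K(x,y)=\1_{[y\leq x]}$ on $(0,1)^2$ one has $K\in\L^{\infty}(O\times O)$, yet $\|K(x,\cdot)-K(x',\cdot)\|_{\infty}=1$ for $x\neq x'$, so $x\mapsto K(x,\cdot)$ is not essentially separably valued and hence not strongly measurable. Correspondingly, your surjectivity step breaks down for $p=1$: $\L^{p'}(O)=\L^{\infty}(O)$ is not separable, so Pettis' theorem is unavailable, and testing against $f\in\L^1(O)$ only gives weak-$*$ measurability of $x\mapsto K(x,\cdot)$; for the operator with the kernel above, strong measurability genuinely fails, so no construction can land in the Bochner space. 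To repair this one must either interpret $\L^{\infty}(O;\L^{\infty}(O))$ as the cited reference does (weak-$*$ measurable essentially bounded functions, equivalently kernels in $\L^{\infty}(O\times O)$) or prove the $p=1$ case by a different mechanism, for instance via the chain of isometries $\cL(\L^1(O),\L^{\infty}(O))\cong(\L^1(O)\otimes_{\pi}\L^1(O))^*\cong\L^1(O\times O)^*\cong\L^{\infty}(O\times O)$, which avoids all pointwise measurable-selection issues; with that case handled, your argument for $p\in(1,\infty)$ stands.
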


	Armed with this result, we are going to use off-diagonal estimates as follows:

	\begin{corollary} \label{D(mu) to G(mu): Corollary: G(mu) equivalent to L2-Linfty-ODE + L2-Holder}
		Consider the following two statements:
		\begin{enumerate}
			\item There are $p \in [1,2]$, $\mu \in (0,1]$ and $c, \omega > 0$ such that
			\begin{equation} \label{eq: D(mu) to G(mu): Reduction L2-Linfty_ODE}
				\| \1_F \e^{-t L} \1_E u \|_{\infty}  + \| \1_F \e^{-t L^*} \1_E u \|_{\infty} \les \e^{\omega t} t^{- \frac{d}{2p}} \e^{-c \frac{\d(E,F)^2}{t}} \| \1_E u \|_p,
			\end{equation}
			for all measurable sets $E, F \sub O$, $t > 0$ and $u \in \L^2(O)$ as well as
			\begin{equation} \label{eq: D(mu) to G(mu): Reduction L2-Cmu}
				[ \e^{-t L} u]_O^{(\mu)} +  [ \e^{-t L^*} u]_O^{(\mu)} \les \e^{\omega t} t^{- \frac{\mu}{2} - \frac{d}{2p}} \| u \|_p \qquad (t > 0, u \in \L^2(O)).
			\end{equation}

			\item $L$ has property $\rG(\mu)$.
		\end{enumerate}
		Then $\mathrm{(i)}$ implies $\mathrm{(ii)}$ and, conversely, $\mathrm{(ii)}$ implies $\mathrm{(i)}$ for every $p \in [1,2]$ and any $\nu \in (0, \mu)$ in place of $\mu$.
	\end{corollary}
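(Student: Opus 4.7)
\textbf{Forward direction $\mathrm{(i)} \Rightarrow \mathrm{(ii)}$.} The plan is to first upgrade the off-diagonal bound \eqref{eq: D(mu) to G(mu): Reduction L2-Linfty_ODE} from $\L^p$ to $\L^1$ input, retaining Gaussian decay and attaining the scaling $t^{-d/2}$ on the $\L^\infty$ output. I would dualize \eqref{eq: D(mu) to G(mu): Reduction L2-Linfty_ODE} applied to $L^*$ to obtain an $\L^1$-$\L^{p'}$ off-diagonal bound for $e^{-tL}$ with the same Gaussian factor, and then compose via $e^{-tL} = e^{-tL/2} \circ e^{-tL/2}$ with \eqref{eq: D(mu) to G(mu): Reduction L2-Linfty_ODE} itself. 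When $1<p<2$ so that $p'\neq p$, the remaining integrability gap is closed by iterating the composition finitely many times and interpolating the intermediate $\L^q$-$\L^r$ bounds against the $\L^2$-$\L^2$ off-diagonal estimate of Proposition~\ref{Operator theoretic setting and relevant function spaces: Proposition: L2 ODE} via Riesz--Thorin. A standard off-diagonal composition argument controls the products of Gaussian factors and yields
\begin{equation*}
\| \1_F e^{-tL} \1_E u \|_\infty \les e^{\omega t} t^{-d/2} e^{-c \d(E,F)^2/t} \|u\|_1.
\end{equation*}
Applying Theorem~\ref{D(mu) to G(mu): Theorem: Dunford-Pettis} with $p=1$ to every pair $(E,F)$ then produces a single measurable kernel $K_t$ on $O \times O$ satisfying (G1) and (G2).

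For (G3) I would perform the analogous upgrade on \eqref{eq: D(mu) to G(mu): Reduction L2-Cmu} (composing the resulting $\L^p$-$\Cdot^\mu$ estimate with the $\L^1$-$\L^p$ factor built in the previous step) to obtain an $\L^1$-$\Cdot^\mu$ bound. Testing this against normalized approximate delta functions $|B(y_0,\varepsilon)|^{-1} \1_{B(y_0,\varepsilon)}$ and letting $\varepsilon \to 0$ yields the H\"older bound for $K_t(\cdot, y_0)$ in the first variable with the correct scaling $t^{-d/2 - \mu/2}$; using the pointwise Gaussian from (G2) justifies passing to the limit. A symmetric application to $L^*$ (whose kernel is $\overline{K_t(y,x)}$) yields H\"older continuity in the second variable, completing (G3).

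\textbf{Reverse direction $\mathrm{(ii)} \Rightarrow \mathrm{(i)}$.} Both conclusions follow by direct estimation from the integral representation $e^{-tL} u(x) = \int_O K_t(x,y) u(y) \, \d y$. For \eqref{eq: D(mu) to G(mu): Reduction L2-Linfty_ODE}, H\"older's inequality combined with (G2) gives
\begin{equation*}
| \1_F e^{-tL} \1_E u(x) | \leq \| K_t(x,\cdot) \|_{\L^{p'}(E)} \| u \|_{\L^p(E)},
\end{equation*}
and the $\L^{p'}$-norm of the Gaussian over $E$ is bounded using $|x-y|^2 \geq \tfrac{1}{2}|x-y|^2 + \tfrac{1}{2} \d(E,F)^2$ (valid for $x\in F$, $y\in E$); the resulting Gaussian integral of order $t^{d/(2p')}$ times the $t^{-d/2}$ prefactor produces the scaling $t^{-d/(2p)}$ while preserving Gaussian decay in $\d(E,F)$. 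The same argument applies verbatim to $L^*$. For \eqref{eq: D(mu) to G(mu): Reduction L2-Cmu}, I would use the refined H\"older-with-Gaussian bound on $K_t$ from Remark~\ref{D(mu), G(mu) and H(mu) for MBC: Remark: G(mu) stability}~(ii): splitting the integration into $|x-x'| \leq |x-y|/2$ (where the refined bound applies) and its complement (handled by (G2) applied twice, after absorbing a polynomial factor into a slightly weaker Gaussian), the same H\"older-inequality computation delivers the stated estimate for any $\nu \in (0,\mu)$.

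The principal obstacle is the composition/iteration step upgrading $\L^p$ input to $\L^1$ input in the forward direction: when $p$ is close to $2$ a single composition suffices, but for general $p\in(1,2)$ several iterations are required, and one must carefully track both the products of Gaussian constants (which degrade the decay constant $c$) and the compounding prefactors $e^{\omega t}$. Once the $\L^1$-$\L^\infty$ and $\L^1$-$\Cdot^\mu$ endpoints are in place, Dunford--Pettis and the extraction of pointwise kernel bounds via approximate identities proceed without further difficulty.
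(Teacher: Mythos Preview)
Your proposal is correct and follows essentially the same approach as the paper: both directions proceed by the same mechanisms (composition/interpolation via the semigroup law and the $\L^2$ off-diagonal estimates to reach $\L^1$--$\L^\infty$ bounds, Dunford--Pettis to extract the kernel, and direct Gaussian integration via H\"older's inequality together with the refined bound of Remark~\ref{D(mu), G(mu) and H(mu) for MBC: Remark: G(mu) stability}~(ii) for the reverse). The only differences are presentational: the paper outsources the change-of-exponents step to \cite{Auscher-Egert}, reads off (G2)--(G3) by taking $E,F$ as shrinking balls rather than testing against approximate deltas, and computes the $\L^{p'}$-norm of the Gaussian via polar coordinates rather than via your splitting $|x-y|^2 \geq \tfrac12 |x-y|^2 + \tfrac12 \d(E,F)^2$.
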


	The result is not particularly deep, but we believe that the precise formulation and the flexibility coming from the exponent $p$ will also be useful for other applications.

	\begin{proof}
		\textbf{(i) $\boldsymbol{\Longrightarrow}$ (ii):} The assumption with $E=F=O$ and Theorem~\ref{D(mu) to G(mu): Theorem: Dunford-Pettis} imply that $\e^{-tL}$ is given by a measurable kernel, $(\e^{-tL}f)(x) = \int_O K_t(x,y)f(y) \, \d y$ say, which is (G1). But this does not yet give the desired pointwise estimates.

		To this end, we use the change-of-exponents formulas from \cite[Chap.~4]{Auscher-Egert} for this type of off-diagonal estimates and the semigroup law to obtain from \eqref{eq: D(mu) to G(mu): Reduction L2-Linfty_ODE} and \eqref{eq: D(mu) to G(mu): Reduction L2-Cmu} the following two estimates:\footnote{In the language of \cite{Auscher-Egert} we start with $\L^p - \L^{\infty}$, go to $\L^2 - \L^{\infty}$ (\cite[Rem.~4.8 \& Lem.~ 4.14]{Auscher-Egert}) and $\L^1 - \L^2$ (duality), and finally to $\L^1 - \L^{\infty}$ (\cite[Lem.~4.6]{Auscher-Egert}).} There are $c, \omega > 0$ such that for all measurable sets $E, F \sub O$, $t > 0$ and $u \in \L^1(O) \cap \L^2(O)$ we have
		\begin{align}   \label{eq: D(mu) to G(mu): Reduction L1-Linfty-ODE}
			\| \1_F \e^{-t L} \1_E u \|_{\infty} + \| \1_F \e^{-t L^*} \1_E u \|_{\infty}
			&\les \e^{\omega t} t^{- \frac{d}{2}} \e^{-c \frac{\d(E,F)^2}{t}} \| \1_E u \|_1, \\
			\label{eq: D(mu) to G(mu): Reduction L1-Cmu}
			[ \e^{-t L} u]_O^{(\mu)} + [ \e^{-t L^*} u]_O^{(\mu)}  &\les \e^{\omega t} t^{- \frac{\mu}{2} - \frac{d}{2}} \| u \|_1.
		\end{align}
		Using the kernel and the $\L^1-\L^\infty$-duality on $E$, the first bound means that
		\begin{equation}
			\underset{x \in F,\, y\in E}{\esssup} | K_t(x,y)|\lesssim \e^{\omega t} t^{- \frac{d}{2}} \e^{-c \frac{\d(E,F)^2}{t}}.
		\end{equation}
		Taking $E$ and $F$ as balls in $O$ with small radius around $x$ and $y$ yields the pointwise bound (G2). In the same manner, the H\"older bound for the semigroup means that
		\begin{equation}
			\underset{y \in O}{\esssup} \, | K_t(x,y)-K_t(x',y)| \les \e^{\omega t} t^{- \frac{\mu}{2}-\frac{d}{2}} |x - x'|^{\mu},
		\end{equation}
		for all $x,x' \in O$, which is one half of (G3). The other half follows from the bounds for $\e^{-tL^*}$ with kernel $K^*_t(x,y)= \overline{K_t(y,x)}$.

		\textbf{(ii) $\boldsymbol{\Longrightarrow}$ (i):} Since $\rG(\mu)$ is stable under taking adjoints and $L^*$ is of the same type as $L$, it suffices to prove \eqref{eq: D(mu) to G(mu): Reduction L2-Linfty_ODE} and \eqref{eq: D(mu) to G(mu): Reduction L2-Cmu} for $L$. Given $x \in O$, we use (G2) and polar coordinates to get
		\begin{align}
			\label{eq: D(mu) to G(mu): Corollary: p'kernel bound}
			\begin{split}
				\left( \int_E |K_t(x,y)|^{p'} \, \d y \right)^{\frac{1}{p'}} &\les t^{- \frac{d}{2}} \e^{\omega t} \left( \int_{\d_E(x)}^{\infty} \e^{- p' b \frac{r^2}{t}} r^d \, \frac{\d r}{r} \right)^{\frac{1}{p'}}
				\\&= t^{- \frac{d}{2p}} \e^{\omega t} \left( \int_{\nicefrac{\d_E(x)}{\sqrt{t}}}^{\infty} \e^{- p' b r^2} r^d \, \frac{\d r}{r} \right)^{\frac{1}{p'}} \les t^{- \frac{d}{2p}} \e^{\omega t} \e^{- b \frac{\d_E(x)^2}{2 t} },
			\end{split}
		\end{align}
		with the obvious modifications when $p' = \infty$. Hence, \eqref{eq: D(mu) to G(mu): Reduction L2-Linfty_ODE} follows from Hölder's inequality. To prove \eqref{eq: D(mu) to G(mu): Reduction L2-Cmu}, we let $x \in O$ and $h \in \R^d \setminus \{ 0 \}$ with $x + h \in O$. First, we consider the case $|h| \leq \sqrt{t}$. We split the domain of integration of
		\begin{equation*}
			\left( \int_O |K_t(x+ h, y) - K_t(x,y)|^{p'} \, \d y \right)^{\frac{1}{p'}}
		\end{equation*}
		into the two regions $O(x, 2 |h|)$ and $O \setminus B(x, 2 |h|)$. On $O(x, 2 |h|)$ we invoke (G3) and obtain due to $|h| \leq \sqrt{t}$ that
		\begin{equation*}
			\left( \int_{O(x, 2 |h|)} |K_t(x+ h, y) - K_t(x,y)|^{p'} \, \d y \right)^{\frac{1}{p'}} \les \e^{\omega t} t^{- \frac{\mu}{2} - \frac{d}{2}} |h|^{\mu + \frac{d}{p'}} \leq \e^{\omega t} t^{- \frac{\mu}{2} - \frac{d}{2p}} |h|^{\mu}.
		\end{equation*}
		On $O \setminus B(x, 2 |h|)$ we use Remark~\ref{D(mu), G(mu) and H(mu) for MBC: Remark: G(mu) stability} (ii), which delivers the estimate
		\begin{align*}
			\left( \int_{O \setminus B(x, 2|h|)} |K_t(x+ h, y) - K_t(x,y)|^{p'} \, \d y \right)^{\frac{1}{p'}} &\les \e^{\omega t} t^{- \frac{\nu}{2} - \frac{d}{2}} |h|^{\nu} \left( \int_{2 |h|}^{\infty} \e^{- p' b \frac{r^2}{t}} r^d \, \frac{\d r}{r} \right)^{\frac{1}{p'}}
			\\&= \e^{\omega t} t^{- \frac{\nu}{2} - \frac{d}{2p}} |h|^{\nu} \left( \int_{\nicefrac{2 |h|}{\sqrt{t}}}^{\infty} \e^{- p' b r^2} r^d \, \frac{\d r}{r} \right)^{\frac{1}{p'}}
			\\&\les \e^{\omega t} t^{- \frac{\nu}{2} - \frac{d}{2p}} |h|^{\nu}.
		\end{align*}
		For $|h| > \sqrt{t}$ these estimates also hold, simply by \eqref{eq: D(mu) to G(mu): Corollary: p'kernel bound} and the triangle inequality. Combining the last two estimates, \eqref{eq: D(mu) to G(mu): Reduction L2-Cmu} follows again from Hölder's inequality.
	\end{proof}

	\begin{proof}[\rm\bf{Proof of Theorem~\ref{(1) to (2): Main result}.}] Of course, we base the argument on Corollary~\ref{D(mu) to G(mu): Corollary: G(mu) equivalent to L2-Linfty-ODE + L2-Holder} with $p=2$.  Since \eqref{eq: D(mu) to G(mu): Reduction L2-Cmu} has been shown in Proposition~\ref{(1) to (2): Proposition: Almost Gaussian estimates}, it remains to show \eqref{eq: D(mu) to G(mu): Reduction L2-Linfty_ODE}. Since $L$ and $L^*$ are of the same type, we only need to argue for $L$. The missing control of the $\L^\infty$-norm from the H\"older bound \eqref{eq: D(mu) to G(mu): Reduction L2-Cmu} and the $\L^2$-theory is a refined version of the proof of Lemma~\ref{(D(mu), G(mu) and H(mu) for MBC: Lemma: Add L-infinity part in H(mu)}.

		Let $r > 0$, $x \in F$ and normalize $\| \1_E u \|_2 = 1$. We have for all $y \in O(x,r)$ that
		\begin{equation*}
			|(\1_E u)_{t}(x)| \leq [(\1_E u)_{t}]_O^{(\mu)} r^{\mu} + |(\1_E u )_{t}(y)|.
		\end{equation*}
		By averaging over $O(x,r)$ with respect to $y$, using \eqref{eq: D(mu) to G(mu): Reduction L2-Cmu} for the first summand and H\"older's inequality for the second one, we deduce
		\begin{equation*}
			|(\1_E u)_{t}(x)| \les \e^{\omega t} t^{- \frac{d}{4}} \left( \frac{r}{\sqrt{t}} \right)^{\mu} + |O(x,r)|^{-\frac{1}{2}} \| \1_{O(x,r)} (\1_E u)_{t} \|_2.
		\end{equation*}
		We pick $r \coloneqq \e^{\nicefrac{- c' \d(E,F)^2}{t}} \sqrt{t}$, with $c' > 0$ to be chosen, to get
		\begin{equation}
			|(\1_E u)_{t}(x)| \les \e^{\omega t} t^{-\frac{d}{4}} \e^{- c' \mu \frac{\d(E,F)^2}{t}} +  |O(x,r)|^{-\frac{1}{2}} \| \1_{O(x,r)} (\1_E u)_t \|_2. \label{eq: D(mu) to G(mu): Final proof, 1}
		\end{equation}
		It remains to bound the second summand. Independently of our choice of $c'$ we have $\d(O(x,r), E) \geq \d(E,F) - \sqrt{t}$. Now, we consider three cases.

		\textbf{(1) $\boldmath \sqrt{t} \leq \nicefrac{\delta}{4}$ and $\boldsymbol{x \in N_{\nicefrac{\delta}{2}}}$.} Then $|O(x,r)| \simeq r^{d}$ by~\hyperref[The geometric setup: Proposition: (ICC)]{$\ICC$}. If $\sqrt{t} \leq \nicefrac{\d(E,F)}{2}$, then $\d(O(x,r),E) \geq \nicefrac{\d(E,F)}{2}$ and Proposition~\ref{Operator theoretic setting and relevant function spaces: Proposition: L2 ODE} yields
		\begin{equation*}
			|O(x,r)|^{-\frac{1}{2}} \| \1_{O(x,r)} (\1_E u)_{t} \|_2 \les r^{-\frac{d}{2}} \e^{-c \frac{\d(E,F)^2}{4t}} = t^{- \frac{d}{4}} \e^{- (\frac{c}{4} - \frac{d}{2} c') \frac{\d(E,F)^2}{t}}.
		\end{equation*}
		Therefore, we pick $c' < \nicefrac{c}{4d}$ to conclude. If $\sqrt{t} > \nicefrac{\d(E,F)}{2}$, then the same estimate holds as it is not saying anything more than $\L^2$-boundedness of the semigroup.

		\textbf{(2) $\boldmath \sqrt{t} \leq \nicefrac{\delta}{4}$ and $\boldsymbol{x \in F \setminus N_{\nicefrac{\delta}{2}}}$.} Since $r \leq \nicefrac{\delta}{4}$, we have either $B(x,r) \sub O$, in which case we can proceed as before, or $D(x,r) \neq \varnothing$. In the latter case, we pick $z \in D(x,r)$ and start over new. Namely, since we have $(\1_E u)_{t}(z) = 0$ by Lemma~\ref{(D(mu), G(mu) and H(mu) for MBC: Lemma: Pointwise real. of bdr. conditions}, we get directly that
		\begin{equation*}
			|(\1_E u)_{t}(x)| \leq [(\1_E u)_{t}]_O^{(\mu)} r^{\mu} \les  t^{-\frac{d}{4}} \e^{- c' \mu \frac{\d(E,F)^2}{t}}.
		\end{equation*}
		\textbf{(3) $\boldmath \sqrt{t} > \nicefrac{\delta}{4}$.} Let $G$ be the set of all $x \in O$ with $\d_F(x) < \nicefrac{\d(E,F)}{2}$. We split
		\begin{equation*}
			\| \1_F (\1_E u)_{t} \|_{\infty} \leq \| \e^{- \frac{\delta^2}{16} L} \1_G \e^{- (t- \frac{\delta^2}{16}) L} \1_E u \|_{\infty} + \| \1_F \e^{- \frac{\delta^2}{16} L} \1_{G^c} \e^{- (t- \frac{\delta^2}{16}) L} \1_E u \|_{\infty}.
		\end{equation*}
		Using \eqref{eq: L2 - Linfty + Holder estimate} and Proposition~\ref{Operator theoretic setting and relevant function spaces: Proposition: L2 ODE} to bound the first summand and \eqref{eq: D(mu) to G(mu): Reduction L2-Linfty_ODE} with $t = \nicefrac{\delta^2}{16}$ from the previous cases (1) and (2) for the second one, we infer
		\begin{equation*}
			\| \1_F (\1_E u)_{t} \|_{\infty} \les \e^{-c \frac{\d(E,F)^2}{4 t- \nicefrac{\delta^2}{4}}} + \e^{- \frac{4 c}{\delta^2} \, \d (E,F)^2} \leq 2\e^{- \frac{c}{4} \frac{\d(E,F)^2}{t}} \lesssim \e^{\omega t} t^{-\frac{d}{4}} \e^{- \frac{c}{4} \frac{\d(E,F)^2}{t}} .
		\end{equation*}
		This completes the proof of \eqref{eq: D(mu) to G(mu): Reduction L2-Linfty_ODE}, hence of the theorem.
	\end{proof}

	\begin{remark} \label{D(mu) to G(mu): Remark: Linfty bound only for fixed t}
		In the above proof we have used the $\L^{\infty}$-bound in \eqref{eq: L2 - Linfty + Holder estimate} only for one fixed value of $t$, namely for $t = \nicefrac{\delta^2}{16}$ in the third step of the argument. This observation will be useful in Section~\ref{Section: Property G(mu) for d=2}.
	\end{remark}

	\section{From $\rG(\mu)$ to $\H(\mu)$} \label{Section: (2) to (3)}

	We highlight that this implication does not depend on the geometry at all, which is a fundamental difference compared to the other parts of the equivalence. Heuristically, this is due to the fact that $\rG(\mu)$ is a global property and $\rD(\mu)$ and $\H(\mu)$ are local ones.

	\begin{theorem} \label{(2) to (3): Theorem: G(mu) implies H(mu)}
		Assume that $L$ has property $\rG(\mu_0)$ and let $\mu \in (0, \mu_0)$. Then $L$ and $L^*$ have property $\mathrm{H}(\mu)$.
	\end{theorem}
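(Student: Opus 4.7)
The plan is to execute a localized Duhamel argument combining the pointwise kernel estimates from $\rG(\mu_0)$ with the form structure of $L$. Since $\rG(\mu_0)$ is adjoint-stable (Remark~\ref{D(mu), G(mu) and H(mu) for MBC: Remark: G(mu) stability}(i)) and $L^*$ is a divergence-form operator of the same type, I would only treat $L$.

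Fix $\mu \in (0, \mu_0)$, $x \in \overline{O}$, $r \in (0,1]$ and $u \in \H_D^1(O)$ with $L_D u = 0$ in $O(x,r)$. I would introduce a smooth cutoff $\chi \in \smooth[\R^d]$ with $\chi \equiv 1$ on $B(x, \tfrac{3r}{4})$, $\supp \chi \sub B(x, \tfrac{7r}{8})$ and $\|\nabla^k \chi\|_{\infty} \les r^{-k}$; then $v \coloneqq \chi u \in \H_D^1(O)$ by Lemma~\ref{D(mu), G(mu) and H(mu) for MBC: Lemma: Sobolev extension by 0}(ii) and $v = u$ on $O(x, \tfrac{r}{2})$. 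For Lebesgue points $y, y' \in O(x, \tfrac{r}{2})$ with $|y-y'| \leq r/2$ and a parameter $t > 0$ to be optimized, I would split
\begin{equation*}
u(y) - u(y') = \bigl[v(y) - (\e^{-tL}v)(y)\bigr] - \bigl[v(y') - (\e^{-tL}v)(y')\bigr] + \bigl[(\e^{-tL}v)(y) - (\e^{-tL}v)(y')\bigr].
\end{equation*}
The last bracket is controlled by (G3) combined with Remark~\ref{D(mu), G(mu) and H(mu) for MBC: Remark: G(mu) stability}(ii), giving a bound of the form $c t^{-d/4 - \nu/2} |y-y'|^\nu \|v\|_2 \e^{\omega t}$ for any $\nu \in (\mu, \mu_0)$.

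The heart of the argument is the pointwise estimate $|v(y) - (\e^{-tL} v)(y)| \les r^{-d/2} \e^{-c r^2/t} \|u\|_{\L^2(O(x,r))}$ on $O(x, \tfrac{r}{2})$, valid for $t \in (0, r^2]$. Its proof rests on the observation that, as an element of $\H_D^{-1}(O)$, $Lv$ is supported in $\supp \nabla \chi \sub B(x, \tfrac{7r}{8}) \setminus B(x, \tfrac{3r}{4})$: indeed, for any $\psi \in \H_D^1(O)$ the function $\chi \psi$ lies in $\H^1_{\partial O(x,r) \setminus N(x,r)}(O(x,r))$ by Lemma~\ref{D(mu), G(mu) and H(mu) for MBC: Lemma: Sobolev extension by 0}(ii), so the local equation $a(u, \chi \psi) = 0$ cancels the interior part of $a(\chi u, \psi) = \langle Lv, \psi \rangle$. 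Via the Duhamel representation $v - \e^{-tL}v = \int_0^t \e^{-sL} Lv \, \d s$, the integrand expands into a zero-order part $\int K_s(y,\cdot) A \nabla u \cdot \nabla \chi$ controlled directly by (G2) and a Caccioppoli estimate for $\nabla u$ in $O(x, \tfrac{7r}{8})$, and a divergence part $\e^{-sL} \Div(u A \nabla \chi)$ controlled in an $\L^2$-averaged way via Proposition~\ref{Operator theoretic setting and relevant function spaces: Proposition: L2 ODE} (gradient off-diagonal for $\sqrt{s}\nabla \e^{-sL^*}$ by duality) and a subsequent $\L^2 \to \L^\infty$ step based on the pointwise kernel bound (G2) for $\e^{-sL/2}$. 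Since $y$ stays at distance $\geq r/4$ from $\supp \nabla \chi$, both contributions carry a Gaussian factor $\e^{-br^2/s}$ which dominates the $s^{-d/2-1}$-type time-derivative blow-up, and integration in $s$ yields the claim.

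Balancing the two bounds via $t = r^{2(1-\alpha)} |y-y'|^{2\alpha}$ with $\alpha = (\nu-\mu)/(\nu + d/2)$ produces $|u(y)-u(y')| \les r^{-d/2}(|y-y'|/r)^\mu \|u\|_{\L^2(O(x,r))}$ for $|y-y'| \leq r/2$; the complementary range $|y-y'| > r/2$ reduces to a pointwise $\L^\infty$ bound on $v$ that comes from the same decomposition with $t = r^2$ together with the $\L^2 \to \L^\infty$ kernel estimate. This identifies $u$ with a continuous representative on $O(x, \tfrac{r}{2})$ meeting Definition~\ref{D(mu), G(mu) and H(mu) for MBC: Definition: H(mu)}. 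The main obstacle is the divergence term: since (G3) only provides H\"older regularity of the kernel, $\nabla_z K_s(y,\cdot)$ is unavailable pointwise, and one must preserve the Gaussian decay $\e^{-br^2/s}$ while passing through the $\L^2$-off-diagonal bound via an intermediate $\e^{-sL/2}$-step.
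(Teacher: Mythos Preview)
Your approach is correct and shares the paper's key ingredients (the cutoff $\chi$, the observation that the equation $L_Du=0$ forces $Lv$ to be supported in $\supp\nabla\chi$, Duhamel's formula, Caccioppoli, and the combination of pointwise kernel bounds with $\L^2$ off-diagonal estimates to handle the divergence term), but the organization is genuinely different. The paper argues by duality: it tests $u$ against $(\tau_{-h}-1)\varphi$ for a bump $\varphi$ near $y$, passes to $\e^{-tL^*}\varphi_h$ via the fundamental theorem of calculus on $[0,r^2]$, and encodes the $|h|^\mu$ gain through an auxiliary interpolated off-diagonal estimate (the preliminary ``claim'') that mixes the H\"older bound with Gaussian decay. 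You instead work pointwise with $\e^{-tL}v$, cleanly separating the decomposition into a smooth term (where all the H\"older regularity comes from the kernel estimate alone) and a Duhamel remainder $v-\e^{-tL}v$ carrying only exponential decay $\e^{-cr^2/t}$ with no $|y-y'|$ dependence, and you recover the exponent $\mu$ by optimizing $t$ afterwards. Your route avoids the duality layer and the intermediate claim, at the price of having to make the factorization $\e^{-sL}=\e^{-sL/2}\e^{-sL/2}$ explicit in order to turn the divergence part into a pointwise estimate; the paper's duality hides this step inside the pairing with $\nabla\e^{-tL^*}\varphi_h$. Both lead to the same conclusion with comparable effort.
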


	The idea of the proof dates back to \cite{Auscher_Tchamitchian_Kato}. However, using the equivalent formulation of $\rG(\mu)$ from Corollary~\ref{D(mu) to G(mu): Corollary: G(mu) equivalent to L2-Linfty-ODE + L2-Holder}, we provide a shorter argument even when $O=\R^d$. Before we start with the proof, let us recall Caccioppoli's inequality for mixed boundary conditions. The proof is identical to the standard argument in e.g.\@ \cite[Lem.~16.6]{Auscher-Egert} since the test function class for $L_D u =0$ is invariant under multiplication with functions in $\smooth[\R^d]$.

	\begin{lemma}[Caccioppoli inequality]
		\label{(2) to (3): Lemma: Caccioppoli}
		Let $x \in \overline{O}$, $r > 0$, $c \in (0,1)$. If $u \in \H^1_D(O)$ solves $L_Du =0$ in $O(x,r)$, then
		\begin{equation*}
			r \| \nabla u \|_{\L^2(O(x, c r))} \les_{\lambda, \Lambda} \frac{1}{1 - c} \| u - \1_{[\d_D(x) > r]} \cdot (u)_{O(x,r)} \|_{\L^2(O(x,r))}.
		\end{equation*}
	\end{lemma}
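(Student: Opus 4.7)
The plan is a classical Caccioppoli argument with a smooth cutoff; the only non-standard input is the choice of constant subtracted from $u$ that makes $\eta^2(u-\kappa)$ an admissible test function for the local problem.

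First, we fix $\eta\in\smooth[B(x,r)]$ with $\eta\equiv 1$ on $B(x,cr)$ and $|\nabla\eta|\leq C_d/((1-c)r)$, and set $\kappa:=\1_{[\d_D(x)>r]}(u)_{O(x,r)}$ together with $v:=\eta^2(u-\kappa)$. We check that $v\in\H^1_{\partial O(x,r)\setminus N(x,r)}(O(x,r))$ by a case split. If $\d_D(x)>r$, then $\overline{B(x,r)}\cap D=\varnothing$, and a direct inspection of the boundary yields $\partial O(x,r)\setminus N(x,r)\sub\partial B(x,r)\cap\overline{O}$; since $\supp\eta$ is compactly contained in $B(x,r)$, the function $v$ vanishes in a neighborhood of this set. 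If $\d_D(x)\leq r$, then $\kappa=0$ and $v=\eta^2 u$, so Lemma~\ref{D(mu), G(mu) and H(mu) for MBC: Lemma: Sobolev extension by 0}(ii), applied with $\psi=\eta^2\in\smooth[B(x,r)]$, places $v$ directly in the required space. This case split is exactly where the indicator $\1_{[\d_D(x)>r]}$ in the statement earns its keep.

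Next, we insert $v$ into the variational identity $\int_{O(x,r)}A\nabla u\cdot\overline{\nabla v}=0$. Using $\overline{\nabla v}=2\eta\,\overline{(u-\kappa)}\nabla\eta+\eta^2\overline{\nabla u}$ and rearranging gives
\begin{equation*}
\int_{O(x,r)}\eta^2\, A\nabla u\cdot\overline{\nabla u}\;=\;-2\int_{O(x,r)}\eta\,\overline{(u-\kappa)}\,A\nabla u\cdot\nabla\eta.
\end{equation*}
Taking real parts, applying ellipticity on the left and $\|A\|_\infty=\Lambda$ on the right, and absorbing a small multiple of $\int\eta^2|\nabla u|^2$ back to the left-hand side via Young's inequality, we reach
\begin{equation*}
\int_{O(x,r)}\eta^2|\nabla u|^2\;\les_{\lambda,\Lambda}\;\int_{O(x,r)}|\nabla\eta|^2|u-\kappa|^2\;\les_d\;\frac{1}{(1-c)^2 r^2}\,\|u-\kappa\|_{\L^2(O(x,r))}^2.
\end{equation*}
Restricting the left-hand integral to $O(x,cr)$, where $\eta\equiv 1$, then multiplying by $r^2$ and taking square roots, delivers the claim.

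The only genuine point of care is verifying that $v$ is an admissible test function; everything else is the textbook Caccioppoli computation, and no geometry beyond the invariance of the test-function class under multiplication by $\smooth[\R^d]$ (i.e., Lemma~\ref{D(mu), G(mu) and H(mu) for MBC: Lemma: Sobolev extension by 0}(ii)) is used.
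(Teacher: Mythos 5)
Your verification that $v=\eta^2(u-\kappa)$ is an admissible test function is correct and is indeed the only non-standard point: in the case $\d_D(x)>r$ one has $D\cap\overline{B(x,r)}=\varnothing$, so $\partial O(x,r)\setminus N(x,r)\sub\partial B(x,r)$ and the compact support of $\eta$ settles the matter (one can approximate by $\eta^2(\varphi_n-\kappa)$ with $\varphi_n\in\smoothD[\R^d]$), while in the case $\d_D(x)\leq r$ your appeal to Lemma~\ref{D(mu), G(mu) and H(mu) for MBC: Lemma: Sobolev extension by 0}~(ii) is exactly the invariance of the test function class that the paper invokes. Up to this point your argument coincides with the standard route the paper refers to.

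There is, however, a gap in the coercivity step. You bound $\re\int_{O(x,r)}\eta^2\,A\nabla u\cdot\overline{\nabla u}$ from below by $\lambda\int\eta^2|\nabla u|^2$, which uses \emph{pointwise} accretivity of $A$. Assumption~\ref{Assumption: Operator theoretic setting} only provides the integrated G\aa rding inequality $\re\int_O A\nabla w\cdot\overline{\nabla w}\geq\lambda\|\nabla w\|_2^2$ for $w\in\H_D^1(O)$, and since $\eta\nabla u$ is not a gradient field, your inequality does not follow from it. The standard repair is to apply the form ellipticity to $w\coloneqq\eta(u-\kappa)$, which does lie in $\H_D^1(O)$ (in the first case because $\eta\kappa\in\smoothD[\R^d]$, in the second because $\kappa=0$ and multiplication by $\eta$ preserves $\H^1_D(O)$): expanding $\nabla w=\eta\nabla u+(u-\kappa)\nabla\eta$ and combining
\begin{equation*}
	\lambda\int_O|\nabla w|^2\leq\re\int_O A\nabla w\cdot\overline{\nabla w}
\end{equation*}
with the tested equation $\int_{O(x,r)}\eta^2 A\nabla u\cdot\overline{\nabla u}=-2\int_{O(x,r)}\eta\,\overline{(u-\kappa)}\,A\nabla u\cdot\nabla\eta$ produces only cross terms of the form $\int\eta|\nabla u|\,|u-\kappa|\,|\nabla\eta|$ and $\int|u-\kappa|^2|\nabla\eta|^2$, which Young's inequality absorbs, and one concludes as you do. This is precisely how the paper's own real-coefficient Caccioppoli estimate (Lemma~\ref{Real valued A: Lemma: Caccioppoli}) is organized, visibly to avoid pointwise ellipticity; with this modification your proof is complete and otherwise matches the intended argument.
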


	\begin{proof}[\rm\bf{Proof of Theorem~\ref{(2) to (3): Theorem: G(mu) implies H(mu)}.}]
		We are going to use $\rG(\mu_0)$ through the equivalent estimates \eqref{eq: D(mu) to G(mu): Reduction L2-Linfty_ODE} and \eqref{eq: D(mu) to G(mu): Reduction L2-Cmu}, see Corollary~\ref{D(mu) to G(mu): Corollary: G(mu) equivalent to L2-Linfty-ODE + L2-Holder}. Since $\rG(\mu_0)$ is stable under taking adjoints, it suffices to show $\mathrm{H}(\mu)$ for $L$.

		Let $x \in \overline{O}$, $r \leq 1$ and $u \in \H_D^1(O)$ with $L_D u = 0$ in $O(x,r)$. Let $y \in O(x, \nicefrac{r}{2})$ and $h \in \R^d$ such that $y + h \in O(x, \nicefrac{r}{2})$. Pick $\varepsilon > 0$ such that $B(y, \varepsilon) \cup B(y + h, \varepsilon) \sub O(x, \nicefrac{r}{2})$ and define $\tau_h f \coloneqq f(\cdot + h)$.

		First, we claim that there is some $c > 0$ depending on the constants in $\rG(\mu_0)$, ellipticity, geometry, $\mu$ and $\mu_0$ such that
		\begin{equation} \label{eq: G(mu) to H(mu): Interpolating L1-L2 estimate}
			\| \e^{- t L^*} (\tau_{-h} -1) f \|_{\L^2(O \setminus B(x, \frac{3}{4} r))} \leq c |h|^{\mu} t^{- \frac{d}{4} - \frac{\mu}{2}} \e^{- \frac{r^2}{c t}} \| f \|_{\L^1(B(y, \varepsilon))}
		\end{equation}
		for all $t \leq 1$ and $f \in \L^1(B(y, \varepsilon))$.

		To see the claim, fix $\nu \in (\mu, \mu_0)$. Property $\rG(\mu_0)$ implies the bound \eqref{eq: D(mu) to G(mu): Reduction L2-Cmu}, so that
		\begin{equation*}
			\| (\tau_h -1)\e^{- t L} f \|_{\L^{\infty}(B(y, \varepsilon))} \les |h|^{\nu} t^{- \frac{d}{4} - \frac{\nu}{2}} \| f \|_2 \qquad (t \leq 1, f\in \L^2(O)).
		\end{equation*}
		By duality, we get
		\begin{equation} \label{eq: G(mu) to H(mu): Dual estimate, L2-C(mu)}
			\| \e^{- t L^*}(\tau_{-h} -1) f \|_2 \les |h|^{\nu} t^{- \frac{d}{4} - \frac{\nu}{2}} \| f \|_{\L^1(B(y, \varepsilon))} \qquad (t \leq 1, f\in \L^1(B(y, \varepsilon))).
		\end{equation}
		Let $\theta \coloneqq \nicefrac{\mu}{\nu}$. We use \eqref{eq: G(mu) to H(mu): Dual estimate, L2-C(mu)} to estimate
		\begin{align*}
			&\| \e^{-t L^*} (\tau_{-h} -1) f \|_{\L^2(O \setminus B(x, \frac{3}{4} r))}
			\\ \leq& \; \| \e^{-t L^*} (\tau_{-h} -1) f \|_2^{\theta} \cdot \| \e^{-t L^*} (\tau_{-h} -1) f \|_{\L^2(O \setminus B(x, \frac{3}{4} r))}^{1 - \theta}
			\\ \les& \; |h|^{\mu} t^{- \frac{\theta d}{4} - \frac{\mu}{2}} \| f \|_{\L^1(B(y, \varepsilon))}^{\theta} \cdot \| \e^{-t L^*} (\tau_{-h} -1) f \|_{\L^2(O \setminus B(x, \frac{3}{4} r))}^{1 - \theta}.
		\end{align*}
		Eventually, we apply the dual estimate of \eqref{eq: D(mu) to G(mu): Reduction L2-Linfty_ODE} with $p=2$, $E = O(x, \nicefrac{r}{2})$ and $F = O \setminus B(x, \nicefrac{3r}{4})$, that is
		\begin{equation*}
			\| \e^{-t L^*} (\tau_{-h} -1) f \|_{\L^2(O \setminus B(x, \frac{3}{4} r))} \les t^{-\frac{d}{4}} \e^{- c \frac{r^2}{t}} \| f \|_{\L^1(B(y, \varepsilon))}.
		\end{equation*}
		The previous two estimates together yield our claim \eqref{eq: G(mu) to H(mu): Interpolating L1-L2 estimate}.

		With the claim at hand, we prove $\H(\mu)$. By duality, it suffices to show for all $\varphi \in \smooth[B(y,\varepsilon)]$ normalized to $\| \varphi\|_1 = 1$ that
		\begin{equation*}
			|( (\tau_h - 1) u \, | \, \varphi)_2| = |(u \, | \, (\tau_{-h} -1) \varphi )_2| \les r^{- \frac{d}{2} - \mu} \| u \|_{\L^2(O(x, r))} |h|^{\mu},
		\end{equation*}
		with an implicit constant not depending on $\varepsilon$.

		We normalize $\| u \|_{\L^2(O(x,r))} =1$, abbreviate $\varphi_h \coloneqq (\tau_{-h} -1) \varphi$ and pick $\chi \in \smooth[\R^d]$ with $\1_{B(x, \nicefrac{7r}{8})} \leq \chi \leq \1_{B(x, \nicefrac{8 r}{9})}$ and $\| \nabla \chi \|_{\infty} \les r^{-1}$. The fundamental theorem of calculus and \eqref{eq: G(mu) to H(mu): Dual estimate, L2-C(mu)} deliver
		\begin{align*}
			|(\tau_h -1) u \, | \, \varphi )_2| &= \bigg|(u \chi \, | \, \e^{-r^2 L^*} \varphi_h)_2 +  \int_0^{r^2} ( u \chi \, | \, L^* \e^{-t L^*} \varphi_h)_2 \, \d t \bigg|
			\\&\leq \| \e^{- r^2 L^*} \varphi_h \|_2 +  \int_0^{r^2} \left| ( u \chi \, | \, L^* \e^{-t L^*} \varphi_h)_2 \right| \, \d t.
			\\&\les r^{- \frac{d}{2} - \mu} |h|^{\mu} +  \int_0^{r^2} \left|( u \chi \, | \, L^* \e^{-t L^*} \varphi_h)_2 \right| \, \d t
			\\&= r^{- \frac{d}{2} - \mu} |h|^{\mu} +  \int_0^{r^2} \left|( A \nabla (u \chi ) \, | \, \nabla \e^{-t L^*} \varphi_h)_2 \right| \, \d t
			\\&\eqqcolon r^{- \frac{d}{2} - \mu} |h|^{\mu} + \mathrm{(I)}.
		\end{align*}
		\textbf{Estimate for (I).} Thanks to Lemma~\ref{D(mu), G(mu) and H(mu) for MBC: Lemma: Sobolev extension by 0} (ii), we know that $\chi \e^{-t L^*} \varphi_h$ serves as a test function for the equation $L_D u = 0$ in $O(x,r)$. Hence, we get
		\begin{align*}
			(A \nabla (u \chi) \, | \, \nabla \e^{-t L^*} \varphi_{h})_2 &= ( u A \nabla \chi \, | \, \nabla \e^{- t L^*} \varphi_{h})_2 + (A \nabla u \, | \, \chi \nabla \e^{- t L^*} \varphi_{h})_2
			\\&= (u A \nabla \chi \, | \, \nabla \e^{- t L^*} \varphi_{h})_2 - ( A \nabla u \, | \, (\e^{-t L^*} \varphi_{h}) \nabla \chi)_2.
		\end{align*}
		Thus, using the properties of $\chi$ and H\"older's inequality, we obtain
		\begin{align*}
			\mathrm{(I)} &\leq \int_0^{r^2} \left| ( u A \nabla \chi \, | \, \nabla \e^{- t L^*} \varphi_{h})_2 \right| \, \d t + \int_0^{r^2} \left| ( A \nabla u \, | \, (\e^{-t L^*} \varphi_{h}) \nabla \chi)_2 \right|  \, \d t
			\\&\les r^{-1} \int_0^{r^2} \| \1_{B(x, \frac{7}{8} r)^c} \sqrt{t} \nabla \e^{-t L^*} \varphi_h \|_2 \, \frac{\d t}{\sqrt{t}}
			\\& \quad + r^{-1} \| \nabla u \|_{\L^2(O(x, \frac{8}{9} r))}  \int_0^{r^2} \| \1_{B(x, \frac{7}{8} r)^c} \e^{-t L^*} \varphi_h \|_2 \, \d t
			\\&\eqqcolon \mathrm{(II)} + \mathrm{(III)}.
		\end{align*}
		\textbf{Estimate for (II).} We split
		\begin{align*}
			\| \1_{B(x, \frac{7}{8} r)^c} \sqrt{t} \nabla \e^{-t L^*} \varphi_h \|_2 &\leq \| \1_{B(x, \frac{7}{8} r)^c} \sqrt{t} \nabla \e^{-\frac{t}{2} L^*} \1_{B(x, \frac{3}{4} r)} \e^{- \frac{t}{2} L^*} \varphi_h \|_2
			\\& \quad + \| \sqrt{t} \nabla \e^{- \frac{t}{2} L^*} \1_{B(x, \frac{3}{4} r)^c} \e^{- \frac{t}{2} L^*} \varphi_h \|_2.
		\end{align*}
		To bound the first summand, we use Proposition~\ref{Operator theoretic setting and relevant function spaces: Proposition: L2 ODE} joint with \eqref{eq: G(mu) to H(mu): Dual estimate, L2-C(mu)} to get
		\begin{equation*}
			\| \1_{B(x, \frac{7}{8} r)^c} \sqrt{t} \nabla \e^{-\frac{t}{2} L^*} \1_{B(x, \frac{3}{4} r)} \e^{- \frac{t}{2} L^*} \varphi_h \|_2 \les \e^{-c \frac{r^2}{64 t}} |h|^{\mu} t^{- \frac{d}{4} - \frac{\mu}{2}}.
		\end{equation*}
		As for the second summand, we use \eqref{eq: G(mu) to H(mu): Interpolating L1-L2 estimate} instead of \eqref{eq: G(mu) to H(mu): Dual estimate, L2-C(mu)} to obtain the same upper bound. Hence, the substitution $s = \nicefrac{r^2}{t}$ eventually reveals
		\begin{align*}
			\mathrm{(II)} \les r^{-1} |h|^{\mu} \int_0^{r^2} \e^{-c \frac{r^2}{64 t}} t^{- \frac{d}{4} - \frac{\mu}{2} - \frac{1}{2}} \, \d t \simeq r^{- \frac{d}{2} - \mu} |h|^{\mu}.
		\end{align*}
		\textbf{Estimate for (III).} Using Caccioppoli's inequality, \eqref{eq: G(mu) to H(mu): Interpolating L1-L2 estimate} and again the substitution $s = \nicefrac{r^2}{t}$, we have
		\begin{equation*}
			\mathrm{(III)} \les r^{-2} |h|^{\mu} \int_0^{r^2} \e^{-c \frac{r^2}{t}} t^{- \frac{d}{4} - \frac{\mu}{2}} \, \d t \simeq r^{- \frac{d}{2} - \mu} |h|^{\mu},
		\end{equation*}
		which completes the proof.
	\end{proof}


	\section{From $\H(\mu)$ to $\rD(\mu)$} \label{Section: (3) to (1)}

	In this section we close the circle of implications by proving:

	\begin{theorem} \label{(3) to (1): Theorem: Main result}
		Let~\hyperref[The geometric setup: Definition (Fat)]{$\Fat$} and~\hyperref[The geometric setup: Definition (LU)]{$\LU$} be satisfied. If $L$ has property $\H(\mu)$, then $L$ has property $\rD(\mu)$.
	\end{theorem}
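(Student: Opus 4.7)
The plan is to combine Caccioppoli's inequality with the H\"older bound from $\H(\mu)$ and close the loop via the weak Poincar\'e inequality $\AssP$. By Remark~\ref{(D(mu), G(mu) and H(mu) for MBC: Remark: Scaling D(mu)} I may assume $r \leq R/8$ (otherwise monotonicity is enough) and $R \leq r_0$, and I allow the ball on the right-hand side of~\eqref{eq: Definition: Property D(mu)} to be enlarged to a fixed geometric multiple of $R$.

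\textbf{Step 1: oscillation via $\H(\mu)$.} Apply Caccioppoli's inequality (Lemma~\ref{(2) to (3): Lemma: Caccioppoli}) at radii $2r$ and $r$ to get
\begin{equation*}
r \| \nabla u \|_{\L^2(O(x,r))} \lesssim \| u - \1_{[\d_D(x) > 2r]} (u)_{O(x,2r)} \|_{\L^2(O(x,2r))}.
\end{equation*}
If $\d_D(x) > 2r$, the oscillation estimate $\operatorname{osc}_{O(x,2r)} u \lesssim r^\mu [u]^{(\mu)}$ bounds the right-hand side by $r^{d/2+\mu}[u]^{(\mu)}$. If $\d_D(x) \leq 2r$, Lemma~\ref{(D(mu), G(mu) and H(mu) for MBC: Lemma: Pointwise real. of bdr. conditions} supplies a point $z \in \overline{D(x,2r)}$ with $u(z)=0$, anchoring the H\"older bound and giving the same $L^2$-estimate. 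In either case,
\begin{equation*}
\| \nabla u \|_{\L^2(O(x,r))} \lesssim r^{d/2 + \mu - 1} \, [u]^{(\mu)}_{O(x,\tilde R)}
\end{equation*}
for some $\tilde R \simeq R$.

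\textbf{Step 2: H\"older seminorm via Dirichlet energy.} The task is to show
\begin{equation*}
[u]^{(\mu)}_{O(x,\tilde R)} \lesssim R^{-\mu - d/2 + 1} \, \| \nabla u \|_{\L^2(O(x,CR))}
\end{equation*}
for some geometric constant $C$. If $\d_D(x) \leq R$, apply $\H(\mu)$ to $u$ on $O(x,R)$ and use the indicator-zero version of $\AssP$, $\| u \|_{\L^2(O(x,R))} \lesssim R \| \nabla u \|_{\L^2(O(x,c_1 R))}$, directly. The more delicate case is $\d_D(x) > R$, which I view as the main obstacle: the raw $\H(\mu)$ bound features $\| u \|_{\L^2}$ on the right and is \emph{not} translation-invariant, whereas $[u]^{(\mu)}$ on the left is. To overcome this I introduce a cut-off: pick $\eta \in \smooth[\R^d]$ with $\eta = 1$ on $B(x,R/2)$ and $\supp \eta \subseteq B(x,R) \subseteq \R^d \setminus D$; then $c\eta \in \smoothD[\R^d]$ for every constant $c$, so $v \coloneqq u - c\eta$ lies in $\H_D^1(O)$. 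Since $\nabla v = \nabla u$ on $O(x,R/2)$, one checks $L_D v = 0$ on $O(x,R/2)$, and applying $\H(\mu)$ to $v$ on this ball gives
\begin{equation*}
[u]^{(\mu)}_{O(x,R/4)} = [v]^{(\mu)}_{O(x,R/4)} \lesssim R^{-\mu - d/2} \, \| u - c \|_{\L^2(O(x,R/2))}.
\end{equation*}
Choosing $c = (u)_{O(x,R/2)}$ and using $\AssP$ with indicator $1$ (since $\d_D(x) > R/2$) closes the estimate with $C = c_1/2$.

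\textbf{Step 3: conclusion.} Squaring Step 1 and inserting Step 2 yields
\begin{equation*}
\int_{O(x,r)} |\nabla u|^2 \lesssim \bigl( r/R \bigr)^{d-2+2\mu} \int_{O(x,CR)} |\nabla u|^2,
\end{equation*}
which by Remark~\ref{(D(mu), G(mu) and H(mu) for MBC: Remark: Scaling D(mu)} is equivalent to $\rD(\mu)$. The key subtlety throughout is aligning the translation-invariant quantities produced by Caccioppoli and the oscillation (both on the level of $\nabla u$ and of $[u]^{(\mu)}$) with the non-translation-invariant form in which $\H(\mu)$ is stated; the cut-off construction in Step 2 is the device that reconciles them.
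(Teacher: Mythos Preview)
Your proof is correct and follows essentially the same route as the paper: Caccioppoli to pass from $\nabla u$ to an $L^2$-oscillation, the H\"older bound from $\H(\mu)$ to control the oscillation pointwise, the vanishing at $D$ via Lemma~\ref{(D(mu), G(mu) and H(mu) for MBC: Lemma: Pointwise real. of bdr. conditions} when near the Dirichlet boundary, and the cut-off trick to subtract a constant while staying in $\H_D^1(O)$ when away from $D$, finally closing with $\AssP$. The only differences are organizational---you factor the argument into two independent steps (gradient $\to$ H\"older seminorm, then H\"older seminorm $\to$ gradient) with separate case splits at scales $2r$ and $R$, whereas the paper runs a single case distinction on $\d_D(x)$ versus $R/c_1$---and cosmetic in the cut-off: you use $v = u - c\eta$ while the paper uses $v = \varphi(u-c)$; both yield $v = u - c$ on the inner ball and $v \in \H_D^1(O)$.
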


	\begin{proof}
		It suffices to control the growth of the Dirichlet integral when $0 < 4 c_1 r \leq R \leq r_0$, $x \in \overline{O}$ and $u \in \H_D^1(O)$ with $L_Du =0$ in $O(x, R)$, see Remark~\ref{(D(mu), G(mu) and H(mu) for MBC: Remark: Scaling D(mu)}. We distinguish between two cases.

		\textbf{(1) $\boldmath{\nicefrac{R}{c_1} < \d_D(x)}$.}  Let $\varphi \in \smoothD[\R^d]$ with $\varphi = 1$ on $\overline{B(x, \nicefrac{R}{c_1})}$. Then$$v \coloneqq \varphi (u - (u)_{O(x, \nicefrac{R}{c_1})}) \in \H_D^1(O) \quad \& \quad L_D v =0 \; \text{in} \; O(x, \nicefrac{R}{c_1}).$$Thanks to property $\H(\mu)$, $v$ has a continuous representative in $O(x, \nicefrac{R}{2 c_1}) \supseteq O(x, 2r)$. We start with the Caccioppoli inequality
		\begin{align*}
			\int_{O(x,r)} |\nabla u|^2
			&= \int_{O(x,r)} |\nabla v|^2
			\\&\les r^{-2} \int_{O(x, 2r)} |v - v(x)|^2,
			\intertext{where now we can bring $\H(\mu)$ into play and then apply~\hyperref[The geometric setup: Proposition: (P)]{$\AssP$} in order to get}
			&\les r^{-2} R^{-d - 2 \mu} \left( \int_{O(x, 2r)} |x-y|^{2 \mu} \, \d y \right) \int_{O(x, \nicefrac{R}{c_1})} |v|^2
			\\&= r^{-2} R^{-d - 2 \mu} \left( \int_{O(x, 2r)} |x-y|^{2 \mu} \, \d y \right) \int_{O(x, \nicefrac{R}{c_1})} |u - (u)_{O(x, \nicefrac{R}{c_1})}|^2
			\\&\les  \left( \frac{r}{R} \right)^{d- 2 + 2 \mu} \int_{O(x, R)} |\nabla u|^2.
		\end{align*}
		\textbf{(2) $\boldmath{\d_D(x) \leq \nicefrac{R}{c_1}}$.} We consider two subcases.

		\textbf{(2.1) $\boldsymbol{\d_D(x) \leq 2 r}$.} We pick $x_D \in D$ with $\d_D(x) = |x-x_D|$. The most important observation is that (the continuous representative of) $u$ vanishes in $x_D$ by Lemma~\ref{(D(mu), G(mu) and H(mu) for MBC: Lemma: Pointwise real. of bdr. conditions}. The Caccioppoli inequality yields
		\begin{align*}
			\int_{O(x,r)} |\nabla u|^2
			&\les r^{-2} \int_{O(x, 2r)} |u|^2.
			\intertext{By property $\H(\mu)$ on $O(x, \nicefrac{R}{2 c_1}) \supseteq O(x,2r)$ and~\hyperref[The geometric setup: Proposition: (P)]{$\AssP$}, we obtain in the usual manner that}
			&\les r^{-2} \int_{O(x, 2r)} |u(y) - u(x)|^2 \, \d y  + r^{d-2} |u(x) - u(x_D)|^2
			\\&\les R^{-d - 2\mu} (r^{d-2 + 2 \mu} + r^{d-2} \d_D(x)^{2 \mu}) \int_{O(x, \nicefrac{R}{c_1})} |u|^2
			\\&\les \left( \frac{r}{R} \right)^{d-2 + 2 \mu} \int_{O(x, R)} |\nabla u|^2.
		\end{align*}
		\textbf{(2.2) $\boldsymbol{2 r < \d_D(x)}$.} In this case we can replace $u$ by $u - u(x)$ when we apply the Caccioppoli inequality in case (2.1). Then $x_D$ is not needed and we conclude by the same chain of estimates.
	\end{proof}

	\section{Property $\H(\mu)$ for divergence form operators with real coefficients}  \label{Section: Real-valued A}

	Our goal in this section is to show Theorem~\ref{Theorem: Main result 2}. As mentioned in the introduction, we can go one step further and relax~\hyperref[The geometric setup: Definition (Fat)]{$\Fat$} and~\hyperref[The geometric setup: Definition (LU)]{$\LU$} to the following axiomatic framework, where $p \in (1,2)$.

	\begin{enumerate}
		\item[$\AssE$] \textbf{Embedding property for $\boldsymbol{\H_D^1(O)}$.} If $d\geq3$, assume there is $c_E > 0$ such that
		\begin{equation}
			\| u \|_{2^*} \leq c_E \| u \|_{1,2} \qquad (u \in \H_D^1(O)).
		\end{equation}
		If $d=2$, assume there are $q \in (2, \infty)$ and $c_E >0$ such that
		\begin{equation}
			\| u \|_q \leq c_E \| u \|_{1,2}^{1 - \frac{2}{q}} \| u \|_2^{\frac{2}{q}} \qquad (u \in \H_D^1(O)).
		\end{equation}
		\item[$\AssPp$] \textbf{Weak $\boldsymbol{p}$-Poincar\'{e} inequality.} There are $c_0, r_0 > 0, c_1 \geq 1$ with
		\begin{equation}
			\| u - \1_{[\d_D(x) > r ]} \cdot (u)_{O(x,r)} \|_{\L^p(O(x,r))} \leq c_0 r \| \nabla u \|_{\L^p(O(x, c_1 r))}
		\end{equation}
		for all $u \in \W_D^{1,p}(O)$, each $x \in \overline{O}$ and all $r \in (0, r_0]$.
	\end{enumerate}

	The implicit constants in~\hyperref[Section: Real-valued A]{$\AssPp$} might be different from the ones chosen in and after Proposition~\ref{The geometric setup: Proposition: (P)} but they serve the exact same purpose. It is only that in this section we postulate~\hyperref[Section: Real-valued A]{$\AssPp$} instead of deriving it from geometric assumptions.

	Let us explain why these properties follow from our concrete geometric assumptions in the previous sections.

	\begin{lemma} \label{Real valued A: Lemma: Fat und LU imply E and Pp}
		Assumptions~\hyperref[The geometric setup: Definition (Fat)]{$\Fat$} and~\hyperref[The geometric setup: Definition (LU)]{$\LU$} imply~\hyperref[Section: Real-valued A]{$\AssE$} and~\hyperref[Section: Real-valued A]{$\AssPp$} for some $p \in (1, 2)$. Moreover,~\hyperref[Section: Real-valued A]{$\AssE$} holds true in the pure Dirichlet case on any open set $O$.
	\end{lemma}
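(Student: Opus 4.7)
The lemma splits into two claims: the embedding $\AssE$ and the weak Poincar\'{e} inequality $\AssPp$ for some $p \in (1, 2)$. I would route $\AssE$ through the extension operator $\cE$ from Theorem~\ref{The geometric setup: Theorem: Extension operator}, which provides $\cE u \in \H_D^1(\R^d) \sub \H^1(\R^d)$ for every $u \in \H_D^1(O)$. When $d \geq 3$, the classical Sobolev inequality on $\R^d$ gives at once
\begin{equation*}
\| u \|_{\L^{2^*}(O)} \leq \| \cE u \|_{\L^{2^*}(\R^d)} \les \| \nabla \cE u \|_{\L^2(\R^d)} \les \| u \|_{1,2},
\end{equation*}
where the final step uses the boundedness of $\cE$ on $\H^1$. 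When $d = 2$, I would apply the Gagliardo--Nirenberg inequality $\| v \|_{\L^q(\R^2)} \les \| \nabla v \|_{\L^2(\R^2)}^{1-\nicefrac{2}{q}} \| v \|_{\L^2(\R^2)}^{\nicefrac{2}{q}}$ to $v = \cE u$ with any fixed $q \in (2, \infty)$. The only slightly delicate point is the separate $\L^2$-bound $\| \cE u \|_{\L^2(\R^2)} \les \| u \|_{\L^2(O)}$, which I would obtain by summing the local estimate \eqref{eq: The geometric setup: E is local and hom} for $\ell = 0$ over a Vitali covering of $(\supp \cE u) \setminus O$ by balls of radius $\simeq \delta$ centered on $\partial O$; the bounded overlap of the enlarged balls $O(x_i, Kr)$ then controls the total $\L^2$-mass of $\cE u$ outside $O$, while on $O$ the Jones-type extension coincides with $u$.

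For the pure Dirichlet case, no extension is needed and no geometric assumption is used. With $D = \partial O$, every $\varphi \in \smoothD[\R^d] = \smooth[\R^d \setminus \partial O]$ has $\supp \varphi$ compact and disjoint from $\partial O$, so the restriction $\varphi|_O$ has compact support inside $O$; consequently $\rC_D^{\infty}(O) = \smooth[O]$ and hence $\H_D^1(O) = \H_0^1(O)$. Functions in $\H_0^1(O)$ extend by zero to $\H^1(\R^d)$ with preservation of $\L^2$ and gradient norms, and Sobolev embedding on $\R^d$ delivers $\AssE$ on any open set.

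For $\AssPp$ with some $p \in (1, 2)$, the strategy is to upgrade $\Fat$ to $\Fatp$ and then invoke Proposition~\ref{The geometric setup: Proposition: (P)}. The key input is the deep self-improvement theorem of Lewis \cite{Lewis_P-fat_Open_ended, Mikkonen}: local $q$-fatness of a closed set in $\R^d$ is open-ended in $q$, so a locally $2$-fat closed set is also locally $p$-fat for some $p \in (1, 2)$. By Lemma~\ref{The geometric setup: Lemma: (Fat) rephrased}, $\Fat$ is equivalent to local $2$-fatness of the closed set $D \cup (O^c \setminus N_\delta^\Sigma)$; Lewis' theorem upgrades this to local $p$-fatness for some $p \in (1, 2)$, and Lemma~\ref{The geometric setup: Lemma: (Fat) rephrased} applied in the reverse direction returns $\Fatp$. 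Combined with $\LU$, Proposition~\ref{The geometric setup: Proposition: (P)} then produces $\AssPp$. The only non-elementary ingredient here is Lewis' theorem, invoked as a black box; everything else is bookkeeping.
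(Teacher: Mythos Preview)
Your proposal is correct and follows the same route as the paper: $\AssE$ via the extension operator $\cE$ combined with Sobolev/Gagliardo--Nirenberg on $\R^d$, the pure Dirichlet case via zero extension, and $\AssPp$ via Lewis' self-improvement applied to the closed set $D \cup (O^c \setminus N_\delta^\Sigma)$ through Lemma~\ref{The geometric setup: Lemma: (Fat) rephrased} and then Proposition~\ref{The geometric setup: Proposition: (P)}. The only difference is that you are more explicit than the paper in two places: you spell out why $\H^1_D(O) = \H^1_0(O)$ when $D = \partial O$, and in the $d=2$ case you flag that the interpolation form of $\AssE$ requires $\|\cE u\|_{\L^2(\R^2)} \les \|u\|_{\L^2(O)}$ (not just the $\H^1$-boundedness of $\cE$), which the paper passes over silently but which your covering argument based on the local estimate \eqref{eq: The geometric setup: E is local and hom} indeed supplies.
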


	\begin{proof}
		For $d \geq 3$,~\hyperref[Section: Real-valued A]{$\AssE$} follows from~\hyperref[The geometric setup: Theorem: Extension operator]{$\AssExt$} and the embedding $\H^1(\R^d) \sub \L^{2^*}(\R^d)$. Similarly, for $d = 2$, we can use all $q \in (2, \infty)$ since we have the interpolation inequality
		\begin{equation*}
			\| u \|_{\L^q(\R^d)} \les \| \nabla u \|_{\L^2(\R^d)}^{1 - \frac{2}{q}} \| u \|_{\L^2(\R^d)}^{\frac{2}{q}} \qquad (u \in \H^1(\R^d)),
		\end{equation*}
		see \cite[Lec.~2, Thm.]{Nirenberg_Sobolev_Interpolation}.

		To show~\hyperref[Section: Real-valued A]{$\AssPp$}, we borrow a deep result from capacity theory:
         \begin{quote}
			Let $d \geq 2$ and $C$ be closed. If $C$ is locally $2$-fat, then $C$ is locally $p$-fat for some $p \in (1,2)$.
		\end{quote}
        This is called `self-improvement of $p$-fatness', a phenomenon that is attributed to Lewis~\cite[Thm.~1]{Lewis_P-fat_Open_ended} but for a slightly different version of capacities. With our definition, the proof can be found in \cite[Thm.~8.2]{Mikkonen}. At least when $d \geq 3$, one can rely directly on Lewis' result because the two versions of local $2$-fatness coincide. For convenience, we include this argument in Appendix~\ref{Section: Appendix}.

		We apply this result to the auxiliary set $D \cup (O^c \setminus N_{\delta}^{\Sigma})$ in Lemma~\ref{The geometric setup: Lemma: (Fat) rephrased} to see that~\hyperref[The geometric setup: Definition (Fat)]{$\Fat$} self-improves to~\hyperref[The geometric setup: Definition (Fat)]{$\Fatp$} for some $p \in (1,2)$. Hence,~\hyperref[Section: Real-valued A]{$\AssPp$} follows from Proposition~\ref{The geometric setup: Proposition: (P)}.
	\end{proof}

	To prove Theorem~\ref{Theorem: Main result 2}, we show in a first step that any $L$-harmonic function $u$ is locally bounded. For this we adapt the proof of \cite[Prop.~8.15]{Giaquinta_Martinazzi} to derive a decay condition on the super-level sets of $u$. Here, we only need the embedding~\hyperref[Section: Real-valued A]{$\AssE$}.
	\newline
	In a second step, we follow a classical approach \cite{DeGio57, Giaquinta_Martinazzi} and use the local boundedness of $u$ to obtain estimates for its oscillation, which eventually results in local H\"older continuity. At this point,~\hyperref[Section: Real-valued A]{$\AssPp$} is crucial, as it allows us to get a \textit{quantitative} decay in the super-level sets of $u$. This uses ideas from non-linear methods \cite{DiBe89, DiBenedetto}.

	\subsection{Local boundedness of functions in the De Giorgi class.}  \label{Subsection: Local boundedness, real A}

	The central point of this subsection is that the local boundedness of a function is a consequence of lying in a function class rather than solving an equation. However, the latter is needed to obtain uniform control on the implicit constants, which is essential for Theorem~\ref{Theorem: Main result 2}.

	\begin{definition} \label{Real valued A: Definition: De Giorgi class}
		Let $x \in \overline{O}$ and $R > 0$. We define $\mathrm{DG}_{D, x, R}(O)$ as the set of all $u \in \H_D^1(O; \R)$ for which there exists a constant $c >0$ such that for all $r \in (0,R)$ and $k \in [0, \infty)$ it holds
		\begin{equation} \label{eq: De Giorgi class estimate}
			\int_{O(x,r)} |\nabla (u \mp k)^{\pm}|^2 \leq \frac{c}{(R-r)^2} \int_{O(x,R)} |(u \mp k)^{\pm}|^2.
		\end{equation}
		In addition, if $R < \d_D(x)$, then we require both estimates for all $k \in \R$.
	\end{definition}

	\begin{remark}  \label{Real valued A: Remark: De Giorgi class}
		Notice that $u \in \mathrm{DG}_{D, x, R}(O)$ if and only if $-u \in \mathrm{DG}_{D, x, R}(O)$ due to the identities $(-u-k)^+ = (u + k)^-$ and $(-u + k)^- = (u - k)^+$.
	\end{remark}

	Next, we state the main result of this section. We define
	\begin{align}  \label{eq: Real-valued A: Definition of theta}
		\theta \coloneqq \frac{1}{2} + \sqrt{\frac{1}{4} + \frac{2}{\delta}} > 1, \quad \text{where} \quad \delta \coloneqq \begin{cases}
			\frac{2 q}{q -2} \quad &(d =2), \\ d  &(d \geq 3).
		\end{cases}
	\end{align}

	\begin{theorem} \label{Real valued A: Theorem: Local boundedness}
		Assume~\hyperref[Section: Real-valued A]{$\AssE$}. Let $x \in \overline{O}$, $R_0 > 0$ and $u \in \mathrm{DG}_{D, x, R_0}(O)$. There is some $c > 0$ such that we have for all $k \geq 0$ and $R \in (0, R_0]$ the estimate
		\begin{equation*}
			\underset{O(x, \frac{R}{2})}{\esssup} \; u^{+} \leq k + c \left( R^{-d} \int_{O(x,R)} |(u-k)^{+}|^2 \right)^{\frac{1}{2}} (R^{-d} |\{ u > k \}(x,R)|)^{\frac{\theta -1}{2}}.
		\end{equation*}
		In addition, if $R < \d_D(x)$, then we can allow for all $k \in \R$ in the estimate. Furthermore, if $A$ is real-valued and $u \in \H_D^1(O; \R)$ with $L_Du =0$ in $O(x,R)$, then $c > 0$ depends only on $d$, ellipticity, $R_0$ and~\hyperref[Section: Real-valued A]{$\AssE$}.
	\end{theorem}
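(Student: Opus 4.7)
The plan is a De Giorgi iteration on super-level sets of $u^+$. Fix a parameter $h > 0$, to be optimized at the end, and set $B_n \coloneqq O(x, r_n)$ with $r_n \coloneqq R/2 + R \cdot 2^{-n-1}$, $k_n \coloneqq k + h(1 - 2^{-n})$, $V_n \coloneqq \int_{B_n}[(u - k_n)^+]^2$, and $a_n \coloneqq |\{u > k_n\} \cap B_n|$. Choose smooth cutoffs $\eta_n$ with $\eta_n \equiv 1$ on $B(x, r_{n+1})$, $\supp \eta_n \sub B(x, (r_n + r_{n+1})/2)$, and $\|\nabla \eta_n\|_\infty \les 2^n/R$. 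Since $k_{n+1} \geq 0$, Lemma~\ref{Real valued A: Lemma: Auxiliary results} gives $(u-k_{n+1})^+ \in \H_D^1(O)$ and hence $w_n \coloneqq \eta_n (u - k_{n+1})^+ \in \H_D^1(O)$. The assumption $k \geq 0$ is essential only when $B_R$ meets $D$; in the alternative case $R < \d_D(x)$, $(u-k_{n+1})^+$ is compactly supported in $O(x, R)$ for any $k \in \R$, and the same conclusions hold.

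The De Giorgi class inequality~\eqref{eq: De Giorgi class estimate} applied at radii $(r_n + r_{n+1})/2 < r_n$ and level $k_{n+1}$ controls $\|\nabla (u - k_{n+1})^+\|_{\L^2(\supp \eta_n)}$ by $c \cdot 2^n R^{-1} V_n^{1/2}$; the product rule then gives $\|w_n\|_{1,2} \les_{R_0} 2^n R^{-1} V_n^{1/2}$. Assumption~\hyperref[Section: Real-valued A]{$\AssE$} upgrades this via Sobolev embedding (in $d \geq 3$) or via the Gagliardo--Nirenberg-type interpolation (in $d = 2$) to $\|w_n\|_{\L^{2^*}(O)} \les 2^n R^{-1} V_n^{1/2}$ (respectively an analogous $\L^q$-bound when $d = 2$). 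Since $w_n = (u-k_{n+1})^+$ on $A_{n+1} \sub B_{n+1}$, Hölder gives $V_{n+1} \leq \|w_n\|_{\L^{2^*}}^2 a_{n+1}^{2/d}$, and a Chebyshev step (using $u - k_n > h \cdot 2^{-n-1}$ on $A_{n+1}$) gives $a_{n+1} \leq 4^{n+1} h^{-2} V_n$. Assembling these yields the recursion
\[
V_{n+1} \leq \frac{C_{R_0}\, b^n}{R^2\, h^{4/\delta}}\, V_n^{1 + 2/\delta}
\]
with $\delta$ as in~\eqref{eq: Real-valued A: Definition of theta} and an absolute $b > 1$. The classical iteration lemma \cite[Lem.~7.1]{Giaquinta_Martinazzi} forces $V_n \to 0$, and hence $\esssup_{O(x, R/2)} u \leq k + h$, as soon as $V_1 \leq c\, R^{\delta} h^2$.

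To recover the refined factor $(R^{-d} a_0)^{(\theta-1)/2}$, I sharpen the zeroth-step bound. For an intermediate exponent $p \in (2, 2^*)$, Hölder gives $V_1 \leq \|w_0\|_{\L^p(A_1)}^2 \cdot a_1^{1 - 2/p}$, and interpolation yields $\|w_0\|_{\L^p(O)} \leq \|w_0\|_{\L^{2^*}}^{\eta} \|w_0\|_{\L^2}^{1-\eta}$ with $\eta = d(p-2)/(2p)$. Using the trivial inclusion $a_1 \leq a_0$ produces $V_1 \les_{R_0} R^{-2\eta} V_0 \cdot a_0^{1-2/p}$. The algebraic identity $\theta^2 - \theta = 2/\delta$ defining $\theta$ makes $p = 2/(2-\theta) \in (2, 2^*)$ admissible and yields $1 - 2/p = \theta - 1$ together with $\delta + 2\eta = d\theta$, so the iteration condition on $V_1$ translates exactly to $V_0 \cdot a_0^{\theta - 1} \leq c R^{d\theta} h^2$, equivalently $h \geq c' (R^{-d} V_0)^{1/2} (R^{-d} a_0)^{(\theta-1)/2}$. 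This is the stated bound.

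The main technical obstacle is the algebraic optimization that picks $p$ and produces the exponent $\theta$; the analytic steps are a careful assembly of the De Giorgi class inequality, the Sobolev/Gagliardo--Nirenberg bound furnished by~\hyperref[Section: Real-valued A]{$\AssE$}, and Hölder on level sets. For the last assertion of the theorem: if $A$ is real-valued and $u \in \H_D^1(O; \R)$ solves $L_D u = 0$ in $O(x, R_0)$, then testing with $\eta^2 (u - k)^{\pm} \in \H_D^1(O)$ (admissible by Lemma~\ref{Real valued A: Lemma: Auxiliary results} and the realness of $A$) and invoking Young's inequality together with ellipticity yields the De Giorgi bound~\eqref{eq: De Giorgi class estimate} with constant depending only on $\lambda, \Lambda$. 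Hence $u \in \mathrm{DG}_{D, x, R_0}(O)$ and all constants above depend only on $d$, $\lambda$, $\Lambda$, $R_0$, and the parameters in~\hyperref[Section: Real-valued A]{$\AssE$}.
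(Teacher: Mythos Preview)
Your argument is correct and arrives at the same conclusion, but the route differs from the paper's in an interesting way. The paper iterates a \emph{combined} functional $\Phi(k,r) = \|(u-k)^+\|_{\L^2(A_{k,r})}^{\delta\theta}\,|A_{k,r}|$ and shows directly that $\Phi(k,r) \leq c(R-r)^{-d\theta}(k-h)^{-2}\,\Phi(h,R)^{\theta}$; the defining equation $\theta^2-\theta=2/\delta$ is precisely what makes this closed recursion in $\Phi$ work, and the level-set factor then falls out automatically from the choice of the iteration threshold $\zeta$. You instead run the \emph{standard} De~Giorgi iteration on $V_n$ alone (exponent $1+2/\delta$), which by itself only yields the cruder bound $\esssup u^+ \leq k + c(R^{-d}V_0)^{1/2}$, and then recover the missing factor $(R^{-d}a_0)^{(\theta-1)/2}$ by a separate sharpened first step using $\L^p$--$\L^{2^*}$ interpolation at the carefully chosen $p=2/(2-\theta)$. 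Your approach is more modular (it isolates the refinement from the iteration), while the paper's is more uniform: because everything is expressed through $\delta$, the cases $d\geq 3$ and $d=2$ are treated simultaneously without a separate interpolation argument.

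One point to tighten: your algebraic identities ``$\delta+2\eta=d\theta$'' and the threshold ``$V_1\leq cR^{\delta}h^2$'' are written for $d\geq 3$ (where $\delta=d$). In $d=2$ the interpolation is between $\L^q$ and $\L^2$, the power of $R$ in the recursion becomes $R^{-4/\delta}$ rather than $R^{-2}$, and the threshold reads $V_1\leq cR^2h^2$; the final exponent count still closes correctly to give $h\geq c'(R^{-2}V_0)^{1/2}(R^{-2}a_0)^{(\theta-1)/2}$, but the intermediate formulas you state do not literally apply. Your parenthetical ``respectively an analogous $\L^q$-bound'' is not quite enough---you should display the $d=2$ algebra or, as the paper does, organize the iteration so that both cases are governed by the single parameter $\delta$.
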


	Before we come to the proof of Theorem~\ref{Real valued A: Theorem: Local boundedness}, let us show that $\mathrm{DG}_{D, x, R}(O)$ is the natural energy class associated to the equation $L_D u = 0$ in $O(x, R)$.

	\begin{lemma}  \label{Real valued A: Lemma: Caccioppoli}
		Let $A$ be real-valued, $x \in \overline{O}$, $R > 0$ and $u \in \H_D^1(O; \R)$ such that $L_Du =0$ in $O(x,R)$. Then $ u \in \mathrm{DG}_{D, x, R}(O)$ with an implicit constant depending only on ellipticity.
	\end{lemma}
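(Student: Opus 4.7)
The plan is to adapt the classical Caccioppoli test-function argument to the mixed-boundary setting, relying on Lemma~\ref{Real valued A: Lemma: Auxiliary results} (truncation in $\H^1_D(O)$) and Lemma~\ref{D(mu), G(mu) and H(mu) for MBC: Lemma: Sobolev extension by 0}~(ii) (multiplication by smooth cutoffs). Since $A$ is real, $-u$ also solves $L_D(-u)=0$ in $O(x,R)$, so by Remark~\ref{Real valued A: Remark: De Giorgi class} it is enough to prove the $(u-k)^+$ half of~\eqref{eq: De Giorgi class estimate}; the $(u+k)^-$ half then follows upon applying it to $-u$. Fix $r\in(0,R)$ and $k\geq 0$ (or $k\in\R$ in the interior case $R<\d_D(x)$), and choose a cutoff $\eta\in\smooth[B(x,R)]$ with $0\leq\eta\leq 1$, $\eta\equiv 1$ on $B(x,r)$, and $\|\nabla\eta\|_\infty\leq 2/(R-r)$.

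The central step is to test the local equation $L_D u=0$ in $O(x,R)$ against $\varphi\coloneqq\eta^2(u-k)^+$, and the first thing to verify is that $\varphi$ is admissible, i.e.\ $\varphi\in\H^1_{\partial O(x,R)\setminus N(x,R)}(O(x,R))$. For $k\geq 0$, Lemma~\ref{Real valued A: Lemma: Auxiliary results} places $(u-k)^+$ in $\H^1_D(O)$, and then Lemma~\ref{D(mu), G(mu) and H(mu) for MBC: Lemma: Sobolev extension by 0}~(ii) with $U=B(x,R)$ places the product $\eta^2(u-k)^+$ in the desired test-function space. In the alternative case $k<0$ with $R<\d_D(x)$, the ball $B(x,R)$ is disjoint from $D$, so $\varphi$ has compact support in $\R^d\setminus D$ and the membership is immediate.

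With admissibility in place, plugging $\varphi$ into the weak equation and using $\nabla u=\nabla(u-k)^+$ on $\{u>k\}$ together with $\varphi\equiv 0$ elsewhere produces the key identity
\begin{equation*}
\int\eta^2 A\nabla(u-k)^+\cdot\nabla(u-k)^+ = -2\int\eta(u-k)^+\,A\nabla(u-k)^+\cdot\nabla\eta.
\end{equation*}
To derive~\eqref{eq: De Giorgi class estimate} from it, I would apply the integrated form ellipticity of Assumption~\ref{Assumption: Operator theoretic setting} to the function $v\coloneqq\eta(u-k)^+\in\H^1_D(O)$, expand $A\nabla v\cdot\nabla v$ in terms of the identity above, and combine the inequality $|a+b|^2\geq\tfrac{1}{2}|a|^2-|b|^2$ with Cauchy--Schwarz and Young's inequality to isolate $\int\eta^2|\nabla(u-k)^+|^2$. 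Inserting the pointwise controls $\eta\geq\1_{B(x,r)}$ and $|\nabla\eta|\leq 2/(R-r)$ then yields the Caccioppoli bound with a constant depending only on $\lambda$ and $\Lambda$.

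The main obstacle is not the analytic computation but the handling of the form ellipticity: because Assumption~\ref{Assumption: Operator theoretic setting} postulates only an integrated (as opposed to pointwise a.e.) ellipticity of $A$, the one-line derivation via $A\nabla(u-k)^+\cdot\nabla(u-k)^+\geq\lambda|\nabla(u-k)^+|^2$ is not directly available. The ellipticity has to be applied to $v=\eta(u-k)^+$ viewed globally in $\H^1_D(O)$, which produces cross terms that must be balanced against the right-hand side of the test-function identity. Once this bookkeeping is carried out, the argument is robust and gives the constant of order $\Lambda^2/\lambda^2$ stated in the lemma.
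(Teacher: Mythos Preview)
Your proposal is correct and follows essentially the same route as the paper: test with $\eta^2(u-k)^+$, verify admissibility via Lemma~\ref{Real valued A: Lemma: Auxiliary results} and Lemma~\ref{D(mu), G(mu) and H(mu) for MBC: Lemma: Sobolev extension by 0}~(ii), and circumvent the lack of pointwise ellipticity by applying the integrated G{\aa}rding inequality to $\eta(u-k)^+\in\H^1_D(O)$ before expanding and absorbing via Young's inequality. The paper does precisely this, calling your $\eta$, $\varphi$, $v$ respectively $\varphi$, $v$, $w$; your explicit flagging of the integrated-ellipticity issue matches the paper's organization of the chain of inequalities.
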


	\begin{proof}
		Owing to Remark~\ref{Real valued A: Remark: De Giorgi class} we only have to show the estimate for $(u-k)^+$. First, assume that $\d_D(x) \leq R$ and $k \geq 0$.

		Let $r \in (0, R)$ and $\varphi \in \smooth[B(x, R)]$ be $[0,1]$-valued with $\varphi =1$ on $B(x,r)$ and $\| \nabla \varphi \|_{\L^{\infty}(\R^d)} \leq \nicefrac{2}{(R -r)}$.

		Put $w\coloneqq (u-k)^+ \varphi$ and $v\coloneqq (u-k)^+ \varphi^2$. They are contained in $\H^1_D(O)$ and test functions for the equation for $u$, see Lemma~\ref{Real valued A: Lemma: Auxiliary results} and Lemma~\ref{D(mu), G(mu) and H(mu) for MBC: Lemma: Sobolev extension by 0} (ii). By the product rule and ellipticity, we have
		\begin{align}
			\frac{\lambda}{2} \int_O |\nabla (u-k)^+|^2 \varphi^2
			&\leq \lambda \int_O |\nabla w|^2 + |(u-k)^+ \nabla \varphi|^2 \\
			&\leq \int_O A \nabla w \cdot \nabla w + \lambda |(u-k)^+ \nabla \varphi|^2 \\
			&= \int_O A \nabla u \cdot \varphi \nabla w + A (u-k)^+ \nabla \varphi \cdot \nabla w + \lambda |(u-k)^+ \nabla \varphi|^2 \\
			&= \int_O - A \nabla u \cdot  w \nabla \varphi  + A (u-k)^+ \nabla \varphi \cdot \nabla w + \lambda |(u-k)^+ \nabla \varphi|^2,
			\intertext{where we have used the equation for $u$ with test function $v$. By definition of $w$ we can continue by}
			&\leq \int_O - A \varphi \nabla (u-k)^+ \cdot  (u-k)^+  \nabla \varphi  + A (u-k)^+ \nabla \varphi \cdot \varphi \nabla (u-k)^+ \\
			&\quad + A (u-k)^+ \nabla \varphi \cdot (u-k)^+ \nabla \varphi + \lambda |(u-k)^+ \nabla \varphi|^2.
		\end{align}
		At this point, we can use the boundedness of $A$ and Young's inequality to absorb all terms with $\varphi \nabla (u-k)^+$ on the right. We are left with
		\begin{equation}
			\int_O |\nabla (u-k)^+|^2 \varphi^2  \leq c \int_O |(u-k)^+ \nabla \varphi|^2,
		\end{equation}
		where $c$ depends on ellipticity, and \eqref{eq: De Giorgi class estimate} follows by the choice of $\varphi$.

		Finally, if $R < \d_D(x)$, then $v,w \in \H_D^1(O)$ for all $k \in \R$ and the same argument applies.
	\end{proof}

	\begin{proof}[\rm\bf{Proof of Theorem~\ref{Real valued A: Theorem: Local boundedness}.}] We begin with the case $\d_D(x) \leq R$.

		Fix $r \in (0, R)$ and $k \geq 0$. Let $\eta \in \smooth[B(x, \nicefrac{(r+ R)}{2})]$ be $[0,1]$-valued with $\eta =1$ on $B(x,r)$ and $\| \nabla \eta \|_{\L^{\infty}(\R^d)} \leq \nicefrac{4}{(R -r)}$. Then $\eta (u-k)^+ \in \H_D^1(O)$ by Lemmas~\ref{Real valued A: Lemma: Auxiliary results} and~\ref{D(mu), G(mu) and H(mu) for MBC: Lemma: Sobolev extension by 0}~(ii). We deduce from \eqref{eq: De Giorgi class estimate} that
		\begin{align}
			\| \nabla (\eta (u-k)^+) \|_{\L^2(O(x, \frac{r+R}{2}))} &\leq \| (u-k)^+ \nabla \eta \|_{\L^2(O(x,\frac{r+R}{2}))} + \| \eta \nabla (u-k)^+ \|_{\L^2(O(x,\frac{r+R}{2}))} \notag
			\\& \les \frac{1}{R -r} \| (u-k)^+ \|_{\L^2(O(x, R))}.  \label{eq: Local boundedness, 1}
		\end{align}
		By Lemma~\ref{Real valued A: Lemma: Caccioppoli} the implicit constant depends only on ellipticity in the case that $L_Du =0$ in $O(x,R)$. We abbreviate
		\begin{equation*}
			A_{k,R} \coloneqq \{ u > k \}(x,R).
		\end{equation*}
		First, let $d \geq 3$. Using~\hyperref[Section: Real-valued A]{$\AssE$} and \eqref{eq: Local boundedness, 1}, we get
		\begin{align}
			\label{eq: Real valued A: Local boundedness, embedding step}
			\begin{split}
				\| u-k \|_{\L^{2^*}(A_{k,r})} &\leq \| \eta (u-k)^+ \|_{2^*}
				\\&\les \| \nabla (\eta (u-k)^+) \|_{\L^2(O(x, \frac{r+R}{2}))} + \| \eta (u-k)^+ \|_{\L^2(O(x, \frac{r+R}{2}))}
				\\&\les \frac{1+R_0}{R-r} \| u-k \|_{\L^2(A_{k,R})}.
			\end{split}
		\end{align}
		H\"older's inequality yields
		\begin{equation}
			\| u - k \|_{\L^2(A_{k,r})} \leq  |A_{k,r}|^{\frac{1}{d}} \| u - k \|_{\L^{2^*}(A_{k,r})} \les \frac{|A_{k,R}|^{\frac{1}{d}}}{R -r } \| u-k \|_{\L^2(A_{k,R})}.  \label{eq: Local boundedness (1)}
		\end{equation}
		Now, let $d =2$. Then~\hyperref[Section: Real-valued A]{$\AssE$} postulates a Gagliardo--Nirenberg-type inequality.  When we apply $\AssE $ in \eqref{eq: Real valued A: Local boundedness, embedding step}, we obtain instead of \eqref{eq: Local boundedness (1)} the estimate
		\begin{equation}  \label{eq: Local boundedness (1), d=2}
			\| u - k \|_{\L^2(A_{k,r})} \leq |A_{k,r}|^{\frac{1}{\delta}} \| u - k \|_{\L^q(A_{k,r})} \les \frac{|A_{k,R}|^{\frac{1}{\delta}}}{(R -r)^{\frac{2}{\delta}}} \| u-k \|_{\L^2(A_{k,R})}.
		\end{equation}
		Recall the definition of $\theta$ from \eqref{eq: Real-valued A: Definition of theta} and define
		\begin{equation*}
			\Phi(k, r) \coloneqq \| u -k \|_{\L^2(A_{k,r})}^{\delta \theta} |A_{k,r}|.
		\end{equation*}
		We raise \eqref{eq: Local boundedness (1)} and \eqref{eq: Local boundedness (1), d=2} to the $\delta \theta$-th power and multiply by $|A_{k,r}|$ to get for all $h < k$ the estimate
		\begin{align*}
			\Phi(k,r) &\les \frac{1}{(R -r)^{d \theta}} |A_{k,R}|^{\theta} |A_{k, r}| \| u - k \|_{\L^2(A_{k,R})}^{\delta \theta}
			\\&\les \frac{1}{(R -r)^{d \theta}} |A_{h,R}|^{\theta} |A_{k, r}| \| u - h \|_{\L^2(A_{h,R})}^{\delta \theta}
			\\&\leq \frac{1}{(R -r)^{d \theta}} \frac{1}{(k-h)^2} |A_{h,R}|^{\theta} \| u - h \|_{\L^2(A_{h,R})}^{2 + \delta \theta}.
		\end{align*}
		Using that $\theta$ is the positive solution to $\theta^2 - \theta - \nicefrac{2}{\delta} =0$, we have proven so far that there is some $c > 0$ such that we have for all $0 < r < R$, $k \geq 0$ and $h < k$ the estimate
		\begin{equation}
			\Phi(k,r) \leq \frac{c}{(R-r)^{d \theta}} \frac{1}{(k-h)^2} \Phi(h,R)^{\theta}. \label{eq: Local boundedness (2)}
		\end{equation}
		At this point, we set up an iteration scheme to conclude. Let $\zeta > 0$, set $k_n \coloneqq k + \zeta - \nicefrac{\zeta}{2^n}$ and $r_n \coloneqq \nicefrac{R}{2} + \nicefrac{R}{2^{n+1}}$, so that
		\begin{equation*}
			\Phi(k_{n+1}, r_{n+1}) \leq c 2^{d \theta} \frac{2^{(n+1)(d \theta +2)}}{R^{d \theta} \zeta^2} \Phi(k_n, r_n)^{\theta}.
		\end{equation*}
		Let $\mu \coloneqq \nicefrac{(d \theta +2)}{(\theta -1)} > 0$ and put $\psi_n \coloneqq 2^{\mu n} \Phi(k_n, r_n)$. Then
		\begin{equation*}
			\psi_{n+1} \leq  c \frac{2^{\theta (\mu + d)}}{R^{d \theta} \zeta^2} \psi_n^{\theta}.
		\end{equation*}
		We choose $\zeta > 0$ such that
		\begin{equation*}
			\psi_0^{1- \theta} = c \frac{2^{\theta (\mu + d)}}{R^{d \theta} \zeta^2},
		\end{equation*}
		that is,
		\begin{equation*}
			\zeta = c^{\frac{1}{2}} 2^{\frac{\theta (\mu + d)}{2}} R^{- \frac{d \theta}{2}} \| u - k \|_{\L^2(A_{k, R})} |A_{k, R}|^{\frac{\theta -1}{2}} = c' R^{- \frac{d}{2}} \| u - k \|_{\L^2(A_{k, R})} ( R^{-d} |A_{k, R}|)^{\frac{\theta -1}{2}}.
		\end{equation*}
		Our choice of $\zeta$ and induction yields $\psi_n \leq \psi_0$ for all $n \in \N_0$. This eventually implies
		\begin{equation*}
			\Phi (k + \zeta, \nicefrac{R}{2}) \leq \limsup_{n \to \infty} \Phi(k_n, r_n) = \limsup_{n \to \infty} 2^{- \mu n} \psi_n = 0.
		\end{equation*}
		Thus, $\Phi(k+ \zeta, \nicefrac{R}{2}) =0$ and hence $|A_{k + \zeta,\nicefrac{R}{2}}| =0$ or $u= k + \zeta$ on $A_{k + \zeta,\nicefrac{R}{2}}$. In both cases we conclude that
		\begin{equation*}
			\underset{O(x, \frac{R}{2})}{\esssup} \; u^+ \leq k + \zeta,
		\end{equation*}
		as claimed.

		The restriction $k \geq 0$ was only used in order to apply \eqref{eq: De Giorgi class estimate} and to guarantee that $\eta (u - k)^+ \in \H_D^1(O)$. But when $R < \d_D(x)$, this is true for all $k \in \R$ simply by the support of $\eta$ and the same argument applies.
	\end{proof}


	\subsection{Property \boldmath$\H(\mu)$ for $L$}  \label{Subsection: Property H(mu), real A}

	So far, we have worked under assumption (E) alone. Now, we will add~\hyperref[Section: Real-valued A]{$\AssPp$} in order to upgrade local boundedness to local H\"older continuity.

	\begin{theorem}   \label{Real valued A: Theorem: Property H(mu)}
		Assume~\hyperref[Section: Real-valued A]{$\AssE$} and~\hyperref[Section: Real-valued A]{$\AssPp$}. Let $x \in \overline{O}$, $R \leq r_0$ and $u \in \mathrm{DG}_{D, x, R}(O)$. Then $u$ is locally H\"older continuous in $O(x, \nicefrac{R}{4})$ with
		\begin{equation*}
			R^{\mu} [u]^{(\mu)}_{O(x, \frac{R}{4})} \les R^{-\frac{d}{2}} \| u \|_{\L^2(O(x,R))}.
		\end{equation*}
		The implicit constant and $\mu$ depend only on $d$,~\hyperref[Section: Real-valued A]{$\AssPp$},~\hyperref[Section: Real-valued A]{$\AssE$} and the constant in \eqref{eq: De Giorgi class estimate}.
	\end{theorem}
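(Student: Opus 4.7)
The strategy is to bootstrap the local boundedness of Theorem~\ref{Real valued A: Theorem: Local boundedness} to H\"older continuity via an oscillation-decay argument in the spirit of De~Giorgi and DiBenedetto. First I would apply Theorem~\ref{Real valued A: Theorem: Local boundedness} to both $u$ and $-u$ (using Remark~\ref{Real valued A: Remark: De Giorgi class}) with $k=0$ to obtain $\|u\|_{\L^{\infty}(O(x, R/2))} \les R^{-d/2} \|u\|_{\L^2(O(x,R))}$. Once this is in place, a classical Campanato-type iteration reduces the claim to the following local oscillation decay: there exist $\alpha \in (0,1)$ and $\tau \in (0,1)$, depending only on the data, such that for every $y \in O(x, R/4)$ and every $\rho \in (0, R/4]$, setting $\omega(y, \rho) \coloneqq \esssup_{O(y,\rho)} u - \essinf_{O(y,\rho)} u$, one has $\omega(y, \alpha \rho) \leq \tau \, \omega(y, \rho)$. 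Iterating over a geometric sequence of radii then yields H\"older continuity with exponent $\mu = \log(\tau^{-1}) / \log(\alpha^{-1})$, and the quantitative bound in the statement follows by combining with the $\L^\infty$-bound above.

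\textbf{Reduction to level-set decay.} Let $M \coloneqq \esssup_{O(y,\rho)} u$ and $m \coloneqq \essinf_{O(y,\rho)} u$. Up to replacing $u$ by $-u$ (which keeps it in the De~Giorgi class by Remark~\ref{Real valued A: Remark: De Giorgi class}), one may assume $|\{u > \tfrac{M+m}{2}\} \cap O(y, \rho/2)| \leq \tfrac{1}{2}|O(y, \rho/2)|$. Then $v \coloneqq (u - \tfrac{M+m}{2})^{+}$ still lies in the De~Giorgi class on $O(y, \rho/2)$, satisfies $0 \leq v \leq \omega(y, \rho)/2$, and its zero set covers at least half of $O(y, \rho/2)$. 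The oscillation decay is obtained if one can show $\esssup_{O(y, \rho/4)} v \leq (1 - \eta)\, \omega(y, \rho)/2$ for some $\eta > 0$.

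\textbf{De~Giorgi--DiBenedetto iteration.} For $0 \leq k < k'$ put $A_k \coloneqq \{v > k\} \cap O(y, \rho/2)$. The truncation $w \coloneqq (v \wedge k') - (v \wedge k)$ vanishes on the same set as $v$, so the weak Poincar\'e inequality~\hyperref[Section: Real-valued A]{$\AssPp$} for some $p \in (1,2)$ \emph{strictly} (available thanks to Lemma~\ref{Real valued A: Lemma: Fat und LU imply E and Pp}), combined with H\"older's inequality on the set $A_k \setminus A_{k'}$ where $\nabla w = \nabla v$, yields an isoperimetric-type inequality
\begin{equation*}
(k' - k) \, |A_{k'}|^{1/p} \les \rho \, |A_k \setminus A_{k'}|^{\frac{1}{p} - \frac{1}{2}} \, \|\nabla v\|_{\L^2(A_k)}.
\end{equation*}
The Caccioppoli estimate built into the De~Giorgi class controls the right-hand gradient norm by $\rho^{-1} \|v\|_{\L^2(O(y,\rho))} \les \omega(y, \rho)\, \rho^{d/2-1}$. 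A telescoping argument along the geometric sequence $k_n \coloneqq (1 - 2^{-n})\, \omega(y, \rho)/2$ then shows, after finitely many steps, that $|A_{k_n}| \leq \nu_0\, |O(y, \rho/2)|$ for any prescribed $\nu_0 > 0$.

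\textbf{Conclusion and main obstacle.} Once the super-level set is below a sufficiently small threshold $\nu_0$, the quantitative form of Theorem~\ref{Real valued A: Theorem: Local boundedness} --- precisely the factor $(R^{-d}|\{u>k\}(x,R)|)^{(\theta-1)/2}$ it contains --- upgrades the bound to $\esssup_{O(y,\rho/4)} v \leq k_n + c\, \nu_0^{(\theta-1)/2}\, \omega(y,\rho)/2$. For $n$ and $\nu_0$ chosen appropriately, the right-hand side is at most $(1 - \eta)\, \omega(y, \rho)/2$, giving the oscillation decay with $\alpha = 1/4$. The central nonroutine step --- and the real obstacle --- is the availability of~\hyperref[Section: Real-valued A]{$\AssPp$} with $p$ \emph{strictly} below $2$: this alone produces the positive gain exponent $\tfrac{1}{p} - \tfrac{1}{2}$ on the transition set $A_k \setminus A_{k'}$, without which the telescoping collapses. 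This improvement is exactly what Mikkonen's version of Lewis' openness theorem delivers, as already exploited in Lemma~\ref{Real valued A: Lemma: Fat und LU imply E and Pp}. Boundary points $y$ close to $D$ require only cosmetic modifications: there~\hyperref[Section: Real-valued A]{$\AssPp$} applies without the average term and $v$ vanishes pointwise on $D$, which in fact simplifies the verification that $\{v=0\}$ is large.
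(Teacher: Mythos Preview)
Your proposal is correct and follows essentially the same route as the paper: the paper's Lemma~\ref{Real valued A: Lemma: 2-isoperimetric inequality} and the Shrinking Lemma~\ref{Real valued A: Lemma: ShrinkingLemma} package exactly the isoperimetric inequality and the telescoping argument you describe, and the conclusion via the quantitative factor $(R^{-d}|\{u>k\}|)^{(\theta-1)/2}$ in Theorem~\ref{Real valued A: Theorem: Local boundedness} is identical. One small refinement worth noting: near $D$ the sign freedom is spent to ensure $k_0 = \tfrac{M+m}{2} \geq 0$ (so that $(u-k_0)^+ \in \H^1_D(O)$ and~\hyperref[Section: Real-valued A]{$\AssPp$} without average applies), \emph{not} for the half-measure alternative, which is then unnecessary --- so ``cosmetic'' slightly undersells this logical switch, though the analysis is indeed no harder.
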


	This result implies Theorem~\ref{Theorem: Main result 2}.

	\begin{proof}[\rm\bf{Proof of Theorem~\ref{Theorem: Main result 2} from Theorem~\ref{Real valued A: Theorem: Property H(mu)}}] Let $u \in \H_D^1(O)$ with $L_D u = 0$ in $O(x, r)$ for some $x \in \overline{O}$. By Remark~\ref{(D(mu), G(mu) and H(mu) for MBC: Remark: H(mu) Scaling} we can assume $r \leq r_0$ and it suffices to prove \eqref{eq: Definition: Property H(mu)} with $O(x, \nicefrac{r}{4})$ on the left-hand side.

		Since $A$ is real-valued, $\re(u), \im(u) \in \H_D^1(O; \R)$ solve the same equation as $u$. According to Lemma~\ref{Real valued A: Lemma: Caccioppoli} they belong to $\mathrm{DG}_{D, x, r}(O)$. As the geometric assumptions~\hyperref[The geometric setup: Definition (Fat)]{$\Fat$} and~\hyperref[The geometric setup: Definition (LU)]{$\LU$} imply~\hyperref[Section: Real-valued A]{$\AssE$} and~\hyperref[Section: Real-valued A]{$\AssPp$} by Lemma~\ref{Real valued A: Lemma: Fat und LU imply E and Pp}, we can apply Theorem~\ref{Real valued A: Theorem: Property H(mu)} to $\re(u)$ and $\im(u)$ to complete the proof.
	\end{proof}

	In order to prove Theorem~\ref{Real valued A: Theorem: Property H(mu)}, we still need two short lemmas. The slight asymmetry in the scaling of the radius between (i) and (ii) is unavoidable and the reader might want to think of $c_1=1$ on a first reading.

	\begin{lemma}  \label{Real valued A: Lemma: 2-isoperimetric inequality}
		Assume~\hyperref[Section: Real-valued A]{$\AssPp$}, let $u \in \H^1_D(O; \R)$, $r \in (0, r_0]$ and $x \in \overline{O}$.
		\begin{enumerate}
			\item If $r < \d_D(x)$, then it holds for all $h < k$ that
			\begin{equation} \label{eq: 2-isoperimetric near Neumann}
				(k-h)^p |\{ u \geq k \} (x,\nicefrac{r}{c_1})| \leq \frac{c_0^p  r^p|O(x,\nicefrac{r}{c_1})|^p}{|\{ u \leq h \}(x,\nicefrac{r}{c_1})|^p} \int_{\{ h \leq u \leq k\}(x,r)} |\nabla u|^p.
			\end{equation}

			\item If $\d_D(x) \leq r$, then it holds for all $0 \leq h < k$ that
			\begin{equation} \label{eq: 2-isoperimatric near Dirichlet}
				(k-h)^p |\{ u \geq k \} (x,r)| \leq c_0^p r^p \int_{\{ h \leq u \leq k\}(x,c_1 r)} |\nabla u|^p.
			\end{equation}
		\end{enumerate}
	\end{lemma}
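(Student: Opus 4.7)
The plan is to apply the weak Poincar\'{e} inequality $\AssPp$ to the double truncation
\[
v := (u - h)^{+} \wedge (k - h),
\]
which satisfies $v = 0$ on $\{u \leq h\}$, $v = k-h$ on $\{u \geq k\}$, and $\nabla v = \nabla u \, \1_{\{h < u < k\}}$ almost everywhere. By Lemma~\ref{Real valued A: Lemma: Auxiliary results} (after a localization to the ball $O(x, c_1 r)$) this truncation lies in $\W^{1,p}_D(O)$. The sign hypothesis $h \geq 0$ in case~(ii) is precisely what lets the truncation lemma be applied to $(u-h)^{+}$ while preserving the partial Dirichlet condition; in case~(i) this restriction is unnecessary, as we stay strictly inside $O$.

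For case~(ii), since $\d_D(x) \leq r$ the indicator $\1_{[\d_D(x) > r]}$ in $\AssPp$ vanishes, and the subtraction-free Poincar\'{e} inequality combined with $v \geq (k-h)\, \1_{\{u \geq k\}}$ gives
\[
(k-h) \, |\{u \geq k\}(x,r)|^{1/p} \leq \| v \|_{\L^p(O(x,r))} \leq c_0 \, r \, \| \nabla v \|_{\L^p(O(x, c_1 r))}.
\]
Raising to the $p$-th power and observing that $\nabla v = 0$ outside $\{h \leq u \leq k\}$ yields \eqref{eq: 2-isoperimatric near Dirichlet}.

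For case~(i) I instead work at the reduced radius $\rho := r/c_1$; since $\rho < r < \d_D(x)$, the indicator now equals $1$ and $\AssPp$ takes the averaged form
\[
\| v - (v)_{O(x, \rho)} \|_{\L^p(O(x, \rho))} \leq c_0 \, \rho \, \| \nabla v \|_{\L^p(O(x, r))}.
\]
Restricting the left-hand side to $\{u \leq h\}(x, \rho)$, where $v = 0$, bounds the mean value $(v)_{O(x, \rho)}$ from above by $c_0 \, \rho \, \| \nabla v \|_{\L^p(O(x, r))}/|\{u \leq h\}(x, \rho)|^{1/p}$; on the other hand $v \geq (k-h)\,\1_{\{u \geq k\}}$ gives $(k-h)\, |\{u \geq k\}(x, \rho)| \leq (v)_{O(x, \rho)} \, |O(x, \rho)|$. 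Combining these ingredients and raising to the $p$-th power delivers the natural estimate
\[
(k-h)^{p} |\{u \geq k\}(x,\rho)|^{p} \leq c_0^{p} \rho^{p} \, \frac{|O(x,\rho)|^{p}}{|\{u \leq h\}(x,\rho)|} \int_{\{h \leq u \leq k\}(x,r)} |\nabla u|^{p}.
\]
The stated inequality \eqref{eq: 2-isoperimetric near Neumann} then follows by dividing through by $|\{u \geq k\}(x,\rho)|^{p-1}$ (the case $|\{u \geq k\}(x,\rho)| = 0$ being trivial) and using that the ratio $|O(x,\rho)|/|\{u \leq h\}(x,\rho)| \geq 1$, so that one power of it is dominated by its $p$-th power; the factor $c_1^{-p} \leq 1$ is absorbed in the process.

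The main technical point I expect to wrestle with is the Sobolev bookkeeping for $v$: verifying that after truncation the function really lies in the space $\W^{1,p}_D(O)$ on which $\AssPp$ is formulated (in particular the preservation of the partial Dirichlet condition that forces the sign condition $h \geq 0$ in case~(ii)), together with the admissibility of the radii $\rho$ and $c_1 \rho = r$ in $\AssPp$ (which is exactly what the hypothesis $r \leq r_0$ provides). Once these details are handled, the whole argument reduces to one round of Chebyshev combined with the Poincar\'{e} inequality applied to the truncated function.
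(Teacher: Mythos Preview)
Your treatment of case~(ii) is correct and coincides with the paper's. In case~(i), however, the final reduction is wrong. Your intermediate inequality
\[
(k-h)^{p}\,|\{u\ge k\}(x,\rho)|^{p}\ \le\ c_0^{p}\rho^{p}\,\frac{|O(x,\rho)|^{p}}{|\{u\le h\}(x,\rho)|}\int_{\{h\le u\le k\}(x,r)}|\nabla u|^{p}
\]
is indeed a valid consequence of restricting the Poincar\'e inequality to $\{u\le h\}$ and bounding $(v)_{O(x,\rho)}$ from below via $\int v$, but it does \emph{not} imply \eqref{eq: 2-isoperimetric near Neumann}. After dividing by $|\{u\ge k\}|^{p-1}$ you would need $|\{u\le h\}|^{p-1}\le c\,|\{u\ge k\}|^{p-1}$, which is false in general; the monotonicity of $t\mapsto t^{p}$ for $t\ge 1$ that you invoke does not bridge this gap. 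The paper restricts to the \emph{other} level set $\{u\ge k\}$, where $v=k-h$, and combines this with the upper bound $(v)_{O(x,\rho)}\le (k-h)\,|\{u>h\}(x,\rho)|/|O(x,\rho)|$ to obtain $|v-(v)_{O(x,\rho)}|\ge (k-h)\,|\{u\le h\}(x,\rho)|/|O(x,\rho)|$ on $\{u\ge k\}$; integrating the $p$-th power over $\{u\ge k\}$ then gives \eqref{eq: 2-isoperimetric near Neumann} directly with the correct distribution of exponents.

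A second, smaller point: in case~(i) the truncation $v$ need not lie in $\W^{1,p}_D(O)$ when $h<0$ (far from $B(x,r)$, near $D$, one has $u\approx 0$ and hence $v\approx -h>0$). Your remark about ``staying strictly inside $O$'' is off --- the ball may well meet $N$ --- and the vague ``localization'' does not fix this. The paper multiplies by a cutoff $\varphi\in\rC^\infty_D(\R^d)$ with $\varphi=1$ on $B(x,r)$, which exists precisely because $r<\d_D(x)$; this forces $\varphi v\in\W^{1,p}_D(O)$ without affecting $v$ or $\nabla v$ on $O(x,r)$.
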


	\begin{proof}[\rm\bf{Proof of (i).}] Let $\varphi \in \smoothD[\R^d]$ with $\varphi =1$ on $B(x, r)$. Set $w \coloneqq \varphi ((u-h)^+-(u-k)^+)$ and estimate
		\begin{equation*}
			(w)_{O(x,\nicefrac{r}{c_1})} \leq \frac{(k-h) |\{ u > h \}(x,\nicefrac{r}{c_1})|}{|O(x,\nicefrac{r}{c_1})|} \leq k -h.
		\end{equation*}
		As $w = k - h$ on $\{ u \geq k \}(x,\nicefrac{r}{c_1})$, we can use this bound and~\hyperref[Section: Real-valued A]{$\AssPp$} to get
		\begin{align*}
			&(k-h)^p \left( 1-\frac{|\{ u > h \}(x,\nicefrac{r}{c_1})|}{|O(x,\nicefrac{r}{c_1})|} \right)^p |\{ u \geq k \}(x,\nicefrac{r}{c_1})|
			\\=& \int_{\{ u \geq k \}(x,\nicefrac{r}{c_1})} \bigg| k-h-\frac{(k-h)|\{ u > h \}(x,\nicefrac{r}{c_1})|}{|O(x,\nicefrac{r}{c_1})|} \bigg|^p
			\\\leq& \int_{O(x,\nicefrac{r}{c_1})} |w-(w)_{O(x,\nicefrac{r}{c_1})}|^p
			\\\leq& \, c_0^p r^p \int_{O(x, r)} |\nabla w|^p.
		\end{align*}
		Since $\nabla w = \1_{\{ h \leq  u \leq k \}} \nabla u$ in $O(x, r)$, we are done.

		\textbf{Proof of (ii).} We first use Lemma~\ref{Real valued A: Lemma: Auxiliary results} to conclude that $w \coloneqq (u-h)^+ - (u-k)^+ \in \H_D^1(O)$.~\hyperref[Section: Real-valued A]{$\AssPp$} implies
		\begin{align*}
			(k-h)^p |\{ u \geq k \}(x,r)| &= \int_{\{ u \geq k \}(x,r)} |w|^p \leq c_0^p r^p \int_{O(x,c_1 r)} |\nabla w|^p.
		\end{align*}
		This completes the proof.
	\end{proof}

	\begin{definition}
		Let $u \colon O \to \R$ be a measurable function, $x \in \overline{O}$ and $r > 0$. We define \begin{enumerate}
			\item[(i)] $M_{x,u}(r) \coloneqq \underset{O(x,r)}{\esssup} \; u$.

			\item[(ii)] $ m_{x,u}(r) \coloneqq \underset{O(x,r)}{\essinf} \; u$.

			\item[(iii)] $\osc_{x,u}(r) \coloneqq M_{x,u}(r) - m_{x, u}(r)$.
		\end{enumerate}

		Here the third expression is only defined, when at least one of the summands is finite or both are infinite with a different sign.
	\end{definition}

	\begin{lemma}[Shrinking Lemma] \label{Real valued A: Lemma: ShrinkingLemma}
		Assume~\hyperref[Section: Real-valued A]{$\AssE$} and~\hyperref[Section: Real-valued A]{$\AssPp$}. Let $r \in (0, 4 c_1 r_0]$, $x \in \overline{O}$, $u \in \mathrm{DG}_{D, x, r}(O)$ with $\osc_{x, u}(\nicefrac{r}{2}) > 0$ and define $\gamma(p) \coloneqq 1 -\nicefrac{p}{2} \in (0,1)$.
		\begin{enumerate}
			\item If $\nicefrac{r}{4 c_1} < \d_D(x)$ and
			\begin{equation} \label{eq: measure}
				\left|\left\{ u > M_{x,u}(\nicefrac{r}{2}) - 2^{-1} \osc_{x,u}(\nicefrac{r}{2}) \right\} (x,\nicefrac{r}{4 c_1^2}) \right| \leq \frac{1}{2} |O(x,\nicefrac{r}{4 c_1^2})|,
			\end{equation}
			then there is some $c>0$ depending only on~\hyperref[Section: Real-valued A]{$\AssPp$} and the implicit constant in \eqref{eq: De Giorgi class estimate} for $u$ such that for each $n \in \N$ the super-level sets of $u$ shrink by the law
			\begin{equation}\label{eq: shrinkyouhellofalevelset}
				\left| \left\{ u \geq M_{x,u}(\nicefrac{r}{2}) - 2^{-(n+1)} \osc_{x,u}(\nicefrac{r}{2}) \right\} (x,\nicefrac{r}{4 c_1^2}) \right| \leq c |O(x,\nicefrac{r}{4 c_1})| \cdot n^{- \gamma(p)}.
			\end{equation}
			\item If $\d_D(x) \leq \nicefrac{r}{4 c_1}$ and
			\begin{equation*}
				M_{x,u}(\nicefrac{r}{2}) - 2^{-1} \osc_{x,u}(\nicefrac{r}{2}) \geq 0,
			\end{equation*}
			then there is some $c>0$ depending only on~\hyperref[Section: Real-valued A]{$\AssPp$} and the implicit constant in \eqref{eq: De Giorgi class estimate} for $u$ such that for each $n \in \N$ the super-level sets of $u$ shrink by the law
			\begin{equation}\label{eq: shrinkyouhellofalevelset}
				\left| \left\{ u \geq M_{x,u}(\nicefrac{r}{2}) - 2^{-(n+1)} \osc_{x,u}(\nicefrac{r}{2}) \right\} (x,\nicefrac{r}{4 c_1}) \right| \leq c |O(x,\nicefrac{r}{4})| \cdot n^{- \gamma(p)}.
			\end{equation}
		\end{enumerate}
	\end{lemma}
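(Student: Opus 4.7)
Set $M\coloneqq M_{x,u}(r/2)$, $\omega\coloneqq\osc_{x,u}(r/2)$, and define the increasing sequence of levels $k_n\coloneqq M-2^{-(n+1)}\omega$, so that $k_{n+1}-k_n=2^{-(n+2)}\omega$ and $k_n\nearrow M$. Write $a_n\coloneqq|\{u\geq k_n\}(x,\rho)|$ with $\rho=r/(4c_1^2)$ in case (i) and $\rho=r/(4c_1)$ in case (ii), and $b_n\coloneqq|\{k_n\leq u\leq k_{n+1}\}(x,\rho')|$ where $\rho'$ is the larger radius appearing on the right-hand side of the isoperimetric inequality of Lemma~\ref{Real valued A: Lemma: 2-isoperimetric inequality}. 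The plan is the standard De Giorgi shrinking scheme: combine Lemma~\ref{Real valued A: Lemma: 2-isoperimetric inequality} with the De Giorgi energy inequality \eqref{eq: De Giorgi class estimate} to derive a recursion $a_{n+1}^{2/(2-p)}\les |O|\cdot b_n$, then sum using the disjointness of the annular sets $\{k_n\leq u\leq k_{n+1}\}$.

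The first step is to apply Lemma~\ref{Real valued A: Lemma: 2-isoperimetric inequality} with $h=k_n$ and $k=k_{n+1}$. In case (i), since $k_n$ is increasing, the sublevel sets satisfy $\{u\leq k_n\}(x,\rho)\supseteq\{u\leq k_0\}(x,\rho)$, and the hypothesis \eqref{eq: measure} guarantees $|\{u\leq k_0\}(x,\rho)|\geq|O(x,\rho)|/2$, so the denominator in \eqref{eq: 2-isoperimetric near Neumann} is controlled uniformly in $n$. In case (ii), the condition $k_n\geq k_0\geq 0$ makes \eqref{eq: 2-isoperimatric near Dirichlet} directly applicable. Either way, I obtain
\begin{equation*}
(2^{-(n+2)}\omega)^p\,a_{n+1}\les r^p\int_{\{k_n\leq u\leq k_{n+1}\}(x,\rho')}|\nabla u|^p.
\end{equation*}

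Next, H\"older's inequality with exponents $2/p$ and $2/(2-p)$ gives
\begin{equation*}
\int_{\{k_n\leq u\leq k_{n+1}\}(x,\rho')}|\nabla u|^p\leq b_n^{1-p/2}\biggl(\int_{O(x,\rho')}|\nabla(u-k_n)^+|^2\biggr)^{p/2},
\end{equation*}
and the De Giorgi estimate \eqref{eq: De Giorgi class estimate} applied with outer radius $r/2$ (note $\rho'<r/2$) together with the pointwise bound $(u-k_n)^+\leq M-k_n=2^{-(n+1)}\omega$ on $O(x,r/2)$ yields
\begin{equation*}
\int_{O(x,\rho')}|\nabla(u-k_n)^+|^2\les r^{-2}(2^{-(n+1)}\omega)^2|O(x,r/2)|.
\end{equation*}
Substituting back and cancelling the factors $(2^{-n}\omega)^p$ produces the clean recursion $a_{n+1}\les|O(x,r/2)|^{p/2}\,b_n^{1-p/2}$. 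Raising to the power $2/(2-p)$ and summing over $n=1,\dots,N$, together with the disjointness relation $\sum_n b_n\leq|O(x,\rho')|$, gives $N\cdot a_{N+1}^{2/(2-p)}\les|O(x,r/2)|^{p/(2-p)}|O(x,\rho')|$. Since monotonicity of $a_n$ makes $N\cdot a_{N+1}^{2/(2-p)}$ the dominant term and $(2-p)/2=\gamma(p)$, the bound $a_{N+1}\les|O|\cdot N^{-\gamma(p)}$ follows after adjusting $|O|$ by a harmless dimensional constant reflecting the ratios of the various radii.

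The only genuinely delicate point is bookkeeping the ball radii, namely ensuring that the scale $\rho'$ on which we apply the isoperimetric inequality and the scale $r/2$ on which we apply the Caccioppoli-type estimate \eqref{eq: De Giorgi class estimate} are compatible with the constant $c_1$ of~\hyperref[Section: Real-valued A]{$\AssPp$}; this is precisely why the statement contains the slightly asymmetric radii $r/(4c_1^2)$ in case (i) versus $r/(4c_1)$ in case (ii). Aside from this, everything is mechanical once the recursion and the disjointness trick are in place.
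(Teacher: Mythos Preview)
Your proof is correct and follows essentially the same approach as the paper's: define the dyadic levels $k_n$, apply Lemma~\ref{Real valued A: Lemma: 2-isoperimetric inequality} (using hypothesis \eqref{eq: measure} to control the denominator in case (i) and $k_n\geq 0$ in case (ii)), combine H\"older's inequality with the De~Giorgi estimate \eqref{eq: De Giorgi class estimate} and the pointwise bound $(u-k_n)^+\leq 2(k_{n+1}-k_n)$ on $O(x,r/2)$, then telescope using disjointness of the annular sets. The only cosmetic difference is that the paper absorbs the factor $|O(x,r/2)|^{p/2}$ into the implicit constant via $|O(x,r)|\lesssim r_0^d$, whereas you carry it explicitly to the end.
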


	\begin{proof}
		Theorem~\ref{Real valued A: Theorem: Local boundedness} implies $\osc_{x,u}(\nicefrac{r}{2}) < \infty$. Define for $i = 0, \up, n$ the numbers
		\begin{equation} \label{eq: Real-valued A: Shrinking lemma: Def of k_i}
			k_i \coloneqq k_i(u) \coloneqq M_{x,u}(\nicefrac{r}{2}) - 2^{-(i+1)} \osc_{x,u}(\nicefrac{r}{2}) \quad \& \quad A_{i,r} \coloneqq \{ u \geq k_i \} (x,r).
		\end{equation}
		We begin with case (i). Using that $A_{i,r}$ is decreasing in $i$ joint with the assumption
		\begin{equation*}
			|\{ u \leq k_0 \} (x,\nicefrac{r}{4 c_1^2})| \geq \frac{1}{2} |O(x,\nicefrac{r}{4 c_1^2})|,
		\end{equation*}
		we derive from \eqref{eq: 2-isoperimetric near Neumann} that
		\begin{equation} \label{eq: Shrinking Lemma: Use of 2-iso}
			|k_{i+1}-k_i|^p |A_{i+1,\nicefrac{r}{4 c_1^2}}| \les r^p \int_{\{ k_i \leq u \leq k_{i+1} \}(x,\nicefrac{r}{4 c_1})} |\nabla u|^p.
		\end{equation}
		Now, we use H\"older's inequality and $u \in \mathrm{DG}_{D, x, r}(O)$ to infer
		\begin{align*}
			|k_{i+1}-k_i|^p |A_{i+1,\nicefrac{r}{4 c_1^2}}| &\les r^p \left( \int_{O(x,\nicefrac{r}{4 c_1})} |\nabla (u-k_i)^+|^2 \right)^{\frac{p}{2}} |A_{i,\nicefrac{r}{4 c_1 }} \setminus A_{i+1,\nicefrac{r}{4 c_1}}|^{1 - \frac{p}{2}}
			\\&\les \left( \int_{O(x,\nicefrac{r}{2})} |(u-k_i)^+|^2 \right)^{\frac{p}{2}}  |A_{i,\nicefrac{r}{4 c_1}} \setminus A_{i+1,\nicefrac{r}{4 c_1}}|^{1 - \frac{p}{2}}
		\end{align*}
		Now, we use that $(u-k_{i})^+ \leq 2^{-(i+1)} \osc_{x, u}(\nicefrac{r}{2}) =2( k_{i+1} -k_{i})$ on $O(x, \nicefrac{r}{2})$ and that $|O(x,r)| \lesssim r_0^d$ to conclude the bound
		\begin{align*}
			|k_{i+1}-k_i|^p |A_{i+1,\nicefrac{r}{4 c_1^2}}|
			&\les |k_{i+1}-k_i|^p  |A_{i,\nicefrac{r}{4 c_1}} \setminus A_{i+1,\nicefrac{r}{4 c_1}}|^{\gamma(p)}.
		\end{align*}
		Finally, we cancel the term $|k_{i+1} -k_i|^p$, raise both sides to the $\nicefrac{1}{\gamma(p)}$-th power, sum from $i=1$ to $n$ and bound the left from below by $n |A_{n+1,\nicefrac{r}{4 c_1^2}}|^{\nicefrac{1}{\gamma(p)}}$ in order to get
		\begin{align*}
			n |A_{n+1,\nicefrac{r}{4 c_1^2}}|^{\gamma(p)^{-1}} \lesssim |O(x, \nicefrac{r}{4c_1})|.
		\end{align*}

		\textbf{Proof of (ii).} We perform the same proof as in (i), using \eqref{eq: 2-isoperimatric near Dirichlet} instead of \eqref{eq: 2-isoperimetric near Neumann} in \eqref{eq: Shrinking Lemma: Use of 2-iso}. Here, we also use the assumption $k_0(u) \geq 0$.
	\end{proof}

	Finally, we come to the

	\begin{proof}[\rm\bf{Proof of Theorem~\ref{Real valued A: Theorem: Property H(mu)}.}]

		We can assume that $\osc_{x, u}(\nicefrac{R}{2}) > 0$, since otherwise $u$ is equal to a constant almost everywhere and there is nothing to prove. For $\osc_{x, u}(\nicefrac{R}{2})$ we divide the proof into two parts.

		\textbf{(1) $\boldmath \nicefrac{R}{4 c_1} < \d_D(x)$.} We define $k_n = k_n(u)$ as in \eqref{eq: Real-valued A: Shrinking lemma: Def of k_i} with $r = R$ and write
		\begin{equation*}
			O(x,\nicefrac{R}{4 c_1^2}) \supseteq \{ u > k_0(u) \} (x,\nicefrac{R}{4 c_1^2}) \cup \{ -u > k_0(-u) \}(x,\nicefrac{R}{4 c_1^2}).
		\end{equation*}
		At least one of the disjoint sets on the right has measure at most $\frac{1}{2}|O(x,\nicefrac{R}{4 c_1^2})|$, say the first one, because otherwise we work with $- u$. Theorem~\ref{Real valued A: Theorem: Local boundedness} implies that
		\begin{equation*}
			M_{x,u}(\nicefrac{R}{8 c_1^2}) \leq k_n + c(M_{x,u}(\nicefrac{R}{2}) - k_n) \left( \frac{|\{ u \geq k_n \} (x, \nicefrac{R}{4 c_1^2})|}{R^d} \right)^{\frac{\theta-1}{2}},
		\end{equation*}
		with $\theta$ as in \eqref{eq: Real-valued A: Definition of theta}. Since $|\{ u > k_0(u) \}(x, \nicefrac{R}{4 c_1^2})| \leq \frac{1}{2} |O(x, \nicefrac{R}{4 c_1^2})|$, Lemma~\ref{Real valued A: Lemma: ShrinkingLemma} (i) yields that there is some $n \in \N$ depending only on~\hyperref[Section: Real-valued A]{$\AssE$},~\hyperref[Section: Real-valued A]{$\AssPp$} and the implicit constant in \eqref{eq: De Giorgi class estimate} for $u$ such that
		\begin{equation}
			c \left( \frac{|\{ u \geq k_n \} (x, \nicefrac{R}{4 c_1^2})|}{R^d} \right)^{\frac{\theta -1}{2}} < \frac{1}{2}. \label{eq: Real valued A: Final proof: Make term small}
		\end{equation}
		This implies that
		\begin{align*}
			M_{x,u}(\nicefrac{R}{8 c_1^2}) &\leq M_{x,u}(\nicefrac{R}{2}) - 2^{-(n+2)} \osc_{x,u}(\nicefrac{R}{2}).
		\end{align*}
		Let $\sigma \coloneqq  1 -2^{-(n+2)}$. We subtract $m_{x,u}(\nicefrac{R}{8 c_1^2})$ and use $m_{x,u}(\nicefrac{R}{2}) \leq m_{x,u}(\nicefrac{R}{8 c_1^2})$ to obtain
		\begin{equation*}
			\osc_{x,u}(\nicefrac{R}{8 c_1^2}) \leq \sigma \osc_{x,u}(\nicefrac{R}{2}).
		\end{equation*}
		Now, let $0 < r \leq \rho \leq \nicefrac{R}{2}$ and fix $k \in \N$ with $r \in ((4 c_1^2)^{-k} \rho, (4 c_1^2)^{-k+1} \rho]$. The latter estimate delivers with $\mu \coloneqq \nicefrac{-\ln(\sigma)}{\ln(4 c_1^2)} \in (0,1)$ that
		\begin{equation}
			\label{eq: Real valued A: Final proof: osc-bound}
			\osc_{x,u}(r) \leq \osc_{x,u} ((4 c_1^2)^{-k+1} \rho) \leq \sigma^{k-1} \osc_{x,u} (\rho) \leq \sigma^{-1} \left( \frac{r}{\rho} \right)^{\mu} \osc_{x,u}(\rho),
		\end{equation}
		where we have used that $\sigma^{k} = ((4 c_1^2 )^{-k})^{\mu} \leq (\nicefrac{r}{\rho})^{\mu}$.

		Next, let $y, z \in O(x, \nicefrac{R}{4})$. If $|y-z| > \nicefrac{R}{8}$, then
		\begin{align*}
			\frac{|u(y)-u(z)|}{|y-z|^\mu}
			\leq \frac{2 M_{x,|u|}(\nicefrac{R}{4})}{(\nicefrac{R}{8})^\mu}
			\les R^{-\mu - \frac{d}{2}} \|u\|_{\L^2(O(x, \nicefrac{R}{4}))},
		\end{align*}
		where the final step is due to Theorem~\ref{Real valued A: Theorem: Local boundedness} with $k=0$. Now, let $|y-z| \leq \nicefrac{R}{8}$. Then $O(y, \nicefrac{R}{8}) \sub O(x, \nicefrac{R}{2})$. Hence, \eqref{eq: Real valued A: Final proof: osc-bound} gives
		\begin{align*}
			|u(y) - u(z)| &\leq \osc_{y, u} (|y-z|) \les \left( \frac{|y-z|}{\nicefrac{R}{8}} \right)^{\mu} \osc_{y, u} \left( \nicefrac{R}{8} \right) \les \left( \frac{|y-z|}{R} \right)^{\mu} 2 M_{x, |u|}(\nicefrac{R}{2})
		\end{align*}
		and we conclude by Theorem~\ref{Real valued A: Theorem: Local boundedness} as before.

		\textbf{(2) $\boldmath \d_D(x) \leq \nicefrac{R}{4 c_1}$.} Since $k_0(-u) = - k_0(u)$ we can assume that $k_0(u) \geq 0$. Now, we can argue as before, using Lemma~\ref{Real valued A: Lemma: ShrinkingLemma} (ii) instead of (i) in \eqref{eq: Real valued A: Final proof: Make term small} and consequently replacing $c_1^2$ by $c_1$ everywhere.
	\end{proof}


	\section{Property $\rD(\mu)$ for complex perturbations} \label{Section: Property D(mu)}

	Property $\rD(\mu)$ is stable under small complex perturbations. This observation is due to Auscher when $O = \R^d$ \cite[Thm.~4.4]{Auscher_Heat-Kernel} and one of the main reasons to study Gaussian bounds through property $\rD(\mu)$. Indeed, the former is much harder to perturb.

	\begin{theorem} \label{Perturbation of property D(mu): Theorem}
		Assume~\hyperref[The geometric setup: Definition (Fat)]{$\Fat$} and~\hyperref[The geometric setup: Definition (LU)]{$\LU$}. Let $A, A_0 \in \L^{\infty}(O; \C^{d \times d})$ be uniformly strongly elliptic such that $A_0$ has ellipticity constant $\lambda > 0$ and $L_0 \coloneqq - \Div (A_0 \nabla \cdot)$ has property $\rD(\mu)$. Then for all $\nu \in (0, \mu)$ there is some $\varepsilon = \varepsilon(c_{\rD(\mu)}, \lambda, d, \mu, \nu) > 0$ such that if $\| A - A_0 \|_{\infty} < \varepsilon$, then $L = - \Div (A \nabla \cdot)$ has property $\rD(\nu)$.
	\end{theorem}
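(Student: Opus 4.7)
The plan is to adapt Auscher's classical freezing argument~\cite[Thm.~4.4]{Auscher_Heat-Kernel}. Fix $x \in \overline{O}$, $0 < r \leq R \leq 1$, and $u \in \H_D^1(O)$ with $L_D u = 0$ in $O(x,R)$, and set $\phi(r) \coloneqq \int_{O(x,r)}|\nabla u|^2$. The strategy is to correct $u$ by a small perturbation $w$ into a function $v$ that is $L_0$-harmonic on a slightly shrunken ball, apply $\rD(\mu)$ for $L_0$ to $v$, and reassemble a bound for $\phi$ that feeds Campanato's iteration lemma (Lemma~\ref{Appendix: Lemma: Campanato}).

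Assume first $R \leq r_0$. By Lemma~\ref{D(mu), G(mu) and H(mu) for MBC: Lemma: Existence of weak solutions + A prior bound} applied to $L_0$, there is a radius $\rho \in [R/4, R]$ for which the frozen local problem on $O(x,\rho)$ admits unique weak solutions with correct scaling. Since $L_D u = 0$ implies $L_{0,D} u = -\Div((A_0-A)\nabla u)$ in $O(x,\rho)$, we solve
\begin{equation*}
L_{0,D} w = -\Div((A_0-A)\nabla u) \quad \text{in } O(x,\rho), \qquad w \in \H^1_{\partial O(x,\rho)\setminus N(x,\rho)}(O(x,\rho)),
\end{equation*}
and the a priori bound yields $\|\nabla w\|_{\L^2(O(x,\rho))} \les \varepsilon\,\|\nabla u\|_{\L^2(O(x,\rho))}$. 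Extending $w$ by zero via Lemma~\ref{D(mu), G(mu) and H(mu) for MBC: Lemma: Sobolev extension by 0}~(i) and setting $v \coloneqq u-w \in \H_D^1(O)$, subtracting the two variational identities on test functions supported in $O(x,\rho)$ shows $L_{0,D} v = 0$ in $O(x,\rho)$. Applying $\rD(\mu)$ for $L_0$ to $v$, combining with $|\nabla u|^2 \les |\nabla v|^2 + |\nabla w|^2$ and the a priori bound for $w$, using $\rho \simeq R$, and covering $r \in (\rho,R]$ trivially by monotonicity of $\phi$, we arrive at
\begin{equation*}
\phi(r) \leq c_1\bigl[(r/R)^{d-2+2\mu} + \varepsilon^2\bigr]\phi(R) \qquad (0 < r \leq R \leq r_0),
\end{equation*}
where $c_1$ depends only on $c_{\rD(\mu)}$, $\lambda$, $d$, and geometry.

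Given $\nu \in (0,\mu)$, this is precisely the hypothesis of Lemma~\ref{Appendix: Lemma: Campanato} with $\gamma = d-2+2\mu$, $\beta = d-2+2\nu$, and $C_2 = 0$. Choose $\varepsilon$ so small that $c_1\varepsilon^2$ falls below the threshold $\varepsilon_0$ from that lemma, a quantity depending only on $c_1$, $\mu$, and $\nu$. The lemma then delivers $\phi(r) \les (r/R)^{d-2+2\nu}\phi(R)$ on $0 < r \leq R \leq r_0$, which is $\rD(\nu)$ in this range. The extension to $r_0 \leq R \leq 1$ is routine: apply the established bound at scale $r_0$ and absorb the bounded factor $(R/r_0)^{d-2+2\nu}$ into the constant via monotonicity of $\phi$.

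The main technical point is merely to track that every constant depends only on the advertised parameters. The sole piece where mixed boundary conditions enter nontrivially is the local solvability result (Lemma~\ref{D(mu), G(mu) and H(mu) for MBC: Lemma: Existence of weak solutions + A prior bound}), which uses~\hyperref[The geometric setup: Definition (Fat)]{$\Fat$} and~\hyperref[The geometric setup: Definition (LU)]{$\LU$}; with that lemma in hand, the perturbation step transfers essentially verbatim from the Euclidean model case, and no genuine obstacle arises.
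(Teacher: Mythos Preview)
Your proof is correct and essentially the same as the paper's: both solve the local $L_0$-problem on $O(x,\rho)$ with right-hand side $-\Div((A_0-A)\nabla u)$ (the paper writes this equivalently as $-\Div(A_0\nabla u)$, using $L_D u=0$), compare the harmonic part via $\rD(\mu)$ for $L_0$, bound the correction by $\|A-A_0\|_\infty\,\phi(R)$ from the a~priori estimate, and feed the result into Lemma~\ref{Appendix: Lemma: Campanato}. The only differences are cosmetic---you swap the names of $v$ and $w$ and square $\phi$---and your final extension from $R\le r_0$ to $R\le 1$ is handled in the paper by Remark~\ref{(D(mu), G(mu) and H(mu) for MBC: Remark: Scaling D(mu)}.
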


	In view of Theorems~\ref{Theorem: Main result of the paper} and \ref{Theorem: Main result 2} we record:

	\begin{corollary}
		Under the geometric assumptions~\hyperref[The geometric setup: Definition (Fat)]{$\Fat$} and~\hyperref[The geometric setup: Definition (LU)]{$\LU$} there is some $\varepsilon > 0$ depending on geometry and ellipticity such that if $\| \im(A) \|_{\infty} < \varepsilon$, then $L$ has property $\rD(\mu)$, $\rG(\mu)$ and $\H(\mu)$ for some $\mu \in (0,1]$.
	\end{corollary}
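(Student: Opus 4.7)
The plan is the classical freezing-and-correcting method of Auscher, implemented through Lemma~\ref{D(mu), G(mu) and H(mu) for MBC: Lemma: Existence of weak solutions + A prior bound} to handle our rough geometry, and concluded with the Campanato iteration Lemma~\ref{Appendix: Lemma: Campanato}. Fix $x\in\overline{O}$, $R\in(0,r_0]$ and $u\in\H_D^1(O)$ with $L_D u=0$ in $O(x,R)$. Since $L=L_0-\Div((A-A_0)\nabla\cdot)$, we may rewrite the equation as
\begin{equation*}
L_{0,D}u=-\Div F \; \text{in} \; O(x,R), \qquad F\coloneqq -(A-A_0)\nabla u,
\end{equation*}
with $\|F\|_{\L^2(O(x,R))}\leq \|A-A_0\|_\infty\|\nabla u\|_{\L^2(O(x,R))}$.

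Next, apply Lemma~\ref{D(mu), G(mu) and H(mu) for MBC: Lemma: Existence of weak solutions + A prior bound} to $L_0$ and the datum $F$ on $O(x,R)$ to pick a radius $\rho\in[R/4,R]$ for which the problem $L_{0,D}v=-\Div F$ in $O(x,\rho)$ admits a unique weak solution $v\in\H^1_{\partial O(x,\rho)\setminus N(x,\rho)}(O(x,\rho))$ obeying $\|\nabla v\|_{\L^2(O(x,\rho))}\lesssim \|A-A_0\|_\infty\|\nabla u\|_{\L^2(O(x,R))}$. Extending $v$ by zero via Lemma~\ref{D(mu), G(mu) and H(mu) for MBC: Lemma: Sobolev extension by 0}(i), the function $w\coloneqq u-v\in\H_D^1(O)$ satisfies $L_{0,D}w=0$ in $O(x,\rho)$. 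Applying property $\rD(\mu)$ for $L_0$ to $w$ and using $\rho\geq R/4$, we obtain for every $0<r\leq \rho$ that
\begin{align*}
\int_{O(x,r)}|\nabla u|^2 &\lesssim \int_{O(x,r)}|\nabla v|^2+\int_{O(x,r)}|\nabla w|^2 \\
&\lesssim \|A-A_0\|_\infty^2\int_{O(x,R)}|\nabla u|^2 + c_{\rD(\mu)}\left(\tfrac{r}{\rho}\right)^{d-2+2\mu}\int_{O(x,\rho)}|\nabla w|^2 \\
&\lesssim \bigl[\|A-A_0\|_\infty^2+\left(\tfrac{r}{R}\right)^{d-2+2\mu}\bigr]\int_{O(x,R)}|\nabla u|^2.
\end{align*}
For $\rho<r\leq R$ the same bound is trivial by monotonicity (then $r/R\geq 1/4$).

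Set $\phi(r)\coloneqq \int_{O(x,r)}|\nabla u|^2$, $\gamma\coloneqq d-2+2\mu$ and $\varepsilon_A\coloneqq C\|A-A_0\|_\infty^2$. We have reached the Campanato-type inequality
\begin{equation*}
\phi(r)\leq C_1\bigl[(r/R)^\gamma+\varepsilon_A\bigr]\phi(R)\qquad (0<r\leq R\leq r_0),
\end{equation*}
with $C_1$ depending only on $c_{\rD(\mu)}$, $\lambda$ and the geometry. Given $\nu\in(0,\mu)$, set $\beta\coloneqq d-2+2\nu<\gamma$ and invoke Lemma~\ref{Appendix: Lemma: Campanato} with $C_2=0$: there is $\varepsilon_0=\varepsilon_0(C_1,\mu,\nu)>0$ such that $\varepsilon_A<\varepsilon_0$ implies $\phi(r)\lesssim (r/R)^{d-2+2\nu}\phi(R)$ on the same range. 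Choosing $\varepsilon=\varepsilon(c_{\rD(\mu)},\lambda,d,\mu,\nu)>0$ so small that $\|A-A_0\|_\infty<\varepsilon$ forces $\varepsilon_A<\varepsilon_0$, and then extending the estimate from $R\leq r_0$ to $R\leq 1$ via Remark~\ref{(D(mu), G(mu) and H(mu) for MBC: Remark: Scaling D(mu)}, we conclude that $L$ has property $\rD(\nu)$.

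The main obstacle is the perturbation step: it requires solving the auxiliary Dirichlet problem $L_{0,D}v=-\Div F$ with the correct $R$-scaling in the a priori bound. In our rough setting, coercivity of the $L_0$-form on the natural test-function space is not automatic on balls of arbitrary radius, and the trick is to absorb this failure into the freedom to shrink $R$ to a comparable radius $\rho\in[R/4,R]$ supplied by Lemma~\ref{D(mu), G(mu) and H(mu) for MBC: Lemma: Existence of weak solutions + A prior bound}; the factor $1/4$ is then harmless for the Campanato iteration.
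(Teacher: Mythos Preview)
Your argument is correct and is essentially the paper's proof of Theorem~\ref{Perturbation of property D(mu): Theorem}, with the minor streamlining that you solve $L_{0,D}v=-\Div F$ for $F=-(A-A_0)\nabla u$ so that the a priori bound of Lemma~\ref{D(mu), G(mu) and H(mu) for MBC: Lemma: Existence of weak solutions + A prior bound} directly carries the small factor $\|A-A_0\|_\infty$, whereas the paper solves $L_{0,D}v=-\Div(A_0\nabla u)$ and only extracts that factor afterward by testing $v$ in both equations. To complete the Corollary you still need the wrapper the paper dispatches with its ``In view of Theorems~\ref{Theorem: Main result of the paper} and~\ref{Theorem: Main result 2}'' remark: set $A_0\coloneqq\re(A)$, obtain $\rD(\mu_0)$ for $L_0$ (and $L_0^*$) from Theorems~\ref{Theorem: Main result 2} and~\ref{(3) to (1): Theorem: Main result}, and then upgrade $\rD(\nu)$ for $L$ and $L^*$ to $\rG$ and $\H$ via Theorem~\ref{Theorem: Main result of the paper}.
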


	\begin{proof}
		We adapt the argument in \cite[Thm.~4.4]{Auscher_Heat-Kernel}. Let $x \in \overline{O}$, $0 < r \leq \nicefrac{R}{4} \leq \nicefrac{r_0}{4}$, $u \in \H_D^1(O)$ with $L_Du = 0$ in $O(x,R)$ and define the function
		\begin{equation*}
			\phi(r) \coloneqq \| \nabla u \|_{\L^2(O(x,r))}.
		\end{equation*}
		By Lemma~\ref{D(mu), G(mu) and H(mu) for MBC: Lemma: Existence of weak solutions + A prior bound} we find $\rho \in [\nicefrac{R}{4}, R]$ and $v \in \H_{\partial O(x,\rho) \setminus N(x, \rho)}^1(O(x,\rho))$ such that
		\begin{equation*}
			L_{0, D} v = - \Div (A_0 \nabla u) \quad \text{in} \; O(x, \rho).
		\end{equation*}
		Lemma~\ref{D(mu), G(mu) and H(mu) for MBC: Lemma: Sobolev extension by 0} (i) allows us to extend $v$ by $0$ to an element of $\H^1_D(O)$. Hence, $w \coloneqq u -v \in \H_D^1(O)$ and $L_{0, D} w =0$ in $O(x, \rho)$. Since $L_0$ has property $\rD(\mu)$, we get
		\begin{align}
			\phi(r) &\leq \| \nabla v \|_{\L^2(O(x,r))} + \| \nabla w \|_{\L^2(O(x,r))}  \notag
			\\&\les \| \nabla v \|_{\L^2(O(x,r))} + \left( \frac{r}{\rho} \right)^{\frac{d}{2}-1+\mu}  \| \nabla w \|_{\L^2(O(x,\rho))}  \notag
			\\&\les \| \nabla v \|_{\L^2(O(x,\rho))} + \left( \frac{r}{R} \right)^{\frac{d}{2} -1 + \mu} \phi(R). \label{eq: Perturbarion of D(mu)}
		\end{align}
		Next, we use $v$ as a test function in $L_{0, D} v = - \Div (A_0 \nabla u)$ and $L_D u =0$ in $O(x,\rho)$ to obtain
		\begin{align*}
			\int_{O(x,\rho)} A_0 \nabla v \cdot \overline{\nabla v} = \int_{O(x,\rho)} A_0 \nabla u \cdot \overline{\nabla v} =  \int_{O(x,\rho)} (A_0-A) \nabla u \cdot \overline{\nabla v}.
		\end{align*}
		Thus, ellipticity and Cauchy--Schwarz yield
		\begin{align*}
			\int_{O(x, \rho)} |\nabla v|^2 \les \| A - A_0 \|_{\infty} \phi(R) \| \nabla v \|_{\L^2(O(x,\rho))}.
		\end{align*}
		We divide by $\| \nabla v \|_{\L^2(O(x,\rho))}$ and insert the resulting estimate back into \eqref{eq: Perturbarion of D(mu)} to get
		\begin{equation*}
			\phi(r) \les \left( \| A -A_0 \|_{\infty}+ \left( \frac{r}{R} \right)^{\frac{d}{2} -1 + \mu} \right) \phi(R).
		\end{equation*}
		Lemma~\ref{Appendix: Lemma: Campanato} yields for each $\nu \in (0, \mu)$ the claim
		\begin{equation*}
			\phi(r) \les \left( \frac{r}{R} \right)^{\frac{d}{2}-1+ \nu} \phi(R),
		\end{equation*}
		provided that $\| A- A_0 \|_{\infty}$ is small enough (depending only on $[c_{\rD(\mu)}, \lambda, d, \mu, \nu]$).
	\end{proof}


	\section{Property $\rG(\mu)$ in dimension $d =2$}\label{Section: Property G(mu) for d=2}

	In this section we prove that in dimension $d =2$ \textit{every} elliptic operator $L$ has property $\rG(\mu)$ for some $\mu \in (0,1]$. On $O = \R^d$, this is due to \cite{AMcIT-Heat-kernel-d=2}. In doing so, we need to assume that $D$ is a \textbf{$\boldsymbol{(d-1)}$-set}:
	\begin{equation*}
		\exists \, c > 0 \; \forall \, x \in D, r \leq 1 \colon \quad c r^{d-1} \leq \cH^{d-1}(D(x,r)) \leq c^{-1} r^{d-1}.  \tag*{\text{$(\rD)$}}
	\end{equation*}
	This geometric requirement implies that $D$ is locally $2$-fat, see Lemma~\ref{App: Fat and Thick: Lem: Thickness and fatness} for an explicit proof. As $D \sub O^c$, also~\hyperref[The geometric setup: Definition (Fat)]{$\Fat$} is satisfied. We need (D) to apply an extrapolation result from \cite{Bechtel_Lp}.

	\begin{theorem} \label{Property G(mu) for d=2: Theorem}
		Let $d=2$ and assume~\hyperref[The geometric setup: Definition (LU)]{$\LU$} and $\mathrm{(D)}$. Then $L$ has property $\rG(\mu)$ for some $\mu \in (0,1)$ depending on geometry and ellipticity.
	\end{theorem}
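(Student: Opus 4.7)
The plan is to establish property $\H(\mu)$ for some $\mu > 0$ and then invoke the equivalence theorem. Since $D$ being a $(d-1)$-set implies local $2$-fatness of $D$, and hence \hyperref[The geometric setup: Definition (Fat)]{$\Fat$}, the hypotheses of Theorem~\ref{Theorem: Main result of the paper} are at our disposal. The special feature of $d=2$ is that any Meyers-type higher integrability exponent $p > 2$ for the gradient already produces H\"older continuity via Morrey's embedding $\W^{1,p} \hookrightarrow \rC^{0,1 - \nicefrac{2}{p}}$; in higher dimensions one would need the much stronger $p > d$.

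\textbf{Step 1: boundary Meyers estimate.} Using the extrapolation machinery of \cite{Bechtel_Lp}, I would show that there exists $p > 2$ depending only on geometry and ellipticity such that for every $x \in \overline{O}$, $R \leq r_0$, and every $u \in \H_D^1(O)$ with $L_Du = 0$ in $O(x,R)$, a reverse H\"older inequality
\begin{equation*}
	\bigg( \Xint-_{O(x, \nicefrac{R}{2})} |\nabla u|^p \bigg)^{\nicefrac{1}{p}} \les \bigg( \Xint-_{O(x, R)} |\nabla u|^2 \bigg)^{\nicefrac{1}{2}}
\end{equation*}
holds. The input on the geometric side is the weak $q$-Poincar\'{e} inequality $\AssPp[q]$ for some $q < 2$, which follows from \hyperref[The geometric setup: Definition (Fat)]{$\Fat$} via the self-improvement of $2$-fatness (see Lemma~\ref{Real valued A: Lemma: Fat und LU imply E and Pp}). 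The proof scheme is the usual one: combine the Caccioppoli inequality (Lemma~\ref{(2) to (3): Lemma: Caccioppoli}) with the Sobolev--Poincar\'{e} inequality issued from $\AssPp[q]$ to obtain a weak reverse H\"older inequality on balls adapted to the boundary situation, and then apply Gehring's lemma. The role of $(\rD)$ is to allow this procedure to go through uniformly up to the Dirichlet part.

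\textbf{Step 2: from reverse H\"older to H\"older continuity.} With the higher integrability in hand, combine it once more with the Caccioppoli inequality to get, for $u$ as above,
\begin{equation*}
	\bigg( \Xint-_{O(x, \nicefrac{R}{2})} |\nabla u|^p \bigg)^{\nicefrac{1}{p}} \les R^{-1} \bigg( \Xint-_{O(x, R)} \big|u - \1_{[\d_D(x) > R]} \cdot (u)_{O(x,R)}\big|^2 \bigg)^{\nicefrac{1}{2}},
\end{equation*}
after absorbing the oscillation term through \hyperref[The geometric setup: Proposition: (P)]{$\AssP$}. Set $\mu := 1 - \nicefrac{2}{p} \in (0,1)$. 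Near the Neumann part, apply Morrey's embedding to the extension $\cE u \in \W^{1,p}(B(x, \nicefrac{R}{2}))$ provided by Theorem~\ref{The geometric setup: Theorem: Extension operator} (which is bounded in the $p$-scale by inspection of the proof of $\AssExt$). Near the Dirichlet part, apply Morrey's embedding to the $0$-extension $u_0 \in \W^{1,p}(B(x, \nicefrac{R}{2}))$, which is admissible because $(\rD)$ forces the $(d-1)$-Hausdorff measure of $D$ inside each ball to be non-trivial, so $u_0$ genuinely vanishes on a large set and satisfies a Poincar\'{e} inequality with vanishing boundary values. Both cases yield the pointwise bound
\begin{equation*}
	R^{\mu} [u]^{(\mu)}_{O(x, \nicefrac{R}{4})} \les R^{- \nicefrac{d}{2}} \| u \|_{\L^2(O(x, R))},
\end{equation*}
establishing property $\H(\mu)$; the same argument applied to $L^*$ delivers $\H(\mu)$ for $L^*$. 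Invoking Theorem~\ref{Theorem: Main result of the paper} then produces $\rG(\mu')$ for every $\mu' \in (0, \mu)$.

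\textbf{Main obstacle.} The hard step is Step~1: proving a reverse H\"older inequality uniformly up to a boundary that mixes Dirichlet and Neumann conditions. The delicate points are that the extension operator is only available near $N$, and that the $q$-Poincar\'{e} inequality needed to drive Gehring's lemma is only available for $q < 2$ with $q$ that one does not control explicitly. The $(d-1)$-set hypothesis $(\rD)$, which is strictly stronger than $2$-fatness of $D$, is exactly what makes the extrapolation result of \cite{Bechtel_Lp} applicable and so allows one to pin down an exponent $p > 2$ with quantitative control, depending only on ellipticity and geometry.
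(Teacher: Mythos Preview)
Your overall strategy is sound and would work, but it takes a genuinely different route from the paper's. You establish $\H(\mu)$ via a local Meyers-type reverse H\"older inequality for $L$-harmonic functions, then apply Morrey's embedding (exploiting $d=2$, so any $p>2$ suffices), and finally invoke the full equivalence Theorem~\ref{Theorem: Main result of the paper}. The paper instead bypasses $\H(\mu)$ entirely and works directly with the semigroup: it uses the global Lax--Milgram extrapolation from \cite{Bechtel_Lp} as a black box to obtain that $(1+\cL)^{-1}$ maps $\L^{q_*}(O)$ boundedly into $\rC^{\mu}(O)$ for some $q>2$ and $\mu = 1-\nicefrac{2}{q}$, combines this with the $\L^{q_*}$ boundedness of $t(1+L)\e^{-t(1+L)}$ to get $[\e^{-tL}u]^{(\mu)}_O \les t^{-\nicefrac{\mu}{2}-\nicefrac{d}{2q_*}} \|u\|_{q_*}$ together with an $\L^\infty$ bound at one fixed time, and then feeds these estimates into the off-diagonal machinery of Corollary~\ref{D(mu) to G(mu): Corollary: G(mu) equivalent to L2-Linfty-ODE + L2-Holder} with $p=q_*$ (re-running the proof of Theorem~\ref{(1) to (2): Main result}). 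This is shorter precisely because it avoids the local reverse H\"older inequality that you identify as your main obstacle.

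Your Step~1 is also somewhat muddled about what you are actually invoking. The Caccioppoli--Poincar\'e--Gehring scheme you describe is a \emph{local} argument and, if carried out carefully, would only need $\AssPp$ for some $p<2$ --- hence just $\Fat$ and $\LU$, not $(\rD)$. By contrast, the ``extrapolation machinery of \cite{Bechtel_Lp}'' that you also cite is a \emph{global} statement (the $\W^{1,q}_D \to (\W^{1,q'}_D)^*$ isomorphism) which genuinely requires $(\rD)$ but does not directly hand you a local reverse H\"older inequality for harmonic functions; extracting one from it would require a further localization step. The paper uses the global result, which is why $(\rD)$ enters; if you pursue the purely local Gehring route, you should either justify why $(\rD)$ is still needed or note that you might be proving something slightly stronger.
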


	\begin{proof}
		We are going to use the following two properties of the operator $L$ from \cite{Bechtel_Lp} that follow from~\hyperref[The geometric setup: Definition (LU)]{$\LU$} and (D): we use the extrapolation result from \cite[Prop.~7.1]{Bechtel_Lp} with $p=2$ to find some $q \in (2, \infty)$ such that the restriction of the Lax--Milgram isomorphism
		\begin{equation*}
			1 + \cL \colon \W^{1,2}_D(O) \to \W^{1,2}_D(O)^*, \quad u \mapsto (u \, | \, \cdot )_2 + a(u,\cdot )
		\end{equation*}
		to $\W^{1,q}_D(O) \cap \W^{1,2}_D(O)$ extends to an isomorphism from $\W^{1,q}_D(O)$ to $\W^{1,q'}_D(O)^*$ with inverse that coincides with $(1+ \cL)^{-1}$ on $\W^{1,q'}_D(O)^* \cap \W^{1, 2}_D(O)^*$. By  \cite[Cor.~3.5 \& Prop.~3.6]{Bechtel_Lp} we have the estimate
		\begin{equation*}
			\| t (1 +L) \e^{-t (1 + L)} u \|_{q_*} \les \| u \|_{q_*} \qquad (t > 0, u \in \L^{q_*}(O) \cap \L^2(O)).
		\end{equation*}

		Now, we can start the actual proof. Set $\mu \coloneqq 1 - \nicefrac{d}{q} \in (0,1)$. Due to~\hyperref[The geometric setup: Theorem: Extension operator]{$\AssExt$} we have the Sobolev embeddings $\W_D^{1,q}(O) \sub \mathrm{C}^{\mu}(O)$ and $\W^{1, q'}_D(O) \sub \L^{(q_{*})'}(O)$. By duality, the second embedding entails that $\L^{q_*}(O) \sub \W^{1,q'}_D(O)^*$. Using these embeddings and the fact that $(1+ \cL)^{-1}$ maps $\W^{1,q'}_D(O)^* \cap \W^{1,2}(O)^*$ boundedly into $\W_D^{1,q}(O)$ for the $q$-norms, we conclude that
		\begin{equation*}
			[(1 + \cL)^{-1} u]^{(\mu)}_O + \| (1 + \cL )^{-1} u \|_{\infty} \les \| u \|_{q_*} \qquad (u \in \L^{q_*}(O) \cap \L^2(O)).
		\end{equation*}
		Let $t > 0$ and $u \in \L^{q_*}(O) \cap \L^2(O)$. Using $(1 + \cL)|_{\L^2(O)} = 1+ L$, we get
		\begin{align*}
			[\e^{- t L}u]^{(\mu)}_O
			=  [ (1 + \cL)^{-1} (1 + L) \e^t \e^{- t (1+ L)}u ]^{(\mu)}_O
			&\les \e^t \| (1 + L) \e^{-t (1 + L)}u \|_{q_*}  \\
			&\les \e^t t^{-\frac{\mu}{2} - \frac{d}{2 q_*}} \| u \| _{q_*}.
		\end{align*}
		Replacing $[\, \cdot \, ]_O^{(\mu)}$ by $\| \cdot \|_{\infty}$ and setting $t = \nicefrac{\delta^2}{16}$ in the latter estimates, we deduce in the same manner
		\begin{equation*}
			\| \e^{- \frac{\delta^2}{16} L} u \|_{\infty} \les \| u \|_{q_*}.
		\end{equation*}
		The last two estimates allow us to repeat the proof of Theorem~\ref{(1) to (2): Main result} with $\L^2$ systematically replaced by $\L^{q_*}$, see also Remark~\ref{D(mu) to G(mu): Remark: Linfty bound only for fixed t}. The outcome is property (i) of Corollary~\ref{D(mu) to G(mu): Corollary: G(mu) equivalent to L2-Linfty-ODE + L2-Holder} and hence, $L$ has property $\rG(\mu)$.
	\end{proof}


	\appendix

	\section{Remarks on capacities} \label{Section: Appendix}

	For the reader's convenience, we include some results related to capacities that we could not find in the literature. For brevity we write $\L^p = \L^p(\R^d)$ and so on. Capacities and $p$-fatness have been introduced in Section~\ref{Subsection: Assumptions Fat and LU}.

	\begin{remark} \label{Appendix: Remark: p-fatness}
		We can change the parameters in Definition~\ref{eq: Definition: p-fatness} of local $p$-fatness.
		\begin{enumerate}
			\item It is possible to replace $2 B \coloneqq B(x,2r)$ by $\kappa B $ for each $\kappa > 1$ in \eqref{eq: Definition: p-fatness}. The interesting direction is when fatness is formulated with a reference ball $\kappa B$ and we want to switch to a larger radius called $2 > \kappa$ for simplicity. We pick $u \in \smooth[2 B]$ such that $u \geq 1$ on $\overline{B} \cap C$. Let $\eta \in \rC_{\cc}^{\infty}(\kappa B)$ be $[0,1]$-valued with $\eta = 1$ on $\overline{B}$ and $\| \nabla \eta \|_{\L^{\infty}} \les_{\kappa} r^{-1}$. Poincar\'{e}'s inequality yields
			\begin{equation*}
				\Capp_p (\overline{B} \cap C; \kappa B) \leq \| \nabla (u \eta ) \|_{\L^p(\kappa B)}^p \les_{\kappa, d,p} \| \nabla u \|_{\L^p(2B)}^p,
			\end{equation*}
			and thus
			\begin{equation*}
				\Capp_p (\overline{B} \cap C; \kappa B) \les \Capp_p(\overline{B} \cap C; 2B).
			\end{equation*}
			\item We can replace the condition $r \leq 1$ by $r \leq r_0$ for any $r_0 > 0$ in \eqref{eq: Definition: p-fatness}. This follows from the first remark and the monotonicity of capacities in the first argument.

			\item We can replace the requirement $u \geq 1$ on $\overline{B} \cap C$ by $u =1$ in an open neighborhood of $\overline{B} \cap C$ and $0 \leq u \leq 1$ everywhere. Indeed, for the interesting direction we pick $u \in \smooth[2B; \R]$ with $u \geq 1$ on $\overline{B} \cap C$. Let $\varepsilon \in (0,1)$ and put $v_{\varepsilon} \coloneqq ( (1- \varepsilon)^{-1} u \land 1) \lor 0$. Note that $v_{\varepsilon}$ is continuous with
			\begin{align*}
				v_{\varepsilon} = \begin{cases} 1 \qquad \qquad \quad  &(u \geq 1 - \varepsilon), \\ (1- \varepsilon)^{-1} u &(0 \leq u \leq 1 - \varepsilon), \\ 0  &(u \leq 0). \end{cases}
			\end{align*}
			In particular, as $\overline{B} \cap C$ is compact and $u$ is continuous, there is some $\delta > 0$ such that $v_{\varepsilon} =1$ on $(\overline{B} \cap C)_{\delta}$.
			We set $v_n \coloneqq \eta_n * v_{\varepsilon}$ for all $n \in \N$, where $\eta_n(x) = n^d \eta(n x)$ is a standard mollifier. Note that $v_n$ is smooth, $[0,1]$-valued and there is some $N \in \N$ with $v_n =1$ on $(\overline{B} \cap C)_{\nicefrac{\delta}{2}}$ and $v_n$ has compact support in $2B$ for all $n \geq N$. Next, using Young's inequality for convolutions and $\nabla v_{\varepsilon} = (1- \varepsilon)^{-1} \1_{[0 \leq u \leq 1 - \varepsilon]} \nabla u$, we get
			\begin{equation*}
				\| \nabla v_n \|_{\L^p(2B)}^p \leq \| \nabla v_{\varepsilon} \|_{\L^p(2B)}^p \leq (1- \varepsilon)^{-p} \| \nabla u \|_{\L^p(2B)}^p.
			\end{equation*}
			Hence, we derive
			\begin{align*}
				\inf_w \| \nabla w \|_{\L^p(2B)}^p \leq (1- \varepsilon)^{-p} \| \nabla u \|_{\L^p(2 B)}^p,
			\end{align*}
			where the infimum is taken over all $[0,1]$-valued $w \in \smooth[2 B]$ that are $1$ in an open neighborhood of $\overline{B} \cap C$. We conclude by letting $\varepsilon \to 0$.
		\end{enumerate}
	\end{remark}

	In Lewis' self-improvement \cite{Lewis_P-fat_Open_ended} the notion of Riesz capacities is used in the main result. We include a proof that this gives rise to an equivalent notion of $p$-fatness when $p \in (1,d)$. Note that this does not cover the case $p=2=d$. This is why we preferred to rely on the self-improvement for our (variational) capacity from \cite{Mikkonen}.

	Let $I_1(x) \coloneqq |x|^{1-d}$ be the Riesz potential of order $1$. Given $f \in \L^p$ with $f \geq 0$ a.e.\@, the convolution
	\begin{equation*}
		(I_1 * f )(x) = \int_{\R^d} |x-y|^{1-d} f(y) \, \d y
	\end{equation*}
	is defined for all $x \in \R^d$ as a number in $[0,\infty]$. Using classical results for the Riesz transform \cite[Chap.~III.1]{Stein II}, we obtain for all non-negative $f \in \rC_{\cc}^{\infty}$ that
	\begin{equation} \label{eq: App: Mihlin-Lp-estimate}
		\| \nabla (I_1 * f) \|_{\L^p} \les_{d,p} \| f \|_{\L^p}.
	\end{equation}

	\begin{definition}
		For $p \in (1,d)$ and closed $C \sub \R^d$ the \textbf{Riesz capacity of $\boldsymbol{C}$} is defined by
		\begin{equation*}
			R_{1,p}(C) \coloneqq \inf \left\{ \| f \|_{\L^p}^p \colon f \in \L^p \; \text{with} \; f \geq 0 \; \text{a.e.\@ and} \; I_1 * f \geq 1 \; \text{on} \; C \right\}.
		\end{equation*}
	\end{definition}

	Let us explicitly prove that Riesz and variational capacity give rise to the same notion of $p$-fatness when $p \in (1,d)$.

	\begin{lemma} \label{Appendix: Lemma: Riesz capacity, relative capacity}
		Let $p \in (1, d)$ and $C \sub \R^d$ be closed. Then $C$ is locally $p$-fat if and only if
		\begin{equation}
			\exists \, c > 0 \; \forall \, r \leq 1, x \in C \colon \quad R_{1,p}(\overline{B(x,r)} \cap C) \geq c r^{d-p}.  \label{eq: Appendix: Riesz-capacity lower bound}
		\end{equation}
	\end{lemma}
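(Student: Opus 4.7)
The plan is to prove the equivalence by establishing two-sided bounds between $\Capp_p(\overline{B(x,r)} \cap C; B(x,2r))$ and $R_{1,p}(\overline{B(x,r)} \cap C)$ with constants depending only on $d$ and $p$. Since both quantities are to be compared against the same scale $r^{d-p}$, such constants immediately translate into the equivalence of fatness conditions.

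For the implication $\text{(variational fatness)} \Rightarrow \text{(Riesz fatness)}$ I would use the classical pointwise Riesz representation
\begin{equation*}
|u(z)| \leq c_d \, (I_1 * |\nabla u|)(z) \qquad (u \in \smooth[\R^d], \, z \in \R^d),
\end{equation*}
applied to any $u \in \smooth[B(x,2r); \R]$ with $u \geq 1$ on $K := \overline{B(x,r)} \cap C$ (and extended by zero). Then $f := c_d |\nabla u| \in \L^p$ is admissible in the definition of $R_{1,p}(K)$, so $R_{1,p}(K) \leq c_d^p \| \nabla u \|_p^p$. Taking the infimum over $u$ yields $R_{1,p}(K) \leq c_d^p \Capp_p(K; B(x,2r))$.

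For the opposite direction I would, given $0 \leq f \in \L^p$ with $I_1 * f \geq 1$ on $K$, produce an admissible function for the variational capacity by setting $u_f := (I_1 * f) \wedge 1$ and choosing a cutoff $\eta \in \smooth[B(x,2r)]$ with $\eta \equiv 1$ on $B(x,r)$ and $\| \nabla \eta \|_\infty \les r^{-1}$. The candidate is $v := \eta u_f$, which satisfies $v \geq 1$ on $K$ and is supported in $B(x,2r)$. The gradient estimate
\begin{equation*}
\| \nabla v \|_p \leq \| \eta \nabla u_f \|_p + \| u_f \nabla \eta \|_p \les \| \nabla (I_1 * f) \|_p + r^{-1} \| I_1 * f \|_{\L^p(B(x,2r))}
\end{equation*}
is then controlled by $\| f \|_p$ through two ingredients: the Mihlin-type bound \eqref{eq: App: Mihlin-Lp-estimate} gives $\| \nabla (I_1 * f) \|_p \les \| f \|_p$, while the Hardy--Littlewood--Sobolev inequality (available precisely because $p < d$) gives $\| I_1 * f \|_{p^*} \les \| f \|_p$, from which Hölder's inequality on $B(x,2r)$ yields $\| I_1 * f \|_{\L^p(B(x,2r))} \les r^{d(\nicefrac{1}{p} - \nicefrac{1}{p^*})} \| f \|_p = r \| f \|_p$. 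Altogether $\| \nabla v \|_p^p \les \| f \|_p^p$.

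The main obstacle is that $v$ will in general fail to be smooth, while the definition of $\Capp_p$ requires test functions in $\smooth[B(x,2r)]$. To bridge this gap I would first argue that $u_f$ has a quasi-continuous representative which equals $1$ in an open neighborhood of $K$ up to a set of arbitrarily small capacity, then mollify $v$ at a scale small enough to preserve the admissibility condition, invoking Remark~\ref{Appendix: Remark: p-fatness}(iii) (relaxation of the admissibility condition to $u = 1$ on an open neighborhood of $K$ with $0 \leq u \leq 1$) to pass to genuine smooth admissible functions without losing the gradient bound. This delivers $\Capp_p(K; B(x,2r)) \les \| f \|_p^p$, and taking the infimum over $f$ completes the proof.
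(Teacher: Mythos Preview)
Your two-sided comparison strategy is correct and does yield the equivalence, but note that your labels are swapped: the inequality $R_{1,p}(K) \leq c_d^p \Capp_p(K; 2B)$ from your first argument gives Riesz-fat $\Rightarrow$ variationally fat, and your second argument gives the converse. Since you establish both inequalities, this is only presentational.

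The route differs from the paper's in two places. For the bound $R_{1,p} \les \Capp_p$ you use the elementary pointwise estimate $|u| \leq c_d\, I_1 * |\nabla u|$, whereas the paper invokes Stein's representation $u = I_1 * f$ with $\|f\|_p \les \|\nabla u\|_p$ and then takes $f^+$; both are fine. For $\Capp_p \les R_{1,p}$ your route via Hardy--Littlewood--Sobolev plus \eqref{eq: App: Mihlin-Lp-estimate} is more direct than the paper's: there one first approximates $f$ by smooth compactly supported $f_n$, argues by compactness of $K$ and Fatou that eventually $I_1 * f_n \geq 1$ on $K$, and then uses the smooth function $u_n \varphi$ as a test function together with a case split (large $\|u_n\|_{\L^p(2B)}$ handled by Sobolev, small $\|u_n\|_{\L^p(2B)}$ absorbed after invoking the fatness hypothesis). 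The paper's detour buys an honest smooth competitor from the start; your approach avoids the case split but must manufacture smoothness at the end. Here your exposition is the weak spot: ``quasi-continuous representative equal to $1$ on an open neighborhood up to small capacity'' is not the right mechanism. What you need is simply that $I_1 * f$ is lower semicontinuous, so $\{I_1 * f > 1-\varepsilon\}$ is open and contains $K$; then $((1-\varepsilon)^{-1} I_1 * f)\wedge 1$ equals $1$ on that open set, and mollifying $\eta$ times this function proceeds exactly as in Remark~\ref{Appendix: Remark: p-fatness}(iii). With that fix your argument is complete.
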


	\begin{proof}
		First, assume that $C$ is locally $p$-fat with lower bound $c > 0$ and abbreviate $B \coloneqq B(x,r)$. Let $f \in \L^p$ with $f \geq 0$ a.e.~and $u \coloneqq I_1 * f \geq 1$ on $\overline{B} \cap C$. Let $\varepsilon \in (0,1)$ and set $f_{\varepsilon} \coloneqq (1-\varepsilon)^{-1} f$. We define for $n \in \N$ the non-negative functions $f_n \coloneqq (\1_{B(0,n)} f_{\varepsilon}) * \eta_n \in \rC_{\cc}^{\infty}$, where $\eta_n(x) = n^d \eta(n x)$ is again a standard mollifier. Put $u_n \coloneqq I_1 * f_n$. We claim that there is some $N \in \N$ such that $u_n \geq 1$ on $\overline{B} \cap C$ for all $n \geq N$.

		Indeed, assume that this is not the case. Then there is a strictly increasing sequence $(n_j)_j \sub \N$ and a sequence $(x_j)_j \sub \overline{B} \cap C$ such that $u_{n_j}(x_j) < 1$ for all $j \in \N$. Since $\overline{B} \cap C$ is compact, there is some $x \in \overline{B} \cap C$ such that $x_j \to x$ along a subsequence. Using Fatou's lemma and passing to a further subsequence to guarantee $f_{n_j} \to f_{\varepsilon}$ a.e., we get $(I_1 * f_{\varepsilon}) (x) \leq 1$. Thus, $u(x) \leq 1 - \varepsilon < 1$, which is a contradiction to our assumption.

		From now on we let $n \geq N$. If $\nicefrac{c r^{\nicefrac{d}{p}} }{4} \leq \| u_n \|_{\L^p(2B)}$, then we obtain by H\"older's inequality and a Sobolev embedding that
		\begin{equation*}
			r^{\frac{d}{p}-1} \les r^{-1} \| u_n \|_{\L^p(2B)} \les \| u_n \|_{\L^{p^*}} \les \| \nabla u_n \|_{\L^p}.
		\end{equation*}
		Now, consider the case $\| u_n \|_{\L^p(2B)} < \nicefrac{c r^{\nicefrac{d}{p}} }{4}$. Let $\varphi \in \smooth[2 B]$ be $[0,1]$-valued with $\varphi =1$ on $\overline{B}$ and $\| \nabla \varphi \|_{\L^{\infty}} \leq \nicefrac{2}{r}$. Using that $C$ is locally $p$-fat, we estimate
		\begin{equation*}
			c r^{\frac{d}{p}-1} \leq \| \nabla (u_n \varphi ) \|_{\L^{p}(2 B)} \leq \| \nabla u_n \|_{\L^p(2 B)} + 2 r^{-1} \| u_n \|_{\L^p(2 B)} \leq \| \nabla u_n \|_{\L^p} + \frac{c}{2} r^{\frac{d}{p}-1}
		\end{equation*}
		and subtract the second summand on the right to obtain the same lower bound for $\| \nabla u_n \|_{\L^p}$ as before. Using the very definition of $u_n$ and \eqref{eq: App: Mihlin-Lp-estimate}, the upshot is that in either case we have
		\begin{equation*}
			r^{d-p} \les \| \nabla u_n \|_{\L^p}^p \les \| f_n \|_{\L^p}^p \leq (1-\varepsilon)^{-p} \| f \|_{\L^p}^p.
		\end{equation*}
		Thus, \eqref{eq: Appendix: Riesz-capacity lower bound} follows as $\varepsilon \to 0$.

		Now, suppose that \eqref{eq: Appendix: Riesz-capacity lower bound} holds true. To prove that $C$ is locally $p$-fat, pick $u \in \smooth[2B; \R]$ with $u \geq 1$ on $\overline{B} \cap C$. By \cite[p.~125]{Stein II} we find $f \in \L^p$ such that $u = I_1 * f$ and $\| f \|_{\L^p} \les \| \nabla u \|_{\L^p}$. In particular, $I_1 * f^+ \geq 1$ on $\overline{B} \cap C$. Hence, \eqref{eq: Appendix: Riesz-capacity lower bound} yields
		\begin{equation*}
			r^{d-p} \les \| f^+ \|_{\L^p}^p \leq \| f \|_{\L^p}^p \les \| \nabla u \|_{\L^p}^p = \| \nabla u \|_{\L^p(2 B)}^p.
		\end{equation*}
		This completes the proof.
	\end{proof}

\section{Relation between fatness and thickness} \label{Section: Appendix, Fatness and thickness}

For the reader's convenience, we also include a proof of the following result that compares $p$-fatness and thickness relative to the Hausdorff measure.

\begin{lemma} \label{App: Fat and Thick: Lem: Thickness and fatness}
	Let $\delta >0$, $p \in (1, d]$ and $s \in (d-p,d]$. Let $C \sub \R^d$ be closed and $\widehat{C} \sub \R^d$. If $C$ satisfies
	\begin{equation*}
		\cH^s(C(x,r)) \simeq r^s \qquad (r \leq \delta, x \in C \cap \widehat{C}_{2 \delta} ),
	\end{equation*}
	then $C$ is locally $p$-fat in $C \cap \widehat{C}_{\delta}$.
\end{lemma}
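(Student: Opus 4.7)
The idea is to realize the $p$-capacity lower bound via a Frostman-type measure built from the Hausdorff regularity hypothesis, combined with an Adams--Maz'ya trace inequality. Throughout, I would fix $x \in C \cap \widehat{C}_\delta$ and work with radii $r \in (0, \delta/2]$, which by Remark~\ref{Appendix: Remark: p-fatness}~(ii) is enough to establish local $p$-fatness.

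Let $K \coloneqq \overline{B(x,r)} \cap C$ and put $\mu \coloneqq \cH^s|_K$. The lower bound $\mu(K) \gtrsim r^s$ is immediate from the hypothesis. For a matching Frostman upper bound on all balls, I would argue as follows: given $y \in \R^d$ and $t > 0$ with $K \cap B(y,t) \neq \varnothing$, pick $z \in K \cap B(y,t)$. Since $\d(z, \widehat{C}) \leq \d(z,x) + \d(x, \widehat{C}) < r + \delta \leq 2\delta$, we have $z \in C \cap \widehat{C}_{2\delta}$. If $2t \leq \delta$, the hypothesis applied at $z$ with radius $2t$ yields $\mu(B(y,t)) \leq \cH^s(C \cap B(z,2t)) \lesssim t^s$; otherwise $t > \delta/2 \geq r$ and the trivial bound $\mu(B(y,t)) \leq \mu(K) \lesssim r^s \leq t^s$ suffices. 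Thus $\mu(B(y,t)) \lesssim t^s$ for all $y$ and $t$.

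Next, I would take any admissible $u \in \smooth[B(x,2r); \R]$ with $u \geq 1$ on $K$ and use the standard pointwise representation $|u(y)| \lesssim I_1(|\nabla u|)(y)$, where $I_1$ denotes the Riesz potential of order $1$. The crucial step is Adams' trace inequality. Exploiting $\supp(\mu) \subset \overline{B(x,r)}$ and factoring $t^s = t^{s-(d-p)} \, t^{d-p}$ for $t \leq 2r$ (the trivial bound $\mu(\R^d) \lesssim r^s$ handles $t > 2r$ via $r \leq t$ and $d \geq p$), the measure $\mu$ satisfies the critical Frostman bound
\begin{equation*}
\mu(B(y,t)) \lesssim r^{s-(d-p)} \, t^{d-p} \qquad (y \in \R^d, \; t > 0).
\end{equation*}
Hence $\mu$ carries the critical $(d-p)$-Frostman condition with constant $\simeq r^{s-(d-p)}$, and Adams' trace theorem in the pairing $I_1 \colon \L^p \to \L^p(\d\mu)$ (see e.g.\ \cite{Hedberg}) yields $\|I_1 g\|_{\L^p(\d\mu)}^p \lesssim r^{s-(d-p)} \|g\|_p^p$. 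Applied to $g = |\nabla u|$ this gives
\begin{equation*}
r^s \lesssim \mu(K) \leq \int u^p \, \d\mu \lesssim \int (I_1 |\nabla u|)^p \, \d\mu \lesssim r^{s-(d-p)} \|\nabla u\|_{\L^p(B(x,2r))}^p,
\end{equation*}
so $\|\nabla u\|_p^p \gtrsim r^{d-p}$; passing to the infimum over $u$ completes the argument for $p \in (1,d)$. The endpoint $p = d$ reduces to the previous case: pick any $p' \in (\max(1, d-s), d)$, which is admissible because $s > 0 = d-p$; the argument above gives local $p'$-fatness, and H\"older's inequality applied to admissible test functions (whose supports have volume $\simeq r^d$) upgrades $p'$-fatness to $p$-fatness for all $p > p'$.

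The main obstacle is the clean, scale-invariant invocation of Adams' critical trace inequality. The factorization $t^s \leq r^{s-(d-p)} t^{d-p}$ in the supercritical regime $s > d-p$ is precisely the trick that keeps the constants scale-correct and is where the strict inequality $s > d-p$ enters; once this is set up, everything else is routine bookkeeping in capacity theory.
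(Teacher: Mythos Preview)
Your proof is correct and takes a genuinely different route from the paper. The paper proceeds in two steps: first, it proves the Hausdorff \emph{content} lower bound $\cH^s_\infty(C(x,r)) \gtrsim r^s$ via a covering argument (for each covering ball $B(x_n,r_n)$ it distinguishes $r_n \leq \delta$ versus $r_n > \delta$ and uses the upper Hausdorff bound at a nearby point of $C \cap \widehat{C}_{2\delta}$), and second, it invokes Martio's theorem \cite[Thm.~3.1]{Martio-Thickness_implies_fatness} as a black box to convert this content lower bound into $p$-fatness. You instead construct the Frostman measure $\mu = \cH^s|_K$ explicitly and verify its growth bound $\mu(B(y,t)) \lesssim t^s$ by the same case split on scales; this is essentially the covering argument in dual form. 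You then bypass Martio entirely by factoring $t^s \leq r^{s-(d-p)} t^{d-p}$ and applying Adams' critical trace inequality directly, with a separate H\"older bootstrap for the endpoint $p=d$. Your approach is more self-contained---it unpacks what Martio's theorem does under the hood---while the paper's is shorter by deferring to the literature. Both rely on the same geometric input (the two-scale case distinction ensuring points stay inside $C \cap \widehat{C}_{2\delta}$), so neither is fundamentally more elementary.
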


We will use the notion of $s$-dimensional Hausdorff content, denoted by $\cH_{\infty}^s$, and refer to \cite[Chap.~7]{Yeh_Real-Analysis} for further background.

\begin{proof}
	In view of \cite[Thm.~3.1]{Martio-Thickness_implies_fatness} applied with $h(r) = r^s$, it suffices to prove the lower bound
	\begin{equation}  \label{eq: Comparison of the geometric setup: D is (d-1)-thick}
		\cH_{\infty}^s(C(x,r)) \gtrsim r^s \qquad (r \leq \delta, x \in C \cap \widehat{C}_{\delta}).
	\end{equation}
	We adapt an argument in \cite[Lem.~6.6]{Auscher_Badr_Haller_Rehberg} to our needs.

	Fix $x \in C \cap \widehat{C}_{\delta}$, $r \leq \delta$ and let $\{ B_n \}_n = \{ B(x_n, r_n) \}_n$ be a covering of $C(x,r)$ with open balls centered in $C(x,r)$. The assumption yields
	\begin{equation*}
		r^s \les \cH^s (C(x,r)) = \cH^s \Big( C(x,r) \cap \bigcup_n B_n \Big) \leq \sum_n \cH^s (C(x,r) \cap B_n).
	\end{equation*}
	If $r_n > \delta$, then we use the other part of the assumption in the form$$\cH^s(C(x,r) \cap B_n) \leq \cH^s(C(x,r)) \les r^s \leq r_n^s.$$If $r_n \leq \delta$, then we note that $C(x,r) \cap B_n \sub C(x_n, r_n)$ and $x_n \in C \cap \widehat{C}_{2 \delta}$ in order to deduce the same bound $\cH^s(C(x,r) \cap B_n) \les r_n^s$. In total, we obtain
	\begin{equation*}
		r^s \les \sum_n r_n^s.
	\end{equation*}
	By definition, $\cH_{\infty}^s(C(x,r))$ is the infimum over all expressions as on the right-hand side. Thus, \eqref{eq: Comparison of the geometric setup: D is (d-1)-thick} follows.
\end{proof}


\section*{Conflicts of interest statement and Data availability}

Data sharing not applicable to this article as no datasets were generated or analysed during the current study.

All authors declare that they have no conflict of interest.



\begin{thebibliography}{10}
		\providecommand{\url}[1]{{\tt #1}}
		\providecommand{\urlprefix}{URL}
		\providecommand{\eprint}[2][]{\url{#2}}

		\bibitem{Hedberg}
		\textsc{D.~R.~Adams} and \textsc{L.~I.~Hedberg}.
		\newblock Function spaces and potential theory, Grundlehren der Mathematischen Wissenschaften, vol.~314,
		\newblock Springer, Berlin, 1995.


		\bibitem{Dunford-Pettis}
		\textsc{W.~Arendt} and \textsc{A.~V.~Bukhvalov}.
		{\em Integral representations of resolvents and semigroups}. Forum Math. \textbf{6} (1994), no.~1, 111--135.

		\bibitem{Arendt_ter-Elst}
		\textsc{W.~Arendt} and \textsc{A.~F.~M.~ter Elst}.
		{\em Gaussian estimates for second order elliptic operators with boundary conditions}. J. Oper. Theory \textbf{38} (1997), no.~1, 87--130.

		\bibitem{Auscher_Heat-Kernel}
		\textsc{P.~Auscher}.
		{\em Regularity theorems and heat kernel for elliptic operators}. J. Lond. Math. Soc., II. Ser. \textbf{54} (1996), no.~2, 284--296.

		\bibitem{Auscher_Badr_Haller_Rehberg}
		\textsc{P.~Auscher}, \textsc{N.~Badr}, \textsc{R.~Haller--Dintelmann} and \textsc{J.~Rehberg}.
		{\em The square root problem for second-order, divergence form operators with mixed boundary conditions on {{\(L^p\)}}}. J. Evol. Equ., \textbf{15} (2015), no.~1, 165--208.

		\bibitem{Auscher-Egert}
		\textsc{P.~Auscher} and \textsc{M.~Egert}.
		\newblock Boundary value problems and Hardy spaces for elliptic systems with block structure, Progress in Mathematics, vol.~346,
		\newblock Birkhäuser, Cham, 2023.

		\bibitem{AMcIT-Heat-kernel-d=2}
		\textsc{P.~Auscher}, \textsc{A.~McIntosh} and \textsc{P.~Tchamitchian}.
		{\em Heat Kernels of Second Order Complex Elliptic Operators and Applications}. J. Funct. Anal., \textbf{152} (1998), no.~1, 22--73.

		\bibitem{Auscher_Russ}
		\textsc{P.~Auscher} and \textsc{E.~Russ}.
		{\em Hardy spaces and divergence operators on strongly {Lipschitz} domains of $\R^n$}. J. Funct. Anal., \textbf{201} (2003), no.~1, 148--184.

		\bibitem{Auscher_Tchamitchian_Kato}
		\textsc{P.~Auscher} and \textsc{P.~Tchamitchian}.
		\newblock Square root problem for divergence operators and related topics, Ast{\'e}risque, vol.~249,
		\newblock Soci{\'e}t{\'e} Math{\'e}matique de France, Paris, 1998.

		\bibitem{Auscher_Tchamitchian_Domains}
		\textsc{P.~Auscher} and \textsc{P.~Tchamitchian}.
		\newblock Gaussian estimates for second order elliptic divergence operators on Lipschitz and $C^1$ domains, Evolution equations and their applications in physical and life sciences (Bad Herrenalb, 1998), Lecture Notes in Pure and Appl.~Math., vol.~215, 15--32,
		\newblock Dekker, New York, 2001.

		\bibitem{Bechtel_Lp}
		\textsc{S.~Bechtel}.
		{\em $\L^p$-estimates for the square root of elliptic systems with mixed boundary conditions $\mathrm{II}$}. J. Differential Equations, \textbf{379} (2024), 104--124.


		\bibitem{BB-Hardy_on_open_sets}
		\textsc{S.~Bechtel} and \textsc{T.~Böhnlein}.
		{\em Hardy spaces adapted to elliptic operators on open sets}. Preprint (2023),
		\url{https://arxiv.org/abs/2311.13316v1}.

		\bibitem{Extension_Operator}
		\textsc{S.~Bechtel}, \textsc{R.~M.~Brown}, \textsc{R.~Haller--Dintelmann} and \textsc{P.~Tolksdorf}.
		{\em Extendability of functions with partially vanishing trace}. Preprint (2021),
		\url{https://arxiv.org/abs/1910.06009v2}.
		
	
		\bibitem{Kato_Mixed}
		\textsc{S.~Bechtel}, \textsc{M.~Egert} and \textsc{R.~Haller--Dintelmann}.
		{\em The Kato square root problem on locally uniform domains}. Adv. Math., \textbf{375} (2020), id/no. 107410, 37.

		\bibitem{Bui_Duong_Ly_JFA}
		\textsc{T.~A.~Bui}, \textsc{X.~T.~Duong} and \textsc{F.~K.~Ly}.
		{\em Maximal function characterizations for {Hardy} spaces on spaces of homogeneous type with finite measure and applications}. J. Funct. Anal., \textbf{278} (2020), no.~8, 55.

		\bibitem{Campa}
		\textsc{S.~Campanato}.
		{\em Propriet{\`a} di h{\"o}lderianit{\`a} di alcune classi di funzioni}. Ann. Sc. Norm. Super. Pisa, \textbf{17} (1963), no.~1--2, 175--188.

		\bibitem{Reference_Intro_Holder_Mixed_2}
		\textsc{M.~Chicco} and \textsc{M.~Venturino}.
		{\em H{\"o}lder regularity for solutions of mixed boundary value problems containing boundary terms}. Boll. Unione Mat. Ital., Sez. B, Artic. Ric. Mat. (8), \textbf{9} (2006), no.~2, 267--281.

		\bibitem{DeGio57}
		\textsc{E.~De Giorgi}.
		{\em Sulla differenziabilit\`a e l'analiticit\`a delle estremale degli integrali multipli regolari (Differentiability and analyticity of the extremals of regular multiple integrals)}. Mem. Acc. Sci. Torino, Ser, \textbf{3} (1957), no.~3, 25--43.

		\bibitem{Disser_ter-Elst_Rehberg}
		\textsc{K.~Disser}, \textsc{A.~F.~M.~ter Elst} and \textsc{J.~Rehberg}.
		{\em Hölder estimates for parabolic operators on domains with rough boundary}. Ann. Sc. Norm. Super. Pisa, Cl. Sci. (5), \textbf{17} (2017), no.~1, 65--79.

		\bibitem{DiBe89}
		\textsc{E.~DiBenedetto}.
		{\em Harnack estimates in certain function classes}. Atti Sem. Mat. Fis. Univ. Modena, \textbf{37} (1989), no.~1, 173--182.

		\bibitem{DiBenedetto}
		\textsc{E.~DiBenedetto}.
		\newblock Partial differential equations, ed.~2nd,
		\newblock Springer Science \& Business Media, 2009.

		\bibitem{Duong_Yan-JAMS-H1-BMO-Duality}
		\textsc{T.~X.~Duong} and \textsc{L.~Yan}.
		{\em Duality of {Hardy} and {BMO} spaces asscociated with operators with heat kernel bounds}. J. Amer. Math. Soc., \textbf{18} (2005), no.~4, 943--973.

		\bibitem{Egert_PhD}
		\textsc{M.~Egert}.
		\newblock On Kato's conjecture and mixed boundary conditions, PhD Thesis,
		\newblock Sierke Verlag, G\"ottingen, 2015. Available online: \url{https://www.mathematik.tu-darmstadt.de/media/analysis/personen_analysis/egert/Thesis.pdf}

		\bibitem{Egert: Lp-Riesz-Trafo}
		\textsc{M.~Egert}.
		{\em $\L^p$-estimates for the square root of elliptic systems with mixed boundary conditions}. J. Differential Equations, \textbf{265} (2018), no.~4, 1279--1323.


		\bibitem{P-ellipticity-Moritz-Counterpart}
		\textsc{A.~F.~M.~ter Elst} and \textsc{R.~Haller--Dintelmann} and \textsc{J.~Rehberg} and \textsc{P.~Tolksdorf}.
		{\em On the $\L^p$-theory for second order elliptic operators in divergence form with complex coefficients}. J. Evol. Equ. \textbf{21} (2021), 3963--4003.

		\bibitem{ter-Elst_Rehberg}
		\textsc{A.~F.~M.~ter Elst} and \textsc{J.~Rehberg}.
		{\em H{\"o}lder estimates for second-order operators on domains with rough boundary}. Adv. Differ. Equ. \textbf{20} (2015), no.~3--4, 299--360.

		\bibitem{Giaquinta}
		\textsc{M.~Giaquinta}.
		\newblock Multiple integrals in the calculus of variations and nonlinear elliptic systems, Ann. Math. Stud., vol.~105,
		\newblock Princeton University Press, Princeton, NJ, 1983.

		\bibitem{Giaquinta_Martinazzi}
		\textsc{M.~Giaquinta} and \textsc{L.~Martinazzi}.
		\newblock An introduction to the regularity theory for elliptic systems, harmonic maps and minimal graphs, Appunti, Sc. Norm. Super. Pisa (N.S.), vol.~2,
		\newblock Edizioni della Normale, Pisa, 2005.



		\bibitem{Uniform_domains_book}
		\textsc{P.~Gyrya} and \textsc{L.~Saloff--Coste}.
		\newblock Neumann and Dirichlet heat kernels in inner uniform domains, Ast{\'e}risque, vol.~336,
		\newblock Soci{\'e}t{\'e} Math{\'e}matique de France, Paris, 2011.



		\bibitem{Robert_Joachim_Fractional_Powers}
		\textsc{R.~Haller}, \textsc{H.~Meinlschmidt} and \textsc{J.~Rehberg}.
		{\em H\"older regularity for domains of fractional powers of elliptic operators with mixed boundary conditions}. Preprint (2022), \url{https://arxiv.org/abs/2210.03451v1}.

		\bibitem{Reference_Intro_Holder_Mixed_1}
		\textsc{R.~Haller--Dintelmann}, \textsc{C.~Meyer}, \textsc{J.~Rehberg} and \textsc{A.~Schiela}.
		{\em H{\"o}lder continuity and optimal control for nonsmooth elliptic problems}. Appl. Math. Optim. \textbf{60} (2009), no.~3, 397--428.


		\bibitem{Jones_Uniform_Domains}
		\textsc{P.~W.~Jones}.
		{\em Quasiconformal mappings and extendability of functions in {Sobolev} spaces}. Acta Math. \textbf{147} (1981), 71--88.

		\bibitem{Kato}
		\textsc{T.~Kato}.
		\newblock Perturbation theory for linear operators, Classics in Mathematics,
		\newblock Springer-Verlag, Berlin, 1995.


		\bibitem{Poincare-Capacity}
		\textsc{T.~Kilpel{\"a}inen} and \textsc{P.~Koskela}.
		{\em Global integrability of the gradients of solutions to partial differential equations}. Nonlinear Anal., Theory Methods Appl. \textbf{23} (1994), no.~7, 899--909.

		\bibitem{Kinderlehrer_Stampacchia}
		\textsc{D.~Kinderlehrer} and \textsc{G.~Stampacchia}.
		\newblock An Introduction to Variational Inequalities and Their Applications, Classics in Applied Mathematics, vol.~31,
		\newblock PA: Society for Industrial {and} Applied Mathematics (SIAM), Philadelphia, 2000.

		\bibitem{P_D_implies_2-fat}
		\textsc{J.~Kinnunen} and \textsc{R.~Korte}.
		\newblock Characterizations for the {Hardy} inequality, Around the research of Vladimir Maz'ya. I. Function spaces,
		\newblock Springer, Dordrecht;  Tamara Rozhkovskaya Publisher, Novosibirsk, 2010.



		\bibitem{Lewis_P-fat_Open_ended}
		\textsc{J.~L.~Lewis}.
		{\em Uniformly fat sets}. Trans. Amer. Math. Soc. \textbf{308} (1988), no.~1, 177--196.

		\bibitem{Reference_Intro_Holder_Mixed_3}
		\textsc{G.~M.~Liebermann}.
		{\em Optimal {H{\"o}lder} regularity for mixed boundary value problems}. J. Math. Anal. Appl. \textbf{143} (1989), no.~2, 572--586.

		\bibitem{Martio-Thickness_implies_fatness}
		\textsc{O.~Martio}.
		{\em Capacity and measure densities}. Ann. Acad. Sci. Fenn., Ser. A I, Math. \textbf{4} (1979), 109--118.


		\bibitem{Mikkonen}
		\textsc{P.~Mikkonen}.
		{\em On the {Wolff} potential and quasilinear elliptic equations involving measures}. Ann. Acad. Sci. Fenn., Math., Diss. \textbf{104} (1996).

		\bibitem{Nirenberg_Sobolev_Interpolation}
		\textsc{L.~Nirenberg}.
		{\em On elliptic partial differential equations}. Ann. Sc. Norm. Super. Pisa, Sci. Fis. Mat., III. Ser. \textbf{13} (1959), 115--162.


		\bibitem{Song_Yan-JEE-Self-adjoint}
		\textsc{L.~Song} and \textsc{L.~Yan}.
		{\em Maximal function characterizations for {Hardy} spaces associated with nonnegative self-adjoint operators on spaces of homogeneous type}. J. Evol. Equ. \textbf{18} (2018), no.~1, 221--243.

		\bibitem{Stampacchia_MBC_Hard_to_Check}
		\textsc{G.~Stampacchia}.
		{\em Problemi al contorno ellitici, con dati discontinui, dotati di soluzioni h\"oderiane}. Ann. Mat. Pura. Appl., \textbf{51} (1960), 1--37.

		\bibitem{Stein II}
		\textsc{E.~M.~Stein}.
		\newblock Singular Integrals and Differentiability Properties of Functions,
		\newblock Princeton University Press, New Jersey, 1970.


		%

		\bibitem{Vaisala-Uniform-domains}
		\textsc{J.~Väisälä}.
		{\em Uniform domains}. Tôhoku Math. J., \textbf{40} (1988), no.~1, 101--118.

		\bibitem{Yeh_Real-Analysis}
		\textsc{J.~Yeh}.
		\newblock Real Analysis, Theory of Measure and Integration, ed.~2nd,
		\newblock World Scientific Publishing Co. Pte. Ldt., Hackensack, 2006.


	\end{thebibliography}
\end{document}